\newcommand{\ifundef}[1]{\expandafter\ifx\csname#1\endcsname\relax}
\DeclareMathAlphabet{\mathbbe}{U}{bbold}{m}{n}
\def\re@DeclareMathSymbol#1#2#3#4{%
    \let#1=\undefined
    \DeclareMathSymbol{#1}{#2}{#3}{#4}}
  \DeclareSymbolFont{tcSyC}{U}{txsyc}{m}{n}
  \re@DeclareMathSymbol{\Top}{\mathord}{tcSyC}{120}
  \re@DeclareMathSymbol{\Bot}{\mathord}{tcSyC}{121}
  \DeclareFontFamily{U}{MnSymbolC}{}
  \DeclareSymbolFont{mnSyC}{U}{MnSymbolC}{m}{n}
  \DeclareFontShape{U}{MnSymbolC}{m}{n}{
      <-6>  MnSymbolC5
     <6-7>  MnSymbolC6
     <7-8>  MnSymbolC7
     <8-9>  MnSymbolC8
     <9-10> MnSymbolC9
    <10-12> MnSymbolC10
    <12->   MnSymbolC12}{}
  \DeclareFontShape{U}{MnSymbolC}{b}{n}{
      <-6>  MnSymbolC-Bold5
     <6-7>  MnSymbolC-Bold6
     <7-8>  MnSymbolC-Bold7
     <8-9>  MnSymbolC-Bold8
     <9-10> MnSymbolC-Bold9
    <10-12> MnSymbolC-Bold10
    <12->   MnSymbolC-Bold12}{}
  \re@DeclareMathSymbol{\righthalfcup}{\mathord}{mnSyC}{184}
  \re@DeclareMathSymbol{\lefthalfcap}{\mathord}{mnSyC}{185}
\DeclareFontFamily{U}{MnSymbolA}{}
\DeclareSymbolFont{mnSyA}{U}{MnSymbolA}{m}{n}
\DeclareFontShape{U}{MnSymbolA}{m}{n}{
    <-6>  MnSymbolA5
   <6-7>  MnSymbolA6
   <7-8>  MnSymbolA7
   <8-9>  MnSymbolA8
   <9-10> MnSymbolA9
  <10-12> MnSymbolA10
  <12->   MnSymbolA12}{}
\DeclareFontShape{U}{MnSymbolA}{b}{n}{
    <-6>  MnSymbolA-Bold5
   <6-7>  MnSymbolA-Bold6
   <7-8>  MnSymbolA-Bold7
   <8-9>  MnSymbolA-Bold8
   <9-10> MnSymbolA-Bold9
  <10-12> MnSymbolA-Bold10
  <12->   MnSymbolA-Bold12}{}
\re@DeclareMathSymbol{\twoheadedswarrow}{\mathord}{mnSyA}{30}
\newcommand{\mlaux}[3]{\setbox0=\hbox{$\mathsurround=0pt #2{#3}$}%
  \dimen0=\dp0\advance\dimen0 by \ht0\lower#1\dimen0\box0}
\newcommand{\makellapm}[2]{\hbox to 0pt{\hss$\mathsurround=0pt #1{#2}$}}
\newcommand{\makerlapm}[2]{\hbox to 0pt{$\mathsurround=0pt #1{#2}$\hss}}
\newcommand{\makelapm}[2]{\hbox to 0pt{\hss$\mathsurround=0pt #1{#2}$\hss}}
\newcommand{\makeushort}[3]{%
  \setbox0=\hbox{$\mathsurround=0pt #2{#3}$}%
  \hbox to 1\wd0{\hss\underbar{\hbox to #1\wd0{\hss\box0\hss}}\hss}}
\def\makebigger#1#2#3{\scalebox{#1}{$\mathsurround=0pt #2{#3}$}}
\def\bigger#1#2{{\relax\mathpalette{\makebigger{#1}}{#2}}}
\def\scaleuphalf{1.0954}
\def\scaleupone{1.2}
\newcommand{\op}{^{\mathord{\text{\rm op}}}}
\newcommand{\co}{^{\mathord{\text{\rm co}}}}
\newcommand{\defeq}{\mathrel{:=}}
\def\newmop{\@ifstar{\@newmop m}{\@newmop o}}
\def\@newmop#1{\@ifnextchar[{\@@newmop #1}{\@@@newmop #1}}
\def\@@newmop#1[#2]{\@declmathop #1#2}
\def\@@@newmop#1#2{\expandafter\@declmathop\expandafter #1\csname #2\endcsname{#2}}
\newcommand{\comma}{\mathbin{\downarrow}}
\newcommand{\unit}{\eta}
\newcommand{\counit}{\varepsilon}
\newcommand{\rotatemath}[2]{\rotatebox[origin=c]{180}{$\m@th #1{#2}$}}
\newcommand{\yoneda}{\mathscr{Y}\!}
\newcommand{\pocorner}{\hbox to 8pt{{\vrule height8pt depth0pt width0.5pt}%
    \vbox to 8pt{{\hrule height0.5pt width7.5pt depth0pt}\vfill}}}
\newcommand{\poexcursion}{\save[]-<15pt,-15pt>*{\pocorner}\restore}
\newcommand{\pbcorner}{\vbox to 0pt{\kern 4pt\hbox to 0pt{\kern 4pt%
      \vbox{{\hrule height0.5pt width7.5pt depth0pt}}%
      {\vrule height8pt depth0pt width0.5pt}\hss}\vss}}
\newcommand{\pbexcursion}{\save[]+<5pt,-5pt>*{\pbcorner}\restore}
\newcommand{\pwr}{\mathbin\pitchfork}
\newcommand{\wlim}[2]{\{#1,#2\}}
\newcommand{\leib}[1]{\mathbin{\widehat{#1}}}
\newcommand{\category}[1]{\underline{\smash[b]{\text{\rm{#1}}}}}
\newcommand{\cattwo}{{\bigger{1.12}{\mathbbe{2}}}}
\newcommand{\iso}{{\mathbb{I}}}
\def\Del@Sym{{\bigger\scaleuphalf{\mathbbe{\Delta}}}}
\def\del@fn{\futurelet\del@next}
\def\del@dn{\def\del@next}
\def\parsedel@{%
  \ifx +\del@next \del@dn+{\Del@Sym_{\mathord{+}}}%
  \else \del@dn {\del@fn\parsedel@@}%
  \fi\del@next}
\def\parsedel@@{%
  \ifx\space@\del@next \expandafter\del@dn\space{\del@fn\parsedel@@}%
  \else\ifx [\del@next \del@dn[{\del@fn\parsedel@@@}%
  \else\ifx _\del@next \del@dn{\Delta}%
  \else\ifx ^\del@next \del@dn{\Delta}%
  \else \del@dn{\Del@Sym}%
  \fi\fi\fi\fi\del@next}
\def\parsedel@@@{%
  \ifx\space@\del@next \expandafter\del@dn\space{\del@fn\parsedel@@@}%
  \else\ifx t\del@next \del@dn t{\Del@Sym_\infty\del@fn\parsedel@@@@}%
  \else\ifx b\del@next \del@dn b{\Del@Sym_{-\infty}\del@fn\parsedel@@@@}%
  \else \del@dn{\errmessage{unexpected modifier}}%
  \fi\fi\fi\del@next}
\def\parsedel@@@@{%
  \ifx\space@\del@next \expandafter\del@dn\space{\del@fn\parsedel@@@@}%
  \else\ifx ]\del@next \del@dn]{}%
  \else \del@dn{\errmessage{expecting close of option block}}%
  \fi\fi\del@next}
\def\Del{\del@fn\parsedel@}
\newcommand{\Horn}{\Lambda}
\newcommand{\Cat}{\category{Cat}}
\newcommand{\sCat}{\sSet\text{-}\Cat}
\newcommand{\sSet}{\category{sSet}}
\newcommand{\qCat}{\category{qCat}}
\newcommand{\coCart}{\category{coCart}}
\newcommand{\qMod}[2]{\mathrm{mod}(#1,#2)}
\newcommand{\dmod}[3]{\xymatrix@=1.25em{{#2} \ar[r]|\mid^{ {#1}} & {#3}}}
\newcommand{\pbshape}{{\mathord{\bigger\scaleupone\righthalfcup}}}
\newcommand{\vertex}{\nu}
\newcommand{\degen}{\sigma}
\newcommand{\fbv}[1]{\{{#1}\}}
\newcommand{\join}{\mathbin\star}
\newcommand{\nrv}{N}
\newcommand{\ho}{h}
\newcommand{\hN}{\nrv}
\newcommand{\gC}{\mathfrak{C}}
\newcommand{\boundary}{\partial}
\def\reedyfilt#1_#2{#1_{\leq #2}}
\newcommand{\Kan}{\category{Kan}}
\def\makeslashed#1#2#3#4#5{#1{\mathpalette{\sla@{#2}{#3}{#4}}{#5}}}
\def\@mathlower#1#2#3{\setbox0=\hbox{$\m@th#2#3$}\lower#1\ht0\box0}
\def\mathlower#1#2{\mathpalette{\@mathlower{#1}}{#2}}
\newcommand{\inc}{\hookrightarrow}
\newcommand{\tfib}{\twoheadrightarrow}
\newcommand{\longtwoheadrightarrow}{\mathrel{\mathord{-}\mkern-3mu\mathord\twoheadrightarrow}}
\newcommand{\we}{\xrightarrow{\mkern10mu{\smash{\mathlower{0.6}{\sim}}}\mkern10mu}}
\newcommand{\trvfib}{\stackrel{\smash{\mkern-2mu\mathlower{1.5}{\sim}}}\longtwoheadrightarrow}
\newcommand{\vsim}{\mathrel{\rotatebox{270}{$\sim$}}}
\newcommand{\To}{\Rightarrow}
\newcommand{\iTo}{\mathbin{\stackrel{\smash{\mkern-5mu\mathlower{.25}{\sim}}}\Rightarrow}}
\def\tens@fn{\futurelet\tens@next}
\def\tens@dn{\def\tens@nextcont}
\newtoks\tens@toks
\def\addtotens@toks#1{\tens@toks=\expandafter{\the\tens@toks#1}}
\def\parsetens@@{%
    \ifx\space@\tens@next \expandafter\tens@dn\space{\tens@fn\parsetens@@}%
    \else\ifx ^\tens@next \tens@dn ^##1{\parsetens@procsep^\addtotens@toks{##1}%
      \tens@fn\parsetens@@}%
    \else\ifx _\tens@next \tens@dn _##1{\parsetens@procsep_\addtotens@toks{##1}%
      \tens@fn\parsetens@@}%
    \else\tens@dn{\ifx *\tens@last \else\addtotens@toks\egroup\fi\the\tens@toks}%
    \fi\fi\fi\tens@nextcont}
\def\parsetens@procsep#1{%
  \ifx *\tens@last \addtotens@toks{#1}\addtotens@toks\bgroup%
  \else\ifx \tens@last\tens@next \addtotens@toks,%
  \else \addtotens@toks\egroup\addtotens@toks\bgroup%
    \addtotens@toks\egroup\addtotens@toks{#1}\addtotens@toks\bgroup%
  \fi\fi\let\tens@last\tens@next}
\newcommand{\tn}[1]{\let\tens@last=*\tens@toks={#1}\tens@fn\parsetens@@}
\def\adjdisplay#1-|#2:#3->#4.{{%
    \xymatrix@R=0em@!C=2.5em{%
      *+[l]{#3} \ar@/_0.55pc/[rr]_-{#2} & {\bot} &
      *+[r]{#4}\ar@/_0.55pc/[ll]_-{#1}}}}
\def\adjdisplaytwo#1-|#2:#3->#4.{{%
\xymatrix@=1.2em{
      {#3}\ar@/_1.5ex/[rr]_-{#2}^-{}="one"
      & & {#4}
      \ar@/_1.5ex/[ll]_-{#1}^-{}="two"
      \ar@{}"one";"two"|{\bot}
    }}}
\def\tripleadjdisplay#1-|#2-|#3:#4->#5.{{%
\xymatrix@=2.4em{
{#4}\ar[r]|{#2} &
{#5} \ar@/_3ex/[l]_{#1}^{\bot} \ar@/^3ex/[l]_{\bot}^{#3}}
}}
\def\adjinline#1-|#2:#3->#4.{{#1}\dashv{#2}:#3\to #4}
\newcommand{\pent}[1]{
  \xybox{
    \POS (0,-15)*+{\a}="0",
         (-14,-5)*+{\b}="1",
         (-9,12)*+{\c}="2",
         (9,12)*+{\d}="3",
         (14,-5)*+{\e}="4"
    \POS"0" \ar "1"^{\labelstyle \ab}|{}="01"
    \POS"1" \ar "2"^{\labelstyle \bc}|{}="12"
    \POS"2" \ar "3"^{\labelstyle \cd}|{}="23"
    \POS"3" \ar "4"^{\labelstyle \de}|{}="34"
    \POS"0" \ar "4"_{\labelstyle \ae}|{}="04"
    \ifcase #1
    \POS"0" \ar "2"|{\labelstyle \ac}="02"
    \POS"0" \ar "3"|{\labelstyle \ad}="03"
    \POS"02";"1"**{}, ?(0.3) \ar@{=>} ?(0.7)^{\labelstyle \abc}
    \POS"03";"2"**{}, ?(0.25) \ar@{=>} ?(0.5)_{\labelstyle \acd}
    \POS"04";"3"**{}, ?(0.2) \ar@{=>} ?(0.4)_{\labelstyle \ade}
    \or
    \POS"1" \ar "3"|{\labelstyle \bd}="13"
    \POS"1" \ar "4"|{\labelstyle \be}="14"
    \POS"13";"2"**{}, ?(0.3) \ar@{=>} ?(0.7)_{\labelstyle \bcd}
    \POS"14";"3"**{}, ?(0.25) \ar@{=>} ?(0.5)_{\labelstyle \bde}
    \POS"04";"1"**{}, ?(0.25) \ar@{=>} ?(0.5)_{\labelstyle \abe}
    \or
    \POS"2" \ar "4"|{\labelstyle \ce}="24"
    \POS"0" \ar "2"|{\labelstyle \ac}="02"
    \POS"02";"1"**{}, ?(0.3) \ar@{=>} ?(0.7)^{\labelstyle \abc}
    \POS"04";"2"**{}, ?(0.2) \ar@{=>} ?(0.35)_{\labelstyle \ace}
    \POS"24";"3"**{}, ?(0.2) \ar@{=>} ?(0.6)^{\labelstyle \cde}
    \or
    \POS"1" \ar "3"|{\labelstyle \bd}="13"
    \POS"0" \ar "3"|{\labelstyle \ad}="03"
    \POS"04";"3"**{}, ?(0.2) \ar@{=>} ?(0.4)_{\labelstyle \ade}
    \POS"13";"2"**{}, ?(0.3) \ar@{=>} ?(0.7)_{\labelstyle \bcd}
    \POS"03";"1"**{}, ?(0.25) \ar@{=>} ?(0.5)^{\labelstyle \abd}
    \or
    \POS"2" \ar "4"|{\labelstyle \ce}="24"
    \POS"1" \ar "4"|{\labelstyle \be}="14"
    \POS"24";"3"**{}, ?(0.2) \ar@{=>} ?(0.6)^{\labelstyle \cde}
    \POS"04";"1"**{}, ?(0.25) \ar@{=>} ?(0.5)_{\labelstyle \abe}
    \POS"14";"2"**{}, ?(0.25) \ar@{=>} ?(0.5)^{\labelstyle \bce}
    \else\fi
  }
}
\newcommand{\pentofpent}[1]{
  \def\baselen{#1}
  \begin{xy}
    0;<\baselen,0mm>:
    *{\xybox{
        \POS(0,-4)*[o]{\pent 0}="zero"
        \POS(16,40)*[o]{\pent 3}="three"
        \POS(72,40)*[o]{\pent 1}="one"
        \POS(88,-4)*[o]{\pent 4}="four"
        \POS(44,-36)*[o]{\pent 2}="two"
        \ar@<1ex>"zero";"three"^-{\objectstyle\abcd}
        \ar@<1ex>"three";"one"^-{\objectstyle\abde}
        \ar@<1ex>"one";"four"^-{\objectstyle\bcde}
        \ar@<-1ex>"zero";"two"_-{\objectstyle\acde}
        \ar@<-1ex>"two";"four"_-{\objectstyle\abce}
        \ar@{=>}(44,-5);(44,+15)^{\objectstyle\abcde}
     }}
  \end{xy}
}
\newcommand{\qc}[1]{\mathord{\text{\normalfont{\textsf{#1}}}}}
\newcommand{\qop}[1]{\mathord{\qc{#1}}}
\def\ec@#1#2<.>{{\mathcal{#1}\mkern-2mu\text{\normalfont{%
      \textsf{\slshape #2}}}\mkern2mu}}
\newcommand{\ec}[1]{\mathord{\ec@#1<.>}}
\newcommand{\eop}[1]{\mathord{\ec{#1}}}
\newcommand{\SSet}{\eop{SSet}}
\newcommand{\qqCat}{\qop{qCat}}
\newcommand{\gr}{^{\mathrm{gr}}}
\newcommand{\qA}{\qc{A}}
\newcommand{\qB}{\qc{B}}
\newcommand{\qC}{\qc{C}}
\newcommand{\qD}{\qc{D}}
\newcommand{\qE}{\qc{E}}
\newcommand{\qF}{\qc{F}}
\newcommand{\qG}{\qc{G}}
\newcommand{\qJ}{\qc{J}}
\newcommand{\qK}{\qc{K}}
\newcommand{\qL}{\qc{L}}
\newcommand{\qM}{\qc{M}}
\newcommand{\qP}{\qc{P}}
\newcommand{\qS}{\qc{S}}
\newcommand{\qX}{\qc{X}}
\newcommand{\eK}{\ec{K}}
\newcommand{\eL}{\ec{L}}
\newcommand{\eM}{\ec{M}}
\newcommand{\eA}{\ec{A}}
\newcommand{\eC}{\ec{C}}
\newcommand{\eD}{\ec{D}}
\newcommand{\Hom}{\qop{Hom}}
\newcommand{\Fun}{\qop{Fun}}
\renewcommand{\Map}{\qop{Map}}
\renewcommand{\qMod}{\qop{Mod}}
\renewcommand{\coCart}{\qop{coCart}}
\newcommand{\Cart}{\qop{Cart}}
\newcommand{\Radj}{\qop{Radj}} 
\newcommand{\Ladj}{\qop{Ladj}}
\renewcommand{\qCat}{\eop{QCat}}
\renewcommand{\Kan}{\eop{Kan}}
\newcommand{\psh}{\mathcal{P}}
\newcommand{\extRef}[3]{%
  {\protect\IfBeginWith{#3}{itm:}{}{#2.}}\ref*{#1:#3}}
 \newcommand{\refI}{\extRef{found}{I}}
 \newcommand{\refIII}{\extRef{complete}{III}}
\newcommand{\refIV}{\extRef{yoneda}{IV}}
 \newcommand{\refV}{\extRef{equipment}{V}}
  \newcommand{\refVI}{\extRef{comprehend}{VI}}
    \newcommand{\refVII}{\extRef{holimits}{VII}}
\setlist{}
\theoremstyle{plain}
\newtheorem{thm}{Theorem}[subsection]
\newtheorem{lem}[thm]{Lemma}
\newtheorem{cor}[thm]{Corollary}
\newtheorem{prop}[thm]{Proposition}
\theoremstyle{definition}
\newtheorem{defn}[thm]{Definition}
\newtheorem{ex}[thm]{Example}
\newtheorem{ntn}[thm]{Notation}
\theoremstyle{remark}
\newtheorem{obs}[thm]{Observation}
\newtheorem{rmk}[thm]{Remark}
\let\c@equation\c@thm
\numberwithin{equation}{subsection}
\title{On the construction of limits and colimits in $\infty$-categories}
\author[Riehl]{Emily Riehl}
\address{
  Department of Mathematics \\
Johns Hopkins University \\
Baltimore, MD 21218\\
  USA
}
\email{eriehl@math.jhu.edu}
\author[Verity]{Dominic Verity}
\address{
  Centre of Australian Category Theory \\
  Macquarie University \\
  NSW 2109 \\
  Australia
}
\email{dominic.verity@mq.edu.au}
\date{\today}
\subjclass[2010]{%
  Primary  18A30, 18G55, 55U35, 55U40; %
  Secondary 18A05, 18G30, 55U10
}
\begin{document}

  \ifpdf
  \DeclareGraphicsExtensions{.pdf, .jpg, .tif}
  \else
  \DeclareGraphicsExtensions{.eps, .jpg}
  \fi

  \begin{abstract}
In previous work, we introduce an axiomatic framework within which to prove theorems about many varieties of infinite-dimensional categories simultaneously. In this paper, we establish criteria implying that an $\infty$-\emph{category} --- for instance, a quasi-category, a complete Segal space, or a Segal category --- is complete and cocomplete, admitting limits and colimits indexed by any small simplicial set. Our strategy is to build (co)limits of diagrams indexed by a simplicial set inductively from (co)limits of restricted diagrams indexed by the pieces of its skeletal filtration. We show directly that the \emph{modules} that express the universal properties of (co)limits of diagrams of these shapes are reconstructable as limits of the modules that express the universal properties of (co)limits of the restricted diagrams. We also prove that the Yoneda embedding preserves and reflects limits in a suitable sense, and deduce our main theorems as a consequence.
  \end{abstract}

  \maketitle
  \tableofcontents

\section{Introduction}

This paper is a continuation of previous work \cite{RiehlVerity:2012tt, RiehlVerity:2012hc, RiehlVerity:2013cp,RiehlVerity:2015fy,RiehlVerity:2015ke,RiehlVerity:2017cc,RiehlVerity:2018rq} to lay the foundations for the formal theory of $\infty$-\emph{categories}, which model weak higher categories. In contrast with the pioneering work of Joyal \cite{Joyal:2008tq} and Lurie \cite{Lurie:2009fk,Lurie:2012uq}, our approach is ``synthetic'' in the sense that our proofs do not depend on what precisely these $\infty$-categories \emph{are}, but rather rely upon an axiomatisation of the universe in which they \emph{live}. To describe an appropriate ``universe,'' we introduce the notion of an $\infty$-\emph{cosmos}, a (large) simplicially enriched category $\eK$ satisfying certain axioms. The objects of an $\infty$-cosmos are called $\infty$-\emph{categories}. A theorem, e.g. \cite[4.1.10]{RiehlVerity:2015fy} reproduced as Definition \ref{defn:cocart-fibration}, that characterises a \emph{cartesian fibration} of $\infty$-categories in terms of the presence of an adjunction between \emph{comma $\infty$-categories}, is a result about the objects of any $\infty$-cosmos, and thus applies of course to every $\infty$-cosmos. 

The prototypical example is the $\infty$-cosmos whose objects are \emph{quasi-categories}, a model of $(\infty,1)$-categories as simplicial sets satisfying the weak Kan condition, and whose function complexes are the quasi-categories of functors between them.  But there are other $\infty$-cosmoi whose objects are complete Segal spaces or Segal categories, each of these being models of $(\infty,1)$-\emph{categories}; and of $\theta_n$-spaces, or iterated complete Segal spaces, or $n$-trivial saturated complicial sets, each modelling $(\infty,n)$-\emph{categories}. For any $\infty$-cosmos $\eK$ containing an $\infty$-category $B$, the slice category $\eK_{/B}$ is again an $\infty$-cosmos. Thus each of these objects are $\infty$-categories in our sense and our theorems apply to all of them.\footnote{This may seem like sorcery but in some sense it is really just the Yoneda lemma. To a close approximation, an $\infty$-cosmos is a ``category of fibrant objects enriched over quasi-categories.''  When the Joyal--Lurie theory of quasi-categories is expressed in a sufficiently categorical way, it extends to encompass analogous results for the corresponding ``representably defined'' notions in a general $\infty$-cosmos.}  Along the road to our main theorems here, we prove that cartesian and cocartesian fibrations over fixed or varying bases define $\infty$-cosmoi (see Propositions \ref{prop:radj-cosmos} and \ref{prop:cartesian-cosmoi}). While we only require a minor consequence of these results here, they lay the foundations for a complementary approach to parametrised $\infty$-category that is very much in the spirit of  \cite{BDGNS:2016ph, Shah:2016ph}.

Our first paper in this series \cite{RiehlVerity:2012tt} develops the basic theory of limits or colimits of diagrams indexed by a simplicial set $X$ and valued in an $\infty$-category $A$. In the case where $X$ is the nerve of an ordinary 1-category, this data is traditionally thought of as defining a ``homotopy coherent'' diagram of that shape in $A$. In this paper, we shall  explain how to construct such limits inductively using the canonical skeletal decomposition of the simplicial set $X$, in which the cells attached at stage $n$ are indexed by the set $L_nX \subset X_n$ of non-degenerate $n$-simplices:
\begin{equation}\label{eq:skeletal-decomposition} \hbox to 0.8\textwidth{\hss\hspace{3em}$\xymatrix@R=2em@C=1.8em{ &  & & & \coprod_{L_nX} \partial\Delta^n \ar@{^(->}[r] \ar[d] & \coprod_{L_nX}  \Delta^n \ar[d] \\ \emptyset \ar@{^(->}[r] & \sk_0 X \ar@{^(->}[r] & \sk_1X \ar@{^(->}[r] & \sk_2X \ar@{..}[r] & \sk_{n-1} X \ar@{^(->}[r] & \sk_n X \poexcursion \ar@{..}[r] & \colim_n \sk_n X \cong X}$\hss}\end{equation}
The skeletal description gives rise to a presentation of the \emph{diagram $\infty$-category} $A^X$ as the limit in the $\infty$-cosmos of a countable tower of restriction functors, each of which is a pullback of a product of maps of the form $A^{\Delta^n} \tfib A^{\partial\Delta^n}$. We will argue that limits of $X$-indexed diagrams in $A$ can be defined inductively provided that $A$ admits products, pullbacks, and sequential inverse limits---though since sequential inverse limits may be built from countable products and pullbacks we are only required to postulate the existence of the first two of these.\footnote{We thank Tim Campion for pointing out this retrospectively obvious fact to us.} This allows us to provide criteria for ascertaining that an $\infty$-category $A$ is complete (or, dually, cocomplete), obtaining an generalisation of a result \cite[4.4.2.6]{Lurie:2009fk} that Lurie has proven for quasi-categories via a similar decomposition of the indexing simplicial set (see \cite[\S 4.2.3]{Lurie:2009fk}):

{
\renewcommand{\thethm}{\ref{thm:limit-construction}}
\begin{thm}
  Suppose that $\kappa$ is a regular cardinal and that $A$ is an $\infty$-category that admits  products of cardinality $<\kappa$ and pullbacks. If $X$ is a $\kappa$-presentable simplicial set then $A$ admits all limits of diagrams of shape $X$.
\end{thm}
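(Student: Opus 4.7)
The plan is to build limits of $X$-shaped diagrams in $A$ inductively along the skeletal filtration \eqref{eq:skeletal-decomposition}. Since $X$ is $\kappa$-presentable, each set $L_nX$ of non-degenerate $n$-simplices has cardinality strictly less than $\kappa$. At each stage, the pushout attaching the $n$-cells is sent by the simplicial cotensor $A^{(-)}$ to a pullback square
\[A^{\sk_n X} \simeq A^{\sk_{n-1}X} \times_{\prod_{L_nX} A^{\partial\Delta^n}} \prod_{L_nX} A^{\Delta^n},\]
and a parallel pullback expresses the module representing the limit of a $\sk_n X$-shaped diagram in terms of the modules representing the limits of its restrictions to $\sk_{n-1}X$, to the $L_nX$-indexed coproduct of boundary spheres, and to the $L_nX$-indexed coproduct of simplices. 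This module-theoretic reformulation is the content of the paper's main technical work: representing modules for limits of diagrams over a colimit of shapes are reconstructable as limits of the constituent modules, and the Yoneda embedding reflects limits in the appropriate sense.

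Climbing this ladder requires three auxiliary inputs. Limits indexed by a simplex $\Delta^n$ exist trivially since $\Delta^n$ has an initial vertex, and thus so do limits indexed by any coproduct $\coprod_{L_nX} \Delta^n$, provided one has the requisite $|L_nX|$-fold product, which is available since $|L_nX|<\kappa$. Limits indexed by a boundary sphere $\partial\Delta^n$ exist by a subsidiary induction on $n$: being $(n-1)$-dimensional with only finitely many non-degenerate cells, they are constructed from pullbacks and finite products alone. Assembling these ingredients with the pullback decomposition above, an induction on $n$ shows that $A$ admits limits of $\sk_n X$-shaped diagrams for every $n$.

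To conclude, the isomorphism $X \cong \colim_n \sk_n X$ exhibits $A^X$ as a sequential inverse limit of the tower $\{A^{\sk_n X}\}$, and the limit of an $X$-shaped diagram is correspondingly the sequential inverse limit of the limits produced above. When $\kappa = \aleph_0$ the simplicial set $X$ is finite so the tower stabilises after finitely many stages and there is nothing further to verify; when $\kappa > \aleph_0$ the tower is countable, and as noted in the introduction a sequential inverse limit may be constructed from countable products (which exist since $\aleph_0 < \kappa$) and a single pullback, so the requisite limit exists in $A$. The main obstacle in executing this plan lies not in the skeletal combinatorics, which is classical, but in rigorously justifying that the pushout/pullback decomposition of indexing simplicial sets really does induce the anticipated pullback decomposition of the modules representing limits; this is where the paper's module theory and the limit-reflection property of the Yoneda embedding do the decisive work.
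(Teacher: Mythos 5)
Your proposal is correct and follows essentially the same route as the paper's proof: induction on the skeletal filtration, using the pushout attaching the non-degenerate $n$-cells, the trivial existence of $\Delta^n$-indexed limits, $<\kappa$-fold products for the coproducts of cells, and the module-level reconstruction result (Proposition \ref{prop:colim-of-diags}, resting on Corollary \ref{cor:colim-of-diags} and Proposition \ref{prop:gen.yoneda.pres.lim}) to pass from limits of the restricted diagrams to a limit of the whole diagram, with the infinite-dimensional case handled by the countable tower of skeleta. The only cosmetic difference is that you obtain sequential inverse limits from countable products and a pullback as flagged in the introduction, whereas the proof in the text treats the sequence shape as a $2$-skeletal $\kappa$-presentable simplicial set and reuses the skeletal induction; both are fine.
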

\addtocounter{thm}{-1}
}

To explain the proof strategy, consider a pushout diagram of simplicial sets 
\[ \xymatrix{ X \ar@{^(->}[r] \ar[d] & Y \ar[d] \\ Z \ar@{^(->}[r] & P \poexcursion}\] and suppose that an $\infty$-category $A$ admits limits of shape $X$, $Y$, and $Z$ and also pullbacks, which are limits of shape $\pbshape \defeq \Horn^{2,2}$. A diagram $d \in A^P$ restricts to sub-diagrams $d_X \in A^X$, $d_Y \in A^Y$, and $d_Z \in A^X$. By hypothesis, these each have limits $\ell_X$, $\ell_Y$, and $\ell_Z$ which can be seen to assemble into an internal diagram $d_\pbshape \defeq \ell_Y \to \ell_X \leftarrow \ell_Z$  in $A^\pbshape$, which by hypothesis also has a limit $\ell_\pbshape$. In Proposition \ref{prop:colim-of-diags}, we argue that $\ell_\pbshape$ defines a limit for the original $P$-shaped diagram $d$.

To explain why this is the case, we appeal to one of many equivalent definitions of a limit of a diagram valued in an $\infty$-category. In general, $\ell \in A$ \emph{defines a limit for a diagram} $d \in A^P$ if $\ell$ represents the $\infty$-\emph{category of cones} $\Delta \comma d$ over $d$; see \S\ref{ssec:limits} for precise definitions. This representability is encoded by an equivalence $A \comma \ell \simeq \Delta \comma d$ of \emph{modules} from $1$ to $A$, these modules being the $\infty$-categories defined by pullbacks in the $\infty$-cosmos:
\[ \xymatrix{ A \comma \ell \pbexcursion \ar@{->>}[d] \ar[r] & A^\cattwo \ar@{->>}[d]^{(p_1,p_0)} & & \Delta \comma d\pbexcursion \ar[r] \ar@{->>}[d] & (A^P)^\cattwo \ar@{->>}[d]^{(p_1,p_0)}  \\ 1 \times A \ar[r]^-{\ell \times \id} & A \times A & &  1 \times A \ar[r]^-{d \times \Delta} & A^P \times A^P}\]
So our hypothesised limits for the  sub-diagrams of $d$ provides equivalences $A \downarrow \ell_X \simeq \Delta \comma d_X$, $A \downarrow \ell_Y \simeq \Delta \comma d_Y$, and $A \downarrow \ell_Z \simeq \Delta \comma d_Z$, and similarly, the universal property of $\ell_\pbshape$ as the limit of the $\pbshape$-shaped diagram $d_\pbshape$ is encoded by an equivalence $A \downarrow \ell_\pbshape \simeq \Delta \comma d_\pbshape$ of modules.

We must show that $\ell_\pbshape$ has the stronger universal property of representing cones over the diagram $d$, i.e., that $A \comma \ell_\pbshape$ is equivalent to $\Delta \comma d$. Since the diagram $P$ is a pushout, it follows easily that the $\infty$-category of $P$-shaped cones $\Delta \comma d$ is isomorphic to the pullback:
\[
\xymatrix{ \Delta \comma d \ar@{->>}[d] \pbexcursion \ar[r] & \Delta \comma d_Y \ar@{->>}[d] \\ \Delta \comma d_Z \ar[r] & \Delta \comma d_X}\]
This result appears as Lemma \ref{lem:colim-of-diags}. So we may demonstrate the desired equivalence by arguing that $A \comma \ell_\pbshape$ is the pullback of the equivalent cospan
\[ \xymatrix{ A \comma \ell_Y \ar[r]  \ar[d]_{\rotatebox{90}{$\simeq$}}& A \comma \ell_X \ar[d]^{\rotatebox{270}{$\simeq$}} & A \comma \ell_Z \ar[l] \ar[d]^{\rotatebox{270}{$\simeq$}} \\ \Delta \comma d_Y \ar[r] & \Delta \comma d_X & \Delta \comma d_Z \ar[l]}
\] 
in the large quasi-category of modules from $1$ to $A$. To demonstrate this, and similar results for products and inverse limits of sequences, we prove that

{
\renewcommand{\thethm}{\ref{prop:gen.yoneda.pres.lim}}
\begin{prop}
 For any $\infty$-category $A$, the covariant and contravariant Yoneda embeddings
\[ \Fun_{\eK}(1,A)\hookrightarrow {}_1\qMod_A \qquad \text{and}\qquad \Fun_{\eK}(1,A)\op \hookrightarrow {}_A{\qMod_1}\co\]  preserves any family of limits which is stable under precomposition in $\eK$.
\end{prop}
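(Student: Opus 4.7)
The plan is to exploit the explicit pullback description of the comma $\infty$-category and reduce to a form of the Yoneda lemma for modules established earlier in the paper series. Recall that the covariant Yoneda embedding sends $\ell \colon 1 \to A$ to the representable module $A \comma \ell$, realised as the pullback of the arrow $\infty$-category $(p_1, p_0) \colon A^{\cattwo} \twoheadrightarrow A \times A$ along $\ell \times \id \colon 1 \times A \to A \times A$; the contravariant embedding is defined dually, using the pullback along $\id \times \ell \colon A \times 1 \to A \times A$ to construct $\ell \comma A$.

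Given a diagram $D \colon J \to \Fun_{\eK}(1,A)$ whose limit $\ell = \lim_j D(j)$ is stable under precomposition in $\eK$, I would verify that for every module $E \in {}_1\qMod_A$, the induced comparison map
\[ \Map_{{}_1\qMod_A}(E, A \comma \ell) \to \lim_j \Map_{{}_1\qMod_A}(E, A \comma D(j)) \]
is an equivalence, which establishes $A \comma \ell \simeq \lim_j (A \comma D(j))$ in the quasi-category of modules. The key reduction is via the module Yoneda lemma, which expresses mapping out of $E$ into a representable module $A \comma a$ as an evaluation of $E$ at the element $a$, computed through precomposition by $a \colon 1 \to A$. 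Under this identification the displayed comparison becomes precisely the assertion that evaluating $E$ at a precomposition-stable limit returns a limit of evaluations, which is immediate from the stability hypothesis.

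The main obstacle is the careful matching of the three potentially distinct notions of limit in play, namely limits in $\Fun_{\eK}(1,A)$, limits in the quasi-category ${}_1\qMod_A$, and limits of mapping spaces obtained via the Yoneda lemma for modules, together with verifying that the identification of $\Map_{{}_1\qMod_A}(E, A \comma a)$ is sufficiently functorial in $a$ to convert precomposition stability into the required limit preservation. Once these identifications are established, the contravariant statement follows by the formally dual argument, exchanging the roles of the two projections from $A^{\cattwo}$.
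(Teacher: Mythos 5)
Your central reduction does not go through as stated, because the module Yoneda lemma is being applied in the wrong variance. The Yoneda lemma for groupoidal cartesian fibrations (Corollary IV of the series, used in Lemma \ref{lem:rep-ho-generate}) identifies maps \emph{out of} the representable, $\Map(A\comma a, E)$, with the fibre $E_a$; it says nothing about $\Map(E, A\comma a)$, which is what your comparison map requires. Indeed $\Map(E, A\comma a)$ is not an evaluation of $E$ even for ordinary presheaves of sets, so the step ``the displayed comparison becomes precisely the assertion that evaluating $E$ at a precomposition-stable limit returns a limit of evaluations'' collapses. The salvageable version of your idea would run the other way: use that equivalences of modules are detected on bi-fibres (Corollary \ref{cor:mod.equiv.fibrewise}), that evaluation at $x$ is corepresented by $A\comma x$ and hence preserves limits, and then try to show $\Hom_A(x,\ell)\simeq \lim_j \Hom_A(x,D(j))$. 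But that last equivalence is not ``immediate from the stability hypothesis'': stability under precomposition concerns the functors $\Fun_{\eK}(f,A)$ for $f\colon C\to D$ in $\eK$ (Definition \ref{defn:limit-stable-under-precomp}), and translating it into a statement about hom-spaces out of elements is essentially the whole content of the proposition, since limits in $\Fun_{\eK}(1,A)$ and limits of the comma objects $x\comma\ell$ are a priori unrelated notions.

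You also never confront the fact that $\eK$ is a general $\infty$-cosmos: ${}_1\qMod(\qK)_A$ is a homotopy coherent nerve, so its limits must be computed as pseudo homotopy limits in a Kan-complex-enriched category (Theorem \ref{thm:nerve-completeness}), and $A$ need not have any ``elements'' $1\to A$ at which to evaluate. The paper's proof takes a quite different route: it first establishes the quasi-categorical case by showing the Yoneda embedding is fully faithful (Proposition \ref{prop:yoneda.fully.faithful}) and strongly generating (Lemma \ref{lem:rep-ho-generate}) into a complete quasi-category, whence it preserves limits by Proposition \ref{prop:ff.and.sg.limit.pres}; it then reduces the general case to this one via the family of fibre functors $\Fun_{\eK}(C,-)_f\colon {}_D\qMod(\qK)_A\to{}_1\qMod(\qqCat)_{\Fun_{\eK}(C,A)}$, which are shown to be jointly conservative and limit-preserving, hence jointly to reflect limits, with precomposition stability entering exactly to guarantee that the left-hand legs $\Fun_{\eK}(f,A)$ preserve the given limits. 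Some mechanism of this kind for descending from $\eK$ to $\qCat$, and a genuine proof of the quasi-categorical case, are both missing from your outline.
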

\addtocounter{thm}{-1}
}

The clause ``stable under precomposition in $\eK$''  has to do with a subtlety in the statement: the Yoneda embedding appearing there is \emph{external}, defined as a functor of quasi-categories, rather than internal to the $\infty$-cosmos $\eK$. The limits it preserves are those arising from the $\infty$-cosmos in which $A$ is defined, which are detectable as those limits in the underlying quasi-category $\Fun_{\eK}(1,A)$ of the $\infty$-category $A$ that are ``stable under precomposition.'' This condition disappears when $\eK$ is the $\infty$-cosmos of quasi-categories, in which case this result is first proven by Lurie in \cite[5.1.3.2]{Lurie:2009fk}.

To prove Proposition \ref{prop:gen.yoneda.pres.lim}, in turn, we must first analyse limits in (large) quasi-categories such as ${}_1\qMod_A$ that are defined as homotopy coherent nerves of Kan-complex-enriched categories. A companion paper \cite{RiehlVerity:2018rq} does exactly this. There, we consider a general notion of \emph{pseudo homotopy limit} of a homotopy coherent diagram, defined to be a particular \emph{flexible weighted limit} whose universal property is satisfied up to equivalence of quasi-categories. 
The main theorem, recalled as Theorem \ref{thm:nerve-completeness} below, demonstrates that limits in the quasi-category ${}_1\qMod_A$ transpose across the homotopy coherent realization--homotopy coherent nerve adjunction to pseudo homotopy limits in the Kan-complex-enriched category of modules from $1$ to $A$. In \S\ref{ssec:complete-quasi}, we provide explicit calculations of the pseudo homotopy limits needed to prove Theorem \ref{thm:limit-construction} and its dual.

While the steps in the proof of Theorem \ref{thm:limit-construction} certainly contain more subtleties than in the classical case, the construction given here of a general limit out of iterations of simpler limits, is entirely analogous to the proof of the classical 1-category theoretic result presented, for instance, in \cite[3.4.12]{Riehl:2016cc}.

This paper contains all of the background needed to fill in the details of this outline, with the proofs of these results appearing in  \S\ref{sec:construction}.  To concisely cite previous work in this program, we refer to the results of \cite{RiehlVerity:2012tt, RiehlVerity:2012hc, RiehlVerity:2013cp,RiehlVerity:2015fy,RiehlVerity:2015ke,RiehlVerity:2017cc, RiehlVerity:2018rq}  as I.x.x.x., II.x.x.x, III.x.x.x, IV.x.x.x, V.x.x.x, VI.x.x.x, or VII.x.x.x respectively, though the statements of the most important results are reproduced here for ease of reference. When an external reference accompanies a restated result, this generally indicates that more expository details can be found there.

In \S\ref{sec:background}, we introduce $\infty$-cosmoi and flexible weighted limits, and then recall the notion of an absolute lifting diagram as defined in the homotopy 2-category of an $\infty$-cosmos. In \S\ref{sec:cartesian}, we define cartesian fibrations, cocartesian fibrations, and the accompanying notion of modules between $\infty$-categories. We also prove a new general result in $\infty$-cosmology, demonstrating that the subcategories
\begin{equation}\label{eq:cart-cosmoi} \coCart(\eK)_{/B} \inc \eK_{/B} \qquad \mathrm{and} \qquad \Cart(\eK)_{/B}\inc \eK_{/B}\end{equation}
of (co)cartesian fibrations and cartesian functors between them define $\infty$-cosmoi, as subcategories of the sliced $\infty$-cosmos.

Perhaps the main technical challenge in extending the classical categorical theory of limits and colimits to the $\infty$-categorical context is in merely \emph{defining} the  Yoneda embedding that appears in Proposition \ref{prop:gen.yoneda.pres.lim}; a comparable amount of work is involved in our favorite construction of the quasi-categorical Yoneda embedding, exposed by Cisinski in \cite[\S 5]{Cisinski:2019hc}. In \cite{RiehlVerity:2017cc}, the Yoneda embedding is constructed as an instance of the versatile \emph{comprehension construction}. This material is reviewed in \S\ref{sec:comprehension-yoneda}. For any fixed cocartesian fibration $p \colon E \tfib B$ in an $\infty$-cosmos $\eK$ and $\infty$-category $A$, the comprehension construction produces a simplicial functor
\[\gC\Fun_{\eK}(A,B) \xrightarrow{c_{p,A}} \coCart(\eK)_{/A} \subset \eK_{/A}\] defined on a vertex $a \colon A \to B$ by the pullback:
\[
\xymatrix{ E_a \ar@{->>}[d]_{p_a} \ar[r]^-{\ell_a} \pbexcursion & E \ar@{->>}[d]^p \\ A \ar[r]_a & B}
\]
whose codomain is the Kan-complex-enriched core of the subcategory of $\eK_{/A}$ spanned by the cocartesian fibrations and cartesian functors. The functor $c_{p,A}$ transposes to define a functor from the quasi-category $\Fun_{\eK}(A,B)$ of functors from $A$ to $B$ to the large quasi-category of cocartesian fibrations over $A$. The Yoneda embedding is defined as the restriction of a particular instance of this, obtained by applying this result to the \emph{arrow $\infty$-category}, which defines a cocartesian fibration $(p_1,p_0) \colon A^\cattwo \tfib A \times A$ in the sliced $\infty$-cosmos $\eK_{/A}$.

 In \S\ref{sec:formal},  we provide an brief introduction to synthetic $\infty$-category theory developed in an $\infty$-cosmos, focusing on the theory of limits and colimits and the functors that preserve them. The new material in this section develops the theory of fully faithful and strongly generating functors, isolating the formal properties of the Yoneda embedding that will allow us to prove Proposition \ref{prop:gen.yoneda.pres.lim}. Finally, in \S\ref{sec:construction}, we define the pseudo homotopy limits appearing in the statement of Theorem \ref{thm:nerve-completeness} and compute some explicit examples.   We then apply the material of \S\ref{sec:cartesian} and \S\ref{sec:formal} to the Yoneda embedding of \S\ref{sec:comprehension-yoneda} to prove Proposition \ref{prop:gen.yoneda.pres.lim}, and then use this to prove our main theorem.

While many of the results herein will be familiar to the $\infty$-categorically well informed reader, the context in which they are applied and the approach we take to their proofs is likely to be more novel. When specialized to an $\infty$-cosmos of $(\infty,1)$-categories, Theorem \ref{thm:limit-construction} appears as \cite[4.4.2.6]{Lurie:2009fk}, Proposition \ref{prop:gen.yoneda.pres.lim} appears as \cite[5.1.3.2]{Lurie:2009fk}, and Theorem \ref{thm:nerve-completeness} appears as  \cite[4.2.4.1]{Lurie:2009fk}
  and will thus be familiar to the quasi-categorical cognoscenti, but our extension of these results to an arbitrary $\infty$-cosmos allows us to press them  into service to explicate certain aspects of the \emph{meta-theory} of other species of $\infty$-category. Our guiding light in developing these works has been the \emph{pro-arrow equipment\/}~\cite{wood:proI,wood:proII} and \emph{Yoneda structure\/}~\cite{street.walters:yoneda} based accounts of classical 1-category theory. For example, the preservation result developed in Proposition~\ref{prop:gen.yoneda.pres.lim} is a direct analogue of an important component of the pro-arrow axiomatics. The arguments given here also lead, in subsequent work, to independent proofs of the exponentiability of (co)cartesian fibrations of quasi-categories (and generalisations to certain higher contexts) and of the density of the point in spaces (in certain $\infty$-cosmoi of fibred $\infty$-categories). Furthermore, our foundations will (eventually) make substantial use of the fact that the flexible weighted limit-creating inclusions \eqref{prop:cartesian-cosmoi} of Proposition \ref{prop:cocartesian-completeness} define monadic functors between the corresponding large quasi-categories.

We might also note, in passing, that the proofs leading to Theorem~\ref{thm:limit-construction} may be generalised to deliver other important results of that kind. For example, when our ambient $\infty$-cosmos $\eK$ is cartesian closed then it is natural to study limits of diagrams indexed by $\infty$-categories in $\eK$, rather than by simplicial sets external to it. In that situation, our approach to these results leads to an analogue of Proposition~\ref{prop:colim-of-diags} which applies to pseudo homotopy colimits of diagram shapes in $\eK$. Indeed, similar comments apply to an endeavour close to our hearts, that of generalising results of this kind to the $(\infty,\infty)$-categorical theory of \emph{complicial sets\/}~\cite{Verity:2007:wcs1}. The extension of many of the methods we present here to that, much more general, context is largely a matter of taking a little more care to push \emph{markings\/} (or \emph{stratifications}) around our homotopy coherent structures; this, however, is a topic for another work.

\subsection{Size conventions}

The quasi-categories defined as homotopy coherent nerves are typically large. All other quasi-categories or simplicial sets, particularly those used to index homotopy coherent diagrams, are assumed to be small. In particular,    when discussing the existence of limits and colimits we shall implicitly assume that these are indexed by small categories, and correspondingly, completeness and cocompleteness properties will implicitly reference the existence of small limits and small colimits.  Here, as is typical, ``small'' sets will usually refer to those members of a Grothendieck universe defined relative to a fixed inaccessible cardinal.

Our intent is to provide a size classification which allows us state and prove results that require such a distinction for non-triviality, principally those of the form ``such and such a \emph{large} category admits all \emph{small} limits''. Our arguments mostly comprise elementary constructions, so in applications this size distinction need not invoke the full force of a Grothendieck universe, indeed it might be as simple as that between the finite and the infinite. At the other extreme it might involve the choice of two Grothendieck universes to prove results about large categories. 

We use a common typeface --- e.g.~$\qA$, $\qK$, ${}_1\qMod(\qK)_A$ --- to differentiate small and large quasi-categories from generic $\infty$-categories $A$; see \ref{ntn:qcat-ntn}.
 
\subsection{Acknowledgements}

The authors are grateful for support from the National Science Foundation (DMS-1551129 and DMS-1652600) and from the Australian Research Council (DP160101519). This work was commenced when the second-named author was visiting the first at Harvard and then at Johns Hopkins, continued while the first-named author was visiting the second at Macquarie, and completed after everyone finally made their way home. We thank all three institutions for their assistance in procuring the necessary visas as well as for their hospitality.

We owe an additional debt of gratitude to the referee who pointed out a simplification of the proof of the converse half of Theorem \ref{thm:nerve-completeness}, which had originally appeared here, allowing us to move the proofs of both directions of the biconditional to \cite{RiehlVerity:2018rq}. In addition to a number of other cogent mathematical and expository suggestions which precipitated a cascading reorganization of the present manuscript, the referee directed us to results in the literature we had overlooked. We apologize to those authors and  encourage them to write to us directly  if in the future we again fail to do our due diligence. 


\section{\texorpdfstring{$\infty$}{infinity}-cosmoi and flexible weighted limits}\label{sec:background}

In this section we review the axiomatic framework for the formal theory of $\infty$-categories, introducing the notion of an $\infty$-cosmos in an abbreviated \S\ref{ssec:cosmoi-background}. In \S\ref{ssec:flexible}, we review some of the more exotic \emph{flexible weighted limits} that exist in any $\infty$-cosmos. This will be used to establish the new $\infty$-cosmoi of \S\ref{sec:cartesian}.

In \S\ref{ssec:absolute}, we recall the homotopy 2-category of an $\infty$-cosmos and the notion of  \emph{absolute lifting diagrams}, which encode universal properties of $\infty$-categories using the structure of a strict 2-category of $\infty$-categories, $\infty$-functors, and $\infty$-natural transformations constructed as a quotient of an $\infty$-cosmos. The universal properties expressed by absolute lifting diagrams can also be encoded internally to the $\infty$-cosmos as a fibred equivalence between \emph{comma $\infty$-categories}, which are the subject of \S\ref{ssec:comma}. 

\subsection{\texorpdfstring{$\infty$}{infinity}-cosmoi and the comma construction}\label{ssec:cosmoi-background}

An $\infty$-cosmos is a category $\eK$ whose objects $A, B$ we call $\infty$-\emph{categories} and whose function complexes $\Fun_{\eK}(A,B)$ are quasi-categories of \emph{functors} between them. The handful of axioms imposed on the ambient quasi-categorically enriched category $\eK$ permit the development of a general theory of $\infty$-categories ``synthetically,'' i.e., only in reference to this axiomatic framework, as we shall discover in \S\ref{sec:formal}.

\begin{defn}[$\infty$-cosmos]\label{defn:cosmos}
An $\infty$-\emph{cosmos} is a simplicially enriched category $\eK$ whose 
\begin{itemize}
\item objects we refer to as the \emph{$\infty$-categories} in the $\infty$-cosmos, whose
\item hom simplicial sets $\Fun_{\eK}(A,B)$ are  quasi-categories, 
\end{itemize} and that is equipped with a specified subcategory of \emph{isofibrations}, denoted by ``$\tfib$'',
satisfying the following axioms:
 \begin{enumerate}[label=(\alph*),series=defn:cosmos]
    \item\label{defn:cosmos:a} (completeness) As a simplicially enriched category,  $\eK$ possesses a terminal object $1$, small products, cotensors $A^U$ of  objects $A$ by all small simplicial sets $U$, inverse limits of countable sequences of isofibrations, and pullbacks of isofibrations along any functor.
    \item\label{defn:cosmos:b} (isofibrations) The class of isofibrations contains the isomorphisms and all of the functors $!\colon A \tfib 1$ with codomain $1$; is stable under pullback along all functors; is closed under inverse limit of countable sequences; and if $p\colon E\tfib B$ is an isofibration in $\eK$ and $i\colon U\inc V$ is an inclusion of  simplicial sets then the Leibniz cotensor $i\leib\pwr p\colon E^V\tfib E^U\times_{B^U} B^V$ is an isofibration. Moreover, for any object $X$ and isofibration $p \colon E \tfib B$, $\Fun_{\eK}(X,p) \colon \Fun_{\eK}(X,E) \tfib \Fun_{\eK}(X,B)$ is an isofibration of quasi-categories.
\end{enumerate}
\end{defn}

For ease of reference, we refer to the limit types listed in axiom \ref{defn:cosmos:a} as the \emph{cosmological limit types}, these referring to diagrams of a particular shape with certain maps given by isofibrations.

The underlying category of an $\infty$-cosmos $\eK$ has a canonical subcategory of representably-defined equivalences, denoted by ``$\we$'', satisfying the 2-of-6 property: a functor $f \colon A \to B$ is an \emph{equivalence} just when the induced functor $\Fun_{\eK}(X,f) \colon \Fun_{\eK}(X,A) \to \Fun_{\eK}(X,B)$ is an equivalence of quasi-categories for all objects $X \in \eK$.  The  \emph{trivial fibrations}, denoted by ``$\trvfib$'', are those functors that are both equivalences and isofibrations. These axioms imply that the underlying 1-category of an $\infty$-cosmos is a category of fibrant objects in the sense of Brown. Consequently, many familiar homotopical properties follow from Definition \ref{defn:cosmos}.  For instance, the axioms of an $\infty$-cosmos permit us to construct \emph{arrow} and \emph{comma} $\infty$-categories as particular simplicially enriched limits.

\begin{defn}[comma $\infty$-categories]\label{defn:comma} 
For any $\infty$-category $A$, the simplicial cotensor 
\[ \xymatrix@C=30pt{ A^\cattwo \defeq A^{\Del^1} \ar@{->>}[r]^-{(p_1,p_0)} & {A^{\boundary\Delta^1}} \cong A \times A}\] defines the \emph{arrow $\infty$-category} $A^\cattwo$, equipped with an isofibration $(p_1,p_0)\colon A^\cattwo \tfib A \times A$, where $p_1 \colon A^\cattwo \tfib A$ and $p_0 \colon A^\cattwo \tfib A$ denote the codomain and domain projections.

More generally, any pair of functors  $f\colon B\to A$ and $g\colon C\to A$ in an $\infty$-cosmos $\eK$ has an associated \emph{comma $\infty$-category}, constructed by the following pullback in $\eK$:
\[
    \xymatrix@=2.5em{
      {f\comma g}\pbexcursion \ar[r]\ar@{->>}[d]_{(p_1,p_0)} &
      {A^\cattwo} \ar@{->>}[d]^{(p_1,p_0)} \\
      {C\times B} \ar[r]_-{g\times f} & {A\times A}
    }
\]
\end{defn}

\begin{prop}[{maps between commas, \refVII{prop:trans-comma}}]\label{prop:trans-comma}
A natural transformation of co-spans on the left of the following display gives rise to the diagram of pullbacks on the right
\begin{equation*}
  \vcenter{
    \xymatrix@R=1.5em@C=2.5em{
      {C}\ar[r]^{g}\ar[d]_{c} & {A}\ar[d]_{a} & {B}\ar[l]_{f}\ar[d]^{b} \\
      {C'}\ar[r]_{g'} & {A'} & {B'}\ar[l]^{f'}
    }}\mkern30mu\rightsquigarrow\mkern30mu
  \vcenter{
    \xymatrix@=1em{
      {f\comma g}\pbexcursion\ar[rr]\ar@{->>}[dd] &&
      {A^{\cattwo}}\ar@{->>}[dd]|!{[rd];[ld]}\hole & \\
      & {f'\comma g'}\pbexcursion\ar[rr]\ar@{->>}[dd] &&
      {(A')^{\cattwo}}\ar@{->>}[dd]\\
      {C\times B}\ar[rr]|!{[ru];[rd]}\hole_(0.65){g\times f} && {A\times A} & \\
      & {C'\times B'}\ar[rr]_{g'\times f'} &&
      {A'\times A'}
      \ar@{-->} "1,1";"2,2"
      \ar "3,1";"4,2"_-{c\times b}
      \ar "1,3";"2,4" ^{a^{\cattwo}}
      \ar "3,3";"4,4" ^{a\times a}
    }}
\end{equation*}
in which the uniquely induced dashed map completing the commutative cube is denoted \[\comma(b,a,c) \colon f \comma g \to f' \comma g'.\] Moreover,  $\comma(b,a,c)$ is an isofibration (resp.~trivial fibration, equivalence) whenever the components $a$, $b$ and $c$ are all maps of that kind.
\end{prop}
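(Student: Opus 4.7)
The plan is to construct $\comma(b,a,c)$ from the universal property of the pullback defining $f' \comma g'$, and then to verify the closure properties by decomposing this induced map through a suitably chosen intermediate comma object.

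For the construction, a map out of $f \comma g$ into the pullback $f' \comma g'$ amounts to a compatible pair comprising a map to $(A')^\cattwo$ and a map to $C' \times B'$ whose projections to $A' \times A'$ agree. I take the composites
\[ f \comma g \to A^\cattwo \xrightarrow{a^\cattwo} (A')^\cattwo \qquad\text{and}\qquad f \comma g \to C \times B \xrightarrow{c \times b} C' \times B'. \]
The hypothesised relations $f'b = af$ and $g'c = ag$ ensure that these composites agree on projection to $A' \times A'$, yielding a unique $\comma(b,a,c)$ that makes the cube commute.

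For the closure properties, I decompose
\[ \comma(b,a,c) = \comma(b, \id_{A'}, c) \circ \comma(\id_B, a, \id_C) \colon f \comma g \to af \comma ag \to f' \comma g', \]
so that it suffices to treat each factor separately. The second factor sits in a pullback square whose vertical maps are the isofibrations $(p_1,p_0)$ to $C\times B$ and $C'\times B'$ and whose bottom edge is $c \times b$; thus $\comma(b,\id,c)$ is a pullback of $c\times b$ along an isofibration. Since products and pullbacks along isofibrations preserve each of the three classes --- the equivalence case following from the category-of-fibrant-objects structure implied by the $\infty$-cosmos axioms --- this factor is of the desired class whenever $b$ and $c$ are.

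The first factor $\comma(\id,a,\id)$ is a pullback, along $g\times f \colon C\times B \to A\times A$, of the induced gap map $A^\cattwo \to (A\times A) \times_{A'\times A'} (A')^\cattwo$ comparing the cotensor $A^\cattwo$ with the pullback of $(A')^\cattwo \tfib A'\times A'$ along $a \times a$. When $a$ is an isofibration, axiom \ref{defn:cosmos:b} applied to $\partial\cattwo \hookrightarrow \cattwo$ yields that this gap map is itself an isofibration. When $a$ is a trivial fibration or an equivalence, the gap map sits in a factorisation $A^\cattwo \to (A\times A) \times_{A'\times A'} (A')^\cattwo \to (A')^\cattwo$ whose composite is the cotensor $a^\cattwo$ and whose second arrow is a pullback of $a\times a$ along the isofibration $(p_1,p_0)$; since cotensors and such pullbacks preserve both classes, a 2-of-3 argument places the gap map itself in the appropriate class, and pullback along $g\times f$ transfers the conclusion to $\comma(\id, a, \id)$. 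The main obstacle lies in the equivalence case, where the gap map is not guaranteed to be an isofibration so that direct pullback-stability is unavailable; the 2-of-3 route, together with representability arguments showing that products of equivalences are equivalences and that equivalences are stable under pullback along isofibrations, is what carries the argument through.
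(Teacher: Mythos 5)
Your construction of $\comma(b,a,c)$ via the universal property of the pullback defining $f'\comma g'$ is correct, your factorisation through the intermediate comma $af\comma ag$ is the right decomposition, and the isofibration and trivial fibration cases both close as you describe. The genuine gap is in the equivalence case of the first factor, which you yourself flag as the main obstacle and then do not actually resolve. You correctly exhibit $\comma(\id_B,a,\id_C)$ as the image under the pullback functor $(g\times f)^*\colon \eK_{/A\times A}\to\eK_{/C\times B}$ of the Leibniz gap map $A^\cattwo\to (A\times A)\times_{A'\times A'}(A')^\cattwo$, and your 2-of-3 argument does show that this gap map is an equivalence when $a$ is. But at that point you must pull back an equivalence that is \emph{not} an isofibration along a map that is also \emph{not} an isofibration --- neither $g\times f$ nor the induced map $af\comma ag\to (A\times A)\times_{A'\times A'}(A')^\cattwo$ need be one --- and neither of the tools you invoke (products of equivalences are equivalences; equivalences pull back along isofibrations) covers that configuration. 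The missing ingredient is Ken Brown's lemma applied to the functor $(g\times f)^*$: this functor preserves trivial fibrations, and both the domain and codomain of the gap map are isofibrations over $A\times A$, so it preserves every equivalence between such objects. Equivalently, you could Brown-factor the equivalence $a$ as a section $s$ of a trivial fibration followed by a trivial fibration and reduce to cases already handled, using that $\comma(\id,r,\id)\circ\comma(\id,s,\id)=\id$ when $rs=\id$ forces $\comma(\id,s,\id)$ to be an equivalence by 2-of-3. Without one of these steps the equivalence case is not proved.

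For comparison, the paper defers the proof to its companion, but the machinery it recalls here --- Proposition~\ref{prop:flexible-weights-are-htpical}\ref{itm:flexible-htpical}, together with the observation that $f\comma g$ is the limit of the cospan weighted by the flexible weight sending the outer objects to $\Del^0$ and the middle object to $\Del^1$ --- delivers all three closure statements in one stroke. Your factor-by-factor argument is essentially an unpacking of the proof of that proposition and is a perfectly good route once the missing lemma is supplied.
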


The axioms defining an $\infty$-cosmos are intentionally quite sparse so that there will be many examples, such as:

  \begin{prop}[{$\infty$-cosmoi of isofibrations, \refVII{prop:isofib-cosmoi}}]\label{prop:isofib-cosmoi} For any $\infty$-cosmos $\eK$, there is an $\infty$-cosmos $\eK^\cattwo$ which has:
  \begin{itemize}
  \item objects all isofibrations $p \colon E \tfib A$ in $\eK$;
  \item functor space from $p \colon E \tfib A$ to $q \colon F \tfib B$ defined by taking the pullback
    \[
      \xymatrix@R=1.5em@C=4em{
        {\Fun_{\eK^{\cattwo}}(p,q)}\pbexcursion\ar[r]\ar@{->>}[d] &
        {\Fun_{\eK}(E,F)}\ar@{->>}[d]^{\Fun_{\eK}(E,q)} \\
        {\Fun_{\eK}(A,B)}\ar[r]_-{\Fun_{\eK}(p,B)} & {\Fun_{\eK}(E,B)}
      }
    \]
    in simplicial sets so, in particular, the $0$-arrows from $p$ to $q$ are commutative squares
    \begin{equation}\label{eq:K-cattwo-squares}
      \xymatrix@=2em{
        {E}\ar@{->>}[d]_{p}\ar[r]^{g} & {F}\ar@{->>}[d]^{q} \\
        {A}\ar[r]_{f} & {B}
      }
    \end{equation}
    in $\eK$;
  \item equivalences those squares~\eqref{eq:K-cattwo-squares} whose components $f$ and $g$ are equivalences in $\eK$ and isofibrations (resp.\ trivial fibrations) those squares for which the map $f$ and the induced map $E\dashrightarrow A\times_B F$ (and thus also $g$) are isofibrations (resp.\ trivial fibrations) in $\eK$.
  \end{itemize}
  The cosmological limits are defined object-wise in $\eK$, or in other words are jointly created by the domain and codomain projections $\dom,\cod\colon\eK^{\cattwo}\to\eK$.
\end{prop}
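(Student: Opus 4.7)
The plan is to verify each clause of Definition~\ref{defn:cosmos} for $\eK^\cattwo$, exploiting the observation that the cosmological limits in $\eK^\cattwo$ are to be created pointwise by the projections $\dom,\cod\colon\eK^\cattwo\to\eK$, and that the class of ``isofibrations'' in $\eK^\cattwo$ has been engineered precisely to make the requisite Leibniz closure properties carry over from the corresponding closure properties in $\eK$.

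First I would show that each functor space $\Fun_{\eK^\cattwo}(p,q)$ is a quasi-category. Since $q\colon F\tfib B$ is an isofibration in $\eK$, the last clause of Definition~\ref{defn:cosmos}\ref{defn:cosmos:b} gives that $\Fun_{\eK}(E,q)$ is an isofibration of quasi-categories, and so the defining pullback is a pullback of a cospan of quasi-categories along an isofibration, hence itself a quasi-category.

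Next I would verify axiom~\ref{defn:cosmos:a} by working through each cosmological limit type: the terminal object is $\id_1$; small products, simplicial cotensors, pullbacks of isofibrations, and countable sequential inverse limits of isofibrations are all to be formed by separately taking the corresponding limits of the domains and of the codomains in $\eK$ and then checking that the induced map between these two limits is itself an isofibration in $\eK$. In each case this follows by combining the closure of the isofibration class in $\eK$ under the relevant limit type with the hypothesis that the diagrams we are assembling consist of isofibrations in $\eK^\cattwo$ (which, in particular, forces their codomain components to be isofibrations in $\eK$ and thereby provides the pullbacks along which the induced comparison maps in the definition of isofibrations in $\eK^\cattwo$ are formed). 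The universal property of the resulting object in $\eK^\cattwo$ is automatic from its pointwise construction.

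Finally I would verify axiom~\ref{defn:cosmos:b} clause by clause. That isomorphisms, the squares $p\to\id_1$, pullbacks of isofibrations, and countable sequential inverse limits of isofibrations in $\eK^\cattwo$ are again isofibrations reduces quickly to the corresponding statements in $\eK$, together with routine analyses of the requisite induced comparison maps to pullbacks. The representable clause --- that $\Fun_{\eK^\cattwo}(r,-)$ sends isofibrations in $\eK^\cattwo$ to isofibrations of quasi-categories --- is verified by unpacking the pullback definition of $\Fun_{\eK^\cattwo}$ and invoking both the representable clause of axiom~\ref{defn:cosmos:b} in $\eK$ and the pullback stability of isofibrations of quasi-categories. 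The principal obstacle is the Leibniz cotensor clause: given a simplicial inclusion $i\colon U\inc V$ and an isofibration in $\eK^\cattwo$, the Leibniz cotensor must be shown to be an isofibration in $\eK^\cattwo$. After a careful identification of the induced maps to the relevant pullback in $\eK$, this reduces to verifying that two particular maps in $\eK$ are isofibrations, each of which can be realised as a Leibniz cotensor $i\leib\pwr q$ for an appropriate isofibration $q$ in $\eK$ built from the given data (namely, the bottom map of the input square and its Leibniz comparison map to the pullback). The Leibniz cotensor closure property in $\eK$ then delivers the result.
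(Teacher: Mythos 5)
Your verification of the completeness and isofibration axioms follows the same route as the cited proof: the functor spaces are quasi-categories because they are pullbacks along isofibrations $\Fun_{\eK}(E,q)$, the cosmological limits are created pointwise by $\dom$ and $\cod$, and the only delicate closure property --- stability of the isofibrations of $\eK^\cattwo$ under Leibniz cotensor --- is correctly reduced, via the usual pullback-pasting identifications, to the Leibniz cotensors $i\leib\pwr f$ and $i\leib\pwr(E\to A\times_B F)$ of the two maps in $\eK$ whose being isofibrations constitutes the hypothesis. That part of the argument is sound.

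There is, however, a genuine omission: you never address the third bullet of the statement. In an $\infty$-cosmos the isofibrations are specified data, but the equivalences are not --- they are defined representably, as those maps $w$ for which every $\Fun_{\eK^\cattwo}(r,w)$ is an equivalence of quasi-categories. The assertion that these coincide with the squares whose components $f$ and $g$ are equivalences in $\eK$ is therefore a theorem rather than a stipulation, and it is needed before the stated description of the trivial fibrations even makes sense. Concretely: a pointwise equivalence is a representable equivalence because each $\Fun_{\eK^\cattwo}(r,-)$ is a pullback of a cospan of quasi-categories with one leg an isofibration, and such pullbacks carry pointwise equivalences of cospans to equivalences; conversely, a representable equivalence is an equivalence in the homotopy 2-category $\ho_*(\eK^\cattwo)$, and the 2-functors induced by $\dom$ and $\cod$ carry it to equivalences in $\ho_*\eK$, hence to equivalences in $\eK$. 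The characterisation of the trivial fibrations then follows from this together with the pullback-stability of trivial fibrations in $\eK$ and the 2-of-3 property. You should add this step; everything else in your outline matches the standard argument.
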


A \emph{cosmological functor} is a simplicial functor $\eK \to \eL$ between $\infty$-cosmoi that preserves the classes of isofibrations and all the cosmological limits. For instance, the  domain and codomain projections $\dom,\cod\colon\eK^{\cattwo}\to\eK$ are both cosmological. The cosmological functor $\cod \colon \eK^\cattwo \to \eK$ has a special property: namely its fibres define $\infty$-cosmoi: the sliced $\infty$-cosmoi $\eK_{/B}$  of Example \refIV{ex:sliced.contexts}.

\subsection{Flexible weighted limits in an \texorpdfstring{$\infty$}{infinity}-cosmos}\label{ssec:flexible}

The basic simplicially-enriched limit notions enumerated in axiom \ref{defn:cosmos}\ref{defn:cosmos:a} imply that an $\infty$-cosmos $\eK$ possesses a much larger class of simplicially enriched limits. Before describing them recall the general notion of a \emph{weighted limit} of a simplicially enriched functor valued in a simplicial category $\eC$ with hom-spaces dented by $\Map_{\eC}(X,Y)$.

\begin{defn}\label{defn:simp-weight}
A \emph{weight} for a diagram indexed by a small simplicial category $\eA$ is a simplicial functor $W \colon \eA \to \SSet$. A $W$-\emph{cone} over a diagram $F \colon \eA \to \eC$ in a simplicial category $\eC$ is comprised of an object $L \in \eC$ together with a simplicial natural transformation $\lambda \colon W \to \Map_{\eC}(L,F-)$. Such a cone displays $L$ as a $W$-\emph{weighted limit of} $F$ if and only if for all $X \in \eC$ the simplicial map
  \begin{equation}\label{eq:weighted-UP}
    \xymatrix@R=0em@C=5em{
      \Map_{\eC}(X,L) \ar[r]^-{\cong} & \Map_{\SSet^{\eA}}(W,\Map_{\eC}(X,F-))
    }
  \end{equation}
  given by post-composition with $\lambda$ is an isomorphism, in which case the limit object $L$ is typically denoted by $\wlim{W}{F}_{\eA}$ or simply $\wlim{W}{F}$. In this notation, the universal property \eqref{eq:weighted-UP} of the weighted limit asserts an isomorphism
  \[ 
      \xymatrix@R=0em@C=5em{
      \Map_{\eC}(X,\wlim{W}{F}_{\eA}) \ar[r]^-{\cong} & \{W,\Map_{\eC}(X,F-)\}_{\eA}.
    }
    \]
\end{defn}

\begin{defn}[flexible weights]\label{defn:flexible-weight}
  For a small simplicial category $\eA$ and pair of objects $[n] \in \Del$ and
  $A \in \eA$, the \emph{projective $n$-cell} associated with $A$ is the
  simplicial natural transformation:
  \[
    \boundary\Del^n\times \Map_{\eA}(A,-)\inc
    \Del^n\times \Map_{\eA}(A,-).
  \]
A weight $W\colon\eA \to \SSet$ is  a \emph{flexible\/} if the inclusion $\emptyset\inc W$ may be expressed as a countable composite of pushouts of coproducts of projective cells. 
\end{defn}

\begin{prop}[{\refVII{prop:flexible-weights-are-htpical}}]\label{prop:flexible-weights-are-htpical} Let $\eK$ be an $\infty$-cosmos and let $\eA$ be a small simplicial category.
  \begin{enumerate}[label=(\roman*)]
  \item\label{itm:flexible-exist} For any diagram $F \colon \eA\to\eK$ and flexible weight $W \colon \eA \to \SSet$, the weighted limit $\wlim{W}{F}$ exists in $\eK$.
  \item\label{itm:flexible-htpical} If $\kappa \colon F \To G$ is a simplicial natural transformation between two such diagrams whose components are equivalences, isofibrations, or trivial fibrations in $\eK$ and $W$ is a flexible weight, then the induced map
    \begin{equation*}
      \xymatrix@R=0em@C=6em{
        {\wlim{W}{F}}\ar[r]^-{\wlim{W}{\kappa}} & {\wlim{W}{G}}
      }
    \end{equation*}
    is an equivalence, isofibration, or trivial fibration (respectively) in $\eK$.
  \end{enumerate}
\end{prop}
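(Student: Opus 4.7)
The plan is to prove both parts simultaneously by induction on the cellular presentation of the flexible weight $W$, reducing the existence and homotopical behaviour of general flexible weighted limits to the cosmological limit types listed in Definition~\ref{defn:cosmos}\ref{defn:cosmos:a} together with the Leibniz cotensor stability of isofibrations in \ref{defn:cosmos}\ref{defn:cosmos:b}.

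First I would verify the base case: by the (simplicially enriched) Yoneda lemma, for any $A\in\eA$ and any $n\geq 0$ the weighted limit of $F\colon\eA\to\eK$ by the representable weight $\Map_{\eA}(A,-)$ is $F(A)$, and more generally the weighted limit by $\Delta^n\times\Map_{\eA}(A,-)$ is the cotensor $F(A)^{\Delta^n}$, while that by $\partial\Delta^n\times\Map_{\eA}(A,-)$ is $F(A)^{\partial\Delta^n}$. Consequently the canonical map of weighted limits induced by a projective $n$-cell $\partial\Delta^n\times\Map_{\eA}(A,-)\inc \Delta^n\times\Map_{\eA}(A,-)$ is precisely the Leibniz cotensor-style isofibration $F(A)^{\Delta^n}\tfib F(A)^{\partial\Delta^n}$ supplied by axiom~\ref{defn:cosmos}\ref{defn:cosmos:b}. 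Since weighted limits turn colimits in the weight into limits in $\eK$, a coproduct of projective cells is sent to a product of such isofibrations (again an isofibration), and a pushout of weights is sent to a pullback of the corresponding maps of limits.

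Next I would carry out the inductive step. By hypothesis, the inclusion $\emptyset\inc W$ is a countable composite $W_0\inc W_1\inc W_2\inc\cdots$ whose colimit is $W$, where each $W_{n-1}\inc W_n$ is obtained as a pushout of a coproduct $U_n\inc V_n$ of projective cells. Supposing inductively that $\wlim{W_{n-1}}{F}$ exists, the inclusion $W_{n-1}\inc W_n$ forces
\[
\xymatrix@=2.5em{\wlim{W_n}{F}\pbexcursion\ar@{->>}[d]\ar[r] & \wlim{V_n}{F}\ar@{->>}[d]\\ \wlim{W_{n-1}}{F}\ar[r] & \wlim{U_n}{F}}
\]
to be a pullback square, in which the right-hand map is an isofibration by the base case observation and the product-closure of isofibrations. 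This pullback exists by axiom~\ref{defn:cosmos}\ref{defn:cosmos:a} and produces $\wlim{W_n}{F}$ as an isofibration over $\wlim{W_{n-1}}{F}$. The overall limit $\wlim{W}{F}$ is then obtained as the inverse limit of the resulting countable tower of isofibrations, which exists by the sequential inverse limit clause of \ref{defn:cosmos}\ref{defn:cosmos:a}. That the resulting object carries the universal property~\eqref{eq:weighted-UP} is then a standard preservation-of-limits check using that $\Map_{\eC}(X,-)$ converts these cosmological limits into limits of quasi-categories by \ref{defn:cosmos}\ref{defn:cosmos:b}; this establishes~\ref{itm:flexible-exist}.

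Part~\ref{itm:flexible-htpical} is then proved by running the same induction on $\kappa\colon F\To G$. In the base case, for each $n$-cell, the map $F(A)^{\Delta^n}\to G(A)^{\Delta^n}$ is an equivalence, isofibration, or trivial fibration whenever $\kappa_A$ is, since cotensoring with the simplicial set $\Delta^n$ preserves each of these classes (again by \ref{defn:cosmos}\ref{defn:cosmos:b} applied with the cotensor $\Delta^n\leib\pwr(-)$ and the 2-of-3 property). Products preserve each class, so the induced map between the weighted limits by coproducts of projective cells behaves well. Pullbacks of equivalences along isofibrations are equivalences in any category of fibrant objects, while pullbacks of isofibrations (respectively, trivial fibrations) are isofibrations (respectively, trivial fibrations) by the last clause of \ref{defn:cosmos}\ref{defn:cosmos:b}; so the pullback step carries the relevant class from stage $n-1$ to stage $n$. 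Finally, countable inverse limits of towers of isofibrations preserve each class by the countable-sequence clause of \ref{defn:cosmos}\ref{defn:cosmos:b} and the corresponding homotopical properties, yielding the conclusion for $\wlim{W}{\kappa}$.

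I expect the only non-routine point to be bookkeeping: at each stage of the induction one must recognise the map $\wlim{V_n}{F}\to\wlim{U_n}{F}$ as a Leibniz cotensor of the cellular inclusion $U_n\inc V_n$ against the product projection to $F$-values, so that the pullback is indeed along an isofibration. Once this Leibniz-cotensor identification is made, the entire argument is an orchestrated application of the closure properties packaged into Definition~\ref{defn:cosmos}.
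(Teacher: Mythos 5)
This proposition is recalled here from the companion paper \cite{RiehlVerity:2018rq} rather than reproved, and the proof given there is precisely the cellular induction you describe: representable weights compute values of $F$ by the enriched Yoneda lemma, projective cells yield the cotensor isofibrations $F(A)^{\Delta^n}\tfib F(A)^{\partial\Delta^n}$, and the cell attachments in the presentation of $W$ translate into products, pullbacks along isofibrations, and inverse limits of countable towers of isofibrations, all supplied by axiom \ref{defn:cosmos}\ref{defn:cosmos:a}. So your overall strategy is the intended one, and part \ref{itm:flexible-exist} is fine as written.

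There is, however, a genuine gap in your treatment of the isofibration and trivial fibration cases of \ref{itm:flexible-htpical} at the pushout and countable-composite stages. Knowing that the components of a natural transformation of cospans are isofibrations, and that one leg of each cospan is an isofibration, does \emph{not} imply that the induced map of pullbacks is an isofibration: ``pullbacks of isofibrations are isofibrations'' concerns a single cospan, not a map between two cospans, and the analogous assertion for towers fails for the same reason. What is actually needed is the relative, two-variable Leibniz statement: for a projective cofibration of weights $V\inc W$ and a pointwise isofibration $\kappa\colon F\To G$, the Leibniz weighted limit
\[
\wlim{W}{F}\longrightarrow \wlim{V}{F}\times_{\wlim{V}{G}}\wlim{W}{G}
\]
is an isofibration. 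This is proved by reducing to the generating projective cells, where the map above becomes the Leibniz cotensor $(\partial\Delta^n\inc\Delta^n)\leib\pwr\kappa_A$, an isofibration by axiom \ref{defn:cosmos}\ref{defn:cosmos:b}; the absolute claim of \ref{itm:flexible-htpical} is the case $V=\emptyset$, and the relative form with $V=W_{n-1}$, $W=W_n$ is exactly the Reedy-type condition that makes each pullback and the final tower stage go through. Your closing remark identifying $\wlim{V_n}{F}\to\wlim{U_n}{F}$ as a Leibniz cotensor is the absolute shadow of this point; upgrading it to the two-variable version closes the gap. The equivalence case is fine as stated, being the gluing lemma valid in any category of fibrant objects.
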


When working in a quasi-categorically enriched category $\eK$ it is often the case that we are only interested in weighted (co)limits that are defined up to \emph{equivalence\/} rather than \emph{isomorphism}. To that end we have the following definition:

\begin{defn}[flexible weighted homotopy limits]\label{defn:flexible-hty-limit}
  Suppose that $W\colon\eA\to\SSet$ is a flexible weight and that $F\colon\eA\to\eK$ is a diagram in a quasi-categorically enriched category $\eK$. We say that a $W$-cone $\lambda\colon W\to \Fun_{\eK}(L,-)$ displays an object $L\in \eK$ as a {\em flexible weighted homotopy limit\/} of $F$ weighted by $W$ if for all objects $X\in\eK$ the map
\begin{equation}\label{eq:flex-hty-lim-comp}
  \xymatrix@R=0em@C=5em{
    \Fun_{\eK}(X,L) \ar[r]^-\simeq &
    \{W,\Fun_{\eK}(X,F-)\}_{\eA}.
  }
\end{equation}
induced by post-composition with $\lambda$ is an equivalence of quasi-categories,\footnote{Here we show that the codomain of the comparison map in~\eqref{eq:flex-hty-lim-comp} is a quasi-category by applying Proposition~\ref{prop:flexible-weights-are-htpical} in the $\infty$-cosmos of quasi-categories.} in which case we denote the limit object by $\wlim{W}{F}^\simeq_{\eA}$.
\end{defn}

\subsection{Absolute lifting diagrams and (relative) adjunctions}\label{ssec:absolute}

Recall the quotient homotopy 2-category of an $\infty$-cosmos: 

\begin{defn}[the homotopy 2-category of an $\infty$-cosmos]\label{defn:hty-2-cat} The \emph{homotopy 2-category} of an $\infty$-cosmos $\eK$ is defined by applying the homotopy category functor $\ho \colon \qCat \to \eop{Cat}$ to the functor spaces of the $\infty$-cosmos: 
\begin{itemize}
\item The objects of $\ho_*\eK$ are the objects of $\eK$, i.e., the $\infty$-categories.
\item The 1-cells $f \colon A \to B$ of $\ho_*\eK$ are the vertices $f \in \Fun_{\eK}(A,B)$ in the functor spaces of $\eK$, i.e., the $\infty$-functors.
\item A 2-cell  $\xymatrix{ A \ar@/^2ex/[r]^f \ar@/_2ex/[r]_g \ar@{}[r]|{\Downarrow\alpha}& B}$ in $\ho_*\eK$ is represented by a 1-arrow $\alpha \colon f \to g \in \Fun_{\eK}(A,B)$, where a parallel pair of 1-arrows in $\Fun_{\eK}(A,B)$ represent the same 2-cell if and only if they bound a 2-arrow whose remaining outer face is degenerate.
\end{itemize}
\end{defn}

\begin{defn}[dual $\infty$-cosmoi]\label{defn:dual-cosmoi} For any $\infty$-cosmos $\eK$, write $\eK\co$ for
 the $\infty$-cosmos with the same objects but with the opposite functor spaces
\[ \Fun_{\eK\co}(A,B) \defeq\Fun_{\eK}(A,B)\op.\] 
 The homotopy 2-category of $\eK\co$ is the ``co'' dual of the homotopy 2-category of $\eK$, reversing the 2-cells but not the 1-cells. 
 \end{defn}

Pleasingly, some of the formal theory of $\infty$-categories can be developed in these strict 2-categories of $\infty$-categories, $\infty$-functors, and $\infty$-natural transformations.

\begin{defn} An \emph{adjunction} between $\infty$-categories $A, B \in \eK$ is simply an adjunction in the homotopy 2-category $\ho_*\eK$: i.e., is comprised of a pair functors $f \colon B \to A$ and $u \colon A \to B$, together with a pair of 2-cells $\unit \colon \id_B \To uf$ and $\counit \colon fu \To \id_A$ satisfying the triangle identities.
\end{defn}

We shall also make use of the following ``partial adjunction'' notion:

\begin{defn}[absolute right lifting]\label{defn:absolute-right-lifting} Given a cospan $C \xrightarrow{g} A \xleftarrow{f} B$, a functor $\ell \colon C \to B$ and a 2-cell
  \begin{equation}\label{eq:abs-right-lifting}
    \xymatrix{
      \ar@{}[dr]|(.7){\Downarrow\lambda} & B \ar[d]^f \\
      C \ar[ur]^\ell \ar[r]_g & A}
  \end{equation}
  define an \emph{absolute right lifting of $g$ through $f$} if any 2-cell as displayed below-left factors uniquely through $\lambda$ as displayed below-right
  \begin{equation}\label{eq:abs-rl-univ}
    \vcenter{\xymatrix{
        X \ar[d]_c \ar[r]^b \ar@{}[dr]|{\Downarrow\alpha} &
        B \ar[d]^f \\
        C \ar[r]_g & A}}
    \mkern20mu = \mkern20mu
    \vcenter{\xymatrix{
        X \ar[d]_c \ar[r]^b \ar@{}[dr]|(.3){\exists !\Downarrow\bar\alpha}
        |(.7){\Downarrow\lambda} & B \ar[d]^f \\
        C \ar[ur]|(.4)*+<2pt>{\scriptstyle \ell} \ar[r]_g & A}}
\end{equation}
When this property holds, we say that the triangle displayed in \eqref{eq:abs-right-lifting} as an \emph{absolute right lifting diagram}.
\end{defn}

\begin{rmk}\label{rmk:abs-lift-stab-precomp}
  In category theory, the term ``absolute'' typically means ``preserved by all functors.'' In that spirit, an absolute right lifting diagram is a right lifting diagram\footnote{A \emph{right lifting diagram} is a pair $(\ell,\lambda)$ as in \eqref{eq:abs-right-lifting} satisfying the universal property of \eqref{eq:abs-rl-univ} only in the case where the functor $c$ is $\id_C$.} $\lambda \colon f\ell \To g$ with the property that the restriction of $\lambda$ along any generalised element $c \colon X \to C$ again defines a right lifting diagram.
\end{rmk}

\begin{ex}\label{ex:radj-lifting}
Importantly, $f\dashv u$ is an adjunction with counit $\counit\colon fu\To \id_A$ if and only if the triangle
\begin{equation*}
  \xymatrix{
    \ar@{}[dr]|(.7){\Downarrow\counit} & B \ar[d]^f \\
    A \ar[ur]^{u} \ar@{=}[r] & A}
\end{equation*}
is an absolute right lifting diagram.
\end{ex}

Directly from the universal property of absolute right lifting diagrams observe:

\begin{lem}[composition and cancellation of absolute right lifting diagrams]\label{lem:comp-canc-abs-lift}
  In any 2-category, suppose we are given a diagram
  \begin{equation*}
    \xymatrix@R=1.5em@C=3em{
      \ar@{}[dr]|(0.7){\Downarrow\lambda} &
      {A}\ar[d]^{f} \\
      {D}\ar[ur]^{h}\ar[r]|{k}\ar[dr]_{l} &
      {B}\ar[d]^{g} \\
      \ar@{}[ur]|(0.7){\Downarrow\mu} & {C}
    }
  \end{equation*}
and assume that the lower triangle is an absolute right lifting diagram. Then the upper triangle is an absolute right lifting diagram if and only if the composite triangle displays $h$ as an absolute right lifting of $l$ along $gf$. \qed
\end{lem}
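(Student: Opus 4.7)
The plan is to exploit the bijection on 2-cells provided by the absolute right lifting property of the lower triangle, and observe that this bijection identifies factorisations through $\lambda$ with factorisations through the composite 2-cell $\mu\cdot g\lambda\colon gfh\To l$.

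More precisely, fix any generalised element $c\colon X\to D$ and any $b\colon X\to A$. The hypothesis that $(k,\mu)$ exhibits $k$ as the absolute right lifting of $l$ through $g$ yields, via the universal property \eqref{eq:abs-rl-univ} applied pointwise at $c$, a bijection
\[
\bigl\{\,\beta\colon fb\To kc\,\bigr\}
\;\xrightarrow{\;\mu c\,\cdot\, g(-)\;}\;
\bigl\{\,\gamma\colon gfb\To lc\,\bigr\}
\]
sending $\beta$ to $\mu c\cdot g\beta$. I would first record this bijection as the key bookkeeping device: existence follows from the factorisation half of \eqref{eq:abs-rl-univ} and uniqueness follows from the uniqueness half. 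Under this correspondence, a candidate factor $\bar\alpha\colon b\To hc$ in the upper triangle, pasting to $\lambda c\cdot \bar\alpha\colon fb\To kc$, is sent to $\mu c\cdot g(\lambda c\cdot\bar\alpha)=(\mu\cdot g\lambda)c\cdot\bar\alpha$, which is precisely the factorisation through the composite 2-cell evaluated at $c$. Thus the assignment $\bar\alpha\mapsto \bar\alpha$ is a bijection between factorisations of a 2-cell $\alpha\colon fb\To kc$ through $\lambda$ and factorisations of its transpose $\mu c\cdot g\alpha\colon gfb\To lc$ through $\mu\cdot g\lambda$.

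From here the biconditional is immediate. For the forward direction, suppose the upper triangle is absolute right lifting, and let $\gamma\colon gfb\To lc$ be arbitrary. Transpose it to a unique $\alpha\colon fb\To kc$ via the lower universal property, then factor $\alpha$ uniquely as $\lambda c\cdot\bar\alpha$ using the upper absolute right lifting property; the resulting $\bar\alpha$ is the unique factorisation of $\gamma$ through $\mu\cdot g\lambda$ at $c$, by the bijection above. For the converse, suppose the composite triangle is absolute right lifting, and let $\alpha\colon fb\To kc$ be given. Its transpose $\mu c\cdot g\alpha$ factors uniquely as $(\mu\cdot g\lambda)c\cdot\bar\alpha$, and the bijection guarantees that the same $\bar\alpha$ is the unique factorisation of $\alpha$ itself through $\lambda$.

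No serious obstacle is anticipated: the argument is entirely formal 2-categorical pasting. The only point requiring a little care is to verify that the bijection really does carry factored 2-cells to factored 2-cells in a manner compatible with the candidate factor $\bar\alpha$, which is a matter of the interchange law $g(\lambda c\cdot\bar\alpha)=g\lambda c\cdot g\bar\alpha$ together with associativity of vertical pasting; once this is noted, both directions collapse to an application of the universal property of $(k,\mu)$.
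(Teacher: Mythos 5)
Your argument is correct and is exactly the standard one the paper has in mind: the lemma is stated with an immediate \qed as following ``directly from the universal property,'' and your bijection $\beta\mapsto\mu c\cdot g\beta$ between 2-cells $fb\To kc$ and $gfb\To lc$, together with the observation that it carries $\lambda$-factorisations to $(\mu\cdot g\lambda)$-factorisations, is precisely that omitted verification. The only blemish is notational: the factorisations should read $\lambda c\cdot f\bar\alpha$ and $(\mu\cdot g\lambda)c\cdot gf\bar\alpha$ (you drop the whiskerings by $f$ and $gf$), but the interchange computation you invoke is otherwise exactly right.
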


\begin{defn}\label{defn:abs-lift-trans-right-exactness}
Transformations 
\begin{equation}\label{eq:induced-mate}
    \xymatrix@=1.5em{
      {} & {B}\ar[d]_{f}\ar[dr]^{v} & {} \\
      {C}\ar[r]^{g}\ar[dr]_{w} & {A}\ar[dr]|*+<1pt>{\scriptstyle u} & 
      {B'}\ar[d]^{f'} \\
      {} & {C'}\ar[r]_{g'} & {A'}
    }
    \end{equation}
   between diagrams which admit absolute right liftings give rise to the following diagram
\[
    \vcenter{
      \xymatrix@=1.5em{
        {} \ar@{}[dr]|(.7){\Downarrow\lambda}
        & {B}\ar[d]^{f}\ar[dr]^{v} & {} \\
        {C}\ar[ur]^{\ell}\ar[r]_{g}\ar[dr]_{w} & 
        {A}\ar[dr]|*+<1pt>{\scriptstyle u} & 
        {B'}\ar[d]^{f'} \\
        {} & {C'}\ar[r]_{g'} & {A'}
      }
    } 
    \mkern40mu = \mkern40mu
    \vcenter{
      \xymatrix@=1.5em{
        {} & {B}\ar[dr]^{v} & {} \\
        {C}\ar[ur]^{\ell}\ar[dr]_{w} & 
        {\scriptstyle\Downarrow\tau}\ar@{}[dr]|(.7){\Downarrow\lambda'} & 
        {B'}\ar[d]^{f'} \\
        {} & {C'}\ar[ur]^{\ell'}\ar[r]_{g'} & {A'}
      }
    } 
\]
  in which the triangles are absolute right liftings and the 2-cell $\tau$ is induced by the universal property of the triangle on the right. We say that the transformation \eqref{eq:induced-mate} is {\em right exact\/} if and only if the induced 2-cell $\tau$ is an isomorphism. This right exactness condition holds if and only if, in the diagram on the left, the whiskered 2-cell $u\lambda$ displays $v\ell$ as the absolute right lifting of $g'w$ through $f'$.
\end{defn}

\begin{obs}\label{obs:2-cat-rep-abs-lifting} Unpacking the definitions in any 2-category, $\lambda \colon f\ell \To g$ defines an absolute right lifting diagram if and only if the induced functor 
\begin{equation}\label{eq:rep-lifting-comma-iso}\hom(X,B)  \comma \hom(X,\ell) \xrightarrow{\cong} \hom(X,f) \comma \hom(X,g) \end{equation} defines an isomorphism of comma categories, natural in $X$. From the 2-categorical universal property of these comma categories, it is now clear that $\lambda \colon f\ell \To g $ defines an absolute right lifting diagram if and only if
\begin{enumerate}[label=(\roman*)]
\item\label{itm:rep-liftings} for each object $X$, the diagram
\begin{equation}\label{eq:rep.lifting.triangle}
    \xymatrix@R=2em@C=3em{ \ar@{}[dr]|(.7){\Downarrow\hom(X,\lambda)} & \hom(X,B) \ar[d]^{\hom(X,f)} \\ \hom(X,C) \ar[r]_{\hom(X,g)} \ar[ur]^{\hom(X,\ell)} & \hom(X,A)}
    \end{equation}
    defines an absolute right lifting diagram in $\eop{Cat}$, and 
    \item moreover, each morphism $e \colon Y \to X$ induces a right exact transformation 
    \[    \xymatrix@C=3em@R=2em{
      {} & {\hom(X,B)}\ar[d]_{\hom(X,f)}\ar[dr]^{\hom(e,X)} & {} \\
      {\hom(X,C)}\ar[r]^{\hom(X,g)}\ar[dr]_{\hom(e,X)} & {\hom(X,A)}\ar[dr]|*+<1pt>{\scriptstyle \hom(e,X)} & 
      {\hom(Y,B)}\ar[d]^{\hom(Y,f)} \\
      {} & {\hom(Y,C)}\ar[r]_{\hom(Y,g)} & {\hom(X,A)}
    }
\] 
    \end{enumerate}
    In fact, by the Yoneda lemma, it suffices to assume in \ref{itm:rep-liftings} that $\hom(X,g)$ admits any absolute right lifting along $\hom(X,f)$ for which the transformation induced from any $e \colon Y \to X$ is right exact. Specialising to the case $X=C$ reveals that these absolute left liftings are represented by a morphism $\ell \colon C \to B$ and 2-cell $\lambda \colon f\ell \To g$.
    \end{obs}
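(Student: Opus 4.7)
The strategy is to unpack the universal property of absolute right lifting in purely representable terms, recognise the resulting condition as the assertion that \eqref{eq:rep-lifting-comma-iso} is an isomorphism for each $X$, and then to translate this back into the 2-categorical language of absolute right liftings in $\eop{Cat}$ plus a naturality condition expressed via right exactness.

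To begin, I would prove the initial equivalence directly from Definition~\ref{defn:absolute-right-lifting}. The objects of $\hom(X,B)\comma\hom(X,\ell)$ are triples $(b : X \to B, c : X \to C, \bar\alpha : b \To \ell c)$, and the induced functor carries such a triple to $(b,c,\lambda c \cdot f\bar\alpha)$ in $\hom(X,f)\comma\hom(X,g)$. The existence and uniqueness clause~\eqref{eq:abs-rl-univ} is exactly the assertion that this assignment is a bijection on objects for every $X$ and every $c: X \to C$; taking $X$ arbitrary and $c$ arbitrary subsumes the ``absolute'' aspect of Remark~\ref{rmk:abs-lift-stab-precomp}. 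For morphisms, a commuting square witnessing a morphism $(b,c,\bar\alpha) \to (b',c',\bar\alpha')$ on the left pastes under the functor to a commuting square on the right using the middle-four interchange and naturality of $\lambda$, and conversely a commuting square on the right lifts uniquely to one on the left by applying the uniqueness of factorisation to the two composite 2-cells $b \To \ell c'$ whose postcomposition with $\lambda c'$ agree. Naturality in $X$ is automatic since both functors are built from precomposition on $\hom$.

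Next, I would invoke the 2-categorical universal property of comma objects in $\eop{Cat}$, which identifies absolute right liftings of a cospan $P \to R \leftarrow Q$ in $\eop{Cat}$ precisely with isomorphisms of the corresponding comma categories of the form appearing in \eqref{eq:rep-lifting-comma-iso}. Applied pointwise in $X$ this matches the displayed isomorphism to the condition that~\eqref{eq:rep.lifting.triangle} is an absolute right lifting diagram in $\eop{Cat}$, yielding~(i). Strict naturality of the isomorphisms in $X$ then corresponds to~(ii): given $e : Y \to X$, the canonical mate described in Definition~\ref{defn:abs-lift-trans-right-exactness} between the two composites of precomposition by $e$ with the liftings $\hom(X,\ell)$ and $\hom(Y,\ell)$ is the identity on the nose, so in particular an isomorphism, i.e.\ the precomposition transformation is right exact.

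For the final clause, I would apply a 2-categorical Yoneda argument. Assume only that for each $X$ there is some absolute right lifting $(h_X, \lambda_X)$ of $\hom(X,g)$ through $\hom(X,f)$ with right exact transformations along every $e : Y \to X$. Specialising to $X = C$ and evaluating at $\id_C$ defines $\ell := h_C(\id_C) : C \to B$ and $\lambda := (\lambda_C)_{\id_C} : f\ell \To g$. Right exactness applied to the transformation $\hom(c,-)$ for an arbitrary $c : X \to C$ produces canonical isomorphisms $h_X(c) \cong \ell \circ c = \hom(X,\ell)(c)$ and $\lambda_{X,c} \cong \hom(X,\lambda)_c$, compatible as $c$ varies by 2-functoriality of $\hom$ and uniqueness of the mate. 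This exhibits the abstract lifting as represented by $(\ell,\lambda)$ in the ambient 2-category, closing the biconditional. The principal obstacle is the bookkeeping relating uniqueness of factorisation in Definition~\ref{defn:absolute-right-lifting} to the mate-isomorphism clause in Definition~\ref{defn:abs-lift-trans-right-exactness}; once these are aligned the rest is routine.
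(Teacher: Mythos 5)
Your proposal is correct and takes essentially the same route the paper intends: the paper states this as an Observation with only an indication of the argument (unpacking the lifting property into the comma-category isomorphism objectwise and arrowwise, translating via the strict 2-categorical universal property of commas in $\eop{Cat}$ with naturality recorded as right exactness of the precomposition mates, and closing with the Yoneda specialisation at $X=C$), and your write-up supplies exactly those details, including the correct use of uniqueness of factorisation to get full faithfulness of \eqref{eq:rep-lifting-comma-iso}. Nothing further is needed.
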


\subsection{Fibred equivalences between comma \texorpdfstring{$\infty$}{infinity}-categories}\label{ssec:comma}

In general the homotopy 2-category of an $\infty$-cosmos  will admit few 2-dimensional limit notions. Nonetheless, the 2-cell representing the horizontal functor in the defining pullback of the comma $\infty$-category of a cospan:
  \begin{equation}\label{eq:comma-arrow}
  \vcenter{
    \xymatrix@=2.5em{
      {f\comma g}\pbexcursion \ar[r]\ar@{->>}[d]_{(p_1,p_0)} &
      {A^\cattwo} \ar@{->>}[d]^{(p_1,p_0)} \\
      {C\times B} \ar[r]_-{g\times f} & {A\times A}
    }}
 \qquad\qquad
\vcenter{
    \xymatrix@=0.6em{
      & f \downarrow g \ar@{->>}[dl]_{p_1} \ar@{->>}[dr]^{p_0} & \\ 
      C \ar[dr]_g \ar@{}[rr] \ar@{=>} ?(0.6);?(0.4) _{\phi_{f,g}}
      & & B \ar[dl]^f \\ 
      & A & }}
\end{equation}
enjoys a weak universal property:

\begin{prop}[{\refIV{obs:ess.unique.1-cell.ind}}]\label{prop:1-univ-comma}
2-cells in the homotopy 2-category $\ho_*\eK$ of the form depicted on the left in the following diagram
  \begin{equation*}
    \vcenter{\xymatrix@R=2em@C=2em{
        {D}\ar[r]^{b}\ar[d]_{c} & {B}\ar[d]^{f} \\
        {C}\ar[r]_{g} & {A}
        \ar@{} "1,2";"2,1"
        \ar@{=>} ?(0.35);?(0.65) ^{\alpha}}}
    \mkern40mu \leftrightsquigarrow \mkern40mu
    \vcenter{\xymatrix@R=2em@C=0.5em{
        {D}\ar[dr]_{(c,b)}\ar[rr]^-{\bar\alpha} &&
        {f\comma g}\ar@{->>}[dl]^{(p_1,p_0)} \\
        & {C\times B} &
      }}
  \end{equation*}
  stand in bijective correspondence to isomorphism classes of 1-cells in the slice 2-category ${(\ho_*\eK)}_{/C\times B}$ as shown on the right, with the action of this bijection, from right to left, is given by composition with the comma square depicted in~\eqref{eq:comma-arrow}. 
\end{prop}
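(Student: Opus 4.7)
The plan is to reduce both sides of the claimed bijection to $\pi_0$ of a single Kan complex.

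\emph{Step 1 (functor-space reformulation).} Applying the representable $\Fun_{\eK}(D,-)$ to the defining pullback of $f \comma g$, and using the cotensor identification $\Fun_{\eK}(D, A^{\Delta^1}) \cong \Fun_{\eK}(D, A)^{\Delta^1}$, one obtains a pullback square of quasi-categories
\[
\xymatrix@R=1.5em@C=2.5em{
\Fun_{\eK}(D, f\comma g) \pbexcursion \ar[r] \ar@{->>}[d]_{(p_1,p_0)_*} & \Fun_{\eK}(D,A)^{\Delta^1} \ar@{->>}[d]^{(p_1,p_0)} \\
\Fun_{\eK}(D, C)\times \Fun_{\eK}(D,B) \ar[r]_-{g_* \times f_*} & \Fun_{\eK}(D,A)\times\Fun_{\eK}(D,A).
}
\]
Fixing the lower-left vertex at $(c,b)\colon D \to C \times B$ and taking vertical fibres identifies the fibre $F_{(c,b)} \subseteq \Fun_{\eK}(D, f\comma g)$ with the mapping space in $\Fun_{\eK}(D,A)$ from $fb$ to $gc$, which is a Kan complex.

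\emph{Step 2 (identifying both sides with $\pi_0 F_{(c,b)}$).} On the right, vertices of $F_{(c,b)}$ are 1-arrows $fb \to gc$ in the quasi-category $\Fun_{\eK}(D,A)$, and by Definition~\ref{defn:hty-2-cat}, $\pi_0$ of this fibre is precisely the set of 2-cells $fb \To gc$ in $\ho_*\eK$. On the left, vertices of $F_{(c,b)}$ are the 1-cells $\bar\alpha \colon D \to f\comma g$ with $(p_1,p_0) \bar\alpha = (c, b)$---that is, 1-cells of the slice $\eK_{/C\times B}$ based at $(c,b)$. An edge of $F_{(c,b)}$ from $\bar\alpha$ to $\bar\beta$ represents a 2-cell $\iota \colon \bar\alpha \To \bar\beta$ in $\ho_*\eK$ whose whiskering with $(p_1,p_0)$ is an identity, so such edges encode precisely the 2-cells of the slice 2-category $(\ho_*\eK)_{/C\times B}$. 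Because $F_{(c,b)}$ is a Kan complex, every such 2-cell is automatically invertible, so isomorphism classes in the slice (with fixed base $(c,b)$) are in bijection with $\pi_0 F_{(c,b)}$.

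\emph{Step 3 (matching the bijection to composition with $\phi_{f,g}$).} Tracing through the cotensor adjunction confirms that the canonical identification between $F_{(c,b)}$ and the mapping space of $\Fun_{\eK}(D,A)$ from $fb$ to $gc$ is precisely the operation of pasting with the generic comma square $\phi_{f,g}$ displayed in \eqref{eq:comma-arrow}. Allowing $(c,b)$ to vary and combining Steps 1--3 delivers the claimed bijection.

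\textbf{Main obstacle.} The chief subtlety is reconciling the three quotient relations in play---homotopy of 1-arrows in the mapping space, isomorphism in the slice 2-category, and connectivity in $\pi_0 F_{(c,b)}$. The Kan property of the fibre is essential to upgrade an arbitrary 2-cell over $(c, b)$ in the slice to an \emph{invertible} one, and a little care is required to check that every 2-cell in $(\ho_*\eK)_{/C\times B}$ between functors with common base $(c,b)$ is represented by a 1-arrow living in $F_{(c,b)}$ itself, rather than by one that merely projects to an edge representing the identity 2-cell up to further homotopy in $\Fun_{\eK}(D, C \times B)$.
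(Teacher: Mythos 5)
This paper does not actually prove the proposition; it recalls it from \refIV{obs:ess.unique.1-cell.ind} and points the reader to the ``weak 2-limits'' machinery of the earlier papers, where the statement is deduced from the fact that the canonical comparison functor $\hFun(D,f\comma g)\to \hFun(D,f)\comma\hFun(D,g)$ of hom-categories in $\ho_*\eK$ is a \emph{smothering} functor (surjective on objects, full, and conservative): surjectivity on objects gives 1-cell induction, and fullness plus conservativity give the bijection on isomorphism classes over $C\times B$. Your route is different in packaging but converges on the same technical core: you apply $\Fun_{\eK}(D,-)$ to the defining pullback, identify the strict fibre $F_{(c,b)}$ with the two-sided mapping space from $fb$ to $gc$ in $\Fun_{\eK}(D,A)$ (correct -- this fibre is a Kan complex, which is the same observation as ``commas are modules, hence have groupoidal fibres''), and compute $\pi_0$. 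Steps 1 and the first half of Step 2 are fine, and the identification of $\pi_0F_{(c,b)}$ with 2-cells $fb\To gc$ matches Definition~\ref{defn:hty-2-cat} exactly.

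The one place your argument is not yet a proof is exactly the point you flag as the ``main obstacle,'' and it should not be waved through: a 2-cell in $(\ho_*\eK)_{/C\times B}$ between $\bar\alpha$ and $\bar\beta$ is represented by a 1-arrow of $\Fun_{\eK}(D,f\comma g)$ whose image in $\Fun_{\eK}(D,C\times B)$ is only \emph{homotopic} to the degenerate edge at $(c,b)$, so it need not lie in the strict fibre, and without rectifying it you get neither injectivity of $\pi_0F_{(c,b)}\to\{\text{iso classes}\}$ nor the claim that slice 2-cells ``are'' edges of $F_{(c,b)}$. The fix is a lifting argument against the isofibration $\Fun_{\eK}(D,(p_1,p_0))\colon\Fun_{\eK}(D,f\comma g)\tfib\Fun_{\eK}(D,C\times B)$ (guaranteed by axiom \ref{defn:cosmos}\ref{defn:cosmos:b}): lift the 2-simplex witnessing the homotopy to replace the connecting 1-arrow by one lying strictly over the degenerate edge, using that the relevant horn has an invertible edge where needed. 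This rectification is precisely the content of the fullness-plus-conservativity half of the smothering-functor lemma in the cited proof, so your argument is not shorter than the official one -- it simply relocates the same combinatorial work. Step 3 is likewise asserted rather than checked, but it is a routine unwinding of the cotensor adjunction and I would accept it once the rectification step is in place.
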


We refer the curious reader interested in more details to \S\refI{subsec:weak-2-limits} or \S\refIV{ssec:abstract}.

\begin{obs}\label{obs:trans.induce.comma}
As an application of the weak universal property, we may generalise the construction of Proposition~\ref{prop:trans-comma}
  to diagrams in $\ho_*\eK$ of the following form:
  \begin{equation}\label{eq:trans.induce.diag}
    \xymatrix@R=1.5em@C=5em{
      {C}\ar[r]^{g}\ar[d]_{c} & {A}\ar[d]_{a}
      \ar@{}[dr]\ar@{<=}?(0.4);?(0.6)^{\alpha}
      & {B}\ar[l]_{f}\ar[d]^{b} \\
      {C'}\ar[r]_{g'}\ar@{}[ur]\ar@{<=}?(0.4);?(0.6)^{\beta} &
      {A'} & {B'}\ar[l]^{f'}
    }
  \end{equation}
  This may be glued onto the square that displays the comma $f\comma g$ \eqref{eq:comma-arrow} to give the pasted square on the left of the following diagram
  \begin{equation}\label{eq:trans.induce.diag2}
    \vcenter{\xymatrix@R=1.5em@C=1.5em{
        & {f\comma g}\ar[dl]_{p_1}\ar[dr]^{p_0} & \\
        {C}\ar[d]_-{c}\ar[dr]^{g}
        \ar@{}[rr]\ar@{<=}?(0.4);?(0.6)^{\phi_{f,g}} &&
        {B}\ar[d]^-{b}\ar[dl]_{f} \\
        {C'}\ar[dr]_{g'} & {A}\ar[d]^-{a} & {B'}\ar[dl]^{f'} \\
        & {A'} &
        \ar@{}"3,1";"3,2"\ar@{<=}?(0.35);?(0.65)^{\beta}
        \ar@{}"3,2";"3,3"\ar@{<=}?(0.35);?(0.65)^{\alpha}
      }}
    \mkern20mu = \mkern20mu
    \vcenter{\xymatrix@R=1.5em@C=1.5em{
        & {f\comma g}\ar@{.>}[d]_{\exists}^{\alpha\downarrow\beta} & \\
        & {f'\comma g'}\ar[dl]_{p'_1}\ar[dr]^{p'_0} & \\
        {C'}\ar[dr]_{g'}\ar@{}[rr]\ar@{<=}?(0.4);?(0.6)^{\phi_{f',g'}} &&
        {B'}\ar[dl]^{f'} \\
        & {A'} &
      }}
  \end{equation}
  which induces a functor $\alpha\comma\beta$ as shown on the right, by the weak 2-universal property of $f'\comma g'$. Indeed, the Proposition \ref{prop:1-univ-comma} tells us that $\alpha\comma\beta$ is a representative of a uniquely determined isomorphism class of such functors in $\eK_{/C'\times B'}$.

  This construction is functorial in the following sense, suppose that we are given a second diagram of the form given in~\eqref{eq:trans.induce.diag}:
  \begin{equation*}
    \xymatrix@R=1.5em@C=5em{
      {C'}\ar[r]^{g'}\ar[d]_{c'} & {A'}\ar[d]_{a'}
      \ar@{}[dr]\ar@{<=}?(0.4);?(0.6)^{\alpha'}
      & {B'}\ar[l]_{f'}\ar[d]^{b'} \\
      {C''}\ar[r]_{g''}\ar@{}[ur]\ar@{<=}?(0.4);?(0.6)^{\beta'} &
      {A''} & {B''}\ar[l]^{f''}
    }
  \end{equation*}
  We may juxtapose these two diagrams vertically to give the following diagram
  \begin{equation*}
    \vcenter{\xymatrix@R=1.5em@C=1.5em{
        & {f\comma g}\ar[dl]_{p_1}\ar[dr]^{p_0} & \\
        {C}\ar[d]_-{c}\ar[dr]^{g}
        \ar@{}[rr]\ar@{<=}?(0.4);?(0.6)^{\phi_{f,g}} &&
        {B}\ar[d]^-{b}\ar[dl]_{f} \\
        {C'}\ar[dr]^*-{\scriptstyle g'}\ar[d]_-{c'} & {A}\ar[d]^-{a} &
        {B'}\ar[dl]_*-{\scriptstyle f'}\ar[d]^-{b'} \\
        {C''}\ar[dr]_{g''} & {A'}\ar[d]^-{a'} & {B''}\ar[dl]^{f''} \\
        & {A''} &
        \ar@{}"3,1";"3,2"\ar@{<=}?(0.35);?(0.65)^{\beta}
        \ar@{}"3,2";"3,3"\ar@{<=}?(0.35);?(0.65)^{\alpha}
        \ar@{}"4,1";"4,2"\ar@{<=}?(0.35);?(0.65)^{\beta'}
        \ar@{}"4,2";"4,3"\ar@{<=}?(0.35);?(0.65)^{\alpha'}
      }}
    \mkern20mu = \mkern20mu
    \vcenter{\xymatrix@R=1.5em@C=1.5em{
        & {f\comma g}\ar@{.>}[d]^{\alpha\downarrow\beta} & \\
        & {f'\comma g'}\ar[dl]_{p'_1}\ar[dr]^{p'_0} & \\
        {C'}\ar[dr]^*-{\scriptstyle g'}\ar[d]_-{c'} &  &
        {B'}\ar[dl]_*-{\scriptstyle f'}\ar[d]^-{b'} \\
        {C''}\ar[dr]_{g''} & {A'}\ar[d]^-{a'} & {B''}\ar[dl]^{f''} \\
        & {A''} &
        \ar@{}"3,1";"3,3"\ar@{<=}?(0.4);?(0.6)^{\phi_{f',g'}}
        \ar@{}"4,1";"4,2"\ar@{<=}?(0.35);?(0.65)^{\beta'}
        \ar@{}"4,2";"4,3"\ar@{<=}?(0.35);?(0.65)^{\alpha'}
      }}
    \mkern20mu = \mkern20mu
    \vcenter{\xymatrix@R=1.5em@C=1.5em{
        & {f\comma g}\ar@{.>}[d]^{\alpha\downarrow\beta} & \\
        & {f'\comma g'}\ar@{.>}[d]^{\alpha'\downarrow\beta'} & \\
        & {f''\comma g''}\ar[dl]_{p''_1}\ar[dr]^{p''_0} & \\
        {C''}\ar[dr]_{g''}\ar@{}[rr]\ar@{<=}?(0.4);?(0.6)^{\phi_{f'',g''}} &&
        {B''}\ar[dl]^{f''} \\
        & {A''} &
      }}
  \end{equation*}
  in which we've applied the induction process depicted in~\eqref{eq:trans.induce.diag2} twice. Alternatively, take the same diagram on the left and start by forming the pasting composites of each column of squares
  \begin{equation*}
    \vcenter{\xymatrix@R=1.5em@C=1.5em{
        & {f\comma g}\ar[dl]_{p_1}\ar[dr]^{p_0} & \\
        {C}\ar[d]_-{c}\ar[dr]^{g}
        \ar@{}[rr]\ar@{<=}?(0.4);?(0.6)^{\phi_{f,g}} &&
        {B}\ar[d]^-{b}\ar[dl]_{f} \\
        {C'}\ar[dr]^*-{\scriptstyle g'}\ar[d]_-{c'} & {A}\ar[d]^-{a} &
        {B'}\ar[dl]_*-{\scriptstyle f'}\ar[d]^-{b'} \\
        {C''}\ar[dr]_{g''} & {A'}\ar[d]^-{a'} & {B''}\ar[dl]^{f''} \\
        & {A''} &
        \ar@{}"3,1";"3,2"\ar@{<=}?(0.35);?(0.65)^{\beta}
        \ar@{}"3,2";"3,3"\ar@{<=}?(0.35);?(0.65)^{\alpha}
        \ar@{}"4,1";"4,2"\ar@{<=}?(0.35);?(0.65)^{\beta'}
        \ar@{}"4,2";"4,3"\ar@{<=}?(0.35);?(0.65)^{\alpha'}
      }}
    \mkern20mu = \mkern20mu
    \vcenter{\xymatrix@R=1.5em@C=1.5em{
        & {f\comma g}\ar[dl]_{p_1}\ar[dr]^{p_0} & \\
        {C}\ar[dd]_-{c'c}\ar[dr]^{g}
        \ar@{}[rr]\ar@{<=}?(0.4);?(0.6)^{\phi_{f,g}} &&
        {B}\ar[dd]^-{b'b}\ar[dl]_{f} \\
        & {A}\ar[dd]^(0.7){a'a} & \\
        {C''}\ar[dr]_{g''} & & {B''}\ar[dl]^{f''} \\
        & {A''} &
        \ar@{}@<1.5ex>"4,1";"3,2"
        \ar@{<=}?(0.35);?(0.65)_(0.2){\beta'r\cdot p'\beta}
        \ar@{}@<-1.5ex>"3,2";"4,3"
        \ar@{<=}?(0.35);?(0.65)^(0.2){p'\alpha\cdot\alpha'q}
      }}
    \mkern20mu = \mkern20mu
    \vcenter{\xymatrix@R=1.5em@C=1.5em{
        & {f\comma g}
        \ar@{.>}[dd]^{(p'\alpha\cdot\alpha'q)\downarrow
          (\beta'r\cdot p'\beta)} & \\
        &  & \\
        & {f''\comma g''}\ar[dl]_{p''_1}\ar[dr]^{p''_0} & \\
        {C''}\ar[dr]_{g''}\ar@{}[rr]\ar@{<=}?(0.4);?(0.6)^{\phi_{f'',g''}} &&
        {B''}\ar[dl]^{f''} \\
        & {A''} &
      }}
  \end{equation*}
  then apply the induction process depicted in~\eqref{eq:trans.induce.diag2} only once. It follows that the functors $(\alpha'\comma\beta') (\alpha\comma\beta)$ and $(p'\alpha\cdot\alpha'q) \downarrow(\beta'r\cdot p'\beta)$ are both induced by the same diagram on the left under the weak 2-universal property of $f''\comma g''$, consequently there exists an isomorphism between them in $\eK_{/C''\times B''}$.
\end{obs}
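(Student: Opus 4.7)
The plan is to establish the functoriality isomorphism by appealing to the uniqueness-up-to-isomorphism clause of Proposition \ref{prop:1-univ-comma}. That proposition characterises isomorphism classes of 1-cells $D\to f''\comma g''$ in $\ho_*\eK_{/C''\times B''}$ by the 2-cells they produce after post-composition with the defining comma square $\phi_{f'',g''}$. It therefore suffices to show that the two candidate 1-cells, namely the composite $(\alpha'\comma\beta')(\alpha\comma\beta)$ and the directly-induced $(p'\alpha\cdot\alpha'q)\comma(\beta'r\cdot p'\beta)$, from $f\comma g$ to $f''\comma g''$ induce the same 2-cell under this correspondence; the isomorphism in $\eK_{/C''\times B''}$ will then follow formally.

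First I would compute the 2-cell induced by the composite $(\alpha'\comma\beta')(\alpha\comma\beta)$. By the defining equation of $\alpha'\comma\beta'$ (the analogue of \eqref{eq:trans.induce.diag2} applied to the primed data), post-composition of $\phi_{f'',g''}$ with $\alpha'\comma\beta'$ equals the pasting of $\phi_{f',g'}$ against the lower-row transformations $\alpha'$, $\beta'$, and $a'$. Pre-composing this with $\alpha\comma\beta$ and then invoking the defining equation of $\alpha\comma\beta$ in turn produces the full four-row pasting composite displayed on the left of the final equation in the observation.

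Second I would check that $(p'\alpha\cdot\alpha'q)\comma(\beta'r\cdot p'\beta)$ induces the same 2-cell. Its defining equation identifies the post-composition of $\phi_{f'',g''}$ with this functor as the three-row paste of $\phi_{f,g}$, the middle vertical composite $a'a$, and the side vertical composites $\alpha'q\cdot p'\alpha$ and $\beta'r\cdot p'\beta$. A routine application of the middle-four interchange law in the homotopy 2-category $\ho_*\eK$ shows that this three-row paste is equal to the four-row composite obtained in the previous step. The uniqueness clause of Proposition \ref{prop:1-univ-comma} then delivers the required isomorphism in $\eK_{/C''\times B''}$.

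The principal obstacle is essentially bookkeeping: correctly tracking the directions of the 2-cells $\alpha$, $\beta$, $\alpha'$, $\beta'$, and $\phi$, and organising the pasting composites so that the interchange law can be applied cleanly. No new conceptual ingredients are required beyond the weak 2-universal property of comma objects already exploited in the construction of $\alpha\comma\beta$, so the entire argument is a formal manipulation inside the homotopy 2-category.
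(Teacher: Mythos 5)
Your proposal is correct and follows essentially the same route as the paper: both arguments identify the composite $(\alpha'\comma\beta')(\alpha\comma\beta)$ and the directly-induced $(p'\alpha\cdot\alpha'q)\comma(\beta'r\cdot p'\beta)$ as being induced by one and the same four-row pasting diagram --- evaluated once by collapsing in two stages and once by first composing the columns via interchange --- and then invoke the uniqueness-up-to-isomorphism clause of Proposition~\ref{prop:1-univ-comma}. The middle-four interchange step you isolate is exactly what the paper performs when it ``forms the pasting composites of each column of squares.''
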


For instance, suppose that we are given triangle as in~\eqref{eq:abs-right-lifting} of Definition~\ref{defn:absolute-right-lifting}, then we may apply the construction of Observation \ref{obs:trans.induce.comma} to the diagram
  \begin{equation*}
    \xymatrix@=1.5em{
      {C}\ar[r]^{\ell}\ar@{=}[d] & {B}\ar[d]^{f} &
      {B}\ar@{=}[l]\ar@{=}[d] \\
      {C}\ar[r]_{g} & {A} & {B}\ar[l]^{f}
      \ar@{}"1,2";"2,1"\ar@{=>}?(0.35);?(0.65)_{\lambda}
    }
  \end{equation*} 
  to give a functor $B\comma\lambda\colon B\comma \ell\to f\comma g$ fibred over $C\times B$, which detects whether the diagram \eqref{eq:abs-right-lifting} is absolute right lifting:

\begin{prop}[\refI{prop:absliftingtranslation}]\label{prop:absliftingtranslation}
  Given a cospan $C \xrightarrow{g} A \xleftarrow{f} B$ then there exists an equivalence $f\comma g \simeq B \comma \ell$ over $C \times B$
 if and only if there exists an absolute right lifting diagram of the following form
  \begin{equation*}
    \xymatrix{
      \ar@{}[dr]|(.7){\Downarrow\lambda} & B \ar[d]^f \\
      C \ar[ur]^\ell \ar[r]_g & A}
  \end{equation*}
\end{prop}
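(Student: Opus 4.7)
My plan is to establish the correspondence between the equivalence class of the map $B\comma\lambda \colon B \comma \ell \to f \comma g$ over $C \times B$ and the 2-cell $\lambda \colon f\ell \To g$, and then to show that this map is a fibred equivalence precisely when $\lambda$ enjoys the absolute right lifting property. The first half is provided by Proposition \ref{prop:1-univ-comma} together with the construction immediately preceding the statement of the proposition: the 2-cell $\lambda$ is translated into the induced functor $B\comma\lambda$ over $C \times B$, uniquely determined up to isomorphism in the slice 2-category $(\ho_*\eK)_{/C\times B}$. So the content of the proposition reduces to proving that this functor is an equivalence in $\eK_{/C\times B}$ if and only if $\lambda$ is an absolute right lifting.

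I would then chain together the representable characterisations of the two conditions. On the one hand, by Observation \ref{obs:2-cat-rep-abs-lifting}, $\lambda$ is an absolute right lifting diagram if and only if for every object $X\in\eK$ the induced functor $\hom(X, B) \comma \hom(X, \ell) \to \hom(X, f)\comma \hom(X, g)$ is an isomorphism of comma categories, naturally in $X$. But this is precisely the functor $\hom(X, B\comma\lambda)$, computed using the fact that the hom-category functor sends the defining pullback of a comma $\infty$-category to the corresponding comma category in $\eop{Cat}$. On the other hand, $B\comma\lambda$ is an equivalence in $\eK_{/C \times B}$ if and only if it is an equivalence in $(\ho_*\eK)_{/C\times B}$, which is in turn detected by the induced equivalences on all $\hom(X, -)$ via a 2-categorical Yoneda argument. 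Composing these two characterisations yields the biconditional.

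The main obstacle I anticipate is bridging the gap between the $\eop{Cat}$-level isomorphism of comma categories delivered by the absolute right lifting property and the equivalence of comma quasi-categories $\Fun_{\eK}(X, B\comma\ell) \to \Fun_{\eK}(X, f \comma g)$ that one would naively require to conclude that $B\comma\lambda$ is an equivalence in $\eK$. The resolution lies in the second clause of Observation \ref{obs:2-cat-rep-abs-lifting}: the right exactness condition ensures the appropriate naturality in $X$, which together with the representable detection of equivalences in an $\infty$-cosmos suffices to convert the pointwise isomorphisms on homotopy categories into a genuine fibred equivalence of comma $\infty$-categories over $C \times B$. For the converse direction, the induced 2-cell $\lambda$ is recovered from a chosen fibred equivalence by whiskering with the canonical comma 2-cell $\phi_{f,g}$ in \eqref{eq:comma-arrow}.
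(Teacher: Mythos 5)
Your reduction to a representable criterion stumbles on precisely the subtlety this proposition is designed to negotiate: the identification of $\hom(X,f\comma g)$ with the strict comma category $\hom(X,f)\comma\hom(X,g)$ is false. The functor $\Fun_{\eK}(X,-)$ does carry the defining pullback of $f\comma g$ to the comma \emph{quasi-category} of $\Fun_{\eK}(X,f)$ and $\Fun_{\eK}(X,g)$, but the homotopy category functor does not commute with the comma construction: the canonical comparison
\[
\ho\Fun_{\eK}(X,f\comma g)\longrightarrow \hom(X,f)\comma\hom(X,g)
\]
is only \emph{smothering} --- surjective on objects, full, and conservative, but not faithful --- which is exactly why the comma $\infty$-category enjoys merely the \emph{weak} 2-universal property recorded in Proposition~\ref{prop:1-univ-comma}, classifying isomorphism \emph{classes} of fibred 1-cells rather than giving an isomorphism of hom-categories. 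Consequently the isomorphism of strict comma categories supplied by Observation~\ref{obs:2-cat-rep-abs-lifting} neither is, nor directly detects, an equivalence $\ho\Fun_{\eK}(X,B\comma\ell)\to\ho\Fun_{\eK}(X,f\comma g)$: one cannot cancel smothering functors across the comparison square, since they are not equivalences. The ``right exactness'' clause you invoke only governs naturality in $X$; it does nothing to bridge this gap.

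The proof in the first paper of this series instead argues directly with the weak universal property. For the forward direction one applies the absolute lifting property of $\lambda$ to the comma cone $\phi_{f,g}\colon fp_0\To gp_1$ of $f\comma g$ to obtain a 2-cell $p_0\To\ell p_1$, hence by Proposition~\ref{prop:1-univ-comma} a fibred functor $f\comma g\to B\comma\ell$; the two composites with $B\comma\lambda$ are then shown to be fibred-isomorphic to identities because they represent the same 2-cells as the identities do, and Proposition~\ref{prop:1-univ-comma} identifies fibred 1-cells up to isomorphism with the 2-cells they induce. For the converse, an arbitrary fibred equivalence transports 2-cells $fb\To gc$ (classified up to isomorphism by fibred maps into $f\comma g$) to fibred maps into $B\comma\ell$ and hence to 2-cells $b\To\ell c$; existence and uniqueness of the factorisation through $\lambda$ are then verified using, respectively, the fullness and the conservativity of the smothering comparison. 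Your recovery of $\lambda$ by whiskering the comma cone is the right move in the converse direction, but in both directions the verification must pass through this weak-2-limit calculus rather than through a strict representable isomorphism, and as written your argument does not do so.
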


\begin{rmk}\label{rmk:trans-induct-rel}
  The construction in Observation~\ref{obs:trans.induce.comma} may clearly be regarded as being a generalisation of that discussed in Proposition~\ref{prop:trans-comma}. There is, however, a good reason for distinguishing them in the way we have. On the one hand, the construction in Proposition~\ref{prop:trans-comma}  relies only upon the strict universal property of the comma construction, and so it delivers a uniquely determined map. On the other hand, the construction in Observation~\ref{obs:trans.induce.comma} depends upon a choice of functors representing a 2-cell and so is only defined up to isomorphism. It is, nevertheless, the case that when both constructions apply the first provides a specific choice which is certainly a member of the isomorphism class determined by the second.

  This distinction becomes particularly important in situations where it is important to infer that the map induced by a transformation of cospans is an isofibration. As we have seen such results hold for the construction of Proposition~\ref{prop:trans-comma}, but they cannot reasonably be expected to hold for that of Observation~\ref{obs:trans.induce.comma}, simply because the class of isofibrations is not closed under isomorphisms at the 2-cell level.
\end{rmk}


\section{Cartesian fibrations and modules}\label{sec:cartesian}

In \S\ref{ssec:cartesian} we review the notions of \emph{cartesian} and \emph{cocartesian fibrations} between $\infty$-cat\-e\-gor\-ies, which combine to yield a two-sided structure we refer to as a \emph{module} between $\infty$-cat\-e\-gor\-ies; elsewhere this notion is called a \emph{profunctor} or \emph{correspondence}. Again, we neglect to describe the full theory developed in \cite{RiehlVerity:2015fy}, only recalling those aspects necessary to understand the Yoneda embeddings of \S\ref{sec:comprehension-yoneda}, which represent an element of an $\infty$-category $A$ as a module from $A$ to $1$ or $1$ to $A$.

In \S\ref{sec:limits-colimits-fibrations}, we prove that cartesian or cocartesian fibrations over a fixed base assemble into an $\infty$-cosmos, with cosmological structure inherited from the sliced $\infty$-cosmos $\eK_{/B}$. This gives a new context for the results of \cite{RiehlVerity:2017cc} and allows to immediately apply the main theorem of \cite{RiehlVerity:2018rq} to the large quasi-categories of cocartesian fibrations, cartesian fibrations, or modules with a fixed base; see Examples \ref{ex:comp-fibs} and \ref{ex:comp-mods}.

\subsection{Cartesian fibrations and modules}\label{ssec:cartesian}

Cocartesian fibrations are isofibrations $p \colon E \tfib B$ in an $\infty$-cosmos $\eK$ whose fibres depend functorially on the base, in a sense described by a lifting property for certain 2-cells.  \emph{Cartesian fibrations} are cocartesian fibrations in the dual $\infty$-cosmos $\eK\co$ of Definition \ref{defn:dual-cosmoi}.

 Cocartesian fibrations can be characterised internally to the $\infty$-cosmos via adjoint functors involving comma $\infty$-categories, for which we now establish notation.
 
 \begin{ntn}\label{ntn:comma-adjoints}
 For any isofibration $p \colon E \tfib B$, there exist canonically defined functors 
 \[
\xymatrix{ E \ar@{-->}[dr]^i  \ar@{=}@/_2ex/[ddr] \ar[r]^\Delta & E^\cattwo \ar@{->>}[dr]^{p^\cattwo} & & E^\cattwo  \ar@{-->}[dr]^k \ar@{->>}@/_2ex/[ddr]_{p_0} \ar@{->>}@/^2ex/[drr]^{p^\cattwo}\\ & p \comma B \ar@{->>}[r] \pbexcursion \ar@{->>}[d]_{p_0} & B^\cattwo \ar@{->>}[d]^{p_0} & & p \comma B \ar@{->>}[r] \pbexcursion \ar@{->>}[d]_{p_0} & B^\cattwo \ar@{->>}[d]^{p_0} \\ & E \ar@{->>}[r]_p & B & & E \ar@{->>}[r]_p & B}
\]
Note, $k$ is the map $\comma(E,p,p)\colon E^\cattwo\to p\comma B$, so by Proposition~\ref{prop:trans-comma} it is an isofibration.
  \end{ntn}
    
   \begin{defn}[{\refIV{thm:cart.fib.chars}}]\label{defn:cocart-fibration}
An isofibration $p \colon E \tfib B$ is a \emph{cocartesian fibration} if either of the following equivalent conditions hold:
  \begin{enumerate}[label=(\roman*)]
        \item\label{itm:cocart.fib.chars.ii} The functor $i\colon E\to p\comma B$ admits a left adjoint in the slice $\infty$-cosmos $\eK_{/B}$:
    \begin{equation}\label{eq:cocartesian.fib.adj}
      \xymatrix@R=2em@C=3em{
        {E}\ar@{->>}[dr]_{p} \ar@/_0.8pc/[]!R(0.5);[rr]_{i}^{}="a" & &
        {p \comma B}\ar@{->>}[dl]^{p_1} \ar@{-->}@/_0.8pc/[ll]!R(0.5)_{\ell}^{}="b" 
        \ar@{}"a";"b"|{\bot} \\
        & B &
      }
    \end{equation}
  \item\label{itm:cocart.fib.chars.iii} The functor $k\colon E^\cattwo\to p\comma B$ admits a left adjoint right inverse in $\eK$:
    \begin{equation}\label{eq:cartesian.isosect.adj}
    \xymatrix@C=6em{
      {E^\cattwo}\ar@/_0.8pc/[]!R(0.6);[r]!L(0.45)_{k}^{}="u" &
      {p \comma B}\ar@{-->}@/_0.8pc/[]!L(0.45);[l]!R(0.6)_{\bar{\ell}}^{}="t"
      \ar@{}"u";"t"|(0.6){\bot}
    }
    \end{equation}
  \end{enumerate}
\end{defn}

Recall from Lemma \refVII{lem:groupoidal-object}, that an object $E \in \eK$ in an $\infty$-cosmos is \emph{groupoidal} if every functor space $\Fun_{\eK}(X,E)$ with codomain $E$ is a Kan complex. Intuitively, a groupoidal cocartesian fibration is a cocartesian fibration whose fibres are groupoidal $\infty$-categories, though for exotic $\infty$-cosmoi the following definition is somewhat stronger:

\begin{defn}[{\refIV{prop:iso-groupoidal-char}}]
  An isofibration $p\colon E\tfib B$  is a {\em groupoidal cocartesian fibration\/} if and only if either of the following equivalent conditions hold:
\begin{enumerate}[label=(\roman*)]
  \item It  is a cartesian fibration and it is groupoidal as an object of the slice $\eK_{/B}$.
  \item The functor $k\colon E^\cattwo\to p \comma B$ is an equivalence.
  \end{enumerate}
\end{defn}

\begin{defn}[{\refIV{thm:cart.fun.chars}}]\label{defn:cartesian-functor}
  Given two cocartesian fibrations $p\colon E\tfib B$ and
  $q\colon F\tfib A$ in $\eK$, then a pair of functors $(g,f)$ in the
  following commutative square
  \begin{equation}\label{eq:cart.fun}
    \xymatrix{{F}\ar[r]^{g}\ar@{->>}[d]_-{q} & {E} \ar@{->>}[d]^-{p} \\ {A}\ar[r]_{f} & {B}}
  \end{equation}
  comprise a {\em cartesian functor\/} if and only if the mate of either (and
  thus both) of the commutative squares
  \begin{equation*}
    \xymatrix{{F}\ar[r]^{g}\ar[d]_{i} & {E} \ar[d]^{i} \\
      {q\comma A}\ar[r]_{\comma(g,f, f)} & {p\comma B}}
    \mkern40mu
    \xymatrix{{F^{\cattwo}}\ar[r]^{g^{\cattwo}}\ar@{->>}[d]_{k} &
      {E^{\cattwo}} \ar@{->>}[d]^{k} \\ {q\comma A}\ar[r]_{\comma (g,f,f)} & {p\comma B}}
  \end{equation*}
  under the adjunctions of Definition \ref{defn:cocart-fibration} is an isomorphism. 
\end{defn}


Pullbacks provide an important source of cartesian functors.

\begin{prop}[{\refIV{prop:cart-fib-pullback} and \refIV{cor:groupoidal-pullback}}]\label{prop:cart-fib-pullback}
Consider a simplicial pullback
\[ \xymatrix{ F \pbexcursion \ar@{->>}[d]_q \ar[r]^g & E \ar@{->>}[d]^p \\ A \ar[r]_f & B}\] in $\eK$. If $p \colon E \tfib B$ is a (groupoidal) cocartesian fibration, then $q \colon F \tfib A$ is a (groupoidal) cocartesian fibration and the pullback square defines a cartesian functor. 
\end{prop}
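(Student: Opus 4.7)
The plan is to verify the conditions of Definition \ref{defn:cocart-fibration} for $q$ by pulling back the adjoint structure that witnesses $p$ being cocartesian. I will use characterisation \ref{itm:cocart.fib.chars.iii}, since this involves an adjunction in $\eK$ itself (rather than in a slice). The first task is to show that the commutative square
\[
\xymatrix{ F^\cattwo \ar[r]^{g^\cattwo} \ar@{->>}[d]_{k_q} & E^\cattwo \ar@{->>}[d]^{k_p} \\ q \comma A \ar[r]_{\comma(g,f,f)} & p \comma B}
\]
obtained from the pullback hypothesis and the functoriality of the comma construction (Proposition \ref{prop:trans-comma}) is itself a pullback in $\eK$. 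Since the cotensor $(-)^\cattwo$ and the comma construction are built from simplicially enriched limits (and since $F = A \times_B E$ by hypothesis), this follows from the standard commutation of limits: $F^\cattwo \cong A^\cattwo \times_{B^\cattwo} E^\cattwo$ and $q \comma A \cong F \times_A A^\cattwo \cong (A \times_B E) \times_A A^\cattwo$, and these fit together so that the displayed square becomes the outer face of an iterated pullback diagram.

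With Step 1 in hand, I would transport the left adjoint right inverse $\bar\ell_p \colon p\comma B \to E^\cattwo$ across the pullback. The pair of maps $\id_{q\comma A}$ and $\bar\ell_p \circ \comma(g,f,f)$ is compatible under $k_p$ (since $k_p \bar\ell_p = \id$), so by the pullback in Step 1 it induces a unique functor $\bar\ell_q \colon q\comma A \to F^\cattwo$ with $k_q \bar\ell_q = \id_{q\comma A}$ and $g^\cattwo \bar\ell_q = \bar\ell_p \comma(g,f,f)$. The counit $\id = k_p\bar\ell_p$ transports to the identity counit $\id = k_q\bar\ell_q$, while the unit $\eta_p \colon \id \To \bar\ell_p k_p$ pulls back to a unit $\eta_q$ for $\bar\ell_q \dashv k_q$ (using the fact that $F^\cattwo$ is the pullback and that $\bar\ell_p k_p$ is the identity on the $q\comma A$ factor). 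The triangle identities then transport componentwise, so $\bar\ell_q \dashv k_q$ is a left adjoint right inverse in $\eK$, exhibiting $q$ as a cocartesian fibration. The identity $g^\cattwo \bar\ell_q = \bar\ell_p \comma(g,f,f)$ is precisely the statement that the mate of the right-hand square of Definition \ref{defn:cartesian-functor} is the identity, whence $(g,f)$ is a cartesian functor.

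For the groupoidal case, recall that $p$ is groupoidal cocartesian iff $k_p$ is an equivalence; since $k_p$ is an isofibration (Notation \ref{ntn:comma-adjoints}), it is then a trivial fibration. By axiom \ref{defn:cosmos:b}, the pullback $k_q$ of $k_p$ established in Step 1 is also a trivial fibration, hence an equivalence, so $q$ is groupoidal cocartesian. I expect Step 1 to be the main obstacle: all subsequent steps are formal manipulations of adjoint data, but verifying the pullback compatibility requires carefully unpacking the nested limit definitions of $(-)^\cattwo$ and $-\comma-$ and applying Fubini for limits.
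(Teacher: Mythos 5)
This proposition is recalled here from Part IV rather than proved, so there is no in-paper proof to compare against; I will assess your argument on its own terms. Your Step 1 is correct and is indeed the crux: the square relating $k_q$ to $k_p$ is a genuine simplicial pullback, by commuting the limits defining $(-)^\cattwo$ and $-\comma-$ with the pullback $F \cong A \times_B E$. The construction of the \emph{functor} $\bar\ell_q$ by the strict universal property of this pullback is also fine, as is the groupoidal addendum ($k_p$ a trivial fibration implies its pullback $k_q$ is one), and — granting the adjunction — your identification of the mate with the identity does establish the cartesian functor claim.

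The gap is in the sentence ``the unit \ldots pulls back'' and ``the triangle identities then transport componentwise.'' (Minor quibble first: for $\bar\ell \dashv k$ the $2$-cell $\bar\ell_p k_p \To \id$ is the \emph{counit}, and you have its direction reversed; the identity $k\bar\ell = \id$ is the unit of a LARI.) The substantive problem is that $2$-cells in the homotopy $2$-category do \emph{not} glue along a pullback of $\infty$-categories: $\ho\Fun_{\eK}(X,F^\cattwo)$ is not the pullback of $\ho\Fun_{\eK}(X,q\comma A)$ and $\ho\Fun_{\eK}(X,E^\cattwo)$ over $\ho\Fun_{\eK}(X,p\comma B)$. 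To induce the counit $\bar\ell_q k_q \To \id_{F^\cattwo}$ you must produce an actual $1$-arrow in the strict pullback $\Fun_{\eK}(F^\cattwo,q\comma A)\times_{\Fun_{\eK}(F^\cattwo,p\comma B)}\Fun_{\eK}(F^\cattwo,E^\cattwo)$, i.e.\ a representative of $\epsilon_p$ whose image under $(k_p)_*$ is degenerate \emph{on the nose}, whereas the triangle identity only guarantees this up to homotopy; the same strictness issue recurs when verifying $\epsilon_q\bar\ell_q = \id$ componentwise. This can be repaired (one strictifies $\epsilon_p$ by lifting the relevant homotopy through the isofibration $\Fun_{\eK}(E^\cattwo,k_p)$ — essentially the ``smothering functor'' technology of Part I), but as written the step is unjustified. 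The argument of Part IV sidesteps the issue entirely by using characterisation \ref{itm:cocart.fib.chars.ii} instead: pullback along $f$ defines a cosmological functor $f^*\colon \eK_{/B}\to\eK_{/A}$, hence a $2$-functor $\ho_*(\eK_{/B})\to\ho_*(\eK_{/A})$, which carries the adjunction $\ell_p\dashv i_p$ to an adjunction $\ell_q\dashv i_q$ with no gluing of $2$-cells required; your Step 1 computation reappears there only in the guise of the identification $f^*(p\comma B)\cong q\comma A$. I would recommend either adopting that route or supplying the strictification lemma explicitly.
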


A \emph{module} from $A$ to $B$ is an $\infty$-category upon which $A$ acts covariantly and $B$ acts contravariantly. The calculus of modules, developed in \S\refV{sec:virtual}, is not needed here.

\begin{defn}\label{defn:module} In an $\infty$-cosmos $\eK$, a \emph{module $E$
    from $A$ to $B$} is given by an
  isofibration $(q,p) \colon E \tfib A \times B$ such that
  \begin{enumerate}[label=(\roman*)]
\item\label{itm:cartesian-on-the-right} $\vcenter{\xymatrix@=1em{ E \ar@{->>}[dr]_q \ar[rr]^-{(q,p)} & & A \times B \ar@{->>}[dl]^{\pi_1} \\ & A}}$ is a \emph{cartesian fibration} in  $\eK_{/A}$; informally, ``$B$ acts on the right of $E$, over $A$.'' 
\item\label{itm:cocartesian-on-the-left}  $\vcenter{\xymatrix@=1em{ E \ar@{->>}[dr]_p \ar[rr]^-{(q,p)} & & A \times B \ar@{->>}[dl]^{\pi_0} \\ & B}}$  is a \emph{cocartesian fibration} in $\eK_{/B}$; informally,  ``$A$ acts on the left of $E$, over $B$.'' 
\item\label{itm:groupoidal-fibers} $(q,p) \colon E \tfib A \times B$ is groupoidal as an object in $\eK_{/A \times B}$.
\end{enumerate}
Note that \ref{itm:groupoidal-fibers} implies in particular that  $(q,p) \colon E \tfib A \times B$ has groupoidal fibres and that the sliced (co)cartesian fibrations in \ref{itm:cartesian-on-the-right} and \ref{itm:cocartesian-on-the-left} are both groupoidal.
\end{defn}

\begin{ex}[groupoidal (co)cartesian fibrations as
  modules]\label{ex:modules-over-1} A module from $1$ to $A$ is exactly a
  groupoidal cartesian fibration over $A$, while a module from $A$ to $1$ is
  exactly a groupoidal cocartesian fibration over $A$.
\end{ex}

\begin{ex}[{\refV{prop:hom-is-a-module}}]\label{ex:hom-is-a-module} For any
  $\infty$-category $A$ the arrow $\infty$-category $A^\cattwo$ and its associated projections $(p_1,p_0) \colon A^\cattwo \tfib A \times A$ give a module $A^\cattwo$ from $A$ to $A$ called the \emph{unit} module on $A$.
\end{ex}

\begin{defn}[bi-fibres of modules]
  Suppose that $E$ is a module from $A$ to $B$ and that $g\colon C\to A$ and
  $f\colon D\to B$ are functors in $\eK$. We may form the following
  pullback
  \begin{equation*}
    \xymatrix@=2em{
      {E(f,g)}\pbexcursion\ar[r]\ar@{->>}[d]_{(s,r)} &
      {E}\ar@{->>}[d]^{(q,p)} \\
      {C\times D}\ar[r]_-{(g,f)} & {A\times B}
    }
  \end{equation*}
  and recall from~\refV{prop:two-sided-pullback} that the left-hand vertical
  again defines a module $E(f,g)$ from $C$ to $D$. We call this the
  \emph{bi-fibre\/} of $E$ over the pair $(f,g)$.
\end{defn}

\begin{ex}[hom-space modules]\label{ex:hom.spaces}
  Given an $\infty$-category $A$ it is sometimes illuminating to write $\Hom_A$ for the module $A^{\cattwo}$ and refer to it as the \emph{hom-space module}. Functors $a\colon X\to A$ and $a'\colon X'\to A$ define a pair of generalised elements of $A$, so we may regard the  bi-fibre $\Hom_{A}(a,a')$ of $\Hom_A$ as being the hom-space of arrows from $a$ to $a'$ in $A$; this is, of course, isomorphic to the comma object $a\comma a'$. In the particular case of a quasi-category $\qA$ and objects $a,a'\colon \Del^0\to \qA$, the hom-space $\Hom_{\qA}(a',a)$ is simply the usual hom-space Kan complex of $\qA$, as for instance discussed in~\refVI{defn:qcat-comma-hom} or \cite[\S 1.2.2]{Lurie:2009fk}.

  Given a functor $f\colon A\to B$ of $\infty$-categories in $\eK$ the commutative diagram
  \begin{equation*}
    \xymatrix@R=1.5em@C=2em{
      {A}\ar@{=}[r]\ar@{=}[d] & {A}\ar[d]^{f} & {A}\ar@{=}[l]\ar@{=}[d] \\
      {A}\ar[r]_{f} & {B} & {A}\ar[l]^{f}
    }
  \end{equation*}
  induces a functor $\bar{f}\colon\Hom_A\to\Hom_B(f,f)$ between modules in the slice $\eK_{/A\times A}$, as discussed in Proposition~\ref{prop:trans-comma}; this is called the action of $f$ on the hom-spaces of $A$. Pulling this map back to $\eK_{/X\times X'}$ we obtain a map $f_{a,a'}\colon\Hom_A(a,a')\to\Hom_B(fa,fa')$ which we regard as being the action of $f$ on the hom-space between $a$ and $a'$; it is otherwise describable as the functor induced as in Proposition~\ref{prop:trans-comma} by the commutative diagram:
  \begin{equation*}
    \xymatrix@R=1.5em@C=2em{
      {X'}\ar[r]^{a'}\ar@{=}[d] & {A}\ar[d]^{f} & {X}\ar[l]_{a}\ar@{=}[d] \\
      {X'}\ar[r]_{fa'} & {B} & {X}\ar[l]^{fa}
    }
  \end{equation*} 
  In the case of a quasi-category $\qA$ and objects $a,a'\colon\Del^0\to\qA$ this construction simply gives the usual action of $f$ on the hom-space Kan complex between $a$ and $a'$.

  Observe also that if we apply the construction of Observation~\ref{obs:trans.induce.comma} to a diagram of the following form
  \begin{equation*}
    \xymatrix@R=1.5em@C=2em{
      {Y}\ar[r]^{b}\ar@{=}[d] & {A}\ar@{=}[d] & {X}\ar[l]_{a}\ar@{=}[d] \\
      {Y}\ar[r]_{b'} & {A} & {X}\ar[l]^{a'}
      \ar@{}"2,1";"1,2"\ar@{<=}?(0.4);?(0.6)^{\beta}
      \ar@{}"1,2";"2,3"\ar@{<=}?(0.4);?(0.6)^{\alpha}
    }
  \end{equation*}
  then we obtain a functor $\Hom_A(\alpha,\beta)\colon\Hom_A(a,b)\to\Hom_A(a',b')$ in $\eK_{/X\times Y}$, this being the action on hom-spaces given by pre-composition with $\alpha$ and post-composition with $\beta$. When $\qA$ is a quasi-category and $\alpha$ and $\beta$ are 2-cells between vertices $\Del^0\to\qA$, then they correspond to isomorphism classes of objects in the Kan complexes $\Hom_{\qA}(a',a)$ and $\Hom_{\qA}(b,b')$ (or equivalently of arrows in $\qA$). Then the functor $\Hom_A(\alpha, \beta)$ is a member of the isomorphism class of maps of hom-space Kan complexes induced by choices of pre-/post-composition in $\qA$ by representatives of $\alpha$ and $\beta$.
\end{ex}

We shall restrict our use of the notation $\Hom_{\qA}(a,a')$ to situations where $\qA$ is a quasi-category and $a$ and $a'$ is a pair of its objects. In all other cases, we shall continue to use our established comma notation $a\comma a'$ for this structure.

\begin{lem}[{\refV{lem:two-sided-groupoidal-cells}, \refV{lem:span-maps-cartesian}}]\label{lem:module.legs} Let $E$ be a module from $A$ to $B$ given by an isofibration $(q,p)\colon E\tfib A\times B$.
\begin{enumerate}[label=(\roman*)]
\item The right-hand leg $p\colon E\tfib B$ is a cartesian fibration, and the left-hand leg $q\colon E\tfib A$ is a cocartesian fibration.
\item  Given a second module determined  by an isofibration $(s,r)\colon F\tfib A\times B$, and a functor
  \begin{equation*}
    \xymatrix@C=1em@R=2em{
      {E}\ar[rr]^{g}\ar@{->>}[dr]_{(q,p)} && {F}\ar@{->>}[dl]^{(s,r)} \\
      & {A\times B} &
    }
  \end{equation*} then $g$ defines a cartesian functor between the right-hand cartesian fibrations and also between the left-hand cocartesian fibrations.
  \end{enumerate}
  \end{lem}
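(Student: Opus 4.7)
My plan is to translate the module's slice-level (co)cartesian fibration structure to the corresponding structure on the one-sided legs in the ambient $\infty$-cosmos $\eK$, using the adjunction characterizations of Definitions \ref{defn:cocart-fibration} and \ref{defn:cartesian-functor} together with the observation that the relevant commas in slice $\infty$-cosmoi reduce to commas in $\eK$.

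The key technical input is a comma identification. For the map $(q,p) \colon (E,p) \to (A \times B, \pi_0)$ in $\eK_{/B}$, the arrow object of $(A \times B, \pi_0)$ computed in the slice is the pullback $A^{\cattwo} \times B$, since the $B$-component of a $B$-relative arrow in $A \times B$ is forced to be degenerate. Consequently the comma $(q,p) \comma_B \mathrm{id}_{A\times B}$ formed in $\eK_{/B}$ is canonically isomorphic, as an object of $\eK$ fibred over $A \times B$, to the $\eK$-comma $q \comma A$ appearing in Notation \ref{ntn:comma-adjoints}. A dual computation yields $\mathrm{id}_{A\times B} \comma_A (q,p) \cong B \comma p$. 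Under these identifications, the sliced comparison functors $i_{\mathrm{slice}}$ correspond to the canonical maps $i$ of Notation \ref{ntn:comma-adjoints}, viewed as functors over $A \times B$.

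For part (i), the module axiom \ref{defn:module}\ref{itm:cocartesian-on-the-left} supplies an adjunction $\ell \dashv i_{\mathrm{slice}}$ in $(\eK_{/B})_{/A \times B} \cong \eK_{/A \times B}$ characterising the cocartesian fibration structure of $(q,p)$ in $\eK_{/B}$. Postcomposing with the forgetful cosmological functor $\eK_{/A \times B} \to \eK_{/A}$ (which preserves adjunctions, being simplicially enriched) yields $\ell \dashv i$ in $\eK_{/A}$, whence Definition \ref{defn:cocart-fibration} identifies $q \colon E \tfib A$ as a cocartesian fibration in $\eK$. A dual argument using axiom \ref{defn:module}\ref{itm:cartesian-on-the-right} establishes $p \colon E \tfib B$ as a cartesian fibration in $\eK$.

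For part (ii), Definition \ref{defn:cartesian-functor} requires that certain mate 2-cells be invertible. Since all the adjoint data $i_E$, $i_F$, $\ell_E$, $\ell_F$ and the functor $g$ reside in $\eK_{/A \times B}$, the relevant mate 2-cell can be constructed there. The groupoidality axiom \ref{defn:module}\ref{itm:groupoidal-fibers} then ensures that every 2-cell into $F$ in $\eK_{/A \times B}$ is invertible, so the mate is automatically an isomorphism and descends to invertible mates in $\eK_{/A}$ and $\eK_{/B}$, proving $g$ cartesian both as a functor between the left-hand cocartesian fibrations and as a functor between the right-hand cartesian fibrations. The principal subtlety, flagged in Remark \ref{rmk:trans-induct-rel}, is tracking how units and counits transport across the comma identifications so that the transferred adjoint data is recognisable as that of Notation \ref{ntn:comma-adjoints}; this bookkeeping is routine given the explicit pullback descriptions of commas in slice $\infty$-cosmoi.
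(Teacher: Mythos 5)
This lemma is imported from Part~V and not reproved in the present paper, but your argument is correct and is essentially the one given there: identify the commas formed in the slices $\eK_{/B}$ and $\eK_{/A}$ with the $\eK$-commas $q\comma A$ and $B\comma p$, transport the fibred adjunctions of Definition~\ref{defn:cocart-fibration} from $\eK_{/A\times B}$ along the projection functors to $\eK_{/A}$ and $\eK_{/B}$, and invoke groupoidality of $F$ as an object of $\eK_{/A\times B}$ to see that the relevant mates are invertible. The only quibble is terminological: the functor $\eK_{/A\times B}\to\eK_{/A}$ given by composing with the projection is not cosmological (it fails to preserve products and the terminal object), though, as you note, only its simplicial 2-functoriality is needed to transport the adjunction data.
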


\begin{defn}\label{defn:rep-modules} Given a functor $f \colon A \to B$ between $\infty$-categories, we say that:
\begin{enumerate}[label=(\roman*)]
\item\label{itm:covar-mod} a module $E$ from $A$ to $B$ is \emph{covariantly represented\/} by $f$ if it is equivalent, over $A\times B$, to the module $B\comma f$, and 
\item\label{itm:contra-mod} a module $E$ from $B$ to $A$ is \emph{contravariantly represented\/} by $f$ if it is equivalent to the module $f\comma B$.
\end{enumerate}
\end{defn}

\subsection{\texorpdfstring{$\infty$}{infinity}-cosmoi of (co)cartesian fibrations}\label{sec:limits-colimits-fibrations}

Let $\pbshape$ denote the category indexing a cospan and write $\eK^\pbshape$ for the simplicially enriched category of cospans in an $\infty$-cosmos $\eK$. Thus, objects of $\eK^\pbshape$ are pairs of maps
  \begin{equation}\label{eq:obj-K-pbshape}
    \xymatrix@=1.5em{
      {} & {B}\ar[d]^f \\
      {C}\ar[r]_g & {A}
    }
  \end{equation}
  and 0-arrows are natural transformations
  \begin{equation}\label{eq:obj-K-pbshape-trans}
    \xymatrix@=1.5em{
      {} & {B}\ar[d]_{f}\ar[dr]^{v} & {} \\
      {C}\ar[r]^{g}\ar[dr]_{w} & {A}\ar[dr]|*+<1pt>{\scriptstyle u} & 
      {B'}\ar[d]^{f'} \\
      {} & {C'}\ar[r]_{g'} & {A'}
    }
  \end{equation}

\begin{obs}
  As with any enriched functor category, $\eK^\pbshape$ inherits limits pointwise from $\eK$. In particular, it  has limits weighted by flexible weights, which are then preserved by the simplicial projection functors $\eK^\pbshape\to\eK$  which evaluate at any object of $\pbshape$.
  \end{obs}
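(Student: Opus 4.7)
The observation splits cleanly into two claims: first, that $\eK^\pbshape$ admits all flexible weighted limits, and second, that the three evaluation functors $\eK^\pbshape \to \eK$ (picking out the two feet and the head of each cospan) preserve them. Both assertions are consequences of a single general principle from enriched category theory: for any simplicially enriched category $\eC$ admitting a class of simplicial weighted limits and any small simplicial category $\eD$, the enriched functor category $\eC^\eD$ admits those limits pointwise and they are preserved by the evaluation functors. The plan is to simply instantiate this principle in our setting, appealing to Proposition \ref{prop:flexible-weights-are-htpical} to supply the flexible weighted limits in $\eK$.

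In detail, regard $\pbshape$ as a simplicial category with discrete hom-sets, so that an $n$-arrow in the hom-space $\Fun_{\eK^\pbshape}(X,Y)$ between two cospans $X$ and $Y$ is by definition an $n$-arrow of each functor space $\Fun_{\eK}(X_p, Y_p)$ for $p \in \pbshape$ making the evident squares commute. Concretely, $\Fun_{\eK^\pbshape}(X,Y)$ can be written as an iterated simplicial pullback of the three functor spaces $\Fun_{\eK}(X_p, Y_p)$ along the restriction maps induced by the two morphisms of $\pbshape$. Now suppose given a diagram $F \colon \eA \to \eK^\pbshape$ and a flexible weight $W \colon \eA \to \SSet$. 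For each object $p \in \pbshape$, the composite $\mathrm{ev}_p \circ F \colon \eA \to \eK$ admits a weighted limit $L_p \defeq \wlim{W}{\mathrm{ev}_p F}$ by Proposition \ref{prop:flexible-weights-are-htpical}\ref{itm:flexible-exist}. The two morphisms of $\pbshape$ induce, via the simplicial functoriality of the weighted limit construction, maps between the $L_p$ assembling them into a cospan $L \in \eK^\pbshape$.

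To check that $L$ is the $W$-weighted limit of $F$ in $\eK^\pbshape$, one verifies the defining isomorphism \eqref{eq:weighted-UP}: for any cospan $X \in \eK^\pbshape$, the simplicial map
\[ \Fun_{\eK^\pbshape}(X,L) \longrightarrow \{W,\Fun_{\eK^\pbshape}(X,F-)\}_\eA \]
is an isomorphism because both sides are computed as the same iterated pullback of the three isomorphisms $\Fun_{\eK}(X_p, L_p) \cong \{W, \Fun_{\eK}(X_p, \mathrm{ev}_p F-)\}_\eA$ furnished by the universal property of $L_p$ in $\eK$; here we use that weighted limits in simplicial sets commute with pullbacks, so the codomain of the displayed comparison map is indeed the pullback of the codomains of the three pointwise comparison maps. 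Preservation by the evaluation functors is automatic from the construction: $\mathrm{ev}_p(L) = L_p = \wlim{W}{\mathrm{ev}_p F}$ by definition.

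There is no real obstacle here — the entire content is bookkeeping with the pullback description of $\Fun_{\eK^\pbshape}$ and the observation that weighted limits in $\SSet$ commute with pullbacks — but the one subtle point worth spelling out is that the assembled cospan $L$ carries the \emph{canonical} structure maps induced from the diagram, which is what lets the pullback-of-functor-spaces description of $\Fun_{\eK^\pbshape}(X,L)$ match up with the analogous pullback on the weighted limit side. Once this is noted the verification is routine, and the same argument applies verbatim to any diagram shape in place of $\pbshape$, as asserted in the first sentence of the observation.
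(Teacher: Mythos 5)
Your proof is correct and is precisely the standard enriched-category-theoretic argument that the paper is implicitly invoking with the phrase ``as with any enriched functor category'': the paper offers no further proof of this observation, taking the pointwise construction of weighted limits in $\eK^\pbshape$ and their preservation by evaluation as standard. Your spelling out of the iterated-pullback description of $\Fun_{\eK^\pbshape}(X,Y)$ and the commutation of $\{W,-\}_{\eA}$ with limits in $\SSet$ is exactly the routine verification being elided.
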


In \refIII{sec:absolute}, we introduced a subcategory of $\eK^\pbshape$ (considered here in the dual) whose objects are those diagrams~\eqref{eq:obj-K-pbshape} in which $g$ admits an absolute left lifting through $f$ in the homotopy 2-category of $\eK$, in the sense of Definition \ref{defn:absolute-right-lifting} but with direction of the 2-cells reversed. The 0-arrows in this simplicially enriched category are the \emph{left exact transformations} dual to Definition \ref{defn:abs-lift-trans-right-exactness}.

\begin{defn}
  Let $\eK_{\ell}^\pbshape$ denote the simplicial subcategory of $\eK^\pbshape$ with:
  \begin{itemize}
    \item {\bf objects} those diagrams~\eqref{eq:obj-K-pbshape} in which $g$ admits an absolute left lifting through $f$, and
    \item {\bf $n$-arrows} those $n$-arrows of $\eK^\pbshape$ whose vertices are left exact transformations.
  \end{itemize}
\end{defn}

\begin{lem}\label{lem:absolute-lifting-repleteness}
The inclusion $\eK^\pbshape_\ell \inc \eK^\pbshape$ is replete up to equivalence. That is, given any natural transformation \eqref{eq:obj-K-pbshape-trans} between cospans in $\eK$ that is a pointwise equivalence, then if either cospan admits an absolute left lifting, the other also admits an absolute left lifting defined in such a way that the transformation is left exact. Furthermore, any natural transformation that it pointwise equivalent to a left exact transformation is itself left exact.
\end{lem}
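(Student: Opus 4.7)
The plan is to leverage the standard 2-categorical fact that an absolute lifting diagram can be transported along equivalences on any side. The components $u \colon A \to A'$, $v \colon B \to B'$, and $w \colon C \to C'$ of the transformation, being equivalences in $\eK$, descend to equivalences in the homotopy 2-category $\ho_*\eK$, and so admit pseudoinverses $\bar u$, $\bar v$, $\bar w$ equipped with invertible 2-cells witnessing their pseudoinvertibility. Since all the data is symmetric in the source and target cospans, it suffices to treat the case where the first cospan admits an absolute left lifting.

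Given an absolute left lifting $\lambda \colon g \To f\ell$ of the first cospan, we set $\ell' \defeq v \ell \bar w \colon C' \to B'$ and take $\lambda' \colon g' \To f'\ell'$ to be obtained by pasting $u\lambda$ --- which, via the strict commutativities $uf = f'v$ and $ug = g'w$, we read as a 2-cell $g' w \To f' v \ell$ --- with the invertible 2-cell $\mathrm{id}_{C'} \cong w \bar w$. To verify the absolute left lifting property, given any 2-cell $\alpha' \colon g' c' \To f' b'$ to be factored through $\lambda'$, we whisker by the pseudoinverses to obtain a 2-cell $g(\bar w c') \To f(\bar v b')$, apply the absolute left lifting property of $\lambda$ to factor it uniquely through $\lambda$, and then transfer the resulting factorising 2-cell back along the equivalences $v$ and $w$. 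Uniqueness of the factorisation is secured by the uniqueness for $\lambda$ together with the fact that whiskering by an equivalence is faithful on 2-cells up to invertible 2-cell.

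To show that the transformation \eqref{eq:obj-K-pbshape-trans} is left exact with respect to $\ell'$ so constructed, we examine the mate 2-cell $\tau$ from the dual of Definition~\ref{defn:abs-lift-trans-right-exactness}. By our very construction of $\lambda'$ via pasting with the invertible structural 2-cells $\mathrm{id}_{C'} \cong w\bar w$, the mate $\tau$ unwinds to a composite of those invertible 2-cells and is therefore itself invertible. For the final clause, if a transformation is related to a left exact one by pointwise invertible 2-cells, then the corresponding mates differ only by whiskering with these invertible 2-cells, so invertibility of one forces invertibility of the other. The main obstacle is diagrammatic bookkeeping rather than any conceptual subtlety: one must track the pasting composites and structural isomorphisms carefully through the construction, but the arguments themselves are routine 2-categorical manipulations in $\ho_*\eK$.
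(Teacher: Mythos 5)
Your proof is correct and follows essentially the same route as the paper's: both define the transported lift as $\ell' = v\ell\bar{w}$ and verify its universal property by transporting 2-cells along the (adjoint) equivalences $u$, $v$, $w$, with exactness falling out because $\lambda'$ is built from $u\lambda$ by pasting with invertible 2-cells. The only difference is packaging --- the paper phrases the verification as transporting the comma-category isomorphism of the representable characterisation \eqref{eq:rep-lifting-comma-iso}, whereas you chase individual 2-cells through the factorisation property directly; these amount to the same argument.
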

\begin{proof}
A pointwise equivalence \eqref{eq:obj-K-pbshape-trans} induces an isomorphism between the hom-categories appearing in the 2-categorical representable characterisation \eqref{eq:rep-lifting-comma-iso} of absolute left lifting diagrams: here if $g$ admits an absolute lifting $\ell \colon C \to B$ through $f$, then the absolute left lifting of $g'$ through $f'$ is defined to be the composite of $\ell$ with $v$ and the equivalence inverse to $w$. As the isomorphism of comma categories for $g'$ and $f'$ is defined by transporting the isomorphism for $g$ and $f$ along $u$, $v$, and $w$, this construction makes the transformation  \eqref{eq:obj-K-pbshape-trans}  exact. The argument for repleteness of 0-arrows is similar.
\end{proof}

In \cite{RiehlVerity:2013cp}, we defined these simplicial categories in order to prove the following proposition:

\begin{prop}[{\refIII{prop:abslifts-in-limits}}]\label{prop:abslifts-in-limits}
The simplicial subcategory $\qCat^\pbshape_{\ell} \inc \qCat^{\pbshape}$ is closed under flexible weighted limits.
\end{prop}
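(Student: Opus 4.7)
The proof proceeds by verifying that flexible weighted limits in $\qCat^\pbshape$ preserve both the existence of an absolute left lifting and the left exactness condition on the cone projections. My approach exploits the comma-categorical characterisation of absolute lifting from Proposition~\ref{prop:absliftingtranslation} (dualised for left liftings): a cospan $(g\colon C\to A, f\colon B\to A)$ admits an absolute left lifting $\ell\colon C\to B$ if and only if there is a fibred equivalence $g\comma f\simeq \ell\comma B$ over $C\times B$.

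Given a flexible weight $W\colon\eA\to\SSet$ and a diagram $F\colon\eA\to\qCat^\pbshape_\ell$ with component cospans $(g_a,f_a)$ and chosen absolute left liftings $\ell_a$, each component gives an equivalence $g_a\comma f_a\simeq\ell_a\comma B_a$ fibred over $C_a\times B_a$. The first key observation is that the flexible weighted limit in $\qCat^\pbshape$ is compatible with the comma construction. Since commas are built as pullbacks of cotensors by $\cattwo$ (Definition~\ref{defn:comma}) and flexible weighted limits commute with all simplicial limits, one obtains an isomorphism $\{W,g\}\comma\{W,f\}\cong\{W, a\mapsto g_a\comma f_a\}$ over $\{W,C\}\times\{W,B\}$, where the right-hand side is a flexible weighted limit in the slice.

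The second, more delicate step is to identify a functor $\ell\colon\{W,C\}\to\{W,B\}$ such that $\ell\comma\{W,B\}$ is equivalent to the corresponding limit $\{W, a\mapsto\ell_a\comma B_a\}$. Here left exactness of the transitions in $F$ is essential: by Definition~\ref{defn:abs-lift-trans-right-exactness} (dualised), it guarantees that, up to coherent isomorphism, the $\ell_a$ are compatible with the transition maps $v\colon B_a\to B_{a'}$ and $w\colon C_a\to C_{a'}$. Consequently the $\ell_a$ assemble, via the weak 2-universal property of the weighted limit $\{W,B\}$ (cf. Proposition~\ref{prop:1-univ-comma} and Observation~\ref{obs:trans.induce.comma}), into the required $\ell$. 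Applying the homotopical property of flexible weighted limits (Proposition~\ref{prop:flexible-weights-are-htpical}(ii)) to the pointwise equivalences $g_a\comma f_a\simeq\ell_a\comma B_a$ yields an equivalence $\{W,g\}\comma\{W,f\}\simeq\ell\comma\{W,B\}$, and Proposition~\ref{prop:absliftingtranslation} then supplies the desired absolute left lifting of $\{W,g\}$ through $\{W,f\}$. That each limit cone projection $L\to F(a)$ is left exact follows from the same construction: the induced equivalence of commas restricts to the equivalence $g_a\comma f_a\simeq \ell_a\comma B_a$ on each component, which is precisely the condition required of a left exact transformation.

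The principal obstacle is the second step: assembling the $\ell_a$ into a strict simplicial natural transformation that may be fed into the universal property of the flexible weighted limit. Since absolute liftings are unique only up to isomorphism (see Remark~\ref{rmk:trans-induct-rel}), the induced naturality holds only up to coherent isomorphism, not on the nose. This is circumvented by working throughout with the commas $\ell_a\comma B_a$, which depend on $\ell_a$ only up to fibred equivalence, and by invoking the repleteness of $\qCat^\pbshape_\ell$ inside $\qCat^\pbshape$ up to pointwise equivalence (Lemma~\ref{lem:absolute-lifting-repleteness}) to transport the construction through the equivalences that arise. In this way, the entire argument can be phrased in terms of equivalences of commas, which flexible weighted limits handle cleanly.
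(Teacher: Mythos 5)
Your overall strategy differs from the paper's. This proposition is imported from an earlier paper in the series, and, as the surrounding text explains, the proof given there rests on a special \emph{pointwise} characterisation of absolute left lifting diagrams between quasi-categories (Theorem \refI{thm:pointwise}): the absolute lifting property is reduced to a fibrewise condition that is visibly preserved by the pointwise-computed flexible weighted limits. You instead try to run the argument through the comma-object characterisation of Proposition \ref{prop:absliftingtranslation} together with the homotopical behaviour of flexible weights.

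The difficulty is that the step you yourself flag as ``the principal obstacle'' is not actually resolved by your proposed workaround, and it is exactly where the real content of the proposition lies. The liftings $\ell_a$ are determined only up to isomorphism and are only coherently natural in $a$, so neither $a\mapsto\ell_a\comma B_a$ nor the family of equivalences $g_a\comma f_a\simeq\ell_a\comma B_a$ constitutes a strict simplicial functor or a strict simplicial natural transformation. Proposition \ref{prop:flexible-weights-are-htpical}\ref{itm:flexible-htpical} applies only to genuine simplicial natural transformations whose components are equivalences; it cannot be fed a pseudo-natural family, so the asserted equivalence between $\{W,g\}\comma\{W,f\}$ and $\{W,\,a\mapsto\ell_a\comma B_a\}$ is not licensed. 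Likewise, the weak 2-universal property of Proposition \ref{prop:1-univ-comma} induces maps into a \emph{single} comma object from 2-cell data; it provides no mechanism for inducing a functor $\ell\colon\{W,C\}\to\{W,B\}$ into a general flexible weighted limit from a merely coherent family $(\ell_a)_a$ --- that would require a rectification or representability argument (e.g.\ that a module over $\{W,C\}\times\{W,B\}$ which is coherently pointwise representable is representable), which is precisely the nontrivial input the pointwise characterisation supplies. Repleteness (Lemma \ref{lem:absolute-lifting-repleteness}) lets you transport an absolute lifting along a pointwise equivalence of cospans once you already have one; it does not manufacture the lifting of the limit cospan. Your first step --- that the strict comma construction commutes with flexible weighted limits, giving $\{W,g\}\comma\{W,f\}\cong\{W,\,a\mapsto g_a\comma f_a\}$ --- is fine, as is the closing remark about left exactness of the cone legs once the lifting exists; but without a genuine construction of $\ell$ the proof does not go through.
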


The proof of Proposition \refIII{prop:abslifts-in-limits} made use of Theorem \refI{thm:pointwise}, which provides a special pointwise characterisation of absolute left lifting diagrams between quasi-categories, but nevertheless this result generalises from $\qCat$ to any $\infty$-cosmos. The proof uses the Yoneda lemma to demonstrate that there is a pullback of simplicial categories:
\[ 
\xymatrix{ \eK^\pbshape_\ell \ar@{^(->}[d] \ar@{^(->}[r]^-y \pbexcursion & (\qCat^\pbshape_\ell)^{\eK\op} \ar@{^(->}[d] \\ \eK^\pbshape  \ar@{^(->}[r]^-y & (\qCat^\pbshape)^{\eK\op}}\]
which tells us that absolute left lifting diagrams in $\eK$ can be characterised representably in $\qCat$ in analogy with the 2-categorical representable characterisation of Observation \ref{obs:2-cat-rep-abs-lifting}. Here, however, we require only a corollary of Proposition \refIII{prop:abslifts-in-limits} which generalises more easily to a quasi-categorically enriched category, without any of the additional structures of an $\infty$-cosmos. So rather than presenting the generalisation of Proposition \refIII{prop:abslifts-in-limits}, we instead pursue a proof of Proposition \ref{prop:radj-limits}, which characterizes flexible weighted limits in the quasi-categorically enriched category of of right adjoint functors and exact squares.

\begin{defn}
A commutative square between two parallel 0-arrows $p$ and $q$ admitting left adjoints is \emph{exact} if and only if its mate is an isomorphism.
  \begin{equation}\label{eq:exact-square}
    \xymatrix{
      E \ar[d]^p \ar[r]^f & F \ar[d]_q  & &
      E \ar[r]^f & F  & \ar@{}[d]|{\displaystyle \defeq} &
      \ar@{}[dr]|(.7){\Uparrow\eta}  &
      E \ar[r]^f \ar[d]^p & F  \ar@{}[dr]|(.3){\Uparrow\epsilon}
      \ar[d]_q \ar@{=}[r] & F \\
      B \ar@{-->}@/^2ex/[u]^\ell_\dashv \ar[r]_g &
      A \ar@{-->}@/_2ex/[u]_k^\vdash & &
      B \ar@/^2ex/[u]^\ell \ar[r]_g & A \ar@/_2ex/[u]_k
      \ar@{}[ul]\ar@{=>}?(0.35);?(0.65)_{\cong}^{\epsilon_{f\ell}\cdot kg\eta} & &
      B \ar@{=}[r] \ar[ur]^\ell & B \ar[r]_g & A \ar[ur]_k & }
  \end{equation}
\end{defn}

\begin{defn} For a quasi-categorically enriched category $\eC$, let $\Radj(\eC) \inc \eC^\cattwo$ denote the simplicial subcategory of the simplicially-enriched category of arrows $\eC^\cattwo$ with:
  \begin{itemize}
    \item {\bf objects} those morphisms that admit a left adjoint in the homotopy 2-category of $\eC$, and
    \item {\bf $n$-arrows} those $n$-arrows of $\eC^\cattwo$ whose $0$-arrow vertices are exact squares.
  \end{itemize}
\end{defn}

Substituting the dual $\eC\co$ of Definition \ref{defn:dual-cosmoi} for $\eC$ exchanges right adjoints with left adjoints: $\Radj(\eC\co) \cong \Ladj(\eC)$. In this way, all of the results appearing below will have duals with ``left'' exchanged with ``right.'' 

\begin{prop}\label{prop:qcat-radj-limits}
The simplicial subcategory admits and the inclusion $\Radj(\qCat) \inc \qCat^\cattwo$ is closed under flexible weighted limits. Moreover, the inclusion $\Radj(\qCat) \inc \qCat^\cattwo$ is replete up to equivalence on objects and 0-arrows.
\end{prop}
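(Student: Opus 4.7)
The plan is to transport the dual of Proposition~\ref{prop:abslifts-in-limits} (for absolute left liftings of cospans) to the setting of right adjoints by exhibiting the latter as a special case of the former. Recall from Example~\ref{ex:radj-lifting} that a functor $p\colon E\to B$ in $\qCat$ admits a left adjoint if and only if $\id_B$ admits an absolute left lifting through $p$ (the dual direction of Example~\ref{ex:radj-lifting}). Moreover, for a commutative square between two such maps, the mate of the square with respect to chosen adjunctions is an isomorphism precisely when the associated transformation of cospans $B \xrightarrow{\id} B \xleftarrow{p} E$ and $A \xrightarrow{\id} A \xleftarrow{q} F$ is left exact in the sense dual to Definition~\ref{defn:abs-lift-trans-right-exactness}.

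Write $\qCat^\pbshape_\ell$ for the evident dual of the simplicial subcategory defined in \S\ref{ssec:absolute}, whose objects are cospans admitting an absolute left lifting and whose 0-arrows are the left exact transformations; the duals of Proposition~\ref{prop:abslifts-in-limits} and Lemma~\ref{lem:absolute-lifting-repleteness} provide closure under flexible weighted limits and repleteness up to equivalence for $\qCat^\pbshape_\ell \inc \qCat^\pbshape$. Define a simplicial functor
\[
  \Psi\colon \qCat^\cattwo \longrightarrow \qCat^\pbshape
\]
that sends an arrow $p\colon E\to B$ to the cospan $B \xrightarrow{\id_B} B \xleftarrow{p} E$, with the evident action on $n$-arrows in which the top and left legs of the resulting transformation of cospans are forced to coincide. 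Because $\qCat^\pbshape$ is a simplicial functor category and both positions in the apex of the cospan coincide with the codomain of $p$, the functor $\Psi$ preserves all simplicially-enriched limits, and in particular all flexible weighted limits. Furthermore, the restriction of $\Psi$ to $\Radj(\qCat)$ takes values in $\qCat^\pbshape_\ell$: on objects by our dualization of Example~\ref{ex:radj-lifting}, and on $n$-arrows by the identification of mates described in the first paragraph.

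Fix now a flexible weight $W\colon\eA\to\SSet$ and a diagram $F\colon\eA\to\Radj(\qCat)$. Its $W$-weighted limit in $\qCat^\cattwo$ exists by Proposition~\ref{prop:flexible-weights-are-htpical}, and by the preservation property above its image under $\Psi$ is the flexible weighted limit of $\Psi F$ in $\qCat^\pbshape$. The dual of Proposition~\ref{prop:abslifts-in-limits} then places this limit cospan in $\qCat^\pbshape_\ell$ and guarantees that the projection cone comprises left exact transformations. Since the $g$-leg of the limit cospan is the identity $\id_{\wlim{W}{B}}$, the guaranteed absolute left lifting is precisely a left adjoint to the limit arrow $\wlim{W}{p}$, so $\wlim{W}{F}$ lies in $\Radj(\qCat)$; the left exactness of the projection cone transports back across $\Psi$ to exactness of the corresponding squares in $\qCat^\cattwo$, establishing closure.

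Repleteness on objects and 0-arrows transports across $\Psi$ in the same way from its counterpart for $\qCat^\pbshape_\ell \inc \qCat^\pbshape$ in the dual of Lemma~\ref{lem:absolute-lifting-repleteness}: an equivalence in $\qCat^\cattwo$ between an arrow and an object of $\Radj(\qCat)$ is carried by $\Psi$ to a pointwise equivalence of cospans, one of which is in $\qCat^\pbshape_\ell$, forcing the other to be also; and similarly a 0-arrow pointwise equivalent to an exact square is itself exact. The main obstacle is the careful bookkeeping required to identify the mate appearing in the definition of an exact square \eqref{eq:exact-square} with the induced 2-cell $\tau$ of Definition~\ref{defn:abs-lift-trans-right-exactness} (dualized) that witnesses left exactness of the transformation of cospans produced by $\Psi$. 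Once this mate calculation is laid out explicitly, using the weak universal property of comma objects in Proposition~\ref{prop:1-univ-comma} to pin down the choices involved, every other step in the argument is formal.
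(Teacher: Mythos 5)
Your proposal is correct and follows essentially the same route as the paper: the functor you call $\Psi$ is exactly the paper's functor $C\colon\qCat^\cattwo\to\qCat^\pbshape$ obtained by precomposition with the surjection $\pbshape\to\cattwo$, and both arguments rest on the identification of adjunctions with absolute left liftings (Example~\ref{ex:radj-lifting}), of exact squares with left exact transformations of cospans, and on Proposition~\ref{prop:abslifts-in-limits} together with Lemma~\ref{lem:absolute-lifting-repleteness}. The only difference is presentational: the paper packages the transport as a pullback square of simplicial categories along an isofibration and cites a general creation-of-limits argument, whereas you verify the closure and repleteness directly by pushing the limit cone across $\Psi$.
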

\begin{proof}
By Example \ref{ex:radj-lifting}, $p \colon E \to B$ admits a left adjoint if and only if the cospan
\[
\xymatrix{ \ar@{}[dr]|(.7){\Uparrow\eta} & E \ar[d]^p \\ B \ar@{=}[r] \ar@{-->}[ur]^\ell & B}\] admits an absolute left lifting. A square \eqref{eq:exact-square} is exact if and only if the triple $(g,f,g)$ defines a left exact transformation from the cospan $(p,\id_B)$ to the cospan $(q, \id_A)$, the induced 2-cell \eqref{eq:induced-mate} being the desired mate. This proves that we have a pullback diagram of simplicial categories
\[
\xymatrix{ \Radj(\qCat) \ar[r] \ar[d] \pbexcursion & \qCat^\pbshape_\ell \ar@{^(->}[d] \\ \qCat^\cattwo \ar[r]_C & \qCat^\pbshape}\] where the functor $C$ is given by precomposing with the surjective functor $\pbshape \to \cattwo$ that sends one of the arrows to the identity on the codomain object. Proposition \ref{prop:abslifts-in-limits} proves that the right-hand vertical functor creates flexible weighted limits, and since these are  pointwise defined in $\qCat$, the functor $C$ preserves them. A stricter special case of Lemma \ref{lem:absolute-lifting-repleteness} tells us  that the right-hand vertical functor (and hence also the left-hand vertical functor) is an isofibration. It follows as in the proofs of Lemma \refVI{lem:computad-colimits} and Proposition \refVII{prop:gpdal-infty-cosmos} that $\Radj(\qCat)$ admits and $\Radj(\qCat) \to \qCat^\cattwo$ creates all flexible weighted limits. 

Now this pullback and Lemma \ref{lem:absolute-lifting-repleteness} tells us that the inclusion $\Radj(\qCat)\inc\qCat^\cattwo$ is similarly replete.
\end{proof}

The extension of Proposition \ref{prop:qcat-radj-limits} to any quasi-categorically enriched category $\eC$ makes use of the following representable characterisation of adjunctions. 

\begin{prop}\label{prop:representable-adjunctions} Let $\eC$ be a quasi-categorically enriched category. Then \[ 
\xymatrix{ \Radj(\eC) \ar@{^(->}[d] \ar@{^(->}[r]^-y \pbexcursion & \Radj(\qCat)^{\eC\op} \ar@{^(->}[d] \\ \eC^\cattwo  \ar@{^(->}[r]^-y & (\qCat^\cattwo)^{\eC\op}}\] is a pullback diagram of simplicial categories.
  \end{prop}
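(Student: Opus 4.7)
The plan is to reduce the pullback claim to a representable characterisation of adjunctions in the homotopy 2-category, essentially lifting Proposition~\ref{prop:qcat-radj-limits} from $\qCat$ to an arbitrary quasi-categorically enriched $\eC$.

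First, I would observe that the simplicial Yoneda embedding $y \colon \eC \hookrightarrow \qCat^{\eC\op}$ is simplicially fully faithful, so its induced arrow map $y^{\cattwo} \colon \eC^{\cattwo} \hookrightarrow (\qCat^{\cattwo})^{\eC\op}$ is as well. Both vertical inclusions in the square are sub-simplicial categories whose defining condition is imposed only on objects and $0$-arrows, with an $n$-arrow for $n \geq 1$ lying in the subcategory iff all of its $0$-arrow vertex faces do. Consequently, verifying the pullback reduces to checking that the canonical map is a bijection on objects and on $0$-arrows.

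For the object bijection, I would invoke Example~\ref{ex:radj-lifting} to translate ``$p \colon E \to B$ admits a left adjoint in $\ho_*\eC$'' into the existence of an absolute left lifting of $\id_B$ through $p$, then appeal to the dual of Observation~\ref{obs:2-cat-rep-abs-lifting} to characterise this representably. That characterisation has two clauses: for each $X \in \eC$ the whiskered triangle under $\Fun_{\eC}(X,-)$ must be an absolute left lifting in $\ho_*\qCat$ (i.e., $\Fun_{\eC}(X,p)$ lies in $\Radj(\qCat)$), and for every $e \colon Y \to X$ the induced transformation of representable liftings must be left exact (i.e., define a $0$-arrow of $\Radj(\qCat)$). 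Together these are exactly the conditions for the simplicial functor $\Fun_{\eC}(-,p) \colon \eC\op \to \qCat^{\cattwo}$ to factor through $\Radj(\qCat)$, which is membership of $y^{\cattwo}(p)$ in $\Radj(\qCat)^{\eC\op}$.

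For the $0$-arrow bijection, I would use that a commutative square of right adjoints in $\eC^{\cattwo}$ is exact iff its mate is invertible in $\ho_*\eC$. Isomorphisms in a homotopy 2-category are detected representably, and the formation of mates commutes with simplicial hom functors, so exactness in $\eC$ is equivalent to exactness of each representable in $\ho_*\qCat$ compatibly in $X \in \eC\op$---precisely membership of the Yoneda image in $\Radj(\qCat)^{\eC\op}$. The main subtlety, and the place where care is needed, is establishing the dual of Observation~\ref{obs:2-cat-rep-abs-lifting} in this simplicially-enriched rather than strictly 2-categorical setting; this should amount to noting that adjunctions, absolute lifting diagrams, mates, and exact squares all live in the homotopy 2-category, whose construction is compatible with the simplicial Yoneda embedding via the hom quasi-categories.
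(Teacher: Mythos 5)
Your proposal is correct in outline and shares the paper's overall strategy: reduce the pullback claim to a bijection on objects and on $0$-arrows (both inclusions being full on higher arrows), observe that the preservation directions are immediate, and establish the converses by a Yoneda argument. Where you differ is in how the converse on objects is organised. The paper works directly with the mapping quasi-categories: it defines the candidate left adjoint to be $\ell = L_B(\id_B)$, uses exactness to identify $L_X(b)$ with $\ell b$ and hence to transfer the adjunctions $L_X \dashv \Map_{\eC}(X,u)$ to $\Map_{\eC}(X,\ell)\dashv\Map_{\eC}(X,u)$, extracts $1$-simplices representing the unit and counit as components of the given units and counits at $\id_B$ and $\id_A$, and checks the triangle identities using the adjunction morphisms for precomposition by $\ell$ and $u$. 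You instead descend to the homotopy $2$-category $\ho_*\eC$ and invoke the dual of Observation~\ref{obs:2-cat-rep-abs-lifting}. That route does work, but the step you defer as ``the main subtlety'' genuinely needs to be supplied: membership in $\Radj(\qCat)^{\eC\op}$ asserts adjunctions in $\ho_*\qCat$ between the quasi-categories $\Map_{\eC}(X,A)$ and $\Map_{\eC}(X,B)$, whereas Observation~\ref{obs:2-cat-rep-abs-lifting} applied to $\ho_*\eC$ concerns adjunctions in $\eop{Cat}$ between the hom-\emph{categories} $\hom_{\ho_*\eC}(X,A)=\ho\Map_{\eC}(X,A)$, and these are not the same condition. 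The gap closes because the homotopy category construction defines a $2$-functor from $\ho_*\qCat$ to the $2$-category of categories, so the hypothesised quasi-categorical adjunctions and exact squares descend to $2$-categorical ones, which is all the observation requires; but this should be said explicitly. The paper's hands-on construction sidesteps this descent and, as a bonus, produces the explicit unit and counit data. Your treatment of the $0$-arrow case matches the paper's: both detect invertibility of the mate representably, by evaluating at an identity.
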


\begin{proof} On objects, the pullback asserts that 
\begin{enumerate}[label=(\roman*)]
\item\label{itm:rep-radj} A functor $u \colon A \to B$ admits a left adjoint if and only if for each $X \in \eC$, the functor $\Map_{\eC}(X,u) \colon \Map_{\eC}(X,A) \to \Map_{\eC}(X,B)$ admits a left adjoint and moreover for each $e \colon Y \to X$, the square
\[
  \xymatrix@C=3em{ \Map_{\eC}(X,A) \ar[d]^-{\Map_{\eC}(X,u)} \ar[r]^{\Map_{\eC}(e,A)} & \Map_{\eC}(Y,A) \ar[d]_{\Map_{\eC}(Y,u)}  \\ \Map_{\eC}(X,B) \ar@{-->}@/^2ex/[u]_\dashv^{L_X} \ar[r]_-{\Map_{\eC}(e,B)} & \Map_{\eC}(Y,B) \ar@{-->}@/_2ex/[u]^\vdash_{L_Y} }
  \]
  is exact.
  \end{enumerate}
  On 0-arrows, the pullback asserts that moreover:
    \begin{enumerate}[label=(\roman*), resume]
  \item\label{itm:rep-exact} a commutative square between functors admitting left adjoints is exact in $\eC$ if and only if each $\Map_{\eC}(X,-)$ carries this square to an exact square between quasi-categories.
   \end{enumerate}
The vertical inclusions are full on $n$-arrows for $n>0$, so we need only prove the statements \ref{itm:rep-radj} and \ref{itm:rep-exact}.

The preservation halves of each statement are clear, and the converse direction of \ref{itm:rep-exact} is easy: to test whether a square \eqref{eq:exact-square} is exact, we need to show that a single 2-cell is invertible, and this 2-cell is represented by a 1-arrow in $\Map_{\eC}(B,F)$. If the square in the image of $\Map_{\eC}(B,-)$ is exact, then this 1-arrow, as a component of an invertible natural transformation, is an isomorphism, which is what we wanted to show.

For the converse to \ref{itm:rep-radj}, we use the left adjoint $L_B \colon \Map_{\eC}(B,B) \to \Map_{\eC}(B,A)$   to define $\ell \colon B \to A$ to be $L_B(\id_B)$. Exactness tells us that for any $b \colon X \to B$, $L_X(b) \cong \ell b \colon X \to A$, from which we infer that the adjointness of $L_X \dashv \Map_{\eC}(X,u)$ implies that $\Map_{\eC}(X,\ell) \dashv \Map_{\eC}(X,u)$ for all $X$, with any $e \colon Y \to X$ defining an \emph{adjunction morphism}, by which we mean a strict exact transformation. Specialising to $X=A$ and $X=B$, we can extract 1-simplices representing the counit and unit of $\ell \dashv u$ as components of the counit and unit at $\id_A$ and $\id_B$. The adjunction morphisms corresponding to precomposition with $u \colon A \to B$ and $\ell \colon B \to A$ are used to verify the triangle identities.
\end{proof}

\begin{prop}\label{prop:radj-limits}
For any quasi-categorically enriched category $\eC$, $\Radj(\eC)\inc \eC^\cattwo$ is closed under any flexible weighted homotopy limits in $\eC^\cattwo$ that are preserved by the evaluation functors $\dom,\cod \colon \eC^\cattwo \to \eC$. Moreover, if the flexible weighted limit in $\eC^\cattwo$ is strict then so is the flexible weighted limit in $\Radj(\eC)$.
\end{prop}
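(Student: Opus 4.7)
The plan is to reduce to Proposition \ref{prop:qcat-radj-limits} via the representable characterisation of Proposition \ref{prop:representable-adjunctions}. Fix a small simplicial category $\eA$, flexible weight $W\colon\eA\to\SSet$, and diagram $F\colon \eA \to \Radj(\eC) \subset \eC^\cattwo$ with composites $F_0 = \dom\circ F$ and $F_1 = \cod\circ F$. Suppose the flexible weighted homotopy limit of $F$ in $\eC^\cattwo$, written $p_L \colon L_0 \to L_1$, exists and is preserved by $\dom$ and $\cod$; this identifies each $L_i$ as the flexible weighted homotopy limit of $F_i$ in $\eC$. The goal is to show that $p_L$ lies in $\Radj(\eC)$ and that the limit cone consists of exact squares.

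For each $X \in \eC$, the simplicial representable $\Map_{\eC}(X,-)\colon \eC \to \qCat$ preserves any simplicial limit whose universal property is framed in terms of function complexes, so $\Map_{\eC}(X,L_i)$ is a flexible weighted homotopy limit in $\qCat$ of $\Map_{\eC}(X,F_i-)$. Since flexible weighted (homotopy) limits in the simplicial arrow category $\qCat^\cattwo$ are detected pointwise by $\dom$ and $\cod$, this assembles to exhibit $\Map_{\eC}(X,p_L)$ as a flexible weighted homotopy limit in $\qCat^\cattwo$ of the composite $\Map_{\eC}(X,-)\circ F$, with a parallel conclusion for the natural transformations induced by each $e\colon Y\to X$ in $\eC$. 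Applying Proposition \ref{prop:representable-adjunctions} to the hypothesis that $F$ lands in $\Radj(\eC)$, each composite $\Map_{\eC}(X,-)\circ F$ factors through $\Radj(\qCat)$ and the natural transformation associated with each $e\colon Y\to X$ has $0$-arrow components that are exact squares. Proposition \ref{prop:qcat-radj-limits}, combined with its repleteness-up-to-equivalence clause for the homotopy case, then guarantees that $\Map_{\eC}(X,p_L) \in \Radj(\qCat)$ for every $X$ and that the squares induced by each $e\colon Y\to X$ are exact. Reversing the pullback of Proposition \ref{prop:representable-adjunctions} then yields $p_L\in\Radj(\eC)$, and the analogous argument applied to the $0$-arrow components of the limit cone establishes their exactness. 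The strict version of the statement follows by the same argument with equivalences replaced by isomorphisms throughout, invoking the strict half of Proposition \ref{prop:qcat-radj-limits}.

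The main technical hurdle is the pointwise-detection claim above: that applying $\Map_{\eC}(X,-)$ to the flexible weighted homotopy limit in $\eC^\cattwo$ produces a flexible weighted homotopy limit in $\qCat^\cattwo$. The preservation hypothesis on $\dom$ and $\cod$ is essential, because without it the representable image of $L$ could fail to be a pointwise limit and $\Map_{\eC}(X,L_i)$ need not be the flexible weighted homotopy limit of $\Map_{\eC}(X,F_i-)$. Given that hypothesis, however, the verification reduces to the simplicial continuity of representables together with the standard fact that limits in the simplicial arrow category of $\qCat$ are pointwise.
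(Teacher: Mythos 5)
Your proposal is correct and follows essentially the same route as the paper: both arguments use the pullback of Proposition \ref{prop:representable-adjunctions} to transfer the problem to $\Radj(\qCat)$, invoke Proposition \ref{prop:qcat-radj-limits} together with its repleteness-up-to-equivalence clause to place the representable image of the limit (and of the cone legs) in $\Radj(\qCat)$, and then pull back. The only difference is presentational --- you phrase the argument pointwise at each $X\in\eC$, whereas the paper packages the same data as a single object of $(\qCat^{\cattwo})^{\eC\op}$ naturally equivalent to one lying in $\Radj(\qCat)^{\eC\op}$.
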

\begin{proof} 
Consider a diagram
 \[ 
\xymatrix{\eA \ar[r]^-D &  \Radj(\eC) \ar@{^(->}[d] \ar@{^(->}[r]^-y \pbexcursion & \Radj(\qCat)^{\eC\op} \ar@{^(->}[d] \\ & \eC^\cattwo  \ar@{^(->}[r]^-y & (\qCat^\cattwo)^{\eC\op}}\] so that the diagram $D \colon \eA \to \eC^\cattwo$ admits a flexible $W$-weighted homotopy limit $u \colon L \to K$. The hypothesis that this limit is preserved by the domain and codomain functors tells us that the limit cone induces the horizontal natural equivalences
\begin{equation}\label{eq:pointwise-nat-equiv}
\xymatrix{ \Map_{\eC}(X,L) \ar[r]^-\sim \ar[d]_{\Map_{\eC}(X,u)} & \{W, \Map_{\eC}(X,\dom D-) \} \ar[d] \\
  \Map_{\eC}(X,K) \ar[r]^-\sim & \{W, \Map_{\eC}(X,\cod D-)\} \ar@{-->}@/_2ex/[u]^\vdash}
  \end{equation}
  
By Proposition \ref{prop:qcat-radj-limits}, the $W$-weighted limit of $yD \colon \eA \to \Radj(\qCat)^{\eC\op}$ exists and is created by the inclusion into $(\qCat^\cattwo)^{\eC\op}$, where it is preserved by the evaluation functors to $\qCat$. Thus, we conclude that $\Map_{\eC}(-,u) \in (\qCat^\cattwo)^{\eC\op}$ is naturally equivalent to a diagram whose components at each $X \in \eC$, displayed as the right-hand verticals of \eqref{eq:pointwise-nat-equiv}, admit right adjoints and for which the transformations induced by each $e \colon Y \to X$ are exact. The repleteness statement of Proposition \ref{prop:qcat-radj-limits} tells us that an object of $(\qCat^\cattwo)^{\eC\op}$ that is naturally equivalent to an object of $\Radj(\qCat)^{\eC\op}$ is also in $\Radj(\qCat)^{\eC\op}$. Hence,   each component $\Map_{\eC}(X,u) \colon \Map_{\eC}(X,L) \to \Map_{\eC}(X,K)$ admits a right adjoint and the transformations induced by each $e \colon Y \to X$ are exact. Now the pullback of Proposition \ref{prop:representable-adjunctions} tells us that the flexible weighted homotopy limit $u \colon L \to K$ is present in $\Radj(\eC)$. A similar analysis shows that the legs of the limit cone are exact.
\end{proof}

When $\eK$ is an $\infty$-cosmos the previous results restrict to the quasi-categorically enriched categories of arrows or right adjoints in $\eK$ that are isofibrations, for which we re-appropriate the previous notation. If $\eK$ is an $\infty$-cosmos, let $\eK^\cattwo$ denote the $\infty$-\emph{cosmos of isofibrations} of Proposition \ref{prop:isofib-cosmoi}.

\begin{prop}\label{prop:radj-cosmos} When $\eK$ is an $\infty$-cosmos, $\Radj(\eK)$ is an $\infty$-cosmos and $\Radj(\eK)\inc\eK^\cattwo$ is a cosmological functor, creating isofibrations and simplicially enriched limits.
\end{prop}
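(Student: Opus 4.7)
The plan is to exploit the already-established $\infty$-cosmos structure on $\eK^\cattwo$ from Proposition \ref{prop:isofib-cosmoi} and lift each axiom to $\Radj(\eK)$ using Proposition \ref{prop:radj-limits} applied to the quasi-categorically enriched category $\eC = \eK$. The key observation is that, by Proposition \ref{prop:isofib-cosmoi}, the cosmological limits in $\eK^\cattwo$ are jointly created by the two cosmological functors $\dom, \cod \colon \eK^\cattwo \to \eK$; hence those limits are in particular preserved by $\dom$ and $\cod$. Consequently the hypotheses of Proposition \ref{prop:radj-limits} apply to every cosmological limit type, and the "strict" clause of that proposition lets us lift those limits strictly (not merely up to equivalence) along $\Radj(\eK) \inc \eK^\cattwo$.

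First I would define an isofibration in $\Radj(\eK)$ to be a $0$-arrow (i.e., exact square) whose underlying arrow of $\eK^\cattwo$ is an isofibration there; this makes the inclusion create isofibrations by fiat. The completeness axiom \ref{defn:cosmos}\ref{defn:cosmos:a} then follows directly: the terminal object, small products, simplicial cotensors, inverse limits of countable sequences of isofibrations, and pullbacks of isofibrations are all flexible weighted limits in $\eK^\cattwo$, they are preserved by $\dom$ and $\cod$, so Proposition \ref{prop:radj-limits} provides them strictly in $\Radj(\eK)$ and exhibits them as preserved by the inclusion. This simultaneously establishes that $\Radj(\eK) \inc \eK^\cattwo$ creates the claimed simplicially enriched limits.

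For the isofibration axiom \ref{defn:cosmos}\ref{defn:cosmos:b}, most of the stability properties are inherited directly from $\eK^\cattwo$ once we know the relevant constructions land in $\Radj(\eK)$: stability under pullback along arbitrary $0$-arrows of $\Radj(\eK)$ (i.e., along exact squares) follows because pullback is a flexible weighted limit that lifts from $\eK^\cattwo$ to $\Radj(\eK)$ as above; closure under countable composites of isofibrations lifts similarly; the Leibniz cotensor $\hat{i}\pitchfork p$ with a simplicial monomorphism $i$ is built from cotensors and pullbacks so lies in $\Radj(\eK)$; and containment of isomorphisms and of the maps to the terminal object is immediate. The one delicate point is the final clause, which requires that $\Fun_{\Radj(\eK)}(X,p) \tfib \Fun_{\Radj(\eK)}(X,q)$ be an isofibration of quasi-categories for every $X\in\Radj(\eK)$ and isofibration $p\to q$ in $\Radj(\eK)$. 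By the definition of $\Radj(\eK)$, these functor spaces are full sub-simplicial-sets of $\Fun_{\eK^\cattwo}(X,p)$ and $\Fun_{\eK^\cattwo}(X,q)$ on those vertices that are exact squares; the repleteness statement established in the proof of Proposition \ref{prop:qcat-radj-limits} (via Lemma \ref{lem:absolute-lifting-repleteness} and the representable pullback of Proposition \ref{prop:representable-adjunctions}) shows that exactness of a square is invariant under equivalence of its constituent $0$-arrows, which implies that these full sub-complexes are replete sub-quasi-categories, so that the restricted map remains an isofibration.

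The principal obstacle I anticipate is this last point about the functor spaces, because one must verify that "having exact vertices" is a condition stable under the quasi-categorical structure (inner-horn filling, isomorphism-lifting) downstairs in $\eK^\cattwo$. This is precisely the role of the repleteness half of Proposition \ref{prop:qcat-radj-limits}, which in turn rests on the analogous repleteness Lemma \ref{lem:absolute-lifting-repleteness} for absolute lifting diagrams; applied pointwise via Proposition \ref{prop:representable-adjunctions}, it guarantees that exactness of a square in $\eK$ is detected on its image in $\qCat$ under $\Map_{\eK}(X,-)$ and is preserved under pointwise equivalence of squares. With that in hand, everything else in the proof is a direct application of Proposition \ref{prop:radj-limits} and of the corresponding property already recorded in Proposition \ref{prop:isofib-cosmoi}, so the cosmological functoriality of $\Radj(\eK)\inc\eK^\cattwo$ then follows since it creates isofibrations by definition and creates simplicially enriched cosmological limits by the argument of the second paragraph.
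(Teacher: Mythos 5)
Your proposal is correct and follows the same route as the paper, whose entire proof is the one-line observation that Proposition \ref{prop:radj-limits} lets $\Radj(\eK)$ inherit isofibrations and strict simplicially enriched limits from $\eK^\cattwo$, making the inclusion cosmological. You have simply spelled out the details the paper leaves implicit, including the repleteness argument needed for the functor-space clause of axiom \ref{defn:cosmos}\ref{defn:cosmos:b}, which is a legitimate elaboration rather than a different method.
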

\begin{proof}
By Proposition \ref{prop:radj-limits}, isofibrations and strict simplicially enriched limits in $\Radj(\eK)$ are inherited from $\eK^\cattwo$. This proves that the inclusion is a cosmological functor. 
\end{proof}

\begin{ntn} For any  $\infty$-category $A$ in an $\infty$-cosmos $\eK$, let $\coCart(\eK)_{/A} \inc \eK_{/A}$ denote the quasi-categorically enriched subcategory with  
  \begin{itemize}
    \item {\bf objects} cocartesian fibrations in $\eK$ with codomain $A$, and 
    \item {\bf $n$-arrows} those $n$-arrows of $\eK_{/A}$ whose vertices are cartesian functors.
\end{itemize}

Let $\coCart\gr(\eK)_{/A} \inc \coCart(\eK)_{/A}$ denote the full subcategory spanned by the groupoidal cocartesian fibrations in $\eK$ with codomain $A$; since any functor between groupoidal cocartesian fibrations is cartesian, the inclusion $\coCart\gr(\eK)_{/A} \inc \eK_{/A}$ is also full.  Since groupoidal cocartesian fibrations define groupoidal objects in the $\infty$-cosmos $\eK_{/A}$, this full subcategory is in fact enriched over Kan complexes. The subcategories $\Cart\gr(\eK)_{/A} \inc\Cart(\eK)_{/A}\inc\eK_{/A}$ are defined similarly.

Finally, for any $A,B \in \eK$ let ${}_A\qMod(\eK)_{B} \subset \eK_{/A \times B}$ denote the full subcategory of modules from $A$ to $B$. As modules are groupoidal objects, this subcategory is also enriched in Kan complexes.
\end{ntn}

\begin{prop}\label{prop:cocartesian-completeness} For any $\infty$-cosmos $\eK$, the subcategories $\coCart(\eK)_{/B}$ and $\Cart(\eK)_{/B}$ admit and the inclusions \[\coCart(\eK)_{/B} \inc \eK_{/B} \qquad \mathrm{and} \qquad  \Cart(\eK)_{/B} \inc \eK_{/B}\] are closed under flexible weighted homotopy limits.
\end{prop}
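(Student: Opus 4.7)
My plan is to realise $\coCart(\eK)_{/B}$ as a pullback of quasi-categorically enriched categories and then invoke Proposition~\ref{prop:radj-limits} applied to $\eC=\eK_{/B}$, which is itself an $\infty$-cosmos by Example~\refIV{ex:sliced.contexts} and hence admits all flexible weighted limits strictly by Proposition~\ref{prop:flexible-weights-are-htpical}. First, I would construct a simplicial functor $\Phi\colon \eK_{/B}\to (\eK_{/B})^\cattwo$ sending an isofibration $p\colon E\tfib B$ to the comparison functor $i\colon(E,p)\to(p\comma B,p_1)$ of Notation~\ref{ntn:comma-adjoints}. Functoriality on $0$-arrows is supplied by the induced maps $\comma(g,\id_B,\id_B)\colon p\comma B\to q\comma B$ of Proposition~\ref{prop:trans-comma}, which are compatible with the $i$'s by naturality of the comma construction; on higher arrows it is supplied by the simplicial enrichment of that construction as an iterated cotensor and pullback.

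Next, I would identify $\coCart(\eK)_{/B}$ with the pullback of simplicial categories
\[
\xymatrix@R=1.5em@C=3em{
  \coCart(\eK)_{/B}\pbexcursion\ar[r]\ar[d] & \Radj(\eK_{/B})\ar[d] \\
  \eK_{/B}\ar[r]_-{\Phi} & (\eK_{/B})^\cattwo.
}
\]
Indeed, Definition~\ref{defn:cocart-fibration}(i) shows that those objects $p$ for which $\Phi(p)=i$ admits a left adjoint in $\eK_{/B}$ are exactly the cocartesian fibrations over $B$, while Definition~\ref{defn:cartesian-functor} shows that those $0$-arrows $g$ over $B$ whose image $\Phi(g)$ is an exact square are exactly the cartesian functors. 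Since the $n$-arrow classes on both sides are determined by conditions on their $0$-arrow vertices, this identification extends to all arrows. I would then check that $\Phi$ preserves flexible weighted limits: the domain component $\dom\Phi(p)=E$ evidently does so, while the codomain component $p\mapsto p\comma B$ is the pullback of the cotensor $B^\cattwo$ along $\id_B\times p\colon B\times E\to B\times B$, a finite conical limit that hence commutes with any other flexible weighted limit in the variable $E$.

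With these pieces in place, the proof concludes by feeding a flexible-weighted diagram $D\colon\eA\to\coCart(\eK)_{/B}$ into Proposition~\ref{prop:radj-limits}. Its strict $W$-weighted limit $L$ exists in $\eK_{/B}$ by Proposition~\ref{prop:flexible-weights-are-htpical}, and by the preceding preservation step $\Phi(L)$ is the corresponding limit of $\Phi D$ in $(\eK_{/B})^\cattwo$ and is manifestly preserved by the evaluations $\dom$ and $\cod$. Proposition~\ref{prop:radj-limits} then places $\Phi(L)$ in $\Radj(\eK_{/B})$ with exact projection legs, so by the pullback square above, $L$ is a cocartesian fibration over $B$ whose projection legs are cartesian functors. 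The statement for $\Cart(\eK)_{/B}$ follows by applying the same argument in the dual $\infty$-cosmos $\eK\co$, exchanging $\Radj$ with $\Ladj$. The main obstacle is the preservation step for $\Phi$: establishing that the simplicially enriched comma construction commutes with flexible weighted limits is essentially a Fubini-style interchange of iterated limits, conceptually routine but requiring careful bookkeeping within the sliced $\infty$-cosmos with fixed base $B$.
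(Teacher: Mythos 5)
Your proposal is correct and follows essentially the same route as the paper: realise $\coCart(\eK)_{/B}$ as a pullback of $\Radj(\eK_{/B})\inc(\eK_{/B})^\cattwo$ along a comparison functor built from the comma construction, observe that this functor preserves flexible weighted limits because it is itself built from weighted limits, and invoke Proposition~\ref{prop:radj-limits}. The only (immaterial here) difference is that the paper replaces your $i\colon E\to p\comma B$ by the isofibration $E^\iso\tfib p\comma B$ through which it factors up to equivalence, so as to land in the $\infty$-cosmos of isofibrations rather than the full arrow category; since Proposition~\ref{prop:radj-limits} is stated for the arrow category of an arbitrary quasi-categorically enriched category, your version goes through unchanged.
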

\begin{proof}
We prove the statements for cocartesian fibrations, the results for cartesian fibrations being dual.   We first argue that there is a pullback of quasi-categorically enriched categories:
\begin{equation}\label{eq:cocart-pullback} \xymatrix{ \coCart(\eK)_{/B} \ar[r] \ar@{^(->}[d] \pbexcursion & \Radj(\eK_{/B}) \ar@{^(->}[d] \\ \eK_{/B} \ar[r]_-K & (\eK_{/B})^\cattwo}\end{equation}
where $K$ is the functor that carries an isofibration $p \colon E \tfib B$ to the isofibration $E^\iso \tfib p \comma B$ over $B$ defined by restricting the Leibniz cotensor $\langle p_0, p \rangle \colon E^\cattwo \tfib p \comma B$ along $E^\iso \tfib E^\cattwo$; recall our convention that the objects in each of these four categories are isofibrations.

We must argue that the claimed pullback relationship holds on objects and 0-arrows. On objects, this  follows from Definition \ref{defn:cocart-fibration}\ref{itm:cocart.fib.chars.ii}, which tells us that $p \colon E \to B$ is cocartesian if and only if the functor $E \to p\comma B$ admits a left adjoint over $B$. This map factors as an equivalence $E \we E^\iso$ followed by the map $E^\iso \tfib p\comma B$, so the composite admits a left adjoint in $\eK_{/B}$ if and only if the map $E^\iso \tfib p \comma B$ does. On 0-arrows, this follows from  Definition \ref{defn:cartesian-functor}, which tells us that a functor 
\[ \xymatrix{ E \ar@{->>}[dr]_p \ar[rr]^f & & F \ar@{->>}[dl]^q \\ & B}\] between cocartesian fibrations is cartesian if and only if the induced outer square
\[    \xymatrix@=1.5em{ {E}\ar[d]_{\rotatebox{90}{$\sim$}}\ar[r]^{f} & {F}\ar[d]^{\rotatebox{90}{$\sim$}} \\ E^\iso \ar[r]^{f^\iso} \ar@{->>}[d] & F^\iso \ar@{->>}[d] \\
   {p\comma B}\ar[r]_{\scriptstyle (f,\id)} & {q\comma B}
    }\]
is exact, and this is the case if and only if the lower square is exact. This tells us that a 0-arrow between cocartesian fibrations in $\eK_{/B}$ defines a cartesian functor if and only if its image under the functor $K$ defines an exact square. Since the pairs of subcategories are full on $n$-arrows for $n >0$, the claimed pullback relationship follows from this pair of results.

Now Proposition \ref{prop:radj-limits} tells us that $\Radj(\eK_{/B})\inc(\eK_{/B})^\cattwo$ creates flexible weighted homotopy limits. The functor  $K\colon \eK_{/B} \to (\eK_{/B})^\cattwo$ preserves them since its construction involves a weighted limit, that commutes with these flexible weighted limits. It follows as in the proof of that result that $\coCart(\eK)_{/B}$ admits flexible weighted limits and both legs of the pullback cone preserve them. In particular the inclusion  $\coCart(\eK)_{/B} \inc \eK_{/B}$ creates flexible weighted limits as claimed, proving the claim.
\end{proof}

Proposition \ref{prop:cocartesian-completeness} has an interesting and important corollary.

\begin{prop}\label{prop:cartesian-cosmoi} For any $\infty$-category $B$ in an $\infty$-cosmos $\eK$, the quasi-categorically enriched subcategories $\coCart(\eK)_{/B}, \Cart(\eK)_{/B} \subset \eK_{/B}$ spanned by the (co)cartesian fibrations and cartesian functors define $\infty$-cosmoi, with isofibrations and limits inherited from $\eK_{/B}$. The Kan-complex-enriched subcategories of groupoidal objects are, respectively, the subcategories
\[ \coCart(\eK)\gr_{/B}\inc\coCart(\eK)_{/B} \qquad \mathrm{and} \qquad \Cart(\eK)\gr_{/B}\inc\Cart(\eK)_{/B}\] of groupoidal (co)cartesian fibrations.
\end{prop}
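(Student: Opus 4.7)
The plan is to verify the $\infty$-cosmos axioms of Definition \ref{defn:cosmos} for $\coCart(\eK)_{/B}$ equipped with the isofibrations and simplicially enriched limits inherited from the sliced $\infty$-cosmos $\eK_{/B}$; the statement for $\Cart(\eK)_{/B}$ follows dually. The bulk of the technical work is already supplied by Proposition \ref{prop:cocartesian-completeness}, so the proof becomes a matter of checking that the cosmological axioms are compatible with that inheritance, and then identifying the groupoidal objects.

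First, observe that each cosmological limit type in axiom \ref{defn:cosmos:a} --- the terminal object, small products, simplicial cotensors, countable inverse limits of isofibrations, and pullbacks of isofibrations --- arises as a flexible weighted limit. Proposition \ref{prop:cocartesian-completeness} therefore ensures that all such limits are created by the inclusion $\coCart(\eK)_{/B} \inc \eK_{/B}$, and since that inclusion is full on $n$-arrows for $n > 0$, the simplicially enriched universal properties transfer without change. For axiom \ref{defn:cosmos:b}, I would declare an isofibration in $\coCart(\eK)_{/B}$ to be a cartesian functor whose underlying map is an isofibration in $\eK_{/B}$. All closure conditions --- under isomorphism, pullback along arbitrary cartesian functors, countable sequential inverse limits, Leibniz cotensors, and the representability of functor-space isofibrations --- are inherited directly from $\eK_{/B}$ once we know that each of the relevant limit constructions lives in the subcategory. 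The unique functor $p \colon E \tfib B$ from a cocartesian fibration to the terminal object $\id_B$ is automatically cartesian because one of the adjunctions entering the mate calculation of Definition \ref{defn:cartesian-functor} is trivial.

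Finally, for the identification of the groupoidal subcategory: if $A$ is a groupoidal cocartesian fibration, then $\Fun_{\eK_{/B}}(X,A)$ is a Kan complex for every $X \in \eK_{/B}$, and $\Fun_{\coCart(\eK)_{/B}}(X,A)$ is a full sub-simplicial-set (spanned by the cartesian functors $X \to A$), which is therefore also a Kan complex; hence $A$ is groupoidal in $\coCart(\eK)_{/B}$. For the converse I would appeal to the cotensor characterisation that $A$ is groupoidal if and only if $A \to A^\cattwo$ is an equivalence, together with the observations that $A^\cattwo$ is a created cotensor (Step 1) and that equivalences in $\coCart(\eK)_{/B}$ agree with equivalences in $\eK_{/B}$ between objects of the subcategory. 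The main delicate point --- and the step I would check most carefully --- is this last agreement of equivalences, which requires a short 2-categorical argument to show that the homotopy inverse of a cartesian functor is itself a cartesian functor, so that an equivalence in $\eK_{/B}$ between cocartesian fibrations that is realised by a cartesian functor is automatically an equivalence in $\coCart(\eK)_{/B}$, and vice versa.
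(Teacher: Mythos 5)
Your proposal is correct and follows essentially the same route as the paper, whose proof consists of exactly your two main moves: axiom (a) is supplied by Proposition \ref{prop:cocartesian-completeness} (the cosmological limit types all being flexible weighted limits created by the inclusion), and the closure properties of the isofibrations in axiom (b) are inherited from the sliced $\infty$-cosmos. Your additional verification that the groupoidal objects of the sub-cosmos are precisely the groupoidal (co)cartesian fibrations --- including the delicate point about equivalences in $\coCart(\eK)_{/B}$ agreeing with equivalences in $\eK_{/B}$ --- is a genuine part of the statement that the paper's two-sentence proof leaves implicit, and your argument for it is sound.
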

\begin{proof}
Proposition \ref{prop:cocartesian-completeness} proves axiom \ref{defn:cosmos}\ref{defn:cosmos:a}. The closure properties of the isofibrations enumerated in axiom \ref{defn:cosmos:b} are inherited from the sliced $\infty$-cosmos. 
\end{proof}

\begin{rmk} The Kan-complex-enriched category ${}_A\qMod(\eK)_B$ of modules from $A$ to $B$ in an $\infty$-cosmos $\eK$ is also the category of groupoidal objects in an $\infty$-cosmos, namely the $\infty$-cosmos of \emph{two-sided fibrations}
\[
\Cart(\coCart(\eK)_{/A})_{/\pi \colon A \times B \tfib A} \cong \coCart(\Cart(\eK)_{/B})_{/\pi \colon A \times B \tfib B}\]
 that will be introduced in a forthcoming paper \cite{RiehlVerity:2017ts}.
\end{rmk}


\section{Comprehension and the Yoneda embedding}\label{sec:comprehension-yoneda}

The (external) Yoneda embedding carries an element $a \colon 1 \to A$ of an $\infty$-category $A$ to the module $p_0 \colon A \comma a \tfib A$ from $1$ to $A$, a groupoidal cartesian fibration over $A$. This is the assignment on objects of a functor from $\Fun(1,A)$, the \emph{underlying quasi-category} of the $\infty$-category $A$, to the quasi-category of modules from $1$ to $A$. Our aim in this section is to review the construction of this functor.

In \S\ref{sec:comprehension-construction}, we describe the general \emph{comprehension construction}, which is the subject of \cite{RiehlVerity:2017cc}. The comprehension construction associates to any $\infty$-category $A$ and any (co)car\-tes\-ian fibration $p \colon E \tfib B$ a functor from the quasi-category $\Fun(A,B)$ to the quasi-category of (co)cartesian fibrations and cartesian functors over $A$. On objects, the comprehension construction carries a functor $a \colon A \to B$ to the pullback of the fibration $p$:
\[\xymatrix{ E_a \ar@{->>}[d]_{p_a} \ar[r]^-{\ell_a} \pbexcursion & E \ar@{->>}[d]^p \\ A \ar[r]_a & B}\]

Importantly, the construction of the comprehension functor provided by Theorem \ref{thm:comprehension} may be used for $\infty$-categories in any $\infty$-cosmos. In \S\ref{sec:yoneda-case}, we exploit this versatility to define the co- and contravariant Yoneda embeddings as specialisation of the comprehension functor to an appropriate sliced $\infty$-cosmos. These functors  will be used to prove our completeness and cocompleteness results in \S\ref{sec:construction}.


\subsection{The comprehension functor}\label{sec:comprehension-construction}

In this section, we review the construction of the comprehension functor from \cite{RiehlVerity:2017cc} that will be specialised in \S\ref{sec:yoneda-case} to define the Yoneda embedding. 

\begin{ntn}\label{ntn:qcat-ntn} 
Any $\infty$-cosmos $\eK$ admits a maximal $(\infty,1)$-categorical core $g_*\eK$, the subcategory with the same objects and with functor spaces 
\[ \Fun_{g_*\eK}(A,B) \defeq g\Fun(A,B)\] defined to be the maximal groupoid cores of the functor quasi-categories; see Definition \refVII{defn:infinity,1-core}. By \cite[2.1]{Cordier:1986:HtyCoh} the homotopy coherent nerve of a Kan-complex-enriched category is a quasi-category, so we let:
  \begin{itemize}
  \item $\qK_{/A}$ denote the quasi-category $\hN(g_*(\eK_{/A}))$,
  \item $\coCart(\qK)_{/A}$ denote the quasi-category $\hN(g_*(\coCart(\eK)_{/A}))$,
  \item $\Cart(\qK)_{/A}$ denote the quasi-category $\hN(g_*(\Cart(\eK)_{/A}))$,
  \item $\coCart\gr(\qK)_{/A}$ denote the quasi-category $\hN(\coCart\gr(\eK)_{/A})$,
  \item $\Cart\gr(\qK)_{/A}$ denote the quasi-category $\hN(\Cart\gr(\eK)_{/A})$,  and
  \item ${}_A\qMod(\qK)_{B}$ denote the quasi-category $\hN({}_A\qMod(\eK)_B)$.
  \end{itemize}
\end{ntn}

With this notation in hand, we a may now introduce the \emph{comprehension functor}.

\begin{thm}[{\refVI{thm:general-comprehension}}]\label{thm:comprehension}
For any cocartesian fibration $p \colon E \tfib B$ in an $\infty$-cosmos $\eK$ and any $\infty$-category $A \in \eK$, there is a functor
\[ \Fun_{\eK}(A,B) \xrightarrow{c_{p,A}} \coCart(\qK)_{/A}\]
defined on 0-arrows by mapping a functor $a \colon A \to B$ to the pullback:
\[  \xymatrix{ E_a \ar@{->>}[d]_{p_a} \ar[r]^-{\ell_a} \pbexcursion & E \ar@{->>}[d]^p \\ A \ar[r]_a & B}\]
  Its action on 1-arrows $f\colon a\to b$ is defined by lifting $f$ to a $p$-cocartesian 1-arrow as displayed in the diagram
  \begin{equation*}
    \begin{xy}
      0;<1.4cm,0cm>:<0cm,0.75cm>::
      *{\xybox{
          \POS(1,0)*+{A}="one"
          \POS(0,1)*+{A}="two"
          \POS(3,0.5)*+{B}="three"
          \ar@{=} "one";"two"
          \ar@/_5pt/ "one";"three"_{b}^(0.1){}="otm"
          \ar@/^10pt/ "two";"three"^{a}_(0.5){}="ttm"|(0.325){\hole}
          \ar@{=>} "ttm"-<0pt,7pt> ; "otm"+<0pt,10pt> ^(0.3){f}
          \POS(1,2.5)*+{E_{b}}="one'"
          \POS(1,2.5)*{\pbcorner}
          \POS(0,3.5)*+{E_{a}}="two'"
          \POS(0,3.6)*{\pbcorner}
          \POS(3,3)*+{E}="three'"
          \ar@/_5pt/ "one'";"three'"_{\ell_{\hat{b}}}^(0.1){}="otm'"
          \ar@/^10pt/ "two'";"three'"^{\ell_{\hat{a}}}_(0.55){}="ttm'"
          \ar@{->>} "one'";"one"_(.3){p_b}
          \ar@{->>} "two'";"two"_{p_a}
          \ar@{->>} "three'";"three"^{p}
          \ar@{..>} "two'";"one'"_*!/^2pt/{\scriptstyle E_f}
          \ar@{=>} "ttm'"-<0pt,4pt> ; "otm'"+<0pt,4pt> ^(0.3){\ell_{\hat{f}}}
        }}
    \end{xy}
  \end{equation*}
  and then factoring its codomain to obtain the requisite $\infty$-functor $E_f \colon E_a \to E_b$ between the fibres over $a$ and $b$. 
  \end{thm}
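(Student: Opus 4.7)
The plan is to transpose the desired map of quasi-categories across the simplicially enriched adjunction $\gC \dashv \hN$ between homotopy coherent realization and homotopy coherent nerve. Since by definition $\coCart(\qK)_{/A} = \hN(g_*(\coCart(\eK)_{/A}))$, a simplicial map $\Fun_{\eK}(A,B) \to \coCart(\qK)_{/A}$ corresponds bijectively under this adjunction to a simplicial functor $\gC\Fun_{\eK}(A,B) \to g_*(\coCart(\eK)_{/A})$. So the task reduces to building this simplicial functor, working cell-by-cell in the cofibrant simplicial computad $\gC\Fun_{\eK}(A,B)$.

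On objects, i.e.\ on vertices $a \colon A \to B$ of $\Fun_{\eK}(A,B)$, the assignment is forced by the statement: we send $a$ to the pullback $p_a \colon E_a \tfib A$, which by Proposition \ref{prop:cart-fib-pullback} really is a cocartesian fibration whose defining pullback square is a cartesian functor. On a 1-arrow $f \colon a \to b$, regarded internally as a functor $A \to B^{\cattwo}$, we form a $p$-cocartesian lift of $f$ using the cocartesian fibration structure on $p$, in the form of the left adjoint $\ell$ of Definition \ref{defn:cocart-fibration}\ref{itm:cocart.fib.chars.ii} or $\bar\ell$ of \ref{itm:cocart.fib.chars.iii}. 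We then use the universal property of the pullback defining $E_b$ to factor the codomain of this lifted 1-arrow through $E_b$, producing the requisite functor $E_f \colon E_a \to E_b$ over $A$. That $E_f$ is a cartesian functor in the sense of Definition \ref{defn:cartesian-functor} follows from an inspection of its mate, which is invertible precisely because the lift was chosen to be $p$-cocartesian.

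For higher-dimensional cells of $\gC\Fun_{\eK}(A,B)$, the construction proceeds by the same pattern applied to exponentiated shapes. An $n$-arrow of $\Fun_{\eK}(A,B)$ encodes a functor $A \to B^{\Delta^n}$, and to produce the corresponding $n$-arrow in $g_*(\coCart(\eK)_{/A})$ one coherently extends the boundary data already produced by lifting against a further cocartesian 1-arrow in an appropriate Leibniz cotensor of $p$. The crucial technical input is that the Leibniz cotensor $\boundary\Delta^n \leib\pwr p$ inherits cocartesian-type lifting properties from $p$, which follows from axiom \ref{defn:cosmos}\ref{defn:cosmos:b} together with the $\infty$-cosmos-internal characterisations of Definition \ref{defn:cocart-fibration}. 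Consequently, the space of lifts extending a given boundary is a contractible Kan complex, and since $\gC\Fun_{\eK}(A,B)$ is a cofibrant simplicial computad, a coherent system of such choices can be made inductively over its non-degenerate cells.

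The chief obstacle is thus a coherence one: verifying that these inductively chosen cocartesian lifts assemble to a strict simplicial functor, and that each produced 1-arrow really lies in the subcategory $\coCart(\eK)_{/A}$, that is, really is a cartesian functor and not merely a morphism of the ambient slice $\eK_{/A}$. Both points reduce to variants of the contractibility argument above, together with the representable characterisation of cocartesian lifts afforded by Definition \ref{defn:cocart-fibration}\ref{itm:cocart.fib.chars.iii}, which identifies cartesianness of a functor with the invertibility of a certain mate determined by a cocartesian lift. Functoriality of the resulting $c_{p,A}$ in $A$ (by precomposition) and in $p$ (by pullback) is then routine from the universal properties used at each stage.
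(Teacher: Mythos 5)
Your outline reproduces, in essentially the same form, the strategy of the actual proof, which is not given in this paper but in the companion work \cite{RiehlVerity:2017cc} cited in the theorem's label: transpose across $\gC\dashv\hN$, define the simplicial functor on the cofibrant simplicial computad $\gC\Fun_{\eK}(A,B)$ by pullback on objects and by cocartesian lifting followed by factorisation through the pullback on atomic $1$-arrows, and extend inductively over the non-degenerate cells using the homotopical uniqueness (a trivial-fibration/contractibility property) of the spaces of cocartesian lifts. The points you defer --- that the induced maps of fibres are cartesian functors, that the comparison cells are invertible so that the construction lands in the groupoid cores, and that the boundary-extension problems are solvable --- are exactly the technical lemmas that occupy the bulk of that paper, so your plan is on course rather than gapped.
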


A key advantage of Theorem \ref{thm:comprehension} is  that it may be interpreted in any $\infty$-cosmos and in particular applies to slices and duals. 

\begin{rmk}[dual case]
  A cartesian fibration $p\colon E \tfib B$ in $\eK$ defines a cocartesian fibration in $\eK\co$. The comprehension construction in $\eK\co$ defines a functor
  \[ \Fun_{\eK\co}(A,B) \to \coCart(\qK\co)_{/A} \subset
    \qK\co_{/A}
  \]
  Because the duality isomorphism $\qK\co_{/A} \cong (\qK_{/A})\co$ interchanges cocartesian and cartesian fibrations, it follows that $\coCart(\qK\co)_{/A}$ is isomorphic to $\Cart(\qK)_{/A}\co$. Consequently the comprehension construction can be rewritten as
  \begin{equation*}
    \Fun_{\eK}(A,B)\op \to \Cart(\qK)_{/A}\co
    \subset \qK_{/A}\co
  \end{equation*}
\end{rmk}
  
\begin{rmk}[groupoidal case]
If $p \colon E \tfib B$ is a groupoidal (co)cartesian fibration, then Proposition \ref{prop:cart-fib-pullback} demonstrates that its pullbacks are again groupoidal (co)cartesian fibrations. So in this instance, the comprehension functors land in the full sub quasi-categories $\coCart\gr(\qK)_{/A}$ or $\Cart\gr(\qK)_{/A}\co$ spanned by the groupoidal objects.
\end{rmk}

\subsection{The Yoneda embedding}\label{sec:yoneda-case}

\newcommand{\colarr}[3]{%
  \xybox{%
    0;<5.5ex,0cm>:
    \POS (0,0.5)*+{#1}="a",
         (0,-0.5)*+{#3}="b"
    \POS "a" \ar@{->>} "b" ^(0.4){#2}}}
\newcommand{\colarrow}[3]{\xy(0,0)*{\colarr{#1}{#2}{#3}}\endxy}

 In this section, we specialise the comprehension construction to define the covariant and contravariant Yoneda embeddings. This makes use of the hom module $(p_1,p_0) \colon A^\cattwo \tfib A \times A$ of Example \ref{ex:hom-is-a-module} associated to the $\infty$-category $A$. As recalled in Definition \ref{defn:module}, the domain-projection functor defines a cartesian fibration and the codomain-projection functor defines a cocartesian fibration in the sliced $\infty$-cosmos $\eK_{/A}$. Applying a special case of the comprehension construction in each of these instances defines the co- and contravariant Yoneda embeddings as full and faithful functors internal to the $\infty$-cosmos of (large) quasi-categories: the underlying quasi-category $\Fun_{\eK}(1,A)$ of $A$ is embedded covariantly into $\Cart(\qK)\gr_{/A} \cong {}_1\qMod(\qK)_A$ and contravariantly into $(\coCart(\qK)\gr_{/A})\co\cong{}_A\qMod(\qK)_1\co$. A generalisation of this construction, where the terminal $\infty$-category $1$ is replaced by a generic $\infty$-category, will be used in the \S\ref{sec:construction} to prove our general limit and colimit construction theorems.

 \begin{defn}[covariant Yoneda embedding]\label{defn:covariant-yoneda-embedding}
   For any object $A$ in an $\infty$-cosmos $\eK$, the cotensor $(p_1,p_0) \colon A^\cattwo \tfib A \times A$ defines a (groupoidal) cocartesian fibration
   \begin{equation}\label{eq:pre.yoneda.2}
     \xymatrix{ A^\cattwo \ar@{->>}[rr]^-{(p_1,p_0)} \ar@{->>}[dr]_{p_0}
       & & A \times A \ar@{->>}[dl]^{\pi_0} \\ & A}
   \end{equation}
   in the slice $\infty$-cosmos $\eK_{/A}$; see Lemma~\refV{lem:disc-cart-on-right}. The comprehension construction defines a functor:
   \begin{equation}\label{eq:pre.yoneda}
     \Fun_{\eK_{/A}}\left(
       \colarrow{A}{\id_ A}{A},
       \colarrow{A\times A}{\pi_0}{A}
     \right)
     \longrightarrow \coCart(\qK_{/A})_{/\id_A} \cong \qK_{/A},
   \end{equation}
   the isomorphism because cocartesian fibrations over the terminal object are just objects in the $\infty$-cosmos, in this case $\eK_{/A}$.
   Now the domain of this comprehension functor receives a map
   \begin{equation*}
     \Fun_{\eK}(1,A) \to \Fun_{\eK_{/A}}(\id_A,\pi_0),
   \end{equation*}
   defined on objects by sending $a \colon 1 \to A$ to
   \begin{equation*}
     A \cong 1 \times A \xrightarrow{a \times \id_A} A \times A.
   \end{equation*}
   Composing with~\eqref{eq:pre.yoneda} defines a functor
   $\yoneda\colon\Fun_{\eK}(1,A)\to \qK_{/A}$. This acts on a vertex $a\colon 1\to A$
   to return the pullback of $A^\cattwo\tfib A\times A$ along $a\times
   \id_A\colon 1\times A\to A\times A$, that being the module
   $p_0\colon A\comma a\tfib A$ from $1$ to $A$ of Definition \ref{defn:rep-modules}. Consequently, the
   codomain of the functor $\yoneda$ restricts to define a functor
   \begin{equation*}
     \yoneda\colon \Fun_{\eK}(1,A) \to {}_1\qMod(\qK)_A \subset \qK_{/A}
   \end{equation*}
   which is the \emph{covariant Yoneda embedding}.
 \end{defn}
 
 The contravariant Yoneda embedding is an instance of the covariant Yoneda embedding in an appropriate dual.

 \begin{defn}[contravariant Yoneda embedding]\label{defn:contravariant-yoneda-embedding} 
 By applying the covariant Yoneda construction described above in the dual $\infty$-cosmos $\eK\co$ we obtain a dual functor, which this time maps each $a\colon1\to A$ to the groupoidal cocartesian fibration $p_1\colon a\comma A\tfib A$. This gives rise to an embedding
 \begin{equation*}
   \yoneda\colon \Fun_{\eK\co}(1,A) \to {}_1\qMod(\qK\co)_{A} \subset
   \qK\co_{/A}.
 \end{equation*}
  
  Because the duality isomorphism $\eK\co_{/A} \cong (\eK_{/A})\co$ interchanges cocartesian and cartesian fibrations, it carries modules from $1$ to $A$ to modules from $A$ to $1$. Hence, it follows that $ {}_1\qMod(\qK\co)_{A}$ is isomorphic to $ {}_A\qMod(\qK)_{1}\co$. Consequently the embedding above can be rewritten as
  \begin{equation*}
    \yoneda\colon \Fun_{\eK}(1,A)\op \to {}_A\qMod(\qK)_{1}\co
    \subset \qK_{/A}\co
  \end{equation*}
  and this is known as the \emph{contravariant Yoneda embedding}.
\end{defn}

\begin{defn}[generalised Yoneda embeddings]\label{defn:generalised-yoneda}
  The covariant Yoneda embedding can be generalised to replace the terminal
  $\infty$-category $1$ in Definition~\ref{defn:covariant-yoneda-embedding} by a
  generic $\infty$-category $D \in \eK$. Indeed, we can make this generalisation
  simply by applying the covariant Yoneda construction to the promoted object
  $\pi_1\colon D\times A\tfib D$ in the slice $\infty$-cosmos $\eK_{/D}$. On
  observing that the iterated slice $(\eK_{/D})_{/\pi_1\colon D\times A\tfib D}$
  is isomorphic to the slice $\eK_{/D\times A}$, we find that this gives us the
  \emph{covariant generalised Yoneda embedding\/} which is of the following form:
  \begin{equation*}
    \yoneda\colon\Fun_{\eK}(D,A)\cong \Fun_{\eK_{/D}}
    \left(
      \colarrow{D}{\id_D}{D}, \colarrow{D\times A}{\pi_1}{D}
    \right)
    \xrightarrow{\mkern40mu}
    (\qK_{/D})_{/\pi_1\colon D\times A \tfib D} \cong \qK_{/D\times A}
  \end{equation*}
  By the explicit description of the action of Yoneda on vertices given in Definition~\ref{defn:covariant-yoneda-embedding}, this acts on a $0$-arrow $f\colon D\to A$ to carry it to the representable fibration on $(\id,f)\colon D\to D\times A$ in $\eK_{/D}$. A simple computation reveals that this is simply the object $A\comma f\tfib D\times A$, so it follows that our generalised embedding restricts to the full simplicial subcategory ${}_D\qMod(\qK)_A\subset\eK_{D\times A}$ of modules in its codomain.

  The corresponding \emph{contravariant generalised Yoneda embedding\/} is obtained analogously by applying the construction of Definition~\ref{defn:contravariant-yoneda-embedding} to the object $\pi_0\colon A\times D\tfib D$ in the sliced $\infty$-cosmos $\eK_{/D}$. This gives a functor of quasi-categories of the following form:
  \begin{equation*}
    \yoneda\colon \Fun_{\eK}(D,A)\op \xrightarrow{\mkern40mu} {}_A\qMod(\qK)_{D}\co
    \subset \qK_{/A\times D}\co
  \end{equation*}
\end{defn}

\begin{rmk}[generalised Yoneda in explicit terms]\label{rmk:explicit-generalised-yoneda}
  The generalised Yoneda embedding $\yoneda\colon\Fun_{\eK}(D,A)\to\prescript{}{D}\qMod(\qK)_{A}$ carries each $a\colon D\to A$ to the module $(p_1,p_0)\colon A\comma a\tfib D\times A$. Furthermore, Theorem~\ref{thm:comprehension} tells us that it acts on a $1$-arrow $\alpha\colon a\to b$ in $\Fun_{\eK}(D,A)$ by forming the following cartesian lift
  \[ 
    \begin{xy}
      0;<1.2cm,0cm>:
      *{\xybox{
          \POS(1,0)*+{D \times A}="one"
          \POS(0,1)*+{D \times A}="two"
          \POS(4,0.5)*+{A \times A}="three"
          \ar@{=} "one";"two"
          \ar@/_5pt/ "one";"three"_{b\times A}^(0.3){}="otm"
          \ar@/^10pt/ "two";"three"^{a\times A}_(0.4){}="ttm"|(0.24){\hole}
          \ar@{=>} "ttm"-<0pt,10pt> ; "otm"+<0pt,10pt> ^{\alpha \times \id}
          \POS(1,2.5)*+{A \comma b}="one'"
          \POS(1,2.5)*{\pbcorner}
          \POS(0,3.5)*+{A \comma a}="two'"
          \POS(0,3.6)*{\pbcorner}
          \POS(4,3)*+{A^\cattwo}="three'"
          \ar@/_5pt/ "one'";"three'"^(0.3){}="otm'"
          \ar@/^10pt/ "two'";"three'"^{}_(0.4){}="ttm'"
          \ar@{->>} "one'";"one"_(0.3){p_0}
          \ar@{->>} "two'";"two"_{p_0}
          \ar@{->>} "three'";"three"^{(p_1,p_0)}
          \ar@{..>} "two'";"one'"^{\labelstyle \yoneda(\alpha)}
          \ar@{=>} "ttm'"-<0pt,10pt> ; "otm'"+<0pt,10pt> ^{\chi_{\alpha \times \id_A}}
        }}
    \end{xy}
  \]
  along $(p_1,p_0)\colon A^{\cattwo}\tfib A\times A$ regarded as a cartesian fibration in the slice $\eK_{/A}$. The resulting functor $\yoneda(\alpha)\colon A\comma a\to A\comma b$ could also be annotated as $A\comma \alpha$, since it may also be described as having been induced by the 2-cell $\alpha\colon a\Rightarrow b$ in the manner described in Observation~\ref{obs:trans.induce.comma}.
\end{rmk}

\begin{prop}[generalised Yoneda and cosmological functors]\label{prop:gen-Yoneda-pres}
For any cosmological functor $G\colon\eK\to\eL$ is a cosmological functor and $\infty$-categories $A$ and $D$ in $\eK$, the generalised Yoneda embeddings fit into an essentially commutative square:
  \begin{equation*}
    \xymatrix@=2em{
      {\Fun_{\eK}(D,A)}\ar[r]^-{\yoneda}\ar[d]_{G}
      \ar@{}[dr]|{\cong} & {{}_D\qMod(\qK)_A}\ar[d]^{G} \\
      {\Fun_{\eL}(G(D),G(A))}\ar[r]_-{\yoneda} & {{}_{G(D)}\qMod(\qL)_{G(A)}}
    }
  \end{equation*}
\end{prop}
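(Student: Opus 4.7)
The plan is to derive the essential commutativity from a naturality property of the comprehension functor of Theorem~\ref{thm:comprehension} under cosmological functors, combined with the fact that $G$ preserves the simplicial cotensor $(-)^{\cattwo}$ and the slice $\infty$-cosmos construction. The right-hand vertical functor is well-defined because $G$ preserves the cosmological limits and isofibrations involved in Definition~\ref{defn:module}, so it carries a module from $D$ to $A$ to a module from $G(D)$ to $G(A)$, and moreover restricts to a simplicial functor on the Kan-complex-enriched cores.

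On $0$-arrows, the essential commutativity follows directly from the preservation properties of $G$. Applying $G$ to the pullback square
\begin{equation*}
  \xymatrix@=2em{
    {A\comma a}\pbexcursion\ar[r]\ar@{->>}[d]_{(p_1,p_0)} &
    {A^{\cattwo}}\ar@{->>}[d]^{(p_1,p_0)} \\
    {D\times A}\ar[r]_-{a\times\id_A} & {A\times A}
  }
\end{equation*}
defining $\yoneda(a)$, and exploiting the canonical isomorphism $G(A^{\cattwo})\cong G(A)^{\cattwo}$ arising from preservation of simplicial cotensors, yields a canonical isomorphism $G(A\comma a)\cong G(A)\comma G(a)$ over $G(D)\times G(A)$. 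This identifies $G\yoneda(a)$ with $\yoneda(G(a))$.

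For the action on $n$-arrows I would invoke Remark~\ref{rmk:explicit-generalised-yoneda}, which exhibits $\yoneda(\alpha)\colon A\comma a\to A\comma b$ for $\alpha\colon a\to b$ as a specific cartesian lift of $\alpha\times\id_A$ along the cartesian fibration $(p_1,p_0)\colon A^{\cattwo}\tfib A\times A$ in the slice $\eK_{/A}$. Cosmological functors preserve cartesian fibrations, since these are characterised (dually to Definition~\ref{defn:cocart-fibration}) by the existence of certain adjunctions between comma objects in the sliced homotopy $2$-category, and $G$ preserves both comma objects (by preservation of pullbacks and cotensors) and adjunctions (at the level of homotopy $2$-categories). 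Moreover $G$ preserves the associated cartesian lifts, since these are determined, up to fibred isomorphism, by those same adjunctions together with the pullback square defining their codomains. Hence $G$ applied to the lift constructing $\yoneda(\alpha)$ produces a cartesian lift of $G(\alpha)\times\id_{G(A)}$ along $G(A)^{\cattwo}\tfib G(A)\times G(A)$, which by Remark~\ref{rmk:explicit-generalised-yoneda} is itself a representative of $\yoneda(G(\alpha))$.

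The main obstacle is verifying that the canonical isomorphisms $G(A\comma a)\cong G(A)\comma G(a)$ assemble \emph{coherently} into a simplicial natural isomorphism of functors of quasi-categories, rather than being merely pointwise. This coherence, however, is essentially formal: the comparison isomorphism at each simplex is canonically induced by the preserved universal properties (of pullbacks, cotensors, and cartesian lifts), and the simplicial naturality follows from functoriality of these constructions together with the fact that $G$ is simplicially enriched. Packaging this argument neatly amounts to saying that the comprehension functor of Theorem~\ref{thm:comprehension} is natural under cosmological functors, after which the proposition is obtained by specialising this naturality to the particular cocartesian fibration $(p_1,p_0)\colon A^{\cattwo}\tfib A\times A$ promoted into $\eK_{/D}$ as in Definition~\ref{defn:generalised-yoneda}.
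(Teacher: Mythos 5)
Your overall strategy matches the paper's: both proofs reduce the statement to a naturality property of the comprehension construction under cosmological functors, using that $G$ preserves every ingredient (pullbacks, cotensors, comma objects, adjunctions, (co)cartesian fibrations and lifts, groupoidal objects), and both obtain the generalised statement by passing to the induced slice functor $G\colon\eK_{/D}\to\eL_{/G(D)}$ and restricting to modules. Your explicit checks on $0$-arrows (that $G(A\comma a)\cong G(A)\comma G(a)$ over $G(D)\times G(A)$) and on $1$-arrows (via the cartesian lifts of Remark~\ref{rmk:explicit-generalised-yoneda}) are correct as far as they go.

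The gap is in the step you flag as the ``main obstacle'' and then dismiss as ``essentially formal.'' The generalised Yoneda embedding is a functor of quasi-categories valued in a homotopy coherent nerve, so it carries coherence data in every simplicial dimension; an essentially commutative square of such functors is \emph{not} determined by agreement (even up to isomorphism) on $0$- and $1$-arrows, and the comparison isomorphisms you construct pointwise do not automatically assemble into the required natural isomorphism. The paper closes exactly this gap by invoking the essential uniqueness property of the comprehension construction, Observation~\refVI{obs:unique-comprehension} (see also Proposition~\refVI{prop:comprehension-cou}): since $G$ carries the cocartesian fibration \eqref{eq:pre.yoneda.2} defining the Yoneda embedding in $\eK$ to the one defining it in $\eL$, the two composites around the square are both comprehension functors built from the same data, hence essentially equal. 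That uniqueness statement is a substantive theorem about the homotopy coherent lifting argument underlying Theorem~\ref{thm:comprehension}, not a formality; your proof needs to cite it (or reprove it) rather than assert that functoriality and simplicial enrichment of $G$ suffice. Once that citation is in place, your argument coincides with the paper's.
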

\begin{proof}
Any cosmological functor preserves comma objects, adjunctions, and groupoidal objects, so it follows that it preserves (co)cartesian fibrations, (co)cartesian arrows, cartesian functors, and modules. In other words, it preserves all of the structures used in the comprehension construction. Furthermore, if $A$ is an object in $\eK$ then $G$ carries the cocartesian fibration~\eqref{eq:pre.yoneda.2} used to define the Yoneda embedding $\yoneda\colon\Fun_{\eK}(1,A)\to\qK_{/A}$, as in Definition~\ref{defn:covariant-yoneda-embedding}, to the cocartesian fibration used to define the Yoneda embedding $\yoneda\colon\Fun_{\eL}(1,G(A))\to\qL_{/G(A)}$. These facts, combined with the essential uniqueness property of the comprehension construction, Observation~\refVI{obs:unique-comprehension}, lead us to the conclusion that there exists an essentially commutative square
\[
    \xymatrix@=2em{
      {\Fun_{\eK}(1,A)}\ar[r]^-{\yoneda}\ar[d]_{G}
      \ar@{}[dr]|{\cong} & {\qK_{/A}}\ar[d]^{G} \\
      {\Fun_{\eL}(1,G(A))}\ar[r]_-{\yoneda} & {\qL_{/G(A)}}
    }
\]
  relating the Yoneda embeddings associated with $A$ in $\eK$ and $G(A)$ in $\eL$ (see also Proposition~\refVI{prop:comprehension-cou}). By applying this result to the induced cosmological functor of slices $G\colon\eK_{/D}\to\eL_{/G(D)}$ this result extends to generalised Yoneda embeddings, giving an essentially commutative square:
  \begin{equation*}
    \xymatrix@=2em{
      {\Fun_{\eK}(D,A)}\ar[r]^-{\yoneda}\ar[d]_{G}
      \ar@{}[dr]|{\cong} & {\qK_{/D\times A}}\ar[d]^{G} \\
      {\Fun_{\eL}(G(D),G(A))}\ar[r]_-{\yoneda} & {\qL_{/G(D)\times G(A)}}
    }
  \end{equation*}
 Since the cosmological functor $G$ carries modules in $\eK$ to modules in $\eL$. It follows that we may restrict the square above to give the essentially commutative square of the statement.
\end{proof}

One application of Proposition \ref{prop:gen-Yoneda-pres} is particularly worth of note:

\begin{lem}\label{lem:gen-Yoneda-precomp}
  Suppose that $A$ is an object and $f\colon C\to D$ is a functor in $\eK$. Then there exists an essentially commutative square of generalised Yoneda embeddings:
  \begin{equation*}
    \xymatrix@R=2em@C=3em{
      {\Fun_{\eK}(D,A)}\ar[r]^-{\yoneda}\ar[d]_{\Fun_{\eK}(f,A)}
      \ar@{}[dr]|{\cong} & {{}_D\qMod(\qK)_A}\ar[d]^{(f\times A)^*} \\
      {\Fun_{\eK}(C,A)}\ar[r]_-{\yoneda} & {{}_C\qMod(\qK)_A}
    }
  \end{equation*}
\end{lem}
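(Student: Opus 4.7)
The plan is to deduce this lemma from Proposition~\ref{prop:gen-Yoneda-pres} applied to an appropriate cosmological functor. Specifically, pullback along $f\colon C\to D$ gives a functor $f^*\colon \eK_{/D}\to\eK_{/C}$ which I claim is cosmological: axiom~\ref{defn:cosmos}\ref{defn:cosmos:b} guarantees that isofibrations pull back to isofibrations, and the cosmological limits in each of the sliced cosmoi $\eK_{/D}$ and $\eK_{/C}$ are created from $\eK$, so $f^*$ preserves them.

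I would then apply Proposition~\ref{prop:gen-Yoneda-pres} to $f^*$, taking the $\infty$-category playing the role of ``$D$'' there to be the terminal object $\id_D\in\eK_{/D}$ and that playing the role of ``$A$'' to be the projection $\pi_1\colon D\times A\tfib D$, viewed as an object of $\eK_{/D}$. Since $f^*$ preserves terminal objects and carries $\pi_1\colon D\times A\tfib D$ to $\pi_1\colon C\times A\tfib C$ (the pullback of a product projection along $f$), the proposition supplies an essentially commutative square
\begin{equation*}
\xymatrix@R=2em@C=3em{
{\Fun_{\eK_{/D}}(\id_D, \pi_1)}\ar[r]^-{\yoneda} \ar[d]_{f^*} & {{}_{\id_D}\qMod(\qK_{/D})_{\pi_1}}\ar[d]^{f^*} \\
{\Fun_{\eK_{/C}}(\id_C, \pi_1)}\ar[r]_-{\yoneda} & {{}_{\id_C}\qMod(\qK_{/C})_{\pi_1}}
}
\end{equation*}
of generalised Yoneda embeddings applied internally to the respective slice cosmoi.

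The last step is a bookkeeping exercise identifying this square with the one in the statement of the lemma. On the left-hand column, the evident isomorphism $\Fun_{\eK_{/D}}(\id_D,\pi_1)\cong\Fun_{\eK}(D,A)$ (and similarly over $C$) carries the vertical $f^*$ to precomposition, that is, to $\Fun_{\eK}(f,A)$. On the right-hand column, the identifications ${}_{\id_D}\qMod(\qK_{/D})_{\pi_1}\cong{}_D\qMod(\qK)_A$ and ${}_{\id_C}\qMod(\qK_{/C})_{\pi_1}\cong{}_C\qMod(\qK)_A$ are precisely those built into Definition~\ref{defn:generalised-yoneda} via the natural isomorphism of iterated slices $(\eK_{/D})_{/\pi_1\colon D\times A\tfib D}\cong \eK_{/D\times A}$. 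Under these, pullback along $f$ in the slice $\eK_{/D}$ is transported to pullback along $f\times A\colon C\times A\to D\times A$ in $\eK_{/D\times A}$, which is exactly the functor $(f\times A)^*$ appearing in the statement, and the horizontal arrows become the generalised Yoneda embeddings as required. There is no real obstacle beyond checking that Proposition~\ref{prop:gen-Yoneda-pres} is naturally compatible with the slice-cosmos construction of generalised Yoneda; the only care needed is to chase the identifications between objects of iterated slice cosmoi and their reincarnations in $\eK_{/D\times A}$.
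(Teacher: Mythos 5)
Your proposal is correct and follows essentially the same route as the paper: both deduce the lemma by applying Proposition~\ref{prop:gen-Yoneda-pres} to the cosmological pullback functor $f^*\colon\eK_{/D}\to\eK_{/C}$, observing that it carries $\pi_1\colon D\times A\tfib D$ to $\pi_1\colon C\times A\tfib C$, and then identifying the resulting square with the one in the statement via the iterated-slice isomorphism built into Definition~\ref{defn:generalised-yoneda}. Your write-up simply spells out the bookkeeping that the paper leaves as ``easily checked.''
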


\begin{proof}
  The generalised Yoneda embeddings in the statement may be constructed by promoting $A$ to an object $D\times A\tfib D$ (resp. $C\times A\tfib C$) in the sliced $\infty$-cosmos $\eK_{/D}$ (resp. $\eK_{/C}$) and applying the Yoneda embedding construction of Definition~\ref{defn:covariant-yoneda-embedding} there. Pullback along $f\colon C\to D$ defines a cosmological functor $f^*\colon \eK_{/D}\to \eK_{/C}$ which carries $D\times A\tfib D$ to $C\times A\tfib C$, and it is easily checked that Proposition \ref{prop:gen-Yoneda-pres} specialises in the case of the cosmological functor $f^*$ and the Yoneda embedding derived from $D\times A\tfib D$  to the square given in the statement.
\end{proof}

\section{The formal theory of \texorpdfstring{$\infty$}{infinity}-categories}\label{sec:formal}

The motivation for $\infty$-cosmology is that it enables us to develop the theory of $\infty$-categories ``formally''; in particular, independently of the semantics of any particular model. In this section, which is part review and part new material, we introduce those aspects of the formal theory of $\infty$-categories that we will need later in this paper.

In \S\ref{ssec:limits}, we review the definitions of limits and colimits of diagrams valued in an $\infty$-category, introducing the main subject of this paper. There is one new result which appears in this section: Proposition \ref{prop:ff.and.sg.limit.pres}, which demonstrates that fully faithful and strongly generating functors preserve all limits that exist in their domain and codomain. In \S\ref{ssec:ff-and-sg} we define these notions and prove the theorems which allow us to find examples of $\infty$-functors with these properties.



\subsection{Fully faithful and strongly generating functors}\label{ssec:ff-and-sg}

In this section, we say what it means for a functor between $\infty$-categories to be fully faithful and strongly generating and then provide alternate characterisations of these notions in the quasi-categorical model, which will allow us to develop examples.

\begin{defn}[fully-faithful]\label{defn:fully.faithful}
  We say that a functor of $\infty$-categories $f\colon A\to B$ is \emph{fully-faithful\/} if the functor $\bar{f}= \comma(1,f,1)\colon A^\cattwo\to f\comma f$ induced, as in Proposition~\ref{prop:trans-comma}, by the commutative diagram
  \begin{equation*}
    \xymatrix@R=1.5em@C=2em{
      {A}\ar@{=}[r]\ar@{=}[d] & {A}\ar[d]^{f} & {A}\ar@{=}[l]\ar@{=}[d] \\
      {A}\ar[r]_{f} & {B} & {A}\ar[l]^{f}
    }
  \end{equation*}
  is an equivalence.
\end{defn}

\begin{defn}[strong generator]\label{defn:strong.gen} We say that a functor of $\infty$-categories $f\colon A\to B$ is \emph{strongly generating\/} if it satisfies the property that a 2-cell
  \begin{equation*} \xymatrix@R=1em@C=6em{ {X}\ar@/^2ex/[r]^{h}\ar@/_2ex/[r]_{k} \ar@{}[r]|{\Downarrow\beta} & {B} }
  \end{equation*} is invertible whenever the functor $f\comma\beta \colon f\comma h\to f\comma k$, as described in Observation~\ref{obs:trans.induce.comma}, is an equivalence.
\end{defn}

Our next aim is to  provide concrete characterisations of those functors $f\colon \qA\to\qB$ of quasi-categories that satisfy the abstract properties described in Definitions~\ref{defn:fully.faithful} and~\ref{defn:strong.gen}. These both rely upon the well-known fibre-wise characterisation of equivalences between (co)cartesian fibrations developed in the next proposition, which we now recall:

\begin{prop}\label{prop:equivalence-of-fibrations} A cartesian functor
  \[
    \xymatrix{ \qE\ar[rr]^g \ar@{->>}[dr]_p & & \qF \ar@{->>}[dl]^q \\ & \qB}
  \]
  between cocartesian fibrations of quasi-categories is an equivalence in $\qCat_{\!/\qB}$ if and only if it is a fibrewise equivalence, meaning that for each object $b \in \qB$ the functor $g_b \colon \qE_b \to \qF_b$ induced between corresponding fibres is an equivalence.
\end{prop}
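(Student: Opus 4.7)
The forward implication is formal: the inclusion $\qCat_{/\qB} \inc \eK_{/\qB}$ is a cosmological functor (Proposition~\ref{prop:cartesian-cosmoi}), and so is pullback along any functor between $\infty$-cosmoi; in particular, restriction along each $b \colon \Del^0 \to \qB$ preserves equivalences. Applied to $g$ over $\qB$ this immediately yields that each induced functor on fibres $g_b \colon \qE_b \to \qF_b$ is an equivalence of quasi-categories.

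For the converse, I would reduce to a standard cellular argument. First factor $g$ in $\coCart(\qCat)_{/\qB}$ as a cartesian equivalence $i \colon \qE \we \qE'$ followed by a cartesian isofibration $r \colon \qE' \tfib \qF$. By the forward direction $i$ is fibrewise an equivalence, and fibrewise equivalences satisfy 2-of-3, so $r$ is also a fibrewise equivalence. It therefore suffices to prove the following key special case: a cartesian functor $r \colon \qE \tfib \qF$ between cocartesian fibrations over $\qB$ that is simultaneously an isofibration and a fibrewise equivalence is a trivial fibration, that is, it solves every lifting problem
\[
\xymatrix@R=1.5em{
\partial\Del^n \ar[r] \ar[d] & \qE \ar@{->>}[d]^r \\
\Del^n \ar[r] \ar@{-->}[ur] & \qF.
}
\]

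I would attack this by induction on $n$. The case $n=0$ is immediate from fibrewise essential surjectivity of $r$ together with the isofibration property, which together convert an isomorphism $r(e) \cong f$ in a fibre into a strict equality on the nose. For $n=1$, given $e_0,e_1 \in \qE$ and an edge $\beta \colon r e_0 \to r e_1$ in $\qF$ lying over $\bar\beta \colon b_0 \to b_1$ in $\qB$, I would factor $\beta$ in $\qF$ as a $q$-cocartesian lift $\chi \colon r e_0 \to f'$ of $\bar\beta$ followed by a fibrewise arrow $f' \to r e_1$ in $\qF_{b_1}$. A $p$-cocartesian lift of $\bar\beta$ starting at $e_0$ exists by Definition~\ref{defn:cocart-fibration}; since $r$ is a cartesian functor its image is $q$-cocartesian, and the essential uniqueness of cocartesian lifts combined with the isofibration property of $r$ lets us arrange that image to equal $\chi$, producing $\tilde\chi \colon e_0 \to e'$ with $r e' = f'$. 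The remaining fibrewise edge $f' \to r e_1$ then lifts through the fibrewise equivalence $r_{b_1}$ (plus isofibration) to an edge $e' \to e_1$ in $\qE_{b_1}$; composing yields the required 1-simplex. Higher dimensions proceed analogously by choosing a ``cocartesian spine'' of the simplex, lifting it via iterated $p$-cocartesian extensions, and filling in the remaining fibrewise data using $r_b$.

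The main obstacle is coordinating these cellular choices coherently in all dimensions, which amounts to extending a chosen pointwise $p$-cocartesian boundary cylinder to a pointwise $p$-cocartesian cylinder on the full simplex. This is precisely the content of the extension-of-cocartesian-cylinders lemma (\refVI{lem:cocart-cylinder-extensions}) from earlier in this program; once that tool is in hand the induction closes and the trivial fibration property follows, which combined with the factorisation above finishes the proof.
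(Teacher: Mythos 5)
Your strategy is correct and is essentially the standard one. The paper itself does not actually prove this proposition in the body --- it defers to \cite[3.3.1.5]{Lurie:2009fk}, \cite[2.9]{AyalaFrancis:2017fo} and \cite[4.3.2]{RiehlVerity:2018cl-v2} --- but the argument in those sources (and in a draft proof the authors wrote for this series) is exactly your reduction: deduce the forward direction from stability of fibred equivalences under pullback to fibres, factor $g$ in the slice as an equivalence followed by an isofibration, observe that the isofibration is then a \emph{fibrewise trivial fibration}, and show it is a trivial fibration by a cellular lifting argument against $\partial\Del^n\inc\Del^n$ that uses pointwise cocartesian cylinder extensions to push the given sphere into the fibre over the final vertex, where the fibrewise hypothesis applies. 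The one step your sketch elides is the last one: after solving the transported lifting problem in the fibre $\qE_{b_n}$ over the final vertex, the resulting filler sits at the $\Del^{\fbv{1}}$-end of the cylinder $\Del^n\times\Del^1$, and one must still transport it back to the $\Del^{\fbv{0}}$-end to fill the \emph{original} sphere. This is done by filling a sequence of inner horns together with a single outer horn of shape $\Horn^{n+1,n+1}$ whose final edge is the cocartesian lift of a degenerate edge of $\qB$, hence an isomorphism, so that this outer horn is ``special'' and lifts against the isofibration. (Note also that your argument for each $n$ is self-contained rather than a genuine induction on $n$ --- the lower-dimensional cases are never invoked.) With that final transfer step supplied, your proof is complete and coincides with the intended one.
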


Proofs can be found in \cite[3.3.1.5]{Lurie:2009fk}, \cite[2.9]{AyalaFrancis:2017fo}, or \cite[4.3.2]{RiehlVerity:2018cl-v2}. Our interest in this result arises from the following corollary:

\begin{cor}[equivalences of modules are determined fibre-wise]\label{cor:mod.equiv.fibrewise}
  Suppose that we are given two modules $\qE,\qF$ from $\qA$ to $\qB$ between quasi-categories and a functor
  \begin{equation*}
    \xymatrix@C=1em@R=2em{
      {\qE}\ar[rr]^{g}\ar@{->>}[dr]_{(q,p)} && {\qF}\ar@{->>}[dl]^{(s,r)} \\
      & {\qA\times\qB} &
    }
  \end{equation*}
  between them in the slice $\qCat_{\!/\qA\times \qB}$. Then $g$ is an equivalence in $\qCat_{\!/\qA\times \qB}$ if and only if it is a fibre-wise equivalence, in the sense that for each pair of objects $a\in\qA$ and $b\in\qB$ the induced map of bi-fibres $g_{b,a}\colon \qE(b,a)\to \qF(b,a)$ is an equivalence of Kan complexes.
\end{cor}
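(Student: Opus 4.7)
The forward implication is immediate, since the bi-fibre $\qE(b,a)$ is the pullback of $(q,p)\colon\qE\tfib\qA\times\qB$ along $(a,b)\colon 1\to\qA\times\qB$, and pullback preserves equivalences in $\qCat_{/\qA\times\qB}$. The plan for the converse is to iterate Proposition~\ref{prop:equivalence-of-fibrations} twice, exploiting the two complementary fibration structures that a module carries.

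First, Lemma~\ref{lem:module.legs} supplies cocartesian fibrations $q\colon\qE\tfib\qA$ and $s\colon\qF\tfib\qA$ with $g$ a cartesian functor between them over $\qA$. Proposition~\ref{prop:equivalence-of-fibrations} therefore reduces the question of whether $g$ is an equivalence in $\qCat_{/\qA}$ to showing that, for each point $a\colon 1\to\qA$, the fibre functor $g_a\colon\qE_a\to\qF_a$ is an equivalence of quasi-categories. Next, I compute the fibre $\qE_a$ as a two-step pullback: first restricting $\qE$ along $a\times\id_\qB\colon 1\times\qB\to\qA\times\qB$ to produce the bi-fibre of $\qE$ over $(a,\id_\qB)$, then projecting to $\qB$. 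By stability of modules under pullback the resulting $\qE_a\tfib\qB$ is a module from $1$ to $\qB$, i.e.\ a groupoidal cartesian fibration over $\qB$ by Example~\ref{ex:modules-over-1}; likewise for $\qF_a$, and stability of cartesian functors under pullback ensures that $g_a$ is a cartesian functor between them over $\qB$. The dual of Proposition~\ref{prop:equivalence-of-fibrations} then reduces the problem to showing each $(g_a)_b$ is an equivalence for every $b\colon 1\to\qB$; the pasting law for pullbacks identifies $(\qE_a)_b$ with the bi-fibre $\qE(b,a)$ and $(g_a)_b$ with $g_{b,a}$, so the hypothesis discharges this condition.

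The main potential obstacle is the bookkeeping needed to reconcile equivalences in the various slices. Since $\qE,\qF$ are isofibrations over $\qA\times\qB$—and hence also over $\qA$ via postcomposition with $\pi_0$—the standard category-of-fibrant-objects principle that an underlying equivalence between isofibrations promotes to an equivalence in any slice in which they both live collapses ``equivalence in $\qCat$'', ``equivalence in $\qCat_{/\qA}$'', and ``equivalence in $\qCat_{/\qA\times\qB}$'' to a single condition on $g$; similarly each $g_a$ is an equivalence in $\qCat_{/\qB}$ precisely when it is an equivalence of underlying quasi-categories. Concatenating the resulting biconditionals yields the corollary.
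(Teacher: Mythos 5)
Your proposal is correct and follows essentially the same route as the paper: apply Proposition~\ref{prop:equivalence-of-fibrations} to the cocartesian left-hand legs over $\qA$ to reduce to the fibres $\qE(\id_{\qB},a)$, observe these are modules from $1$ to $\qB$, and then apply the dual of that proposition to their cartesian right-hand legs over $\qB$ to reduce to the bi-fibres. The paper simply runs both directions at once by chaining the two biconditionals, whereas you separate the (trivial) forward implication and add explicit bookkeeping about equivalences in the various slices; neither difference is substantive.
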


\begin{proof}
  We know, by Lemma~\ref{lem:module.legs}, that the left-hand legs $q\colon \qE\tfib\qA$ and $s\colon \qF\tfib\qA$ of the given modules are cocartesian fibrations and that our functor $g\colon \qE\to \qF$ is a cartesian functor between them. It follows, by Proposition~\ref{prop:equivalence-of-fibrations}, that $g$ is an equivalence if and only if its action on the fibres $g_{\id_{\qB},a}\colon \qE(\id_{\qB},a)\to\qF(\id_{\qB},a)$ over each object $a\in \qA$ is an equivalence. Now $\qE(\id_{\qB},a)$ and $\qF(\id_{\qB},a)$ are modules from $1$ to $\qB$ so their right-hand legs are cartesian fibrations and $g_{\id_{qB},a}$ is a cartesian functor between them. Consequently, we may apply the manifest dual of Proposition~\ref{prop:equivalence-of-fibrations} to show that $g_{\id_{\qB},a}$ is an equivalence if and only if its action $g_{b,a}\colon\qE(b,a)\to\qF(b,a)$ on the fibres over each object $b\in\qB$ is an equivalence. The stated result follows immediately. 
\end{proof}

Now we may proceed to characterising the fully-faithful and strongly generating functors between quasi-categories in terms of the hom-space modules of Example \ref{ex:hom.spaces}, specialising Definitions~\ref{defn:fully.faithful} and~\ref{defn:strong.gen}:

\begin{prop}[fully-faithful functors of quasi-categories]\label{prop:qcat.fully.faithful}
  A functor $f\colon\qA\to\qB$ in the $\infty$-cosmos $\qCat$ of quasi-categories is fully-faithful  if and only if for all objects $a,b\in \qA$ its action $f_{a,b}\colon\Hom_{\qA}(a,b)\to\Hom_{\qB}(fa,fb)$ on hom-spaces is an equivalence of Kan complexes.
\end{prop}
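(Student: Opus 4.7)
The plan is to recognise the statement as a direct application of Corollary~\ref{cor:mod.equiv.fibrewise} to the module map $\bar f$ that appears in the definition of fully-faithfulness.

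First, I would observe that both $A^\cattwo = \Hom_\qA$ and $f\comma f = \Hom_\qB(f,f)$ are modules from $\qA$ to $\qA$: the former by Example~\ref{ex:hom-is-a-module}, the latter because it is the bi-fibre of the unit module $\Hom_\qB$ on $\qB$ along $(f,f)\colon \qA\times\qA\to\qB\times\qB$, and bi-fibres of modules are modules. Moreover, the comparison map $\bar f = \comma(1,f,1)\colon \Hom_\qA\to\Hom_\qB(f,f)$ is a morphism in $\qCat_{\!/\qA\times\qA}$, by construction of the induced map between commas given in Proposition~\ref{prop:trans-comma}. Consequently $\bar f$ is a morphism between modules from $\qA$ to $\qA$.

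Next, I would invoke Corollary~\ref{cor:mod.equiv.fibrewise}, which tells us that $\bar f$ is an equivalence in $\qCat_{\!/\qA\times\qA}$ if and only if, for every pair of objects $a,b\in\qA$, the induced map on bi-fibres
\[
  \bar f_{a,b} \colon \Hom_\qA(a,b) \longrightarrow \Hom_\qB(f,f)(a,b)
\]
is an equivalence of Kan complexes. So the proof reduces to identifying this map with $f_{a,b}$.

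For the identification, I would use that bi-fibres are given by iterated pullback, and pullback pastes. The bi-fibre of $\Hom_\qA$ over $(a,b)$ is $\Hom_\qA(a,b)$ by the conventions of Example~\ref{ex:hom.spaces}. For the codomain, pulling $\Hom_\qB(f,f)\tfib \qA\times\qA$ back along $(a,b)\colon 1\times 1\to\qA\times\qA$ is the same as pulling $\Hom_\qB\tfib\qB\times\qB$ back along $(fa,fb)$, which by definition is $\Hom_\qB(fa,fb)$. Under these identifications, $\bar f_{a,b}$ is precisely the hom-space action $f_{a,b}\colon \Hom_\qA(a,b)\to\Hom_\qB(fa,fb)$ described in the latter part of Example~\ref{ex:hom.spaces} (indeed, both maps are produced by Proposition~\ref{prop:trans-comma} applied to the evident cospan transformation, so they agree on the nose). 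Combining these two steps yields the stated biconditional.

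The only potentially delicate point is the bookkeeping required to verify that $\bar f_{a,b}$ coincides with $f_{a,b}$, but this follows from the functoriality of the $\comma(-,-,-)$ construction of Proposition~\ref{prop:trans-comma} with respect to pasting of cospan transformations, so no real calculation is needed.
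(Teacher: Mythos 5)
Your proposal is correct and follows exactly the paper's argument: both reduce the statement to Corollary~\ref{cor:mod.equiv.fibrewise} applied to the module map $\bar f\colon \qA^\cattwo\to f\comma f$ over $\qA\times\qA$, and then identify its action on bi-fibres with the hom-space action $f_{a,b}$ from Example~\ref{ex:hom.spaces}. The extra bookkeeping you supply for that identification is a fine elaboration of what the paper asserts in one sentence.
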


\begin{proof}
  The induced functor $\bar{f}\colon \qA^{\cattwo}\to f\comma f$ introduced in Definition~\ref{defn:fully.faithful} is a map in the slice $\qCat_{\!/\qA\times \qA}$ between the modules $A^{\cattwo}$ and $f\comma f$. Furthermore, its action on the bi-fibre over objects $a,b\in\qA$ is the action $f_{a,b}\colon\Hom_{\qA}(a,b)\to\Hom_{\qB}(fa,fb)$ of $f$ on the hom-space between $a$ and $b$ defined in Example~\ref{ex:hom.spaces}. Consequently, the stated result follows immediately from Corollary~\ref{cor:mod.equiv.fibrewise}.
\end{proof}

\begin{prop}[generalised Yoneda is fully-faithful] \label{prop:yoneda.fully.faithful}
For any $\infty$-categories $D$ and $A$ the generalised Yoneda embeddings
  \begin{equation*}
    \yoneda\colon \Fun_{\eK}(D,A)\xrightarrow{\mkern40mu} {}_D\qMod(\qK)_{A}\qquad\text{and}\qquad    \yoneda\colon \Fun_{\eK}(D,A)\op \xrightarrow{\mkern40mu} {}_A\qMod(\qK)_{D}\co
  \end{equation*}
are fully faithful as functors of quasi-categories.
\end{prop}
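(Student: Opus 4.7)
My strategy is to establish the covariant statement and then derive the contravariant statement by applying it in the dual $\infty$-cosmos $\eK\co$; this is legitimate because, as observed in Definition~\ref{defn:contravariant-yoneda-embedding}, the contravariant generalised Yoneda is exactly the covariant one for $\eK\co$, and fully-faithfulness as defined in Definition~\ref{defn:fully.faithful} is a self-dual property (it is the assertion that a single induced map on commas is an equivalence, and both ``commas'' and ``equivalences'' are self-dual). So from here on I focus on the covariant embedding $\yoneda \colon \Fun_{\eK}(D,A) \to {}_D\qMod(\qK)_A$.

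Since ${}_D\qMod(\qK)_A$ is a quasi-category obtained as the homotopy coherent nerve of a Kan-complex-enriched category, and $\Fun_{\eK}(D,A)$ is a quasi-category, fully-faithfulness in the sense of Definition~\ref{defn:fully.faithful} reduces by Proposition~\ref{prop:qcat.fully.faithful} to showing that for each pair $a, b \colon D \to A$ the induced map on hom Kan complexes
\[
  \yoneda_{a,b} \colon \Hom_{\Fun_{\eK}(D,A)}(a,b) \longrightarrow \Hom_{{}_D\qMod(\qK)_A}(A\comma a,\, A\comma b)
\]
is an equivalence. Unwinding the cotensor $A^{\cattwo}$, the source of $\yoneda_{a,b}$ identifies with the fibre of the isofibration $\Fun_{\eK}(D, A^{\cattwo}) \tfib \Fun_{\eK}(D, A \times A)$ over the vertex $(b,a)$, while the target is the maximal Kan core of $\Fun_{\eK_{/D\times A}}(A\comma a, A\comma b)$, which by the defining pullback of $A\comma b$ sits inside $\Fun_{\eK}(A\comma a, A^{\cattwo})$.

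The core of the argument is the standard Yoneda-lemma computation. For each $a \colon D \to A$, applying the weak 2-universal property of Proposition~\ref{prop:1-univ-comma} to the identity 2-cell $\id_a \colon a = a$ produces a canonical ``Yoneda section'' $i_a \colon D \to A\comma a$ sitting over $(a, \id_D) \colon D \to A \times D$. Restriction along $i_a$ defines a candidate homotopy inverse $(-)\circ i_a$ to $\yoneda_{a,b}$. That $\yoneda_{a,b}(\alpha) \circ i_a$ classifies $\alpha$ itself in one direction follows immediately from the explicit description of $\yoneda(\alpha)$ as a cartesian lift of $\alpha \times \id_A$ along $(p_1,p_0) \colon A^{\cattwo} \tfib A \times A$ recorded in Remark~\ref{rmk:explicit-generalised-yoneda}, together with the compatibility of $i_a$ with that lift.

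The main obstacle will be promoting the other composite, namely showing that an arbitrary fibred map $g \colon A\comma a \to A\comma b$ is canonically equivalent to $\yoneda(g \circ i_a)$, from the $\pi_0$-statement delivered by Proposition~\ref{prop:1-univ-comma} to a coherent equivalence of Kan complexes. My approach would be to exploit Corollary~\ref{cor:mod.equiv.fibrewise}: both $g$ and $\yoneda(g \circ i_a)$ are fibred maps between the same modules from $D$ to $A$, so to produce a fibred equivalence between them it suffices to work bi-fibrewise. Over a point $(d, d') \colon 1 \to D \times A$, both maps restrict to the same induced functor on $\Hom_A$-Kan-complexes determined by the generalised element $g \circ i_a$, by naturality of the comma construction under the transformation described in Observation~\ref{obs:trans.induce.comma}. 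To turn these pointwise equivalences into the desired Kan complex equivalence, I would run the same construction parametrically: replace the base point $(d, d')$ by a generic simplex $X \to D \times A$, working in the sliced $\infty$-cosmos $\eK_{/X}$ and invoking Lemma~\ref{lem:gen-Yoneda-precomp} to see that the Yoneda section construction is natural in precomposition. This parametrised bi-fibrewise comparison, together with the one-sided homotopy already established, upgrades the classical Yoneda bijection to the required equivalence of Kan complexes.
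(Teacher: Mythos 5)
Your framing is sound in several respects: the reduction of the contravariant case to the covariant one by duality, the use of Proposition~\ref{prop:qcat.fully.faithful} to reduce fully-faithfulness to an equivalence on hom Kan complexes, and the identification of the Yoneda section $i_a\colon D\to A\comma a$ with restriction along it as the candidate inverse. But be aware that you are attempting something much more ambitious than the paper does: the paper's proof is three sentences long, quoting Theorem \refVI{thm:yoneda-ff} for the statement that the ordinary Yoneda embeddings act by equivalences on hom-spaces, translating via Proposition~\ref{prop:qcat.fully.faithful}, and observing that the generalised embedding is just the ordinary one applied in a sliced $\infty$-cosmos. You are, in effect, trying to reprove the content of that external theorem, and the sketch has a genuine gap at exactly the point where that theorem does its work.

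The gap is in your treatment of the second composite. Corollary~\ref{cor:mod.equiv.fibrewise} is a criterion for a \emph{single} map between two modules to be an equivalence; it cannot manufacture a homotopy between two parallel fibred maps $g$ and $\yoneda(g\circ i_a)$, which is what you need. Moreover, even granting such a homotopy for every vertex $g$, you would only control $\pi_0$ of the target: $\Hom_{{}_D\qMod(\qK)_A}(A\comma a, A\comma b)$ is the mapping Kan complex of a homotopy coherent nerve, and its $n$-simplices for $n\geq 1$ are homotopy coherent families of module maps. Your proposed parametrisation over generalised elements $X\to D\times A$ varies the \emph{base} of the modules, not the simplicial direction of the mapping space, so it supplies no naturality in $g$ and cannot reach the higher homotopy groups. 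Concretely, your first composite shows $\yoneda_{a,b}$ is split injective on all homotopy groups, and the vertexwise argument adds only surjectivity on $\pi_0$. To close the gap you would need either (i) an independent proof that restriction along $i_a$ is itself an equivalence of Kan complexes --- the ``evaluation at the universal element'' form of the Yoneda lemma, which is the content of the groupoidal Yoneda lemma invoked in Lemma~\ref{lem:rep-ho-generate}, applied in the slice $\eK_{/D}$ --- after which your first composite plus 2-of-3 finishes; or (ii) a coherent construction of the homotopy $\yoneda_{a,b}\circ((-)\circ i_a)\simeq\id$ as a map of Kan complexes. Either of these is the real work, and it is precisely what the externally cited theorem provides.
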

\begin{proof}
  Theorem \refVI{thm:yoneda-ff} proves that the covariant and contravariant Yoneda embeddings are fully faithful, in the sense that their actions on hom-spaces are equivalences of Kan complexes.   Proposition~\ref{prop:qcat.fully.faithful} then tells us that they are indeed fully-faithful as functors in the $\infty$-cosmos $\qCat$ in the sense of Definition~\ref{defn:fully.faithful}. Since Definition \ref{defn:generalised-yoneda} constructs the generalised Yoneda embedding functors as a special case of the ordinary Yoneda embeddings, this establishes the general result.
\end{proof}

\begin{prop}[strongly generating functors of quasi-categories]\label{prop:qcat.strongly.generating}
  A functor $f\colon\qA\to\qB$ in the $\infty$-cosmos $\qCat$ of quasi-categories is strongly generating if and only if an arrow $\beta\colon b\to c$ in $\qB$ is invertible precisely when its action by post-composition on hom-spaces $\Hom_{\qB}(fa,\beta)\colon\Hom_{\qB}(fa,b)\to\Hom_{\qB}(fa,c)$ is an equivalence of Kan complexes for all objects $a\in\qA$.
\end{prop}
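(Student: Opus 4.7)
The plan is to translate each side of the claimed equivalence into statements about bi-fibres of modules using Corollary~\ref{cor:mod.equiv.fibrewise}, and then to reduce to the $X=1$ case via the pointwise detection of invertibility for 2-cells. The key observation is that, for any functor $h\colon X\to\qB$, the module $f\comma h\tfib X\times\qA$ has bi-fibre over $(x,a)\in X\times\qA$ equivalent to the hom-space Kan complex $\Hom_{\qB}(fa,hx)$, as explained in Example~\ref{ex:hom.spaces}. Moreover, for any 2-cell $\beta\colon h\To k$ between parallel functors $X\to\qB$, the induced module map $f\comma\beta\colon f\comma h\to f\comma k$ of Observation~\ref{obs:trans.induce.comma} acts on the bi-fibre over $(x,a)$ as post-composition with the component $\beta_x\colon hx\to kx$, i.e.\ as the Kan-complex map $\Hom_{\qB}(fa,\beta_x)\colon\Hom_{\qB}(fa,hx)\to\Hom_{\qB}(fa,kx)$.

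With this identification in hand, applying Corollary~\ref{cor:mod.equiv.fibrewise} to $f\comma\beta$ shows that $f\comma\beta$ is an equivalence of modules if and only if $\Hom_{\qB}(fa,\beta_x)$ is an equivalence of Kan complexes for every $a\in\qA$ and every $x\in X$. Combining this with the standard pointwise detection of invertibility --- a 2-cell $\beta\colon h\To k$ in $\ho_*\qCat$ is invertible if and only if each component $\beta_x$ is invertible in $\qB$ --- the strongly generating condition of Definition~\ref{defn:strong.gen} for $f$ reformulates as: for all small simplicial sets $X$ and all 2-cells $\beta\colon h\To k\colon X\to\qB$, if $\Hom_{\qB}(fa,\beta_x)$ is an equivalence for every $a\in\qA$ and every $x\in X$, then every component $\beta_x$ is invertible.

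The ``only if'' direction of the proposition is then immediate by specialising $X=1$, so that $h$ and $k$ become objects $b,c\in\qB$ and $\beta$ becomes a single arrow $b\to c$. For the ``if'' direction, suppose that the stated hom-space criterion is known to hold for individual arrows of $\qB$; given any 2-cell $\beta\colon h\To k\colon X\to\qB$ with $f\comma\beta$ an equivalence, the bi-fibrewise translation tells us that each component $\beta_x$ satisfies the hypothesis of the criterion, whence each $\beta_x$ is invertible in $\qB$ and therefore $\beta$ itself is invertible by pointwise detection. The only step requiring any genuine care is the identification of the bi-fibrewise action of $f\comma\beta$ with post-composition by $\beta_x$, which I would verify by specialising the formulae for $\Hom_A(\alpha,\beta)$ in Example~\ref{ex:hom.spaces} and the induction in Observation~\ref{obs:trans.induce.comma} to the diagram with $f$ (and the identity 2-cell) in the right-hand column and $\beta$ in the left-hand column; once this is done, Corollary~\ref{cor:mod.equiv.fibrewise} and the standard pointwise criterion do all the real work.
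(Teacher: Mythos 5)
Your proposal is correct and follows essentially the same route as the paper's proof: both identify the bi-fibrewise action of $f\comma\beta$ with post-composition by the components $\beta_x$ via Example~\ref{ex:hom.spaces}, apply Corollary~\ref{cor:mod.equiv.fibrewise} to detect the equivalence fibrewise, and invoke the pointwise detection of invertibility of 2-cells to conclude. Your write-up merely makes explicit the specialisation to $X=1$ and the component-wise reduction that the paper compresses into ``the stated result now follows from combining these facts.''
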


\begin{proof}
  Given functors $h,k\colon\qX\to\qB$ and a 2-cell $\beta\colon h\Rightarrow k$ in $\qCat$ we know, by the discussion in Example~\ref{ex:hom.spaces}, that the action of the induced map $f\comma\beta\colon f\comma h\to f\comma k$ on the fibres over objects $a\in \qA$ and $x\in \qX$ is the action of the component $\beta x\colon hx\to gx\in\qB$ by post-composition on hom-spaces $\Hom_{\qB}(fa,\beta x)\colon \Hom_{\qB}(fa,hx)\to \Hom_{\qB}(fa,kx)$. So we know, by Corollary~\ref{cor:mod.equiv.fibrewise}, that $f\comma\beta$ is an equivalence if and only if $\Hom_{\qB}(fa,\beta x)$ is an equivalence of Kan complexes for all objects $a\in\qA$ and $x\in\qX$. Note also that Corollary~\refI{cor:pointwise-equiv} tells us that $\beta$ is an invertible 2-cell if and only if for each object $x\in\qX$ its component $\beta x\colon hx\to kx$ is an isomorphism in $\qB$. The stated result now follows from combining these facts.
\end{proof}

\begin{obs}\label{obs:gen-set}
  The characterisation of the Proposition \ref{prop:qcat.strongly.generating} reveals that a functor $f\colon\qA\to\qB$ of quasi-categories is strongly generating iff the set of vertices $\{fa\in \qB_0\colon a\in\qA_0\}$ has the property that ``homming'' out of the vertices in that set detects isomorphisms in $\qB$. In particular, this characterisation says nothing about the rest of the structure of the functor $f$.

  In the quasi-categorical setting, we say that some set $X$ of vertices in a quasi-category $\qB$ is strongly generating if it has this isomorphism detection property. Then we observe that a functor $f\colon\qA\to\qB$ is strongly generating iff it maps surjectively onto some strongly generating set $X\subseteq\qB_0$.
\end{obs}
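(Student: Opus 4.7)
The plan is to observe that Proposition \ref{prop:qcat.strongly.generating} already does all the work; what remains is to unwind its statement and separate out its set-theoretic content. That proposition characterises strong generation of $f\colon\qA\to\qB$ via a condition quantified over the family of Kan complex maps $\{\Hom_\qB(fa,\beta)\}_{a\in\qA_0}$ for each arrow $\beta$ of $\qB$. Since this condition references $f$ only through the image set $\{fa : a\in\qA_0\}\subseteq\qB_0$, the strong generation property of $f$ is determined entirely by this image.

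This motivates isolating the isomorphism-detection property as a property of a subset $X\subseteq\qB_0$: call $X$ strongly generating if, for every arrow $\beta\colon b\to c$ of $\qB$, invertibility of $\beta$ is equivalent to the condition that $\Hom_\qB(x,\beta)\colon\Hom_\qB(x,b)\to\Hom_\qB(x,c)$ is an equivalence of Kan complexes for every $x\in X$. With this definition in hand, the first assertion of the observation is immediate from Proposition \ref{prop:qcat.strongly.generating}.

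For the biconditional, I would argue both directions directly. In the forward direction, if $f$ is strongly generating then Proposition \ref{prop:qcat.strongly.generating} says exactly that the set $X\defeq\{fa : a\in\qA_0\}$ enjoys the detection property, so $X$ is a strongly generating subset of $\qB_0$ onto which $f$ manifestly surjects. In the backward direction, suppose $f$ surjects onto some strongly generating set $X\subseteq\qB_0$; then $\{fa : a\in\qA_0\}=X$, so the families $\{\Hom_\qB(fa,\beta)\}_{a\in\qA_0}$ and $\{\Hom_\qB(x,\beta)\}_{x\in X}$ are the same family up to reindexing. Hence one family consists of equivalences of Kan complexes if and only if the other does, whereupon the detection property for $X$ and Proposition \ref{prop:qcat.strongly.generating} combine to yield that $f$ is strongly generating. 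There is no substantive obstacle: the observation is a direct repackaging of the proposition that emphasises its purely set-theoretic dependence on $f$, and the only mild subtlety worth flagging is that surjectivity is required at the level of $0$-simplices alone, with no hypothesis on higher simplices.
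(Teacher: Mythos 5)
Your argument is correct and is essentially the paper's own: the observation is stated without a separate proof precisely because, as you note, the characterisation in Proposition \ref{prop:qcat.strongly.generating} references $f$ only through the vertex set $\{fa : a\in\qA_0\}$, so the equivalence with surjecting onto a strongly generating subset of $\qB_0$ is an immediate repackaging. Your additional remark that only $0$-simplices matter is exactly the point the observation is making.
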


To find further examples of fully faithful and strongly generating functors between quasi-categories, recall from Definition \refVII{defn:ho-notions} that a simplicial functor $F\colon\eC\to\eD$ between Kan-complex-enriched categories is
  \begin{enumerate}[label=(\roman*)]
  \item \emph{homotopically fully-faithful} if its action $F\colon\Map_{\eC}(A,B)\to\Map_{\eD}(FA,FB)$ on each hom-space is an equivalence of Kan complexes, and
  \item \emph{homotopically strongly generating} if a $0$-arrow $e\colon A\to B$ in $\eD$ is an equivalence if and only if for all objects $C$ in $\eC$ the map $\Map_{\eD}(FC,e)\colon\Map_{\eD}(FC, A)\to\Map_{\eD}(FC,B)$ is an equivalence of Kan complexes.
  \end{enumerate}

\begin{prop}\label{prop:ho-notions}
Consider a simplicial functor $F\colon\eC\to\eD$ of Kan-complex-enriched categories together with the functor of quasi-categories   $f\colon\qC\to\qD$ constructed by application of the homotopy coherent nerve construction. Then
  \begin{enumerate}[label=(\roman*)]
  \item\label{lab:ho-notions.1} $f$ is fully-faithful if and only if $F$ is homotopically fully-faithful, and
  \item\label{lab:ho-notions.2} $f$ is strongly generating if and only if $F$ is homotopically strongly generating.
  \end{enumerate}
\end{prop}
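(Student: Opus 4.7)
The plan is to reduce both claims to the characterisations of fully-faithfulness (Proposition~\ref{prop:qcat.fully.faithful}) and strong generation (Proposition~\ref{prop:qcat.strongly.generating}) via the action of $f$ on hom-spaces. The key ingredient is the standard comparison, for any Kan-complex-enriched category $\eC$, between the hom-spaces $\Hom_{\qC}(A,B)$ of its homotopy coherent nerve $\qC \defeq \hN\eC$ and the mapping spaces $\Map_{\eC}(A,B)$: these Kan complexes are naturally equivalent, and moreover any simplicial functor $F \colon \eC \to \eD$ makes the square
\begin{equation*}
\xymatrix@R=1.5em@C=3em{ \Hom_{\qC}(A,B) \ar[r]^-{\simeq} \ar[d]_{f_{A,B}} & \Map_{\eC}(A,B) \ar[d]^{F_{A,B}} \\ \Hom_{\qD}(FA,FB) \ar[r]_-{\simeq} & \Map_{\eD}(FA,FB) }
\end{equation*}
commute up to equivalence, where $f \defeq \hN F$ and $f_{A,B}$ denotes its action on hom-spaces in the sense of Example~\ref{ex:hom.spaces}.

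Part~(i) then follows immediately. By Proposition~\ref{prop:qcat.fully.faithful}, $f$ is fully-faithful precisely when each $f_{A,B}$ is an equivalence of Kan complexes; by the two-of-three property applied to the square above, this holds iff each $F_{A,B}$ is an equivalence, which is exactly the definition of $F$ being homotopically fully-faithful.

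For part~(ii) I will apply Proposition~\ref{prop:qcat.strongly.generating}, which says that $f$ is strongly generating iff an arrow $\beta \colon B \to C$ of $\qD$ is invertible whenever the post-composition action $\Hom_{\qD}(fA,\beta)$ is a Kan equivalence for every $A \in \eC$. This must be translated into the language of $\eD$ in two steps. First, an arrow of $\qD = \hN\eD$ is precisely a $0$-arrow of $\eD$, and its invertibility in $\qD$ coincides with its being an equivalence in the Kan-complex-enriched sense in $\eD$, since both conditions are detected by the homotopy category $\ho_*\eD$. Second, naturality of the comparison square in the second variable (applied to $\beta$) identifies the map $\Hom_{\qD}(fA,\beta)$ with $\Map_{\eD}(FA,\beta)$ up to equivalence of Kan complexes, so the isomorphism-detection condition in $\qD$ transports to the equivalence-detection condition in $\eD$ that defines homotopical strong generation.

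The only real obstacle is establishing the naturality of the comparison between $\Hom_{\qC}$ and $\Map_{\eC}$ in both variables and in simplicial functors; this is a standard consequence of the adjunction between the homotopy coherent realisation $\gC$ and the nerve $\hN$, together with the fact that for Kan-complex-enriched $\eC$ the counit $\gC\hN\eC \to \eC$ is a Dwyer--Kan equivalence. Granting this naturality, the rest of the argument is a direct unpacking of the quasi-categorical characterisations already proved.
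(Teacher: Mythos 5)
Your proposal is correct and follows essentially the same route as the paper: both parts are reduced to the hom-space characterisations of Propositions~\ref{prop:qcat.fully.faithful} and~\ref{prop:qcat.strongly.generating} via the natural comparison equivalences $\Map_{\eC}(A,B)\we\Hom_{\qC}(A,B)$, whose naturality (strict in the first square, up to isomorphism in the post-composition square) the paper simply cites from earlier results in the series rather than re-deriving, just as you propose to do.
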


\begin{proof}
  Recall from Corollary \refVI{cor:fun-to-hom} that there are canonical equivalences $\Map_{\eC}(A,B)\we\Hom_{\qC}(A,B)$ between mapping spaces of $\eC$ and hom-spaces of its homotopy coherent nerve $\qC$, which are natural in the sense that the square
  \begin{equation*}
    \xymatrix@R=2em@C=4em{
      {\Map_{\eC}(A,B)} \ar[r]^-{F}\ar[d]_{\simeq} &
      {\Map_{\eD}(FA,FB)} \ar[d]^{\simeq} \\
      {\Hom_{\qC}(A,B)} \ar[r]_-{f}
      & {\Hom_{\qD}(FA,FB)}
    }
  \end{equation*}
  commutes for each pair of objects $A,B\in \eC$. So by composition and cancellation of equivalences of Kan complexes, it follows that the upper horizontal in this square is an equivalence if and only if the lower horizontal is; thus establishing \ref{lab:ho-notions.1} via Proposition~\ref{prop:qcat.fully.faithful}.

  Now observe that a $0$-arrow $e\colon A\to B$ is an equivalence in $\eD$ if and only if it is an isomorphism in the quasi-category $\qD$. Also observe that the square
  \begin{equation*}
    \xymatrix@R=2em@C=5em{
      {\Map_{\eD}(FC,A)} \ar[r]^-{\Map_{\eC}(FC,e)}\ar[d]_{\simeq} &
      {\Map_{\eD}(FC,B)} \ar[d]^{\simeq} \\
      {\Hom_{\qD}(FC,A)} \ar[r]_-{\Hom_{\qC}(FC,e)}\ar@{}[ur]|{\cong}
      & {\Hom_{\qD}(FC,B)}
    }
  \end{equation*}
  commutes up to isomorphism for each $C\in\eC$. This result follows straightforwardly from the concrete construction of the vertical equivalences in this square given in Propositions~\refVI{prop:fun-to-r-hom} and~\refVI{prop:hom-space-comparison}. Here again it follows that the upper horizontal map $\Map_{\eC}(FC,e)$ is an equivalence of Kan complexes if and only if $\Hom_{\qC}(FC,e)$ is an equivalence of Kan complexes. Combining these observations we establish \ref{lab:ho-notions.2} via Proposition~\ref{prop:qcat.strongly.generating}.
\end{proof}

\begin{lem}\label{lem:rep-ho-generate}
  If $\qA$ is a quasi-category then the set of representable cartesian fibrations $\{p_{\qA}\colon \qA\comma a\tfib \qA\mid a\in\qA\}$ is homotopically strongly generating in ${}_1\qMod(\qCat)_{\qA}$. Consequently, this set of representables is strongly generating in the corresponding quasi-category ${}_1\qMod(\qqCat)_{\qA}$.
\end{lem}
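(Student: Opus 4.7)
The strategy is to combine an $\infty$-categorical Yoneda-type computation of mapping spaces out of representable modules with the fibrewise detection of equivalences of modules provided by Corollary \ref{cor:mod.equiv.fibrewise}.

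The first step is to establish that, for any module $(q,p)\colon \qE \tfib 1 \times \qA \cong \qA$ from $1$ to $\qA$ and any vertex $a \in \qA$, there is a natural equivalence of Kan complexes
\[ \Map_{{}_1\qMod(\qCat)_\qA}(p_a, q) \we \qE_a \]
realised by evaluation at the terminal vertex $\id_a$ of $\qA \comma a$. This is the Yoneda lemma for modules (equivalently, right fibrations) over $\qA$. In the framework developed here, one can derive it from the comprehension construction of Theorem \ref{thm:comprehension} applied to the hom-module $(p_1,p_0)\colon \qA^\cattwo \tfib \qA \times \qA$: the representable $p_a$ is precisely the comprehension of this module at $a$, and its universal property transports mapping spaces out of $p_a$ into fibres. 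The fully faithfulness of the generalised Yoneda embedding (Proposition \ref{prop:yoneda.fully.faithful}) already supplies this equivalence in the special case where $\qE$ itself is representable; extending it to arbitrary modules amounts to analysing (co)cartesian lifts inside the comprehension construction.

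The second step then applies Corollary \ref{cor:mod.equiv.fibrewise}, specialised to modules from $1$ to $\qA$, so that the bi-fibre over $(\id_1, a) \in 1 \times \qA$ is exactly the fibre $\qE_a$. This says that a morphism $e\colon \qE \to \qF$ in ${}_1\qMod(\qCat)_\qA$ is an equivalence if and only if $e_a\colon \qE_a \to \qF_a$ is an equivalence of Kan complexes for every $a \in \qA_0$. Combining with the Yoneda identification of step one, we find that $e$ is an equivalence if and only if $\Map(p_a, e)$ is an equivalence for every $a \in \qA_0$, which is precisely the statement that the set $\{p_a : a \in \qA_0\}$ is homotopically strongly generating in ${}_1\qMod(\qCat)_\qA$.

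For the ``consequently'' statement, the transition to ${}_1\qMod(\qqCat)_\qA$ is supplied by Proposition \ref{prop:ho-notions}\ref{lab:ho-notions.2} together with Observation \ref{obs:gen-set}: applied to the simplicial functor from the discrete simplicial category on $\qA_0$ that picks out the representables, the homotopically strongly generating condition translates directly into the strongly generating condition for the corresponding set of vertices of the quasi-category ${}_1\qMod(\qqCat)_\qA$. The main obstacle in this plan is the Yoneda identification of step one, which demands careful manipulation of the comprehension construction in order to pin down its effect on mapping spaces.
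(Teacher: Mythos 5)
Your proposal is correct and follows essentially the same route as the paper: identify the mapping space $\Map(p_a,-)$ out of a representable with the fibre over $a$ via a Yoneda lemma for groupoidal cartesian fibrations, detect equivalences of modules fibrewise via Proposition \ref{prop:equivalence-of-fibrations} (your appeal to Corollary \ref{cor:mod.equiv.fibrewise} is just the bi-fibre specialisation of this), and transfer the conclusion to the homotopy coherent nerve via Proposition \ref{prop:ho-notions}. The one step you flag as the main obstacle---the Yoneda identification of mapping spaces out of representables with fibres---is not re-derived in the paper from the comprehension construction; it is simply cited as the Yoneda lemma for groupoidal cartesian fibrations established in an earlier paper of the series, so no further manipulation is required there.
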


\begin{proof}
  Suppose that we are given a 0-arrow
  \[
    \xymatrix{ \qE\ar[rr]^g \ar@{->>}[dr]_p & & \qF \ar@{->>}[dl]^q \\ & \qB}
  \]
  in $\eM\defeq {}_1\qMod(\qCat)_{\qA} = \Cart(\qCat)\gr_{/\qA}$ with the property that for all vertices $a\colon 1\to \qA$ the map
  \begin{equation*}
    \Fun_{\eM}(\yoneda(a), g)\colon \Fun_{\eM}(\yoneda(a), p\colon \qE\tfib \qB)\to \Fun_{\eM}(\yoneda(a), q\colon \qF\tfib \qB)
  \end{equation*}
  is an equivalence of Kan complexes. The Yoneda lemma for groupoidal cartesian fibrations, Corollary \refIV{cor:groupoidal-yoneda}, applies to show that this map of functor spaces is equivalent to the action $g_a\colon E_a\to F_a$ of $g$ on fibres over $a$, and so we may apply  Proposition~\ref{prop:equivalence-of-fibrations} to infer that $g$ is an equivalence as required. The conclusion follows from Proposition \ref{prop:ho-notions}.
\end{proof}

\subsection{Limits and colimits in an \texorpdfstring{$\infty$}{infinity}-category}\label{ssec:limits}

Via the nerve embedding, diagrams indexed by small categories are among the diagrams indexed by small simplicial sets. The simplicial cotensors of axiom \ref{defn:cosmos}\ref{defn:cosmos:a} are used to define $\infty$-categories of diagrams.

\begin{defn}[diagram $\infty$-categories]\label{defn:diagram-cats} If $J$ is a small simplicial set and $A$ is an $\infty$-category, then the $\infty$-category $A^J$ is naturally thought of as being the \emph{$\infty$-category of $J$-indexed diagrams in $A$}.
\end{defn}

\begin{defn}\label{defn:all-limits} An $\infty$-category $A$ \emph{admits all limits of shape $J$} if the constant diagram functor $\Delta \colon A \to A^J$,constructed by applying the contravariant functor $A^{(-)}$ to the unique simplicial map $J\to \Del^0$, has a right adjoint:
\[ \adjdisplay \Delta -| \lim : A^J -> A.\]
\end{defn}

This definition is insufficiently general since many $\infty$-categories will have some, but not all, limits of diagrams of a particular indexing shape. The ``partial adjunctions'' of Definition~\ref{defn:absolute-right-lifting} precisely address this problem:

\begin{defn}[limits of families]\label{defn:limit}
  If $J$ is a small simplicial set and $A$ and $D$ are $\infty$-categories then we can regard a functor $d\colon D\to A^J$ as being an \emph{family of $J$-indexed diagrams in $A$}. We say that the members of such a family \emph{admits a family of limits} $\ell\colon D\to A$ if there exists an absolute right lifting diagram:
  \begin{equation}\label{eq:lim-diagram-defn}
    \xymatrix{
      \ar@{}[dr]|(.7){\Downarrow\lambda} & A\ar[d]^{\Delta_A} \\
      D \ar[r]_d \ar[ur]^{\ell} & A^J}
  \end{equation}

  In the case where $D$ is taken to be the terminal $\infty$-category $1$, we think of $d$ as being a single diagram of shape $J$, $\ell$ as its limit, and $\lambda$ as the limiting cone.
\end{defn}

\begin{defn}[$\infty$-categories of cones]\label{defn:cones} For any diagram $d \colon 1 \to A^J$ of shape $J$ in an $\infty$-category $A$, the \emph{$\infty$-category of cones over $d$} is the comma $\infty$-category $p_0 \colon \Delta \comma d \tfib A$ formed by the pullback 
\[
    \xymatrix@=2.5em{
      {\Delta \comma d}\pbexcursion \ar[r]\ar@{->>}[d]_{(p_1,p_0)} &
      {A^{J \times \cattwo}} \ar@{->>}[d]^{(p_1,p_0)} \\
      {1 \times A} \ar[r]_-{d\times \Delta} & {A^J\times A^J}
    }
\]
Dually, the \emph{$\infty$-category of cones under $d$} is the comma $\infty$-category $p_1 \colon d \comma \Delta \tfib A$.
\end{defn}

The following result recasts Definition \ref{defn:limit} in terms of fibred equivalences of comma $\infty$-categories by specialising Proposition  \ref{prop:absliftingtranslation}:

\begin{prop}[{\refI{prop:absliftingtranslation2}}]\label{prop:limit-as-module-equivalence}
  Given a small simplicial set $J$ and $\infty$-categories $D$ and $A$, then a family $d\colon D\to A^J$ of $J$-indexed diagrams admits a family of limits $\ell\colon D\to A$ if and only if the the $\infty$-category of cones $\Delta\comma d$ is equivalent to $A \comma \ell$ over $D \times A$.
\end{prop}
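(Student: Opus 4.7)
The plan is to observe that this is essentially a direct application of Proposition~\ref{prop:absliftingtranslation} to a suitably chosen cospan. Indeed, Definition~\ref{defn:limit} defines a family of limits to be precisely an absolute right lifting diagram of the shape
\[
  \xymatrix{
    \ar@{}[dr]|(.7){\Downarrow\lambda} & A\ar[d]^{\Delta_A} \\
    D \ar[r]_d \ar[ur]^{\ell} & A^J}
\]
which is an instance of the general absolute right lifting setup of Definition~\ref{defn:absolute-right-lifting} with the cospan $D \xrightarrow{d} A^J \xleftarrow{\Delta_A} A$ in the role of the cospan $C \xrightarrow{g} A \xleftarrow{f} B$ appearing there.

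Under this correspondence, the generic ambient $\infty$-category named $A$ in Proposition~\ref{prop:absliftingtranslation} is instantiated to our $A^J$, the ``$B$'' is instantiated to $A$, and the ``$C$'' is instantiated to $D$, so the fibred equivalence $f \comma g \simeq B \comma \ell$ over $C \times B$ that appears there becomes precisely the fibred equivalence $\Delta_A \comma d \simeq A \comma \ell$ over $D \times A$ asserted in the present statement. The comma $\infty$-category $\Delta_A \comma d$ formed with respect to the cospan above agrees, by Definition~\ref{defn:cones}, with our $\infty$-category of cones $\Delta \comma d$, since both are constructed as the same iterated pullback (using the defining pullback of the cotensor $A^{J \times \cattwo} \cong (A^J)^\cattwo$).

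The proof therefore reduces to invoking Proposition~\ref{prop:absliftingtranslation} for this specific cospan. There is no real obstacle here; the only thing one might want to double-check is that the two possible pullback descriptions of $\Delta \comma d$ --- one as in Definition~\ref{defn:cones} using the cotensor $A^{J \times \cattwo}$ and the projections $(p_1,p_0) \colon A^{J \times \cattwo} \tfib A^J \times A^J$, and the other as in Definition~\ref{defn:comma} applied to the cospan $D \xrightarrow{d} A^J \xleftarrow{\Delta} A$ --- yield the same object of $\eK$. This is immediate from the canonical isomorphism $(A^J)^\cattwo \cong A^{J \times \cattwo}$ of simplicial cotensors and the universal property of pullbacks, so no genuine work is required.
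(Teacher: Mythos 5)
Your proposal is correct and is exactly the route the paper takes: the text introducing this proposition states that it is obtained "by specialising Proposition \ref{prop:absliftingtranslation}" to the cospan $D \xrightarrow{d} A^J \xleftarrow{\Delta} A$, which is precisely your argument, including the routine identification of the cone $\infty$-category of Definition \ref{defn:cones} with the comma object of that cospan via $(A^J)^\cattwo \cong A^{J\times\cattwo}$.
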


Colimits are characterised dually by \emph{absolute left lifting diagrams}; that is to say a triangle
  \begin{equation}\label{eq:abs-left-lifting}
    \xymatrix{
      \ar@{}[dr]|(.7){\Uparrow\lambda} & B \ar[d]^f \\
      C \ar[ur]^\ell \ar[r]_g & A}
  \end{equation}
  in which the direction of the 2-cell is switched relative to that in~\eqref{eq:abs-right-lifting} and which enjoys a universal property akin to that in~\eqref{eq:abs-rl-univ} but with the sense of all 2-cells reversed. 
   Propositions \ref{prop:absliftingtranslation} and \ref{prop:limit-as-module-equivalence} dualise to characterise absolute left lifting diagrams and colimits as fibred equivalences between comma $\infty$-categories.

\begin{prop}\label{prop:fully-faithful-reflects} A fully faithful functor $f \colon A \to B$ reflects all limits or colimits that exist in $B$.
\end{prop}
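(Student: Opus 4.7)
The plan is to work directly in the homotopy 2-category $\ho_*\eK$ using the characterization of limits as absolute right lifting diagrams (Definition~\ref{defn:limit}). Fix a family $d \colon D \to A^J$ together with a candidate cone $\lambda \colon \Delta_A \ell \To d$ in $\ho_*\eK$, and assume that the pasted 2-cell $f^J \lambda \colon \Delta_B(f \ell) \To f^J d$ (employing the identity $f^J \Delta_A = \Delta_B f$) is absolute right lifting. I would verify the universal property of Definition~\ref{defn:absolute-right-lifting} for $\lambda$ directly, exploiting two consequences of fully faithfulness.

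First, cotensoring with $J$ is a cosmological endofunctor $(-)^J \colon \eK \to \eK$, so it sends the defining equivalence $\bar{f} \colon A^\cattwo \we f \comma f$ to an equivalence $(A^J)^\cattwo \cong (A^\cattwo)^J \we (f \comma f)^J \cong f^J \comma f^J$, showing that $f^J$ is fully faithful. Second, for any parallel pair $a, b \colon X \to A$, postcomposition with $f$ induces a bijection on 2-cells $a \To b$ in $\ho_*\eK$: by the weak 2-universal property of commas (Proposition~\ref{prop:1-univ-comma}), such 2-cells correspond to isomorphism classes of 1-cells $X \to A^\cattwo$ over $(b,a)$, respectively $X \to f \comma f$ over $(b,a)$; the fibred equivalence $\bar f$ in the slice 2-category $(\ho_*\eK)_{/A \times A}$ induces a bijection between these classes.

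Now, given $a \colon X \to A$, $c \colon X \to D$, and a 2-cell $\alpha \colon \Delta_A a \To dc$ in $\ho_*\eK$, apply $f^J$ to obtain $f^J \alpha \colon \Delta_B(fa) \To f^J d c$. The universal property of $f^J \lambda$ in $B$ yields a unique $\beta \colon fa \To f \ell c$ with $\Delta_B \beta \cdot f^J \lambda c = f^J \alpha$. By the second observation above, $\beta = f \bar\alpha$ for a unique $\bar\alpha \colon a \To \ell c$. Then
\[
f^J(\Delta_A \bar\alpha \cdot \lambda c) \;=\; \Delta_B(f\bar\alpha) \cdot f^J \lambda c \;=\; \Delta_B \beta \cdot f^J \lambda c \;=\; f^J \alpha,
\]
and applying fully faithfulness of $f^J$ to this parallel pair of 2-cells $\Delta_A a \To dc$ in $\ho_*\eK$ yields the required factorisation $\Delta_A \bar\alpha \cdot \lambda c = \alpha$. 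Uniqueness of $\bar\alpha$ follows by running the argument in reverse: any competitor $\bar\alpha'$ with the same factorisation property would produce, via $f$, a 2-cell $f\bar\alpha'$ satisfying the universal property of $f^J \lambda$, forcing $f\bar\alpha' = \beta = f\bar\alpha$ and hence $\bar\alpha' = \bar\alpha$.

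The colimit reflection statement is dual, obtained by passing to the dual $\infty$-cosmos $\eK\co$ of Definition~\ref{defn:dual-cosmoi}: this reverses 2-cells so absolute right lifting diagrams become absolute left lifting diagrams, while $\bar f$ is unaffected so fully faithfulness transfers verbatim. The principal technical obstacle is the second item above -- converting the equivalence of comma $\infty$-categories that defines fully faithfulness into an honest bijection of 2-cells in $\ho_*\eK$ -- but this follows cleanly from the weak 2-universal property of commas recorded in Proposition~\ref{prop:1-univ-comma}.
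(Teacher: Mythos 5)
Your proof is correct. It shares with the paper's argument the two essential inputs --- that $(-)^J$ carries the defining equivalence $\bar f\colon A^\cattwo\we f\comma f$ to an equivalence exhibiting $f^J$ as fully faithful, and that fully faithfulness must be exploited once for existence and once for uniqueness of the factorisation --- but it packages them differently. The paper invokes Proposition~\ref{prop:absliftingtranslation} to rephrase fully faithfulness of $f$ (and of $f^J$) as the statement that the identity is an absolute right lifting of $f$ through itself, and then applies the composition--cancellation Lemma~\ref{lem:comp-canc-abs-lift} twice; you instead extract from Proposition~\ref{prop:1-univ-comma} the fact that a fully faithful functor induces a bijection on 2-cells between parallel functors landing in its domain, and verify the universal property of Definition~\ref{defn:absolute-right-lifting} by hand. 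Your route is more elementary and makes the mechanism transparent at the level of individual 2-cells, at the cost of re-deriving (from the weak 2-universal property of commas, including the identification of $\bar f$-postcomposition with whiskering by $f$) a bijection that the paper's cited lemmas already encode; the paper's route is shorter and reuses machinery that recurs elsewhere, e.g.\ in the proof of Proposition~\ref{prop:ff.and.sg.limit.pres}. Both treatments of the colimit case via the dual cosmos $\eK\co$ agree.
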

\begin{proof} The statement for limits asserts that given any family of diagrams $d \colon D \to A^J$ of shape $J$ in $A$, any functor $\ell \colon D \to A$ and cone $\lambda \colon \Delta \ell \To d$ as below-left so that the whiskered composite with $f^J \colon A^J \to B^J$ displayed below is an absolute right lifting diagram
\[
\begin{tikzcd} 
\arrow[dr, phantom, "\scriptstyle\Downarrow\lambda" pos=.85] & A \arrow[d, "\Delta"] \arrow[r, "f"] & B \arrow[d, "\Delta"] \\ D \arrow[ur, "\ell"] \arrow[r, "d"'] & A^J \arrow[r, "f^J"'] & B^J
\end{tikzcd}
\]
then $(\ell,\lambda)$ defines an absolute right lifting of $d \colon D \to A^J$ through $\Delta \colon A \to A^J$. By Proposition \ref{prop:absliftingtranslation} applied to Definition \ref{defn:fully.faithful}, to say that $f$ is fully faithful is to say that $\id_A \colon A \to A$ defines an absolute right lifting of $f$ through itself. So by Lemma \ref{lem:comp-canc-abs-lift} and the hypothesis just stated, the composite diagram below-left is an absolute right lifting diagram, and by 2-functoriality of the simplicial cotensor with $J$, the diagram below-left coincides with the diagram below-right:
\[
\begin{tikzcd} & & A \arrow[d, "f"] & & & A \arrow[d, "\Delta"] \\  \arrow[dr, phantom, "\scriptstyle\Downarrow\lambda" pos=.85] & A \arrow[d, "\Delta"] \arrow[ur, equals] \arrow[r, "f"'] & B \arrow[d, "\Delta"] \arrow[r, phantom, "="] &~ & \arrow[dr, phantom, "\scriptstyle\Downarrow\lambda" pos=.25] & A^J \arrow[d, "f^J"] \\ D \arrow[ur, "\ell"] \arrow[r, "d"'] & A^J \arrow[r, "f^J"'] & B^J & D \arrow[r, "d"'] \arrow[uurr, "\ell"] & A^J \arrow[r, "f^J"'] \arrow[ur, equals] & B^J
\end{tikzcd}
\]
By Definition \ref{defn:fully.faithful} to say that $f$ is fully faithful is to say that $\bar{f} \colon A^\cattwo \we f \comma f$ is a fibred equivalence over $A \times A$. Applying $(-)^J \colon \eK \to \eK$, yields a fibred equivalence $\bar{f^J} \colon (A^J)^\cattwo \we f^J \comma f^J$ over $A^J \times A^J$, proving that if $f \colon A \to B$ is fully faithful, then $f^J \colon A^J \to B^J$ is also. Hence by Proposition \ref{prop:absliftingtranslation}, $\id_{A^J} \colon A^J \to A^J$ defines an absolute right lifting of $f^J$ through itself. Applying Lemma \ref{lem:comp-canc-abs-lift}  again, we now conclude that $(\ell,\lambda)$ is an absolute right lifting of $d$ through $\Delta$ as required.
\end{proof}

\begin{prop}\label{prop:ff.and.sg.limit.pres}
  Suppose that we are given a functor of $\infty$-categories $f\colon A\to B$ which is both fully-faithful and strongly generating. Assume also that  $d\colon D\to A^J$ is a family of $J$ indexed diagrams which admits a limit in $A$ and that the transformed diagram $\xymatrix@1{{D}\ar[r]^{d} & {A^J}\ar[r]^{f^J} & {B^J}}$ admits a limit in $B$. Then the functor $f$ preserves the limit of the family of diagrams $d$.
\end{prop}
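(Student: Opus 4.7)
The plan is to prove that the pasted triangle
\[
\xymatrix{
\ar@{}[dr]|(.7){\Downarrow f^J\lambda} & B\ar[d]^{\Delta_B} \\ D \ar[r]_-{f^J d} \ar[ur]^{f\ell} & B^J}
\]
is an absolute right lifting diagram, where $\lambda\colon\Delta_A\ell\To d$ is the limit cone in $A$. By the universal property of the absolute right lifting $(m,\mu)$ hypothesised in $B$, there is a unique 2-cell $\tau\colon f\ell\To m$ with $\mu\cdot\Delta_B\tau = f^J\lambda$; moreover, the pasted triangle above is an absolute right lifting if and only if $\tau$ is invertible, since then $f\ell$ would satisfy the same universal property as $m$. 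So it suffices to show $\tau$ is invertible.

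The strongly generating hypothesis on $f$ (Definition~\ref{defn:strong.gen}) reduces this to showing that the induced map $f\comma\tau\colon f\comma f\ell\to f\comma m$ is an equivalence of $\infty$-categories over $D\times A$. I would analyse each side separately. For the domain: fully faithfulness of $f$ gives an equivalence $\bar{f}\colon A^{\cattwo}\we f\comma f$ over $A\times A$; pulling back along $\ell\times 1_A\colon D\times A\to A\times A$ yields $f\comma f\ell\simeq A\comma\ell$ over $D\times A$, via Proposition~\ref{prop:trans-comma} and the stability of equivalences under pullback. For the codomain: Proposition~\ref{prop:limit-as-module-equivalence} applied to the hypothesised limit $(m,\mu)$ in $B$ gives $B\comma m\simeq\Delta_B\comma f^J d$ over $D\times B$; pulling this back along $1_D\times f\colon D\times A\to D\times B$ and using the identity $\Delta_B\cdot f = f^J\cdot\Delta_A$ yields $f\comma m\simeq f^J\Delta_A\comma f^J d$ over $D\times A$.

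Next, I would observe that $f^J$ is fully faithful: applying the 2-functor $(-)^J$ to the equivalence $A^{\cattwo}\we f\comma f$ produces $(A^J)^{\cattwo}\we f^J\comma f^J$, exactly as in the proof of Proposition~\ref{prop:fully-faithful-reflects}. The same pullback argument then gives $f^J\Delta_A\comma f^J d\simeq\Delta_A\comma d$ over $D\times A$. Finally, Proposition~\ref{prop:limit-as-module-equivalence} applied to the limit $(\ell,\lambda)$ in $A$ gives $\Delta_A\comma d\simeq A\comma\ell$ over $D\times A$. Composing this chain produces a canonical fibred equivalence
\[
\Phi\colon f\comma f\ell\we A\comma\ell\we \Delta_A\comma d\we f^J\Delta_A\comma f^J d\we f\comma m
\]
over $D\times A$.

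The main obstacle is to verify that $\Phi$ coincides, up to isomorphism in $\eK_{/D\times A}$, with the map $f\comma\tau$ induced via Observation~\ref{obs:trans.induce.comma}. By the weak 2-universal property (Proposition~\ref{prop:1-univ-comma}), both maps into $f\comma m$ over $D\times A$ are classified by their associated 2-cells of the form $m\cdot(-)\To f\cdot(-)$ out of $f\comma f\ell$. For $f\comma\tau$ this 2-cell is obtained by pasting $\tau$ with the defining square of $f\comma f\ell$; for $\Phi$, the relevant 2-cell is assembled from the defining 2-cells of the intermediate commas together with the cones $\lambda$ and $\mu$. The identification of these two 2-cells then reduces, via naturality and the middle-four interchange law, to the defining identity $\mu\cdot\Delta_B\tau = f^J\lambda$ for $\tau$. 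Once this compatibility check is in hand, $\Phi\cong f\comma\tau$ in $\eK_{/D\times A}$, so $f\comma\tau$ is an equivalence, $\tau$ is invertible by strong generation, and $f$ preserves the limit of $d$ as required.
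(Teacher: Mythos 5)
Your proposal is correct and follows essentially the same route as the paper's proof: the same reduction to invertibility of the comparison 2-cell via strong generation, the same four fibred equivalences (the two limit equivalences from Proposition~\ref{prop:limit-as-module-equivalence}, and the pullbacks of the fully-faithfulness equivalences for $f$ and $f^J$), and the same final compatibility check identifying the composite with $f\comma\tau$ via the defining identity $\mu\cdot\Delta_B\tau=f^J\lambda$ and the functoriality of Observation~\ref{obs:trans.induce.comma}. The only cosmetic difference is that you string the equivalences into a chain where the paper arranges them as a commuting square and cancels.
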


\begin{proof}
  Consider the following diagrams in the homotopy 2-category $\ho_*\eK$
  \begin{equation*}
    \xymatrix@R=2em@C=3em{
      {}\ar@{}[dr]|(0.7){\Downarrow\lambda} & {A} \ar[r]^{f}\ar[d]^{{\Delta}} &
      {B}\ar[d]^{{\Delta}} & {\mkern20mu} &
      {}\ar@{}[dr]|(0.7){\Downarrow\lambda'} & {B} \ar[d]^{{\Delta}}  \\
      {D}\ar[r]_{d}\ar[ur]^{\ell} & {A^J}\ar[r]_{f^J} & {B^J} & {} &
      {D}\ar[r]_{f^Jd}\ar[ur]^{\ell'} & {B^J}
    }
  \end{equation*}
  in which the triangles display $\ell$ and $\ell'$ as the limits of the diagrams $d$ and $f^Jd$ respectively and the commutative square expresses the naturality of the family of diagonal maps ${\Delta}\colon A\to A^J$. Applying the universal property of the right-hand lifting diagram to the pasted diagram on the left we obtain a unique 2-cell $\alpha\colon f\ell\Rightarrow\ell'$ with $\lambda'\cdot{\Delta}\alpha = f^J\lambda$, and our result follows if we can show that $\alpha$ is invertible. To that end consider the following diagrams
  \begin{equation}\label{eq:some.squares}
    \vcenter{
      \xymatrix@=1.5em{
        {D}\ar[r]^{\ell}\ar@{=}[d] & {A}\ar[d]^f &  {A}\ar@{=}[l]\ar@{=}[d]\\
        {D}\ar[r]^{f\ell}\ar@{=}[d] & {B}\ar@{=}[d] & {A}\ar[l]_{f}\ar@{=}[d]\\
        {D}\ar[r]^{\ell'}\ar@{=}[d] & {B}\ar[d]^{{\Delta}} &
        {A}\ar[l]_{f}\ar@{=}[d]\\
        {D}\ar[r]_{f^Jd} & {B^J} & {A}\ar[l]^{{\Delta}f}
        \ar@{}"3,1";"2,2"\ar@{<=}?(0.35);?(0.65)^{\alpha}
        \ar@{}"4,1";"3,2"\ar@{<=}?(0.35);?(0.65)^{\lambda'}
      }}\mkern10mu\text{(A)}\mkern100mu
    \vcenter{
      \xymatrix@=1.5em{
        {D}\ar[r]^{\ell}\ar@{=}[d] & {A}\ar[d]^{{\Delta}} &
        {A}\ar@{=}[l]\ar@{=}[d]\\
        {D}\ar[r]_{d}\ar@{=}[d] & {A^J}\ar[d]_{f^J} &
        {A}\ar@{=}[d]\ar[l]^(0.45){{\Delta}}\\
        {D}\ar[r]_{f^Jd} & {B^J} & {A}\ar[l]^(0.45){f^J{\Delta}} 
        \ar@{}"2,1";"1,2"\ar@{<=}?(0.35);?(0.65)^{\lambda}
      }
    }\mkern10mu\text{(B)}
  \end{equation}
  the rows of which are diagrams of the form discussed in Observation~\ref{obs:trans.induce.comma}. These give rise to the following diagram  of comma objects and induced functors between them
  \begin{equation}\label{eq:induced.square}
    \xymatrix@R=2em@C=3em{
      {A\comma\ell}\ar[d]_{\text{\ref{item:equivs.a}}}
      \ar[r]^{\text{\ref{item:equivs.c}}} &
      {f\comma f\ell}\ar[r]^{\text{(e)}} &
      {f\comma\ell'}\ar[d]^{\text{\ref{item:equivs.b}}}\\
      {{\Delta}\comma d}\ar[r]_-{\text{\ref{item:equivs.d}}} &
      {f^J{\Delta}\comma f^Jd}\ar@{=}[r] &
      {{{\Delta}f\comma f^Jd}}
      \ar@{}"1,1";"2,3"|*{\cong}
    }
  \end{equation}
  in the slice over $D\times A$. Here the functors on the path along the top and then down the right-hand side are those induced by the successive rows of diagram (A) in~\eqref{eq:some.squares}. Those on the path down the left-hand side and then along the bottom are those induced by the successive rows of diagram (B) in~\eqref{eq:some.squares}. Notice, however, that each pasting composite of a column of (A) is equal to the pasting composite of the corresponding column of (B), which for the right hand columns follows from the defining equality $\lambda'\cdot{\Delta}\alpha = f^J\lambda$ of $\alpha$. So the functoriality comments of Observation~\ref{obs:trans.induce.comma} apply to give the invertible 2-cell relating the two legs of the square in~\eqref{eq:induced.square}. Considering the functors labelled \ref{item:equivs.a}--\ref{item:equivs.d} in~\eqref{eq:induced.square} we see that each one is an equivalence because it
  \begin{enumerate}[label=(\alph*)]
  \item\label{item:equivs.a} is the functor induced by the absolute lifting $\lambda\colon{\Delta}\ell\Rightarrow d$,
  \item\label{item:equivs.b} may be constructed by taking the equivalence $B\comma\ell'\to{\Delta}\comma f^Jd$ induced by the absolute lifting $\lambda'\colon\Delta^B\ell'\Rightarrow f^Jd$ and pulling it back along the functor $D\times f\colon D\times A\to D\times B$,
  \item\label{item:equivs.c} may be constructed by taking the induced functor $A^{\cattwo}\to f\comma f$, which is an equivalence by the assumption that $f$ fully-faithful, and pulling it back along the functor $\ell\times A\colon D\times A\to A\times A$,
  \item\label{item:equivs.d} may be constructed by taking the induced functor $(A^J)^{\cattwo}\to f^J\comma f^J$, which is an equivalence because it may be constructed by applying the comma preserving functor $(-)^J$ to the equivalence $f\colon A^{\cattwo}\to f\comma f$, and pulling it back along the functor $d\times{\Delta}\colon D\times A\to A^J\times A^J$.
  \end{enumerate}
  Applying the composition and cancellation rules for equivalences to~\eqref{eq:induced.square}, we may infer that the functor labelled (e) there is also an equivalence. This latter functor is that induced by $\alpha\colon f\ell\to \ell'$ as in Definition~\ref{defn:strong.gen}, from which it follows, by the strong generation assumption on $f$, that $\alpha$ is an isomorphism as required.
\end{proof}


\section{Construction of limits and colimits}\label{sec:construction} 

We are finally ready to assemble the results of the previous sections and prove our main theorems, extending results proven for quasi-categories by Lurie in \cite[\S 6]{Lurie:2009fk}. Our contribution is to supply proofs independent of his that apply natively to arbitrary $\infty$-cosmoi. An alternate approach might be to use the Yoneda lemma to extend Lurie's results to the general $\infty$-cosmological setting.  

In \S\ref{ssec:complete-quasi}, we give a comprehensive review of the main theorems from \cite{RiehlVerity:2018rq} which allow us to verify that the codomain of the generalised Yoneda embedding is a complete quasi-category and explicitly calculate its quasi-categorical limits as pseudo homotopy limits in a Kan-complex-enriched category, with an expanded array of applications.  In \S\ref{ssec:yoneda-pres}, we use this result to show that the generalised Yoneda embedding $\yoneda\colon\Fun_{\eK}(D,A)\to{}_D\qMod(\qK)_A$ for any pair of $\infty$-categories in any $\infty$-cosmos preserves those limits in $\Fun_{\eK}(D,A)$ derived from the limit of a diagram $d \colon D \to A^J$ in $\eK$. We apply these results in \S\ref{ssec:colimit-construction} to prove  Theorem \ref{thm:limit-construction} and its dual: reducing the question of whether an $\infty$-category $A$ is complete to the question of whether it has products and pullbacks.

\subsection{Complete and cocomplete quasi-categories}\label{ssec:complete-quasi}

We recall the main theorem from \S\refVII{sec:complete} together with a few of its consequences.

\begin{thm}[{\refVII{thm:nerve-completeness},\refVII{thm:nerve-completeness-converse}}]\label{thm:nerve-completeness}
For any Kan-complex-enriched category $\eC$ and simplicial set $X$, if a homotopy coherent diagram $D \colon \gC[X] \to \eC$ admits a pseudo homotopy limit in $\eC$, then the corresponding limit cone $\gC[\Del^0\join X] \to \eC$ transposes to define a limit cone over the transposed diagram $d \colon X \to \qC$ in the homotopy coherent nerve of $\eC$. Conversely, if the diagram $d$ admits a limit in the quasi-category $\qC$, then the limit cone $\Del^0\join X \to \qC$ transposes to define a pseudo homotopy limit cone over $D$ in $\eC$. 

  Consequently, the quasi-category $\qC$ is complete if and only if $\eC$ admits pseudo homotopy limits for all simplicial sets $X$.
\end{thm}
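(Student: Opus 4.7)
The plan rests on the simplicial adjunction $\gC \dashv \hN$ between simplicial sets and simplicial categories. Under this adjunction a diagram $d \colon X \to \qC = \hN\eC$ transposes to a simplicial functor $D \colon \gC[X] \to \eC$, and a cone $\bar{d} \colon \Del^0 \join X \to \qC$ over $d$ transposes to a simplicial functor $\bar{D} \colon \gC[\Del^0 \join X] \to \eC$ extending $D$, the apex being the image of the cone-point vertex $\infty \in \Del^0 \join X$. So both halves of the biconditional amount to matching the universal property of a limit cone on one side of the adjunction with that of a pseudo homotopy limit cone on the other.

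The key construction is the explicit weight $W \colon \gC[X] \to \sSet$ defined by $W(x) \defeq \Map_{\gC[\Del^0 \join X]}(\infty, x)$. By the defining adjointness of mapping spaces, simplicial natural transformations $W \to \Map_{\eC}(L, D-)$ correspond bijectively to simplicial functor extensions of $D$ to $\gC[\Del^0\join X]$ sending $\infty$ to $L$, i.e.\ to cones in $\qC$ over $d$ with apex the vertex representing $L$. The first essential step is to verify that $W$ is flexible in the sense of Definition \ref{defn:flexible-weight}. This proceeds by applying $\gC$ and $\Del^0 \join -$ to the skeletal filtration \eqref{eq:skeletal-decomposition} of $X$ and using the well-known cellular description of $\gC$ on boundary inclusions to exhibit $\emptyset \hookrightarrow W$ as a countable composite of pushouts of coproducts of projective cells, one for each non-degenerate simplex attached along the filtration.

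With this weight in hand both directions of the theorem unfold from transposing universal properties. The pseudo homotopy limit condition on $(L, \lambda)$ asserts that for every $C \in \eC$ the comparison map $\Map_{\eC}(C, L) \to \{W, \Map_{\eC}(C, D-)\}_{\gC[X]}$ is an equivalence of quasi-categories. Transposing $C$ to $c \in \qC$ under the adjunction, the left-hand side becomes $\Hom_{\qC}(c, L)$ and the right-hand side is identified, via the adjoint description of $W$, with the Kan complex of cones over $d$ with apex $c$, which is naturally the fibre of the comma module $\Delta \comma d \to \qC$. Assembling these fibrewise equivalences as $c$ varies and appealing to Corollary \ref{cor:mod.equiv.fibrewise} yields an equivalence of modules $\qC \comma L \simeq \Delta \comma d$ over $\qC$, which by Proposition \ref{prop:limit-as-module-equivalence} is exactly the statement that the transposed cone $\bar{d}$ exhibits $L$ as a limit of $d$ in $\qC$. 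The converse direction is the same argument read backwards, extracting a pseudo homotopy limit cone from a limit cone in $\qC$. The biconditional on completeness follows by quantifying over all small $X$.

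The main technical obstacle, and the locus where I expect the bulk of the labour to lie, is the flexibility claim for $W$: one must give a precise cellular analysis of $\gC$ applied to the join $\Del^0 \join X$, reducing the verification to the case $X = \Del^n$ (where $\gC[\Del^0 \join \Del^n] = \gC[\Del^{n+1}]$ admits the standard cube-complex description) and then glueing the general case from this along the skeletal filtration. A secondary subtlety is the careful assembly of the fibrewise equivalences into an equivalence of modules and its interpretation as an absolute right lifting via the comma translation of Proposition \ref{prop:absliftingtranslation}; both of these steps are clean but require attending to the distinction between equivalence and strict isomorphism throughout.
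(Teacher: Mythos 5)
Your proposal is correct and takes essentially the same route as the actual proof, which this paper does not reproduce but imports from the companion paper \cite{RiehlVerity:2018rq}: the weight $W_X(x)=\Fun_{\gC[\Del^0\join X]}(\bot,x)$ is exactly the one of Definition \ref{defn:weight-for-pseudo-limits}, its flexibility is established there via the cellular analysis of $\gC$ on the skeletal filtration that you sketch, and both directions of the biconditional are obtained simultaneously by matching the weighted-homotopy-limit universal property fibrewise against the comma module $\Delta\comma d$ as in Corollary \ref{cor:mod.equiv.fibrewise} and Proposition \ref{prop:limit-as-module-equivalence}. The one step you compress --- identifying $\{W_X,\Map_{\eC}(C,D-)\}$ naturally with the fibre of $\Delta\comma d\tfib\qC$ over the vertex representing $C$ --- is where the bulk of the companion paper's labour actually lies (it requires the natural comparison $\Map_{\eC}(C,L)\simeq\Hom_{\qC}(C,L)$ between simplicial mapping spaces and quasi-categorical hom-spaces, together with the homotopy-invariance of flexible weighted limits to transport equivalences across it), but you correctly flag this as the locus of the remaining work.
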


This result is essentially the same as Lurie's  \cite[4.2.4.1]{Lurie:2009fk}, though employ a different model for the point-set level homotopy limits, similar to the homotopy limits explored in \cite{Steimle:2011hc}, that we find particular amenable to the sort of analysis we require here. In particular, the data of a pseudo homotopy limit cone is the direct transpose of the data of a quasi-categorical limit cone. 

In classical homotopy theory, a \emph{homotopy coherent diagram} is a simplicial functor $\gC{X} \to \eC$ whose domain is the \emph{homotopy coherent realisation} of the simplicial set $X$, a simplicial category formed as the left adjoint to the homotopy coherent nerve:
\[ \adjdisplay \gC -| \hN : \sCat -> \sSet.\]
 The \emph{pseudo homotopy limits} in the statement of Theorem \ref{thm:nerve-completeness} refer to a flexible weighted homotopy limit with a particular weight  $W_X$ appropriate to diagrams of this shape, with the property that a homotopy coherent diagram of shape $X$ and a $W_X$-shaped cone over that diagram together assemble into a simplicial functor $\gC[\Del^0 \join X] \to \eC$ (see  \S\refVII{ssec:collage}). 

\begin{defn}[{\refVII{defn:weight-for-pseudo-limits}, \refVII{lem:pseudo-is-flexible}}]\label{defn:weight-for-pseudo-limits}
For any simplicial set $X$, the  \emph{weight for the pseudo limit of a homotopy coherent diagram of shape $X$} is the functor
  \begin{equation*}
    \xymatrix@R=0em@C=5em{
      \gC{X}\ar[r]^{W_X} & {\SSet}
    }\qquad\text{given~by}\qquad W_X(x) \defeq \Fun_{\gC[\Del^0\join X]}(\bot,x). 
  \end{equation*}
  Lemma \refVII{lem:pseudo-is-flexible} verifies that this is a flexible weight.  The $W_X$-weighted limit of a homotopy coherent diagram of shape $X$ is then referred to as the \emph{pseudo limit} of that diagram.
  \end{defn}
  
Theorem \ref{thm:nerve-completeness} implies its dual by replacing the Kan-complex-enriched category with its opposite. Thus, it also enables us to deduce cocompleteness results, such as the following theorem, first proven by Barnea, Harpaz, Horel \cite[2.5.9]{BarneaHarpazHorel:2017pc}.

\begin{prop}[{\refVII{prop:qcat-simplicial-model}}]\label{prop:qcat-simplicial-model}
  If $\eM$ is a simplicial model category then the quasi-category $\qM$, defined as the homotopy coherent nerve of the full simplicial subcategory of fibrant-cofibrant objects, is small complete and cocomplete.
\end{prop}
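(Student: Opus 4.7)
The strategy is to apply Theorem~\ref{thm:nerve-completeness} and its dual to the Kan-complex-enriched full subcategory $\eM_{cf}\subseteq \eM$ spanned by the fibrant-cofibrant objects. Thus the entire problem reduces to verifying that $\eM_{cf}$ admits pseudo homotopy limits and pseudo homotopy colimits for every small simplicial set $X$, i.e., $W_X$-weighted homotopy limits and $W_X$-weighted homotopy colimits, where $W_X$ is the flexible weight of Definition~\ref{defn:weight-for-pseudo-limits}.

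First I would treat the limit case. Given a homotopy coherent diagram $D\colon \gC[X]\to \eM_{cf}$, I would form the \emph{strict} simplicially enriched weighted limit $\wlim{W_X}{D}$ in $\eM$, which exists since $\eM$ is (simplicially) complete. The key point is then that this strict limit is already homotopically meaningful: because $W_X$ is a flexible weight (Definition~\ref{defn:flexible-weight}) it is projectively cofibrant as a simplicial functor $\gC[X]\to\SSet$, and because each $D(x)$ is fibrant the simplicial enrichment axiom (SM7) of a simplicial model category implies that $\wlim{W_X}{D}$ is fibrant in $\eM$. Choosing a cofibrant replacement $L\trvfib \wlim{W_X}{D}$ in $\eM$ (which may be taken to be a trivial fibration), the object $L$ lies in $\eM_{cf}$, and I claim it provides the pseudo homotopy limit in $\eM_{cf}$.

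To verify the defining equivalence of Definition~\ref{defn:flexible-hty-limit}, observe that for any $M\in \eM_{cf}$ there is a natural chain
\[
\Map_{\eM_{cf}}(M,L)\we \Map_\eM(M,\wlim{W_X}{D})\cong \{W_X,\Map_\eM(M,D{-})\}_{\gC[X]}=\{W_X,\Map_{\eM_{cf}}(M,D{-})\}_{\gC[X]},
\]
in which the first map is an equivalence of Kan complexes by the simplicial Ken Brown lemma (using that $M$ is cofibrant, $L\trvfib \wlim{W_X}{D}$ is a trivial fibration, and the mapping spaces between fibrant-cofibrant objects are Kan), and the middle isomorphism is the defining universal property of the strict simplicial weighted limit in $\eM$. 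This composite is precisely the canonical comparison~\eqref{eq:flex-hty-lim-comp}, so $L$ exhibits the desired pseudo homotopy limit. Dually, starting from $D\colon \gC[X]\to \eM_{cf}$ I would form the strict $W_X$-weighted colimit $W_X\ast D$ in $\eM$; since $W_X$ is flexible and $D$ is pointwise cofibrant the strict colimit is cofibrant, and a fibrant replacement $W_X\ast D\trvcof C$ with $C\in\eM_{cf}$ provides the pseudo homotopy colimit by the analogous computation. Finally, Theorem~\ref{thm:nerve-completeness} and its dual convert these into quasi-categorical limits and colimits in $\qM$.

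The only step with real content is the construction of $L$ and verification of its universal property: everything else is bookkeeping. The principal obstacle is the homotopical control provided by the SM7 axiom, namely that the strict $W_X$-weighted limit of a pointwise fibrant diagram along a flexible (hence projectively cofibrant) weight is again fibrant and is invariant, under Ken Brown's lemma, when the cone object is replaced up to equivalence. Once that is in hand, the pseudo homotopy universal property drops out directly from the strict enriched universal property of weighted limits in $\eM$.
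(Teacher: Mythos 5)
Your argument is correct and is essentially the proof given for this result in the companion paper (VII), where it is established: one reduces via Theorem~\ref{thm:nerve-completeness} to producing pseudo homotopy (co)limits in $\eM_{cf}$, observes that the $W_X$-weighted limit of a pointwise fibrant diagram along the flexible (projectively cofibrant) weight is fibrant by the right Quillen bifunctor form of SM7, and then passes to a (co)fibrant replacement, which inherits the universal property up to equivalence exactly as in your chain of comparisons. No gaps; the dual colimit argument is likewise as intended.
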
  
  
The groupoidal objects in an $\infty$-cosmos define a Kan-complex-enriched category that is again an $\infty$-cosmos (Proposition \refVII{prop:gpdal-infty-cosmos}) and in particular admits flexible weighted limits. Consequently:
 
\begin{prop}[{\refVII{prop:qcat-of-space-complete}}]\label{prop:qcat-of-space-complete} For any $\infty$-cosmos $\eK$, the large quasi-category $\qS_{\eK}$ of groupoidal $\infty$-categories in $\eK$ is complete and closed under all small limits in the quasi-category $\qK$.
\end{prop}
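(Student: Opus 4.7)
The plan is to deduce this proposition as a direct application of Theorem~\ref{thm:nerve-completeness} (applied to its dual formulation for limits), together with the $\infty$-cosmos structure on groupoidal objects cited in the statement as Proposition~\refVII{prop:gpdal-infty-cosmos}. Since the groupoidal $\infty$-categories in $\eK$ form a Kan-complex-enriched category (indeed, a sub-$\infty$-cosmos whose functor spaces are Kan complexes), the large quasi-category $\qS_{\eK}$ is the homotopy coherent nerve of that Kan-complex-enriched category. By Theorem~\ref{thm:nerve-completeness}, to show $\qS_{\eK}$ is complete it suffices to verify that the Kan-complex-enriched category of groupoidal objects admits pseudo homotopy limits of homotopy coherent diagrams of shape $X$ for every small simplicial set $X$.

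First I would observe that the weight $W_X \colon \gC[X] \to \SSet$ of Definition~\ref{defn:weight-for-pseudo-limits} is flexible. Since groupoidal objects in $\eK$ themselves form an $\infty$-cosmos, Proposition~\ref{prop:flexible-weights-are-htpical} supplies the $W_X$-weighted limit of any diagram $\gC[X] \to \eK\gr$ strictly, and since strict flexible weighted limits are in particular flexible weighted homotopy limits in the sense of Definition~\ref{defn:flexible-hty-limit}, they serve as pseudo homotopy limits. This establishes the first clause: $\qS_{\eK}$ is complete.

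For the second clause --- that $\qS_{\eK} \hookrightarrow \qK$ is closed under small limits --- I would argue as follows. The Kan-complex-enriched inclusion $\eK\gr \hookrightarrow g_*\eK$ of groupoidal objects into the core of $\eK$ preserves flexible weighted limits, since these are created in $\eK\gr$ from the underlying limits in $\eK$ (which in turn agree with those of $g_*\eK$ on homotopy coherent diagrams, since $g_*$ preserves the relevant limits of Kan complexes). Consequently, any pseudo homotopy limit of a diagram $\gC[X] \to \eK\gr$ computed in the groupoidal sub-cosmos also serves as a pseudo homotopy limit when that diagram is regarded as landing in $g_*\eK$. Transposing across the homotopy coherent nerve via both directions of Theorem~\ref{thm:nerve-completeness}, this tells us that any limit cone in $\qS_{\eK}$ over a diagram $X \to \qS_{\eK}$ remains a limit cone when composed with the inclusion $\qS_{\eK} \hookrightarrow \qK$, which is exactly the closure statement.

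The main obstacle is the technical bookkeeping in the second paragraph, verifying that pseudo homotopy limits in $\eK\gr$ and $g_*\eK$ are compatible in the precise sense needed to invoke both directions of Theorem~\ref{thm:nerve-completeness}. This reduces to the observation that groupoidal objects are closed under flexible weighted limits in $\eK$ (since they are characterized representably by the condition that $\Fun_{\eK}(X,-)$ land in Kan complexes, a property stable under limits of quasi-categories), so the forgetful functor $\eK\gr \to \eK$ both preserves and reflects the flexible weighted limits used to build $W_X$-weighted limits. Given this closure, the rest of the argument is a formal consequence of the nerve-completeness correspondence.
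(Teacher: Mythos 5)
Your proposal is correct and follows essentially the same route the paper takes (the result is imported from Proposition \refVII{prop:qcat-of-space-complete}, with the gloss that groupoidal objects form an $\infty$-cosmos and hence admit strict flexible weighted limits, which serve as the pseudo homotopy limits required by Theorem~\ref{thm:nerve-completeness}). Your third paragraph correctly identifies the one genuinely delicate point --- that groupoidal objects are closed under flexible weighted limits in $\eK$ and that, because hom-spaces into groupoidal objects are already Kan complexes, these strict limits also compute the flexible weighted homotopy limits in the $(\infty,1)$-core $g_*\eK$, which is what the closure clause needs.
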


Recall from Notation \ref{ntn:qcat-ntn} that the quasi-category $\qK \defeq Ng_*\eK$ of $\infty$-categories in an $\infty$-cosmos $\eK$ is defined by passing to the $(\infty,1)$-categorical core before applying the homotopy coherent nerve. In  \refVII{ssec:htpy-limit}, we prove: 

\begin{prop}[{\refVII{cor:infty-one-core-flex}}]\label{prop:infty-one-core-flex}
The $(\infty,1)$-core of an $\infty$-cosmos admits flexible weighted homotopy limits.
\end{prop}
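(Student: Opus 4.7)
The plan is to construct each flexible weighted homotopy limit in $g_*\eK$ as the underlying object of the corresponding strict flexible weighted limit in the ambient $\infty$-cosmos $\eK$. Given a small simplicial category $\eA$, a flexible weight $W\colon \eA \to \SSet$, and a diagram $F\colon \eA \to g_*\eK$, view $F$ as a diagram in $\eK$ via the identity-on-objects inclusion $g_*\eK \inc \eK$. Then Proposition~\ref{prop:flexible-weights-are-htpical}(i) produces the strict weighted limit $L \defeq \{W, F\}_\eA$ in $\eK$ together with the canonical isomorphism of quasi-categories $\Fun_\eK(X, L) \cong \{W, \Fun_\eK(X, F-)\}_\eA$ for each $X \in \eK$.

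To promote this isomorphism to the universal property defining a flexible weighted homotopy limit in $g_*\eK$, apply the groupoid core functor $g\colon \qCat \to \Kan$. As the right adjoint to the inclusion $\Kan \inc \qCat$, it preserves all limits, and thus yields
\[ \Fun_{g_*\eK}(X, L) = g\Fun_\eK(X, L) \cong g\{W, \Fun_\eK(X, F-)\}_\eA. \]
The task then reduces to exhibiting a canonical equivalence of Kan complexes between the right-hand side and the target $\{W, g\Fun_\eK(X, F-)\}_\eA = \{W, \Fun_{g_*\eK}(X, F-)\}_\eA$, which will display $L$ as the sought-after flexible weighted homotopy limit in $g_*\eK$.

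The main obstacle is the proof of this last equivalence, which proceeds by induction on the projective cell structure of $W$. The base case $W = \emptyset$ is trivial; the stability under countable composites follows from the preservation by $g$ of sequential limits of isofibrations. At each cell attachment, the weighted limit is built by pullback along the isofibration $Q(A)^{\Del^n} \tfib Q(A)^{\partial\Del^n}$ in $\qCat$, where $Q = \Fun_\eK(X, F-)$; applying $g$ produces a pullback in $\Kan$ that must be compared with the analogous pullback built from $(gQ(A))^{\Del^n} \tfib (gQ(A))^{\partial\Del^n}$. The subtlety is that the individual cotensor comparison maps $(gB)^K \to g(B^K)$ are not equivalences on the nose — for instance $g((\Del^1)^{\Del^1})$ has strictly more connected components than $(g\Del^1)^{\Del^1}$ — so the inductive step relies on a careful pasting argument exploiting the stability of flexible weighted limits under pointwise equivalence (Proposition~\ref{prop:flexible-weights-are-htpical}(ii)) together with the observation that the comparisons for $\Del^n$ and $\partial\Del^n$ are compatibly related by the isofibration structure, so that the discrepancies cancel through the iterated cell attachments. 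This cancellation, properly formalised through a deformation-retract analysis of the relevant comparison squares, provides the final equivalence and completes the proof.
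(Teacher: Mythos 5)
Your overall strategy---take $L$ to be the strict $W$-weighted limit $\wlim{W}{F}$ computed in $\eK$ and argue that it inherits the homotopy universal property in $g_*\eK$---does not work, and the ``cancellation'' you invoke at the end is exactly where it breaks. The simplest non-trivial flexible weight already refutes it: let $\eA$ be the terminal simplicial category, $W=\Del^1$ (two projective $0$-cells plus one projective $1$-cell), and $F$ the diagram picking out an object $A$. Then $\wlim{W}{F}=A^{\cattwo}$, whereas the flexible weighted homotopy limit of $F$ in $g_*\eK$ weighted by $\Del^1$ is $A$ itself: the degenerate cone $\Del^1\to g\Fun_{\eK}(A,A)$ at $\id_A$ induces the constant map $g\Fun_{\eK}(X,A)\to(g\Fun_{\eK}(X,A))^{\Del^1}$, an equivalence because the target is a cotensor of a Kan complex by a contractible simplicial set. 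Since, as you yourself observe, $g(\Fun_{\eK}(X,A)^{\Del^1})\not\simeq g\Fun_{\eK}(X,A)$ in general (three components versus two when $\Fun_{\eK}(X,A)=\Del^1$), no cone whatsoever can make $A^{\cattwo}$ satisfy the universal property, so your candidate is the wrong object. There is nothing for the defect to cancel against: the comparison for the boundary $\partial\Del^1$ is an isomorphism, so the discrepancy created by a single cell attachment survives to the end. A second, independent problem is that the strict limit cone $W\to\Fun_{\eK}(L,F-)$ need not factor through the cores $g\Fun_{\eK}(L,F-)$ at all---for $W=\Del^1$ it picks out the generic non-invertible arrow of $A^{\cattwo}$---so you do not even have a $W$-cone in $g_*\eK$ with which to state the universal property you are trying to verify.

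The correct mechanism, which is the one at work in Proposition \ref{prop:strict-pseudo-cones} of this paper and in the source \refVII{cor:infty-one-core-flex}, is to modify the \emph{weight} rather than to try to commute $g$ past cotensors. For diagrams valued in Kan complexes, limits weighted by flexible weights are invariant under pointwise weak homotopy equivalences between flexible weights; one therefore replaces $W$ by a pointwise weakly equivalent flexible weight all of whose edges are invertible (in the special cases of \S\ref{ssec:complete-quasi} this is simply the map $W_{\qG}\to 1$ to the terminal weight). For such a weight $\widetilde{W}$ the strict cone automatically lands in the cores and $g\wlim{\widetilde{W}}{\Fun_{\eK}(X,F-)}\cong\wlim{\widetilde{W}}{g\Fun_{\eK}(X,F-)}$ holds on the nose, so $\wlim{\widetilde{W}}{F}$ computed in $\eK$, equipped with the cone restricted along $W\to\widetilde{W}$, is the desired flexible weighted homotopy limit. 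Note finally that your appeal to Proposition \ref{prop:flexible-weights-are-htpical}\ref{itm:flexible-htpical} cannot supply the missing step: that statement gives equivalence-invariance in the diagram variable, whereas what is needed here is invariance in the weight variable.
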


Consequently:

\begin{prop}[{\refVII{prop:qcat-of-cosmos-complete}}]\label{prop:qcat-of-cosmos-complete} For any $\infty$-cosmos $\eK$, the large quasi-category $\qK$ of $\infty$-categories in $\eK$ is small complete.
\end{prop}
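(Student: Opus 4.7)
The plan is to combine the two immediately preceding results, since between them they contain essentially all of the content of this proposition. By construction $\qK = \hN(g_*\eK)$ is the homotopy coherent nerve of the $(\infty,1)$-core of $\eK$, which by definition is a Kan-complex-enriched category. Theorem~\ref{thm:nerve-completeness} characterises completeness of a homotopy coherent nerve $\hN(\eC)$ precisely in terms of the existence of pseudo homotopy limits in $\eC$ for all small simplicial sets $X$, so what needs to be checked is that $g_*\eK$ admits pseudo homotopy limits of all small homotopy coherent diagrams.

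First I would recall from Definition~\ref{defn:weight-for-pseudo-limits} that the pseudo limit of a homotopy coherent diagram of shape $X$ is the weighted limit for the weight $W_X\colon\gC X\to\SSet$, which is verified there to be a flexible weight. Consequently, the question reduces to showing that $g_*\eK$ admits all flexible weighted homotopy limits. But this is exactly the content of Proposition~\ref{prop:infty-one-core-flex}, which asserts that the $(\infty,1)$-core of any $\infty$-cosmos admits flexible weighted homotopy limits.

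Combining these two ingredients, for any small simplicial set $X$ and any homotopy coherent diagram $D\colon\gC X\to g_*\eK$, the pseudo homotopy limit $\wlim{W_X}{D}^\simeq$ exists in $g_*\eK$ by Proposition~\ref{prop:infty-one-core-flex}, and then Theorem~\ref{thm:nerve-completeness} transposes the resulting pseudo limit cone $\gC[\Del^0\join X]\to g_*\eK$ into a limit cone in $\qK$ over the transposed diagram $X\to\qK$. Since every diagram $X\to\qK$ arises as the transpose of such a homotopy coherent diagram, this shows that $\qK$ admits all small limits.

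There is no real obstacle here — the entire argument is bookkeeping, invoking Theorem~\ref{thm:nerve-completeness} and Proposition~\ref{prop:infty-one-core-flex} in sequence. The only subtle point worth flagging is the passage to the $(\infty,1)$-core before taking the nerve: one must take care that flexible weighted homotopy limits of groupoidal-object-valued diagrams really do exist in $g_*\eK$ rather than only in $\eK$ itself, and this is precisely the content of Proposition~\ref{prop:infty-one-core-flex}, which is why it is stated separately as a stepping stone to this proposition.
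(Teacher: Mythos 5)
Your proposal is correct and is exactly the argument the paper intends: the result is presented as an immediate consequence of Proposition~\ref{prop:infty-one-core-flex} (the $(\infty,1)$-core admits flexible weighted homotopy limits, hence in particular pseudo homotopy limits for the flexible weights $W_X$) combined with Theorem~\ref{thm:nerve-completeness}. The only quibble is your closing phrase ``groupoidal-object-valued diagrams'' --- the diagrams here take values in arbitrary objects of $\eK$; the relevant subtlety is merely that the homotopy limits must be computed relative to the Kan-complex hom-spaces of $g_*\eK$ rather than the full functor quasi-categories of $\eK$, which is precisely what Proposition~\ref{prop:infty-one-core-flex} supplies.
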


Szumi\l{}o proves a similar result in the context of (unenriched) cofibration categories  \cite{Szumilo:2014tm}.

\begin{ex}[completeness of quasi-categories of cartesian fibrations]\label{ex:comp-fibs}
  In Proposition \ref{prop:cartesian-cosmoi}, we demonstrated that the  categories $\Cart(\eK)_{/B}$ and $\coCart(\eK)_{/B}$ of (co)cartesian fibrations and cartesian functors over an object $B$ support an $\infty$-cosmos structure created by the inclusions $\Cart(\eK)_{/B}\inc\eK_{/B}$ and $\coCart(\eK)_{/B}\inc\eK_{/B}$. Consequently, Propositions \ref{prop:qcat-of-cosmos-complete} and \ref{prop:qcat-of-space-complete} apply to show that the Kan-complex-enriched categories  $g_*(\Cart(\eK)_{/B})$ and $g_*(\coCart(\eK)_{/B})$ and their subcategories of groupoidal objects $\Cart(\eK)\gr_{/B}$ and $\coCart(\eK)\gr_{/B}$ are closed in $g_*(\eK_{/B})$ under flexible weighted limits, and thus that the corresponding quasi-categories $\Cart(\qK)_{/B}$, $\coCart(\qK)_{/B}$, $\Cart(\qK)\gr_{/B}$ and $\coCart(\qK)\gr_{/B}$ are complete and closed under limits in the quasi-category $\qK_{/B}$.
  \end{ex}
  
  \begin{ex}\label{ex:comp-mods}
Definition~\ref{defn:module} tells us that ${}_A\qMod(\eK)_B$ may be expressed as an intersection of three simplicial subcategories $g_*(\Cart(\eK_{/A})_{/A\times B\to A})$, $g_*(\coCart(\eK_{/B})_{/A\times B\to B})$ and $\eK_{/A\times B}\gr$ in the $\infty$-cosmos $\eK_{/A\times B}$. As noted in the Example \ref{ex:comp-fibs} and Proposition \ref{prop:qcat-of-space-complete}, each of these simplicial subcategories is closed in $g_*(\eK_{/A\times B})$ under flexible weighted limits. It follows that ${}_A\qMod(\eK)_B$ is also closed in $g_*(\eK_{/A\times B})$ under small flexible weighted limits and so Theorem~\ref{thm:nerve-completeness} applies to show that the quasi-category ${}_A\qMod(\qK)_B$ is closed in $\qK_{/A\times B}$ under small limits.
\end{ex}

\begin{rmk}\label{rmk:cosmos-pres-qcat-limits}  Any cosmological functor $F\colon\eK\to\eL$ induces corresponding simplicial functors between the respective Kan-complex-enriched categories of (co)cartesian fibrations, groupoidal (co)cartesian fibrations and modules. Since $F$ preserves the products, pullbacks, and sequential limits used to construct the flexible weighted limits in each of those categories, cosmological functors preserve  flexible weighted limits. Consequently the corresponding functors of the complete quasi-categories constructed by applying the homotopy coherent nerve construction all preserve small limits.
\end{rmk}

By Propositions \ref{prop:flexible-weights-are-htpical} and \ref{prop:infty-one-core-flex}, $\infty$-cosmoi and their groupoidal cores admit pseudo (homotopy) limits of homotopy coherent diagrams. In preparation for \S\ref{ssec:colimit-construction}, we calculate pseudo homotopy limits --- applying Definition \ref{defn:flexible-hty-limit} to the weight of Definition \ref{defn:weight-for-pseudo-limits} --- for  simple but important diagram shapes.  Consider as the indexing 1-category $\qJ$ either:
\begin{itemize}
\item  a discrete category, 
\item the pullback shape $\pbshape$, or
\item  the category $\omega\op$ indexing inverse sequences.
\end{itemize}
 In each case, $\qJ$ is a free category on an underlying graph $\qG\inc\qJ$ of ``atomic'' arrows, which we regard as a 1-skeletal simplicial set. As the following lemma explains, in such contexts, strict diagrams $\qJ \to \eC$ are automatically ``homotopy coherent.''
 
 \begin{lem}\label{itm:free-is-hty-coh} Let $\qJ$ be a 1-category freely generated by the graph $\qG \inc \qJ$. 
 \begin{enumerate}[label=(\roman*)]
 \item\label{itm:free-is-hty-coh-i} The homotopy coherent realisation $\gC[\qG]$ is isomorphic to $\qJ$, regarded as a simplicial category with discrete hom sets. Hence diagrams $\qJ \to \eC$ in a Kan complex enriched category, correspond bijectively to diagrams $\qG \to \hN\eC$ in the homotopy coherent nerve.
 \item\label{itm:free-is-hty-coh-ii} For any Kan complex enriched category $\eC$, the quasi-categories $\hN\eC^{\qJ}$ and $\hN\eC^{\qG}$ of diagrams are equivalent. Hence up to equivalence, we can represent a quasi-categorical diagram $\qJ \to \hN\eC$ by a point-set diagram $\qJ \to \eC$.
 \end{enumerate}
 \end{lem}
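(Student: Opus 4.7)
The plan is to prove part (i) by direct computation and then to leverage part (i) in part (ii) via the Quillen equivalence between simplicial categories and quasi-categories.

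For part (i), I would compute $\gC[\qG]$ directly, exploiting that $\qG$ is $1$-dimensional. Using any standard description of $\gC$ (for instance the necklace formula of Dugger--Spivak, or the inductive Cordier construction), every necklace in $\qG$ collapses to a single edge, so each hom-space $\gC[\qG](a,b)$ is discrete: it is the set of edge paths from $a$ to $b$ in $\qG$, with composition given by concatenation. By the universal property of the free category, this is precisely $\qJ(a,b)$, exhibiting $\gC[\qG]\cong\qJ$ as simplicial categories with discrete hom-sets. The claimed bijection on diagrams then follows from the $\gC\dashv\hN$ adjunction together with the observation that a simplicial functor from a discretely enriched simplicial category into $\eC$ is the same data as an ordinary functor between the underlying $1$-categories.

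For part (ii), the plan is to argue that the inclusion $\qG\hookrightarrow N\qJ$ is a Joyal (categorical) equivalence of simplicial sets. Granting this, the inclusion is a monomorphism and hence a Joyal trivial cofibration; since $\hN\eC$ is fibrant in the Joyal model structure (being a quasi-category), the Joyal enrichment axiom implies that restriction $\hN\eC^{N\qJ}\to\hN\eC^{\qG}$ is a trivial Joyal fibration, which delivers the required equivalence of quasi-categories.

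To see that $\qG\hookrightarrow N\qJ$ is a Joyal equivalence, I invoke the Quillen equivalence $\gC\dashv\hN$ between $\sSet$ with the Joyal model structure and $\sCat$ with the Bergner model structure. Under the identification of part (i), the inclusion $\qG\hookrightarrow N\qJ$ is precisely the component $\eta_{\qG}$ of the unit of this adjunction (noting $\hN\qJ=N\qJ$ since $\qJ$ has discrete hom-sets), and the triangle identity expresses the composite
\[
\gC[\qG] \xrightarrow{\gC[\eta_{\qG}]} \gC[N\qJ] \xrightarrow{\epsilon_{\qJ}} \qJ
\]
as the canonical isomorphism $\gC[\qG]\cong\qJ$ of part (i). By the Quillen equivalence the counit $\epsilon_{\qJ}$ is a Dwyer--Kan equivalence, so two-out-of-three forces $\gC[\eta_{\qG}]$ to be a Dwyer--Kan equivalence as well; since every simplicial set is Joyal-cofibrant and $\gC$ reflects weak equivalences between cofibrant objects, this in turn shows that $\eta_{\qG}$ itself is a Joyal equivalence.

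The main obstacle is the careful bookkeeping around the Quillen equivalence and the triangle identity: one must observe that under the isomorphism $\gC[\qG]\cong\qJ$ the unit $\eta_{\qG}$ really is the inclusion $\qG\hookrightarrow N\qJ$, so that the Dwyer--Kan-equivalence conclusion for $\gC[\eta_{\qG}]$ can be transferred back along $\gC$ to yield a genuine Joyal equivalence in $\sSet$. Once this identification is in hand, the enrichment argument for the final equivalence is routine.
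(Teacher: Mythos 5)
Your proposal is correct. Part~(i) matches the paper's argument in substance: both identify $\gC[\qG]$ with the free category $\qJ$ by computing the homotopy coherent realisation of a $1$-skeletal simplicial set (the paper cites its explicit description of $\gC$; your necklace/skeletal-pushout computation gives the same thing), and both then conclude the bijection on diagrams from the adjunction $\gC\dashv\hN$.

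Part~(ii) is where you genuinely diverge. The paper disposes of it in one line by observing that the inclusion $\qG\inc \qJ$ (i.e.\ of the generating graph into the nerve of the free category) is \emph{inner anodyne}, so that restriction into the quasi-category $\hN\eC$ is a trivial fibration; this is a purely combinatorial input, generalising the spine inclusions $\Lambda\inc\Delta^n$, and is elementary but does require that specific combinatorial lemma. You instead show only that the inclusion is a weak categorical equivalence, deriving this from the Quillen equivalence $\gC\dashv\hN$ between the Joyal and Bergner model structures: the counit at the locally discrete (hence locally Kan, hence fibrant) simplicial category $\qJ$ is a Dwyer--Kan equivalence, the triangle identity together with part~(i) identifies $\epsilon_{\qJ}\circ\gC[\eta_{\qG}]$ with the canonical isomorphism, two-out-of-three gives that $\gC[\eta_{\qG}]$ is a Dwyer--Kan equivalence, and reflection of weak equivalences between cofibrant objects transfers this back to $\eta_{\qG}=(\qG\inc\qJ)$. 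Combined with the cartesian monoidal model structure and fibrancy of $\hN\eC$, this again yields a trivial fibration $\hN\eC^{\qJ}\to\hN\eC^{\qG}$. The trade-off is clear: the paper's route is self-contained modulo one combinatorial fact, whereas yours is entirely formal once one grants the Joyal--Bergner Quillen equivalence --- a substantially deeper theorem, but one that spares you from verifying any anodyne-ness by hand. Your bookkeeping of the unit, counit, and triangle identity is accurate, so there is no gap; it is simply a heavier-machinery proof of the same statement.
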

 \begin{proof}
The isomorphism $\gC[\qG]\cong\qJ$ of \ref{itm:free-is-hty-coh-i} is easily recognised  from the explicit description of the homotopy coherent realisation functor given in Proposition \refVII{prop:gothic-C}: the homotopy coherent realisation of any 1-skeletal simplicial set is the free discrete category generated by this graph.  Hence, diagrams $\qJ \cong \gC[\qG] \to \eC$ in a Kan complex enriched category, correspond bijectively to diagrams $\qG \to \hN\eC$ in the homotopy coherent nerve. 

For \ref{itm:free-is-hty-coh-ii}, since $\hN\eC$ is a quasi-category and $\qG \inc \qJ$ is inner anodyne when considered as a monomorphism of simplicial sets, the quasi-categories $\hN\eC^{\qJ}$ and $\hN\eC^{\qG}$ of diagrams are equivalent. So, up to equivalence, we can represent any diagram $\qJ \to \hN\eC$ by its restriction $\qG\inc\qJ\to \hN\eC$, which transposes to a strictly commuting diagram in $\eC$ by  \ref{itm:free-is-hty-coh-i}.
\end{proof}

\begin{defn}\label{defn:strict-pseudo-cones} When $\qJ$ is the free category generated by a graph $\qG$,   a \emph{strictly commuting pseudo cone} over a diagram $F \colon \qJ \cong \gC[\qG] \to \eC$ is formed by restricting a strict cone $\alpha \colon \Delta L \To F(-)$, presenting as a simplicial natural transformation $\alpha \colon 1 \to \Map_{\eC}(L,F(-))$, along the unique map $W_{\qG} \to 1$ of weights:
\[
\xymatrix{ W_{\qG} \ar[r]^{!} & 1 \ar[r]^-{\alpha} & \Map_{\eC}(L,F-).}\]
\end{defn}

We have the following result:

\begin{prop}\label{prop:strict-pseudo-cones} Suppose that $\qJ$ is a discrete category, the pullback shape $\pbshape$, or $\omega\op$ with generating subgroup $\qG\inc\qJ$, and  $F \colon \qJ \to \eK$ is a diagram valued in an $\infty$-cosmos $\eK$, in which one of the maps is an isofibration in case of $\pbshape$ and all of the maps are isofibrations in the case of $\omega\op$. Then the strictly commuting pseudo cone formed from the limit cone $\pi \colon \Delta \lim(F) \to F(-)$ presents $\lim(F)$ as a pseudo homotopy limit of the diagram $F$ in the $(\infty,1)$-categorical core $g_*\eK$.
\end{prop}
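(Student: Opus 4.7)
The plan is to verify the universal property of Definition~\ref{defn:flexible-hty-limit} directly: for each object $X$ of $\eK$ we must show that the comparison map
\[
  \Fun_{g_*\eK}(X, \lim F) \xrightarrow{\mkern20mu} \{W_{\qG}, \Fun_{g_*\eK}(X, F-)\}_{\gC[\qG]}
\]
induced from the strictly commuting pseudo cone is an equivalence of Kan complexes. Since the inclusion $\Kan \inc \qCat$ has a right adjoint (the groupoid core), the functor $g$ preserves all limits, and so $\Fun_{g_*\eK}(X,-) = g\Fun_{\eK}(X,-)$ carries the strict limit cone $\pi$ to a strict limit cone in $\Kan$. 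In particular, the domain of the comparison map is isomorphic to $\{1, \Fun_{g_*\eK}(X,F-)\}_{\qJ}$, and the comparison itself is precisely the map induced by the unique weight morphism $W_{\qG} \to 1$ and the identification $\gC[\qG] \cong \qJ$ of Lemma~\ref{itm:free-is-hty-coh}\ref{itm:free-is-hty-coh-i}. So the task reduces to showing that $W_{\qG} \to 1$ induces an equivalence on these weighted limits of Kan complexes.

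The three cases are then handled separately. In the discrete case, the simplicial set $\Del^0 \join \qG$ is $1$-skeletal, so by the same explicit description of $\gC$ used in Lemma~\ref{itm:free-is-hty-coh}, the simplicial category $\gC[\Del^0\join\qG]$ has discrete hom-sets. Thus $W_{\qG}(j) = \Fun_{\gC[\Del^0\join\qG]}(\bot,j)$ is a point for each object $j$, i.e. $W_{\qG} \cong 1$, and the comparison map is an isomorphism. For the pullback and $\omega\op$ cases, the axioms of an $\infty$-cosmos ensure that $\Fun_{\eK}(X,-)$ carries isofibrations in $\eK$ to isofibrations of quasi-categories, and hence $\Fun_{g_*\eK}(X,-)$ carries them to Kan fibrations. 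So the isofibration hypotheses of the statement guarantee that the resulting diagram of Kan complexes $\Fun_{g_*\eK}(X,F-)$ is projectively fibrant in the relevant sense: a cospan in which one leg is a Kan fibration, respectively a tower of Kan fibrations.

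What remains is to identify the $W_{\qG}$-weighted limit of such a diagram with its strict limit. For these two shapes I would compute $W_{\qG}$ explicitly by unpacking $\gC[\Del^0\join\qG]$: the non-degenerate $2$-simplices of $\Del^0 \join \qG$ are those of the form $\bot \to j \to j'$ for each generating arrow $j \to j'$ of $\qG$, and higher simplices only appear beyond the range relevant for $\qG$ having arrows of length one, so the mapping spaces of $\gC[\Del^0 \join \qG]$ admit a manageable description. In the pullback case this yields the familiar weight whose $W_{\pbshape}$-weighted limit is the Bousfield--Kan homotopy pullback; in the $\omega\op$ case it yields the standard homotopy inverse limit weight. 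In both instances one then invokes the classical comparison between strict and homotopy limit under the stated fibrancy hypotheses, which can be proved directly by iterated use of the right properness of the Kan model structure in the pullback case and by expressing the tower's limit as a countable composition of such pullbacks in the $\omega\op$ case.

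The main obstacle is the last step of the previous paragraph, namely the explicit verification that $W_{\qG} \to 1$ is a projective weak equivalence when tested against pullback diagrams with one leg a Kan fibration and against towers of Kan fibrations. An alternative and cleaner route, which I would likely follow in the final write-up, is to observe that $W_{\qG}$ is flexible, hence its weighted limit is homotopical in the sense of Proposition~\ref{prop:flexible-weights-are-htpical}, and then to replace each comparison by one involving a diagram that is manifestly cofibrant as a homotopy coherent diagram, where the statement becomes formal. Either way, the technical content is a purely simplicial computation involving the weight $W_{\qG}$ and well-established homotopy limit comparisons.
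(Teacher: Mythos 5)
Your proposal is correct and follows essentially the same route as the paper: reduce, via preservation of strict limits by $g$ and by $\Fun_{\eK}(X,-)$, to showing that the unique weight map $W_{\qG}\to 1$ induces an equivalence on weighted limits of the appropriately fibrant diagrams of Kan complexes, then observe $W_{\qG}\cong 1$ in the discrete case and use contractibility of the weight's values together with homotopy invariance of pullbacks along fibrations in the cospan and tower cases. The paper carries out your final "classical comparison" step explicitly (Lemmas \ref{lem:strict-pseudo-pullbacks} and \ref{lem:strict-pseudo-towers}) as a levelwise equivalence of wide pullback diagrams, using that $\pbshape\op$ and $\mathbb{N}$ are contractible, which is the same content as your appeal to right properness.
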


\begin{proof}
Given $X \in \eK$, post-composition by the weighted cone $\pi!\colon W_{\qG}\to \Fun_{g\eK}(\lim(F), F-)$ determines a map
  \begin{equation}\label{eq:cos-lim-flex-1}
    \xymatrix@R=0em@C=6em{
      {\Fun_{g_*\eK}(X,\lim(F))}\ar[r] &
      {\wlim{W_{\qG}}{\Fun_{g_*\eK}(X,F-)}}
    }
  \end{equation}
  of Kan complexes, and our task is to show that this is an equivalence. Notice, however, that we have
  \begin{align*}
    \Fun_{g_*\eK}(X,\lim(F)) &= g(\Fun_{\eK}(X,\lim(F)))
    \cong g(\lim(\Fun_{\eK}(X,F-))) \\
    &\cong \lim(g(\Fun_{\eK}(X,F-))) =
    \lim(\Fun_{g_*\eK}(X,F-)) \cong \wlim{1}{\Fun_{g_*\eK}(X,F-)}
  \end{align*}
  in which the first isomorphism follows because $\lim(F)$ is a simplicially enriched limit in $\eK$ and the second because the groupoid core functor is a right adjoint on underlying categories. Under this isomorphism it is easily checked that the map in~\eqref{eq:cos-lim-flex-1} is isomorphic to the map
  \begin{equation*}
    \xymatrix@R=0em@C=6em{
      {\wlim{1}{\Fun_{g_*\eK}(X,F-)}}\ar[r] &
      {\wlim{W_{\qG}}{\Fun_{g_*\eK}(X,F-)}}
    }
  \end{equation*}
  induced by the unique map of weights $!\colon W_{\qG}\to 1$. It follows that it is enough to show that for any diagram $F\colon\qJ\to \Kan$ of the appropriate kind in Kan complexes the induced map $\wlim{!}{F}\colon\wlim{1}{F}\to\wlim{W_{\qG}}{F}$ from the strict limit of $F$ to the pseudo limit of $F$ is an equivalence, which is achieved by the next three lemmas.
  \end{proof}
  
\begin{lem}\label{lem:strict-pseudo-products} For any family of objects $\{A_i\}$ in a simplicial category with products, the strict limit cone $\pi \colon \prod_i A_i \tfib A_i$ defines a pseudo homotopy limit cone.
\end{lem}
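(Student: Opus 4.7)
The plan is to reduce to an explicit computation of the weight $W_{\qG}$ in the discrete case, and show that it coincides with the terminal weight, so that the strict product cone and the pseudo homotopy product cone have identical (not just equivalent) universal properties. Concretely, by Definition \ref{defn:strict-pseudo-cones} the strictly commuting pseudo cone induced by the product projections $\pi \colon \prod_i A_i \tfib A_i$ is obtained by composing the strict cone $1 \to \Map_{\eC}(\prod_i A_i, A_-)$ with the unique map of weights $W_{\qG} \to 1$. To check that this displays $\prod_i A_i$ as a pseudo homotopy limit, I would need to verify that for every $X \in \eC$ the induced comparison
\[
\Map_{\eC}(X, \textstyle\prod_i A_i) \longrightarrow \wlim{W_{\qG}}{\Map_{\eC}(X, A_-)}
\]
is an equivalence. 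Since the left-hand side is naturally isomorphic to $\prod_i \Map_{\eC}(X,A_i) = \wlim{1}{\Map_{\eC}(X,A_-)}$ by the universal property of the strict product, the comparison factors through the canonical map $\wlim{1}{F} \to \wlim{W_{\qG}}{F}$ induced by the weight map $W_{\qG} \to 1$, and it suffices to show this latter map is an equivalence.

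The key step is then to compute $W_{\qG}$. When $\qJ$ is a discrete category, the generating graph $\qG$ has no non-identity arrows, so as a simplicial set $\qG$ is just the discrete $0$-skeletal set indexing the family. The join $\Del^0 \join \qG$ is therefore a $1$-skeletal simplicial set consisting of a cone point $\bot$ together with a single $1$-simplex from $\bot$ to each vertex $i$. By the explicit description of the homotopy coherent realisation of a $1$-skeletal simplicial set employed in Lemma \ref{itm:free-is-hty-coh}\ref{itm:free-is-hty-coh-i}, $\gC[\Del^0 \join \qG]$ is the free category on this graph with discrete hom-sets, and so
\[
W_{\qG}(i) \;=\; \Fun_{\gC[\Del^0 \join \qG]}(\bot, i) \;\cong\; \Del^0.
\]

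Thus the map of weights $W_{\qG} \to 1$ is a simplicial natural isomorphism, so the induced map $\wlim{1}{F} \to \wlim{W_{\qG}}{F}$ is an isomorphism (in particular an equivalence) for any diagram $F$. Chasing this back through the comparison above establishes that the strict product cone is a pseudo homotopy limit cone. There is no serious obstacle to this argument: the substantive content is just the identification of $W_{\qG}$ with the terminal weight, which is immediate once one unwinds the join and the $1$-skeletal realisation. The comparable calculations for the pullback and inverse sequence shapes (treated in the following two lemmas) will require more care, since there the generating graphs contain non-identity edges and the relevant weights pick out contractible rather than strictly terminal data.
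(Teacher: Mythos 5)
Your proposal is correct and follows exactly the paper's own (one-line) argument: the whole content is the observation that for a discrete indexing shape the weight $W_{\qG}$ is isomorphic to the terminal weight, so the strict and pseudo limit cones coincide. Your expansion of why $W_{\qG}(i)\cong\Del^0$ — via the $1$-skeletal join $\Del^0\join\qG$ and the identification of the homotopy coherent realisation of a $1$-skeletal simplicial set with the free category on its graph — is a correct filling-in of the details the paper leaves implicit.
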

\begin{proof} In this case the result is trivial because the weight for the pseudo limit of a discrete diagram is isomorphic to the terminal weight.
\end{proof}

In particular, Lemma \ref{lem:strict-pseudo-products} applies to the $\infty$-cosmos $\Kan$ of Kan complexes.

\begin{lem}\label{lem:strict-pseudo-pullbacks} The strict pseudo cone formed from the pullback cone over a diagram of Kan complexes and Kan fibrations
\[
\xymatrix{ \qP \ar[r] \ar@{->>}[d] \pbexcursion & \qC \ar@{->>}[d]^p \\ \qB \ar[r]_f & \qA}\]
defines a pseudo homotopy limit cone in $\Kan$.
\end{lem}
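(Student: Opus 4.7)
The plan is to reduce this to the classical fact that strict pullbacks along Kan fibrations are homotopy pullbacks. Let $\qP$ denote the strict pullback and $\qP' \defeq \{W_\pbshape,F\}$ the pseudo pullback, connected by the comparison map $\kappa\colon\qP\to\qP'$ arising from the strict limit cone after whiskering with the projection $W_\pbshape \to 1$.

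First, I would exhibit a concrete path-space model for $\qP'$. The weight $W_\pbshape$ is flexible and pointwise contractible: its values are the mapping spaces $\Map_{\gC[\Delta^0\star\pbshape]}(\bot,-)$, with $\Map(\bot,b)=\Map(\bot,c)=\Delta^0$ and $\Map(\bot,a)$ a tripod obtained by gluing two copies of $\Delta^1$ at a common vertex. Any pointwise equivalence between flexible weights induces an equivalence on weighted limits of pointwise fibrant diagrams (by a standard Quillen argument in the projective model structure on $\Kan^{\pbshape}$, where flexible weights are precisely the projectively cofibrant ones). One may therefore replace $W_\pbshape$ by the manifestly flexible weight $W'$ with $W'(b) = W'(c) = \Delta^0$, $W'(a) = \Delta^1$, and the two endpoint inclusions as structure maps; the weights $W_\pbshape$ and $W'$ are connected by a zigzag of pointwise equivalences through any common cofibrant replacement of the terminal weight on $\pbshape$. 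Unwinding the resulting weighted limit gives
\[
\qP' \simeq \{W',F\} = \qB\times_{\qA}\qA^{\Delta^1}\times_{\qA}\qC,
\]
under which $\kappa$ corresponds to the constant-path inclusion $(b,c)\mapsto(b,\operatorname{const}_{f(b)=p(c)},c)$.

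Second, I would verify that this constant-path inclusion is a weak equivalence, which is the classical argument that pullbacks along Kan fibrations are homotopy pullbacks. The evaluation $\operatorname{ev}_1\colon\qA^{\Delta^1}\trvfib\qA$ is a trivial fibration (as the Leibniz cotensor of the anodyne inclusion $\Delta^{\{1\}}\hookrightarrow\Delta^1$), and its pullback along $p$ yields a trivial fibration $\qA^{\Delta^1}\times_{\qA}\qC\trvfib\qC$ admitting the constant-path map $\qC\to\qA^{\Delta^1}\times_{\qA}\qC$ as a section, hence a weak equivalence. Both sides project to $\qA$ via Kan fibrations ($p$ and $\operatorname{ev}_0\circ\pi_1$, respectively), so this is a weak equivalence between fibrant objects of the slice $\sSet_{/\qA}$ equipped with the Kan model structure. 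Pullback along $f\colon\qB\to\qA$ defines a right Quillen functor $\sSet_{/\qA}\to\sSet_{/\qB}$ that preserves this weak equivalence by Ken Brown's lemma, and the resulting map is precisely $\kappa$. The main obstacle is the first step: establishing a reliable comparison between $W_\pbshape$ and the simpler weight $W'$ requires care with the mapping spaces of $\gC[\Delta^0\star\pbshape]$ and the enriched Quillen machinery for flexible weights, whereas the fibrancy/right-properness verification in the second step is entirely standard.
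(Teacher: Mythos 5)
Your argument is correct, but it takes a genuinely different route from the one in the paper. Both proofs begin by unwinding the pseudo limit into a mapping-path-space object: the paper computes $\wlim{W_\pbshape}{F}$ directly as the wide pullback $\qC\times_{\qA}\qA^{\pbshape\op}\times_{\qA}\qB$, whereas you first replace $W_\pbshape$ by the single-interval weight $W'$. That replacement is sound --- both weights are flexible and pointwise contractible, and $\wlim{-}{F}$ carries pointwise equivalences of projectively cofibrant weights to equivalences when $F$ is pointwise fibrant --- but it costs you an extra Quillen-type argument plus a compatibility check that the induced equivalence $\wlim{W'}{F}\we\wlim{W_\pbshape}{F}$ commutes with the cones out of $\qP$ (for this, choose a comparison $W_\pbshape\to W'$ over the terminal weight by lifting against the pointwise trivial fibration $W'\to 1$, rather than an unspecified zigzag). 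The step is also avoidable: since $\pbshape\op$ is a contractible one-dimensional simplicial set, evaluation at an extreme vertex $\qA^{\pbshape\op}\trvfib\qA$ is already a trivial fibration, so your second step applies verbatim with $\qA^{\pbshape\op}$ in place of $\qA^{\Del^1}$. Where the proofs genuinely diverge is in establishing that the comparison map is an equivalence: you run the classical argument (the constant-path map is a section of a trivial fibration, hence an equivalence of fibrant objects of $\sSet_{/\qA}$, preserved by the right Quillen functor $f^*$), while the paper observes that the diagonal $\qA\to\qA^{\pbshape\op}$ is an equivalence of Kan complexes and invokes the equivalence-invariance of wide pullbacks of isofibrations, comparing the two wide-pullback diagrams computing $\qP$ and $\wlim{W_\pbshape}{F}$ levelwise. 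Your version is more elementary and makes explicit the contact with the classical fact that strict pullbacks along fibrations are homotopy pullbacks; the paper's is shorter because it reuses the machinery of Proposition~\ref{prop:flexible-weights-are-htpical} and transfers without change to the tower case of Lemma~\ref{lem:strict-pseudo-towers}.
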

\begin{proof}
Unpacking Definition \ref{defn:weight-for-pseudo-limits}, the weight $W_\pbshape \colon \gC\pbshape\cong\pbshape \to \SSet$ for pseudo limits over the pullback shape is given by the simplicial functor which maps the outer objects of $\pbshape$ to $\Del^0$ and the middle object to $\pbshape\op $.
From the pullback diagram in the statement, we derive the following diagram
    \begin{equation*}
      \xymatrix@R=2em@C=2.5em{
        {\qC}\ar@{->>}[r]^{p}\ar@{=}[d] & {\qA}\ar@{=}[r]\ar@{=}[d] &
        {\qA}\ar@{=}[r]\ar[d]^-{\Delta}_{\simeq} &
        {\qA}\ar@{=}[d] & {\qB}\ar[l]_{f}\ar@{=}[d]\\
        {\qC}\ar@{->>}[r]_{p} & {\qA} & {\qA^{\pbshape\op}}\ar@{->>}[l]^{\simeq}
        \ar@{->>}[r]_{\simeq} & {\qA} & {\qB}\ar[l]^{f}
      }
    \end{equation*}
    Here the upper row is a wide pullback diagram whose limit is simply the pullback of the original diagram. The lower row is the wide pullback diagram whose limit is the end that computes the limit weighted by $W_{\pbshape}$. The middle component of the transformation from top to bottom is an equivalence because $\pbshape$ is contractible in the Kan model structure and $\qA$ is a Kan complex; since pullbacks of isofibrations are equivalence invariant constructions, it follows  that the induced map between the wide pullbacks of these diagrams is an equivalence as required.
\end{proof}

\begin{lem}\label{lem:strict-pseudo-towers} The strict pseudo cone formed from the limit cone over a sequence of Kan fibrations between Kan complexes
\[
\xymatrix@1{{\cdots}\ar@{->>}[r]^{p_n} & {\qA_n}\ar@{->>}[r]^{p_{n-1}} & {\cdots}\ar@{->>}[r]^{p_1} & {\qA_1}\ar@{->>}[r]^{p_0} & {\qA_0}}\]
defines a pseudo homotopy limit cone.
\end{lem}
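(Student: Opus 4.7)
The plan is to parallel the proof of Lemma~\ref{lem:strict-pseudo-pullbacks}, reducing to a comparison of the strict tower $F$ with a related tower built from the weight $W_{\omega\op}$. The key point is that $W_{\omega\op}$ is pointwise contractible: directly from Definition~\ref{defn:weight-for-pseudo-limits} and the explicit formula for $\gC[\Del^0\join N\omega\op]$, the Kan complex $W_{\omega\op}(n) = \Fun_{\gC[\Del^0\join N\omega\op]}(\bot,n)$ is the nerve of a poset possessing a minimum element (the chain $\{\bot < n\}$), hence contractible. Equivalently, the canonical map $!\colon W_{\omega\op}\to 1$ of weights is a pointwise trivial fibration.

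First I would express both weighted limits in the comparison $\wlim{!}{F}\colon\wlim{1}{F}\to\wlim{W_{\omega\op}}{F}$ as inverse limits of towers of Kan fibrations between Kan complexes. On the left this is automatic: $\wlim{1}{F}$ is just the inverse limit of $F$ itself. On the right I would use that flexible weighted limits may be constructed iteratively along any cell decomposition of the weight: setting $L_n\defeq\wlim{W_{\omega\op}|_{\{0,\dots,n\}}}{F|_{\{0,\dots,n\}}}$, one obtains a tower $\cdots\tfib L_n\tfib L_{n-1}\tfib\cdots\tfib L_0$ whose connecting maps are isofibrations, constructed as iterated Leibniz cotensors (in the sense of axiom~\ref{defn:cosmos}\ref{defn:cosmos:b}) of the Kan fibrations $p_k$ by the inclusions of projective cells attached to build $W_{\omega\op}|_{\{0,\dots,n\}}$ from $W_{\omega\op}|_{\{0,\dots,n-1\}}$. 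Since each $p_k$ is a Kan fibration between Kan complexes and each such cell attachment involves only finitely many projective cells, a routine induction, invoking Proposition~\ref{prop:flexible-weights-are-htpical} on each finite truncation, shows that the component $\qA_n\to L_n$ induced by $!$ is an equivalence of Kan complexes.

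The comparison $\wlim{!}{F}\colon\wlim{1}{F}\to\wlim{W_{\omega\op}}{F}$ is then induced as an inverse limit of this levelwise equivalence of towers. Both towers consist of isofibrations between Kan complexes, so Proposition~\ref{prop:flexible-weights-are-htpical} (applied to the flexible weight for an $\omega\op$-indexed tower of isofibrations in $\Kan$) implies that this map of inverse limits is itself an equivalence of Kan complexes. Together with Proposition~\ref{prop:strict-pseudo-cones}, this gives the desired conclusion.

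The main obstacle is the inductive analysis of the truncated weighted limits $L_n$: one needs to verify that attaching a projective cell indexed by an inclusion $\boundary\Del^m\inc\Del^m$ at stage $n$ replaces $L_{n-1}$ with a pullback of $L_{n-1}$ along a Leibniz cotensor of the form $\qA_n^{\Del^m}\tfib\qA_n^{\boundary\Del^m}\times_{L_{n-1}^{\boundary\Del^m}}L_{n-1}^{\Del^m}$, and that contractibility of the attached cell then forces the comparison map into the new level to remain an equivalence. Once this bookkeeping is in place, the equivalence of inverse limits follows immediately from the cosmological structure of $\Kan$.
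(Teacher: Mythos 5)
Your strategy is sound and rests on the same two pillars as the paper's argument: the pointwise contractibility of the weight and the equivalence-invariance of limits of towers and pullbacks of fibrations between Kan complexes. The packaging differs. The paper computes $W(n)=\Fun_{\gC[\Del^0\join\mathbb{N}\op]}(\bot,n)$ explicitly as the $1$-skeletal linear graph $\mathbb{N}$ with connecting maps the successor $s$, presents the weighted limit as the limit of a single explicit wide pullback diagram whose entries are the cotensors $\qA_n^{\mathbb{N}}$ (with forward maps $p_{n-1}^{\mathbb{N}}$ and backward maps $\qA_n^{s}$), and compares it to the strict tower in one step via the diagonals $\Delta\colon\qA_n\to\qA_n^{\mathbb{N}}$. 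You instead filter the weighted limit by truncations $L_n=\wlim{W|_{\leq n}}{F|_{\leq n}}$ and induct. This does work --- one checks that $L_n\cong L_{n-1}\times_{\qA_{n-1}^{W(n)}}\qA_n^{W(n)}$ is a pullback of $L_{n-1}$ along the Kan fibration $p_{n-1}^{W(n)}$, so the comparison $\qA_n\to L_n$ is an equivalence of pullbacks along fibrations by induction --- but the ``bookkeeping'' you defer is precisely what the paper's explicit end/wide-pullback presentation renders unnecessary, so the paper's route is shorter. Two imprecisions to correct: the map $!\colon W\to 1$ is a pointwise \emph{weak equivalence} but not a pointwise trivial fibration, since $W(n)$ is a linear graph and hence not a Kan complex (and note that pointwise contractibility of the weight alone cannot suffice, as the terminal weight is not flexible --- the fibrancy of the tower is doing real work, which your levelwise argument does exploit); and your identification of $W(n)$ as the nerve of a poset with minimum element describes the weight attached to $\gC[\Del^0\join N\omega\op]$ rather than to the generating graph used in Proposition~\ref{prop:strict-pseudo-cones}, though both models are contractible so the conclusion is unaffected.
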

\begin{proof}
The diagram shape in the statement is the ordered set $\mathbb{N}\op$ with objects $n$ and non-identity edges $n+1 \to n$ and the weight $W_{\mathbb{N}\op} \colon \gC\mathbb{N}\op\cong\omega\op \to \SSet$ maps each object $n$ to the 1-skeletal simplicial set $\mathbb{N}$ with connecting map from one integer to its predecessor  given by the successor map $s\colon \mathbb{N}\to \mathbb{N}$. From the given sequence of Kan fibrations we may derive the following commutative diagram:
    \begin{equation*}
      \xymatrix@R=2em@C=2.5em{
        {\cdots}\ar@{=}[r] & {\qA_n}\ar@{->>}[r]^{p_{n-1}}
        \ar[d]^{\Delta}_{\simeq} &
        {\cdots}\ar@{=}[r] & {\qA_2}\ar@{->>}[r]^{p_1}
        \ar[d]^{\Delta}_{\simeq} &
        {\qA_1}\ar@{=}[r]\ar[d]^{\Delta}_{\simeq} &
        {\qA_1}\ar[d]^{\Delta}_{\simeq}
        \ar@{->>}[r]^{p_0} & {\qA_0}\ar[d]^{\Delta}_{\simeq}
        \ar@{=}[r] & {\qA_0}\ar[d]^{\Delta}_{\simeq}\\
        {\cdots} & {\qA_n^{\mathbb{N}}}\ar@{->>}[r]_{p_{n-1}^{\mathbb{N}}}
        \ar@{->>}[l]^{\qA_n^s} &
        {\cdots} & {\qA_2^{\mathbb{N}}}\ar@{->>}[r]_{p_1^{\mathbb{N}}}
        \ar@{->>}[l]^{\qA_2^s} &
        {\qA_1^{\mathbb{N}}} & {\qA_1^{\mathbb{N}}}\ar@{->>}[r]_{p_0^{\mathbb{N}}}
        \ar@{->>}[l]^{\qA_1^s} &
        {\qA_0^{\mathbb{N}}} & {\qA_0^{\mathbb{N}}}\ar@{->>}[l]^{\qA_0^s}\\
      }
    \end{equation*}
    Here the upper row is a wide pullback diagram whose limit is simply the limit of the original diagram. The lower row is the wide pullback diagram whose limit is the end that computes the limit weighted by $W_{\mathbb{N}\op}$. The component of the transformation from top to bottom are equivalences because $\mathbb{N}$ is contractible in the Kan model structure and each $\qA_n$ is a Kan complex. Again it follows from the equivalence invariance of pullbacks of isofibrations and limits of towers of isofibrations that the induced map between the wide pullbacks of these diagrams is an equivalence as required.
\end{proof}

\subsection{Preservation of limits by generalised Yoneda}\label{ssec:yoneda-pres}

As the last stop on our tour of limit preservation properties we study classes of limits which are preserved by generalised Yoneda embeddings, this time in the setting of a general $\infty$-cosmos, not necessarily biequivalent to $\qCat$.

\begin{obs}\label{obs:limits-are-limits}
  Suppose that $A$ and $D$ are objects of the $\infty$-cosmos $\eK$ and that $K$ and $J$ are simplicial sets. Then transposition under cotensoring provides a bijection between triangles
  \begin{equation}\label{eq:limits-are-limits}
    \vcenter{
      \xymatrix@R=2em@C=3em{
        \ar@{}[dr]|(.7){\Downarrow\lambda} &
        {A^K}\ar[d]^-{\Delta} \\
        D \ar[r]_-{d} \ar[ur]^{\ell} & {(A^K)^J}
      }}
    \mkern30mu\leftrightsquigarrow\mkern30mu
    \vcenter{
      \xymatrix@=2em{
        \ar@{}[dr]|(.7){\Downarrow\hat\lambda} &
        \Fun_{\eK}(D,A)\ar[d]^-{\Delta} \\
        K \ar[r]_-{\hat{d}} \ar[ur]^{\hat\ell} & \Fun_{\eK}(D,A)^J
      }}
  \end{equation}
  in $\eK$ and $\qCat$ respectively. Moreover,  the triangle on the left is a right lifting diagram in $\eK$ if and only if the triangle on the right is a right lifting diagram in $\qCat$.
  \end{obs}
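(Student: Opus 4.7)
The plan is to deduce everything from a single adjunction: the defining $\qCat$-enriched representability of the simplicial cotensor, which yields a natural isomorphism of quasi-categories
\[
  \Fun_{\eK}(D, A^K) \;\cong\; \Fun_{\eK}(D,A)^K
\]
and, iterating once more and using $(Y^K)^J \cong Y^{K\times J} \cong (Y^J)^K$, the natural isomorphism
\[
  \Fun_{\eK}\bigl(D, (A^K)^J\bigr) \;\cong\; \bigl(\Fun_{\eK}(D,A)^J\bigr)^K.
\]
First I will use these to produce the bijection on 1-cells, sending a vertex $\ell \colon D \to A^K$ to the vertex $\hat\ell \colon K \to \Fun_{\eK}(D,A)$ and $d \colon D \to (A^K)^J$ to $\hat d \colon K \to \Fun_{\eK}(D,A)^J$; note that under the second isomorphism the constant-diagram functor $\Delta \colon A^K \to (A^K)^J$ in $\eK$ transposes to $\Delta \colon \Fun_{\eK}(D,A) \to \Fun_{\eK}(D,A)^J$ post-composed with, giving commutativity with the $\Delta$ legs of the two triangles.

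Next I will extend this bijection to the 2-cells. Recall from Definition~\ref{defn:hty-2-cat} that a 2-cell in $\ho_{*}\eK$ (resp.\ $\ho_{*}\qCat$) is an equivalence class of $1$-arrows in the appropriate functor complex. The isomorphism above identifies $1$-arrows from $\Delta\ell$ to $d$ in $\Fun_{\eK}(D,(A^K)^J)$ with $1$-arrows from $\Delta\hat\ell$ to $\hat d$ in $\bigl(\Fun_{\eK}(D,A)^J\bigr)^K$, and this isomorphism respects the $2$-arrows that witness the homotopy-2-category quotient, so it descends to a bijection on 2-cells $\lambda \leftrightsquigarrow \hat\lambda$ compatible with whiskering by $\Delta$. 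This establishes the bijection between triangles claimed in~\eqref{eq:limits-are-limits}.

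For the ``right lifting'' clause the key point is that precisely the same adjunction applied at one level of generality higher produces the universal property. A right lifting diagram in $\eK$ asserts that, for every 2-cell $\alpha\colon \Delta b \To d$ with $b\colon D \to A^K$, there is a unique $\bar\alpha \colon b \To \ell$ with $\lambda \cdot \Delta\bar\alpha = \alpha$. The transposition we have already set up carries the data $(b,\alpha)$ bijectively to $(\hat b, \hat\alpha)$ where $\hat b \colon K \to \Fun_{\eK}(D,A)$ and $\hat\alpha \colon \Delta\hat b \To \hat d$, and carries $\bar\alpha$ to $\hat{\bar\alpha}\colon \hat b \To \hat\ell$; since transposition is compatible with vertical composition and $\Delta$-whiskering (by the first paragraph), the factorisation identity $\lambda \cdot \Delta\bar\alpha = \alpha$ corresponds to $\hat\lambda \cdot \Delta\hat{\bar\alpha} = \hat\alpha$. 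Hence existence-and-uniqueness on one side is literally equivalent to existence-and-uniqueness on the other.

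The only genuinely delicate step is the 2-cell bookkeeping in the second paragraph: that passage from $1$-arrows to homotopy-2-category 2-cells is compatible with the cotensor adjunction, in particular that ``degenerate-face $2$-arrow'' equivalence on $\Fun_{\eK}(D,(A^K)^J)$ corresponds under the iso to the analogous equivalence on $\bigl(\Fun_{\eK}(D,A)^J\bigr)^K$. This is routine since the isomorphism is enriched in $\sSet$ and therefore commutes with all simplicial operators, but it is the point at which one must stop and check carefully rather than hand-wave. Once this is settled, everything else is a direct transport across a natural isomorphism.
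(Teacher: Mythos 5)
Your proof is correct and follows exactly the route the paper intends: the paper states this as an Observation with no written proof, and the companion Lemma's proof simply notes that the simplicially enriched cotensor/hom adjunction descends to a 2-adjunction between $\ho_*\eK$ and $\ho_*\qCat$, leaving the transposition bookkeeping as an exercise. You have carried out that bookkeeping correctly — including the point that the simplicial isomorphism $\Fun_{\eK}(D,(A^K)^J)\cong\bigl(\Fun_{\eK}(D,A)^J\bigr)^K$ respects the degenerate-face equivalence defining 2-cells and commutes with $\Delta$-whiskering — so nothing further is needed.
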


  Notice here that we have, as yet, said nothing about whether these lifts are absolute. To rectify this omission, we first introduce the following definitions:

\begin{defn}\label{defn:limit-stable-under-precomp}
  Suppose that $A$ and $D$ are objects of the $\infty$-cosmos $\eK$, that $K$ and $J$ are simplicial sets and that we are given an absolute lifting diagram
  \begin{equation*}
   \xymatrix@=2em{
      \ar@{}[dr]|(.7){\Downarrow\hat\lambda} &
      \Fun_{\eK}(D,A)\ar[d]^\Delta \\
      K \ar[r]_-{\hat{d}} \ar[ur]^{\hat\ell} & \Fun_{\eK}(D,A)^J
    }
  \end{equation*}
  which presents a limit of a $K$-indexed family $d$ of diagrams of shape $J$ in the functor space $\Fun_{\eK}(D,A)$. We say that this family of limits is \emph{stable under precomposition\/} in $\eK$ iff for each functor $f\colon C\to D$ in $\eK$ it is preserved by the precomposition functor $\Fun_{\eK}(f,A)\colon\Fun_{\eK}(D,A)\to\Fun_{\eK}(C,A)$: i.e., 
      \begin{equation*}
      \xymatrix@=2em{
        \ar@{}[dr]|(.7){\Downarrow\hat\lambda} &
        \Fun_{\eK}(D,A)\ar[d]^-{\Delta}\ar[rr]^{\Fun_{\eK}(f,A)} &&
        \Fun_{\eK}(C,A)\ar[d]^-{\Delta} \\
        K \ar[r]_-{\hat{d}} \ar[ur]^{\hat\ell} &
        \Fun_{\eK}(D,A)^J\ar[rr]_{\Fun_{\eK}(f,A)^J} &&
        {\Fun_{\eK}(C,A)^J}}
    \end{equation*}
        is an absolute right lifting diagram in $\qCat$.
\end{defn}

\begin{defn}\label{defn:pointwise-limit}
  Suppose that $A$ and $D$ are objects of the $\infty$-cosmos $\eK$, that $K$ and $J$ are simplicial sets, then we say that an absolute right lifting diagram
  \begin{equation}\label{eq:pointwise-limit}
    \xymatrix@R=2em@C=3em{
      \ar@{}[dr]|(.7){\Downarrow\lambda} &
      {A^K}\ar[d]^-{\Delta} \\
      D \ar[r]_-{d} \ar[ur]^{\ell} & {(A^K)^J}
    }    
  \end{equation}
  in $\eK$ \emph{displays a family of pointwise limits} in $A^K$ iff for each simplicial map $g\colon L\to K$ it is preserved the functor $A^g\colon A^K\to A^L$: i.e., 
      \begin{equation*}
      \xymatrix@R=2em@C=3em{
        \ar@{}[dr]|(.7){\Downarrow\lambda} &
        {A^K}\ar[d]^-{\Delta}\ar[r]^{A^g} & {A^L}\ar[d]^-{\Delta}\\
        D \ar[r]_-{d} \ar[ur]^{\ell} &
        {(A^K)^J}\ar[r]_{(A^g)^J} & {(A^L)^J}
      }
    \end{equation*}
    is an absolute right lifting diagram in $\eK$.
\end{defn}

Using these definitions: 
  
  \begin{lem}\label{lem:limits-are-limits} Consider a transposed pair of triangles
\[      \vcenter{
      \xymatrix@R=2em@C=3em{
        \ar@{}[dr]|(.7){\Downarrow\lambda} &
        {A^K}\ar[d]^-{\Delta} \\
        D \ar[r]_-{d} \ar[ur]^{\ell} & {(A^K)^J}
      }}
    \mkern30mu\leftrightsquigarrow\mkern30mu
    \vcenter{
      \xymatrix@=2em{
        \ar@{}[dr]|(.7){\Downarrow\hat\lambda} &
        \Fun_{\eK}(D,A)\ar[d]^-{\Delta} \\
        K \ar[r]_-{\hat{d}} \ar[ur]^{\hat\ell} & \Fun_{\eK}(D,A)^J
      }}
      \] as in \eqref{eq:limits-are-limits}.
 The triangle on the left is an absolute right lifting diagram that displays a family of pointwise limits if and only if 
  the transposed triangle on the right is an absolute right lifting diagram that is stable under precomposition in $\eK$.
\end{lem}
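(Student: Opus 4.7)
The plan is to unpack both sides of the biconditional into a common testing dataset and observe that the cotensor transposition of Observation~\ref{obs:limits-are-limits} swaps the ``absolute-lifting parameter'' with the ``pointwise/precomposition parameter''. For every $\infty$-category $X \in \eK$ and simplicial sets $L, J$, the simplicial cotensor structure yields a natural isomorphism of quasi-categories
\[
\Fun_{\eK}(X, (A^L)^J) \cong (\Fun_{\eK}(X, A)^J)^L,
\]
which descends to an isomorphism of homotopy categories. By naturality, for every $c\colon X \to D$ in $\eK$ and $g \colon L \to K$ in $\qCat$, the whiskered composite $(A^g)^J \circ d \circ c$ on the left corresponds to $\Fun_{\eK}(c, A)^J \circ \hat d \circ g$ on the right, and analogous statements hold for $\ell$ and for the 2-cells $\lambda$ and $\hat\lambda$ by 2-functoriality of cotensoring.

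Next, I would unpack the two definitions into testing data. Applying Definition~\ref{defn:pointwise-limit} to the left-hand triangle, the LHS condition asserts that for every simplicial map $g \colon L \to K$, every functor $c \colon X \to D$ with $X \in \eK$, every $b \colon X \to A^L$, and every 2-cell $\alpha \colon \Delta b \To (A^g)^J dc$, there exists a unique $\bar\alpha \colon b \To A^g \ell c$ with $(A^g)^J(\lambda c) \cdot \Delta \bar\alpha = \alpha$. Applying Definition~\ref{defn:limit-stable-under-precomp} to the right-hand triangle, the RHS condition asserts dually that for every functor $f \colon C \to D$ in $\eK$, every simplicial map $c' \colon K' \to K$, every $b' \colon K' \to \Fun_{\eK}(C, A)$, and every 2-cell $\alpha' \colon \Delta b' \To \Fun_{\eK}(f, A)^J \hat d c'$, there exists a unique factorisation through the precomposition-whiskered $\hat\lambda$.

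Finally, under the identifications $X = C$, $c = f$, $L = K'$, $g = c'$, the transposition from the first step carries the testing quadruples $(b, \alpha)$ on the LHS onto $(b', \alpha')$ on the RHS, and the required factorisations $\bar\alpha$ onto $\bar{\alpha'}$. Here the ``absolute'' parameters $(X, c)$ on the LHS and $(K', c')$ on the RHS, and the ``pointwise/precomposition'' parameters $(L, g)$ on the LHS and $(C, f)$ on the RHS, swap roles across the bijection. Since transposition is a strict isomorphism of categories of 2-cells, existence and uniqueness of the factorisations on each side are interchanged exactly, and the biconditional follows. The only technical point requiring care is the verification that whiskering by $A^g$ corresponds to whiskering by $\Fun_{\eK}(c, A)$ under the transposition, but this is a routine consequence of 2-functoriality of cotensoring, so I anticipate no serious obstacle.
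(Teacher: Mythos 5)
Your argument is correct and is essentially the paper's own proof: the paper simply observes that the cotensor/hom adjunction descends to a 2-adjunction between $\ho_*\eK$ and $\ho_*\qCat$ and leaves the rest as ``a straightforward exercise in adjoint transposition,'' which is precisely the exercise you carry out, with the key point in both cases being that transposition interchanges the absolute-lifting test objects with the pointwise/precomposition parameters. The only detail you (and the paper) elide is that the pointwise condition quantifies over arbitrary simplicial sets $L$ while absoluteness in $\ho_*\qCat$ is tested against quasi-categories, but this is harmless since an inner anodyne extension $L\inc L'$ induces an isomorphism on the relevant hom-categories.
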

\begin{proof} The simplicially enriched cotensor/hom adjunction descends to a 2-adjunction between the homotopy 2-categories $\ho_*\eK$ and $\ho_*\qCat$. The remaining details are a straightforward exercise in adjoint transposition across a 2-adjunction.
\end{proof}

\begin{rmk}
When $K$ is a point, Lemma \ref{lem:limits-are-limits} says that $D$-indexed families of limits in an $\infty$-category $A$ can be characterized as limits in $\Fun_{\eK}(D,A)$, which are stable under pre-composition. This gives a representable characterization of limits of families of diagrams in $\infty$-cosmoi.
\end{rmk}

Our terminology of ``pointwise limit'' is explained by the following observation:

\begin{lem}[pointwise limits are determined pointwise]
  A triangle of the form displayed in~\eqref{eq:pointwise-limit} displays a family of pointwise limits in $A^K$ iff for all vertices $k\in K$ the composite triangle
  \begin{equation*}
    \xymatrix@R=2em@C=3em{
      \ar@{}[dr]|(.7){\Downarrow\lambda} &
      {A^K}\ar[d]^-{\Delta}\ar[r]^{A^k} & {A}\ar[d]^-{\Delta}\\
      D \ar[r]_-{d} \ar[ur]^{\ell} &
      {(A^K)^J}\ar[r]_{(A^k)^J} & {A^J}
    }
  \end{equation*}
  is an absolute right lifting diagram displaying a family of limits in $A$.
\end{lem}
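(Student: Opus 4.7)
The forward direction is immediate: if the triangle displays a family of pointwise limits, then by Definition~\ref{defn:pointwise-limit} the functor $A^g$ preserves the absolute right lifting diagram for every simplicial map $g\colon L\to K$, and in particular for each $k\colon\Del^0\to K$ selecting a vertex $k\in K$. Since $A^{k}\circ\Delta=\Delta$, the resulting composite is an absolute right lifting diagram displaying a family of limits in $A$.

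For the reverse direction, fix an arbitrary $g\colon L\to K$; the goal is to show that the pasted triangle $A^g\circ\lambda$ is an absolute right lifting diagram. The strategy is to invoke a vertex-wise detection principle in $\qCat$: a triangle with target $\qA^L$ is absolute right lifting if and only if its composite with each evaluation functor $\qA^l\colon\qA^L\to\qA$, for $l$ a vertex of $L$, is absolute right lifting. Granting this principle, the equality $A^l\circ A^g=A^{g(l)}$ together with the hypothesis that $A^{g(l)}\circ\lambda$ is absolute right lifting entails that $A^g\circ\lambda$ is absolute right lifting after each evaluation at a vertex of $L$. Combined with the representable characterisation of Observation~\ref{obs:2-cat-rep-abs-lifting} and the cotensor isomorphism $\Fun_{\eK}(X,A^L)\cong\Fun_{\eK}(X,A)^L$ (natural in $X$), this lifts the conclusion from $\qCat$ back to $\eK$. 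Note that taking $g=\id_K$ recovers the bare statement that $\lambda$ is itself absolute right lifting, so both clauses of Definition~\ref{defn:pointwise-limit} follow at once.

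The main obstacle is thus establishing the vertex-wise detection principle in $\qCat$. The proposed route is to translate absolute right lifting into a fibred equivalence of comma $\infty$-categories via Proposition~\ref{prop:absliftingtranslation}, exploit that simplicial cotensors commute with the pullbacks defining commas so that commas constructions in $\qA^L$ may be computed vertex-wise, and then appeal to the pointwise detection of equivalences in the spirit of Corollary~\refI{cor:pointwise-equiv} to conclude. Some care is needed to verify that the right exactness of the transformations induced by functors $e\colon Y\to X$ likewise propagates from the vertex-wise data, but since right exactness is itself a representably testable property whose input 2-cells in $\qA^L$ are determined vertex-wise on $L$, this falls out of the same pointwise analysis.
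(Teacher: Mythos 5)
Your forward direction is correct, and it is the easy half of the statement. The reverse direction, however, contains a genuine gap: you place all of the weight on the ``vertex-wise detection principle'' that a triangle over a cospan of the form $D\to(\qA^L)^J\leftarrow\qA^L$ is absolute right lifting as soon as its composite with each evaluation $\qA^l$ is, and the route you propose for establishing that principle does not go through as sketched. Translating via Proposition~\ref{prop:absliftingtranslation}, you would need the comparison map of comma objects \emph{over $D\times\qA^L$} to be an equivalence. But those commas are \emph{not} the $L$-cotensors of the evaluated commas over $D\times\qA$, because the family domain $D$ stays fixed while only the codomain of the cospan is cotensored by $L$: the bi-fibre of $\Delta\comma d$ over a pair of vertices $(x,a)$ with $a\in\qA^L$ is an end over $L$ of the bi-fibres of the evaluated commas over the pairs $(x,a(l))$, not a product or a cotensor, so making your ``commas are computed vertex-wise'' step precise requires the end formula for hom-spaces in functor quasi-categories, which is not among the tools you cite. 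Likewise, Corollary~\refI{cor:pointwise-equiv} detects invertibility of a 2-cell at the vertices of its \emph{domain}; it says nothing about detecting an equivalence of comma quasi-categories by evaluating a cotensor exponent appearing in the \emph{codomain}, so the appeal to it in this role does not apply.

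The paper's proof sidesteps all of this with a transposition you have not exploited. By Lemma~\ref{lem:limits-are-limits}, the assertion that the triangle~\eqref{eq:pointwise-limit} displays a family of pointwise limits is equivalent to the assertion that the transposed diagram $\hat d\colon K\to\Fun_{\eK}(D,A)^J$ admits a limit that is stable under precomposition in $\eK$; that is, the cotensor exponent $K$ becomes the \emph{indexing quasi-category of a family of diagrams} valued in $\Fun_{\eK}(D,A)$. In that formulation the lemma is precisely the statement that a $K$-indexed family of diagrams in a quasi-category admits a (suitably stable) limit if and only if each of its members does, which is Proposition~\refI{prop:families.of.diagrams} --- the place where the genuinely hard ``the point generates $\qCat$'' content has already been established. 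If you wish to salvage your comma-theoretic argument, you would in effect be reproving that proposition, and you would need to supply the end computation of the bi-fibres in place of the vertex-wise commutation you assert.
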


\begin{proof}
  Using Lemma~\ref{lem:limits-are-limits} we may transform this problem into a corresponding one in the world of quasi-categories. Specifically we must show that the dual family of diagrams $\hat{d}\colon K\to \Fun_{\eK}(D,A)^J$ admits a limit that is stable under precomposition in $\eK$ if and only if for each $k\in K$ the individual diagram $dk\in\Fun_{\eK}(D,A)$ possesses a limit which is stable under precomposition. This result, however, follows easily from Proposition~\refI{prop:families.of.diagrams}.
\end{proof}

The synthetic theory of $\infty$-categories developed in \S\ref{sec:formal} a result first proven by Lurie  \cite[5.1.3.2]{Lurie:2009fk}.

\begin{lem}[preservation of limits by quasi-categorical Yoneda]\label{lem:qcat-Yoneda-pres-limits}
For any quasi-category $\qA$, the Yoneda embedding $\yoneda\colon \qA\cong\Fun_{\qCat}(1,\qA)\to {}_1\qMod(\qqCat)_{\qA}$ is fully-faithful, strongly generating and it preserves all families of small limits that exist in $\qA$.
\end{lem}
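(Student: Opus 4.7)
The plan is to combine the three main tools from Section~\ref{sec:formal} and Section~\ref{ssec:complete-quasi}: fully-faithfulness of the generalised Yoneda embedding established in Proposition~\ref{prop:yoneda.fully.faithful}, the strong generation criterion of Lemma~\ref{lem:rep-ho-generate}, and the limit preservation result of Proposition~\ref{prop:ff.and.sg.limit.pres}. The three claims will be addressed in turn.

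First, fully-faithfulness is immediate: Proposition~\ref{prop:yoneda.fully.faithful} shows that the generalised Yoneda embedding is fully-faithful for any $\infty$-categories $D$ and $A$ in any $\infty$-cosmos, and we simply specialise to $\eK = \qCat$ and $D = 1$. For strong generation, I would invoke Lemma~\ref{lem:rep-ho-generate}, which asserts that the set $\{p_{\qA}\colon \qA\comma a\tfib \qA\mid a\in\qA\}$ of representable cartesian fibrations is strongly generating in ${}_1\qMod(\qqCat)_{\qA}$. By the unpacking of Definition~\ref{defn:generalised-yoneda} recalled in Remark~\ref{rmk:explicit-generalised-yoneda}, this set is precisely the image on vertices of the Yoneda embedding. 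Hence, by the vertex-level characterisation discussed in Observation~\ref{obs:gen-set}, the functor $\yoneda\colon \qA \to {}_1\qMod(\qqCat)_{\qA}$ is strongly generating.

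For limit preservation, I would first observe that Example~\ref{ex:comp-mods} shows that ${}_1\qMod(\qqCat)_{\qA}$ is closed under small limits in $\qqCat_{/\qA}$, and in particular is small complete. Consequently, given any small family of diagrams $d\colon D\to \qA^J$ admitting a limit in $\qA$, the composite $\yoneda^J \circ d\colon D\to {}_1\qMod(\qqCat)_{\qA}^J$ also admits a limit. Now Proposition~\ref{prop:ff.and.sg.limit.pres}, applied with $f = \yoneda$, directly implies that $\yoneda$ preserves the limit of $d$.

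The only genuine step requiring care is the identification underlying strong generation: namely that the image of the vertex $a\colon 1\to \qA$ under Yoneda is in fact the representable module $\qA\comma a \tfib \qA$, which is spelled out in Definition~\ref{defn:covariant-yoneda-embedding} and matches the generating set of Lemma~\ref{lem:rep-ho-generate}. Everything else is a direct application of previously established results, so no essential technical obstacle remains.
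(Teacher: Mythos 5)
Your proposal is correct and follows essentially the same route as the paper's own proof: fully-faithfulness via Proposition~\ref{prop:yoneda.fully.faithful}, strong generation via Lemma~\ref{lem:rep-ho-generate}, and limit preservation by combining the completeness of ${}_1\qMod(\qqCat)_{\qA}$ from Example~\ref{ex:comp-mods} with Proposition~\ref{prop:ff.and.sg.limit.pres}. The extra detail you supply identifying the vertex image of $\yoneda$ with the generating set of representables is a harmless elaboration of what the paper leaves implicit.
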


\begin{proof}
  The first two properties posited in the statement are given by Proposition~\ref{prop:yoneda.fully.faithful} and Lemma~\ref{lem:rep-ho-generate} respectively. Since Example \ref{ex:comp-mods} demonstrates that  ${}_1\qMod(\qqCat)_{\qA}$ is complete, those results allow us to apply Proposition~\ref{prop:ff.and.sg.limit.pres} to establish limit preservation.
\end{proof}

We use Lemma \ref{lem:qcat-Yoneda-pres-limits} to prove the analogous result for $\infty$-categories in an arbitrary $\infty$-cosmos which are  considerably more subtle to establish.

\begin{prop}[preservation of limits by generalised Yoneda]\label{prop:gen.yoneda.pres.lim}
  The generalised Yoneda embedding $\yoneda\colon\Fun_{\eK}(D,A)\to{}_D\qMod(\qK)_A$ preserves any family of limits which is stable under precomposition in $\eK$.
\end{prop}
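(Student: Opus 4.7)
The plan is to apply Proposition~\ref{prop:ff.and.sg.limit.pres} to the generalised Yoneda embedding, regarded as a functor in the $\infty$-cosmos $\qCat$ of (large) quasi-categories. Three hypotheses require verification: fully-faithfulness of $\yoneda$, strong generation of $\yoneda$, and existence of both the source limit and the target limit of the transformed family of diagrams. Fully-faithfulness is supplied by Proposition~\ref{prop:yoneda.fully.faithful}; the completeness of ${}_D\qMod(\qK)_A$ established in Example~\ref{ex:comp-mods} provides the target limit; and the source limit is part of the hypothesis. Only strong generation requires additional work.

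By Proposition~\ref{prop:qcat.strongly.generating} and Observation~\ref{obs:gen-set}, strong generation of $\yoneda$ amounts to verifying that the set of representable modules $\{A \comma a : a \in \Fun_\eK(D,A)_0\}$ detects isomorphisms in the quasi-category ${}_D\qMod(\qK)_A$. I would reduce this to the base case $D = 1$ by means of the essentially commutative squares supplied by Lemma~\ref{lem:gen-Yoneda-precomp}
\[
\xymatrix@R=2em@C=3em{
\Fun_\eK(D, A) \ar[r]^-{\yoneda} \ar[d]_-{\Fun_\eK(d, A)} & {}_D\qMod(\qK)_A \ar[d]^-{(d \times A)^*} \\
\Fun_\eK(1, A) \ar[r]_-{\yoneda} & {}_1\qMod(\qK)_A
}
\]
indexed by each generalised element $d \colon 1 \to D$. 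This identifies the pullback of a representable module $A \comma a$ along $d \times A$ with the representable $A \comma ad$. Assuming that these cosmological pullback functors jointly detect equivalences of modules, an $\infty$-cosmological counterpart of Corollary~\ref{cor:mod.equiv.fibrewise}, detection of isomorphisms in ${}_D\qMod(\qK)_A$ reduces to the same task in each ${}_1\qMod(\qK)_A$. The remaining base case is then handled by Lemma~\ref{lem:rep-ho-generate}, whose quasi-categorical statement is transferred to the $\infty$-cosmological setting through Proposition~\ref{prop:gen-Yoneda-pres} applied to the cosmological functor $\Fun_\eK(1,-) \colon \eK \to \qCat$, which identifies $\yoneda_1$ up to equivalence with the quasi-categorical Yoneda embedding of the underlying quasi-category of $A$.

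The main obstacle is the fibrewise detection of equivalences of modules in an arbitrary $\infty$-cosmos, extending Corollary~\ref{cor:mod.equiv.fibrewise} beyond the quasi-categorical case. This step combines the Yoneda lemma for groupoidal cartesian fibrations (Corollary~\refIV{cor:groupoidal-yoneda}), which identifies mapping spaces out of representable modules with the corresponding fibres, with the representable characterisation of equivalences in an $\infty$-cosmos; with these in hand, the detection of isomorphisms by the pullback functors $(d \times A)^*$ proceeds fibre-by-fibre, reducing to the case of Kan complexes.
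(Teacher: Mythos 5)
Your overall strategy---apply Proposition~\ref{prop:ff.and.sg.limit.pres} directly to $\yoneda\colon\Fun_{\eK}(D,A)\to{}_D\qMod(\qK)_A$---founders on the claim that this functor is strongly generating, and the statement of the proposition itself signals the problem: if $\yoneda$ were fully faithful and strongly generating with complete codomain, Proposition~\ref{prop:ff.and.sg.limit.pres} would show that it preserves \emph{all} small limits of $\Fun_{\eK}(D,A)$, rendering the hypothesis ``stable under precomposition in $\eK$'' superfluous. That hypothesis is essential in a general $\infty$-cosmos (it only disappears when $\eK=\qCat$), so the strong generation must fail in general. Concretely, the failure occurs at both reduction steps you propose. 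First, the pullback functors $(d\times A)^*$ indexed by \emph{global} elements $d\colon 1\to D$ are not jointly conservative on ${}_D\qMod(\qK)_A$: equivalences in an $\infty$-cosmos are detected by mapping in from \emph{all} objects $X\in\eK$, and in a cosmos such as $\qCat_{/B}$ the object $D$ may admit few or no global elements at all. Second, even in the case $D=1$, transferring Lemma~\ref{lem:rep-ho-generate} along $\Fun_{\eK}(1,-)\colon\eK\to\qCat$ does not suffice, because that cosmological functor is not conservative: knowing that $\Fun_{\eK}(1,g)$ is an equivalence does not show that $g$ is.

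The paper's proof repairs exactly these two defects simultaneously: it tests not against global elements but against the whole family of functors $\Fun_{\eK}(C,-)_f\colon{}_D\qMod(\qK)_A\to{}_1\qMod(\qqCat)_{\Fun_{\eK}(C,A)}$ indexed by all objects $C\in\eK$ and all $f\colon C\to D$, each obtained by applying the cosmological functor $\Fun_{\eK}(C,-)$ and then passing to the fibre over $f$. This family is jointly conservative (by Proposition~\ref{prop:equivalence-of-fibrations} applied in $\qCat$ together with the representable definition of equivalences in $\eK$), each member preserves small limits, and hence the family jointly reflects them. The diagram~\eqref{eq:gen-Yoneda-pres-limits} identifies the composite of $\yoneda$ with $\Fun_{\eK}(C,-)_f$ with the composite of the precomposition functor $\Fun_{\eK}(f,A)$---which preserves the given limit precisely because of the stability hypothesis---followed by the quasi-categorical Yoneda embedding of $\Fun_{\eK}(C,A)$, to which your intended mechanism (fully faithful, strongly generating, complete codomain) genuinely does apply via Lemma~\ref{lem:qcat-Yoneda-pres-limits}. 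So the argument you have in mind is the right engine, but it can only be run after reducing to $\qCat$ along a jointly reflecting family, and the stability hypothesis is exactly what makes that reduction compatible with the limit in question.
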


By Lemma \ref{lem:limits-are-limits}, Proposition \ref{prop:gen.yoneda.pres.lim} asserts that those limit diagrams in $\Fun(D,A)$ in the $\infty$-cosmos of quasi-categories that arise from corresponding limit diagrams in $A$ in the $\infty$-cosmos $\eK$ are the ones preserved by the generalised Yoneda embedding---even despite the fact that this ``external'' Yoneda embedding is a functor of quasi-categories rather than a functor in $\eK$. The upshot is that the generalised Yoneda embedding respects the limits it recognises arise from the original $\infty$-cosmos. 

\begin{proof}
  Given a functor $f\colon C\to D$ in $\eK$ we contemplate the following diagram:
  \begin{equation}\label{eq:gen-Yoneda-pres-limits}
    \xymatrix@R=2em@C=2em{
      {\Fun_{\eK}(D,A)}\ar[r]^-{\yoneda}\ar[d] \ar@{-->}@/_17ex/[dd]_{\Fun_{\eK}(f,A)} &
      {{}_D\qMod(\qK)_A}\ar[d]\ar@{}[dl]|{\cong} \ar@{-->}@/^15ex/[dd]^{\Fun_{\eK}(C,-)_f} \\
      {\Fun_{\qCat}(\Fun_{\eK}(C,D), \Fun_{\eK}(C,A))}
      \ar[r]^-{\yoneda}\ar[d] &
      {{}_{\Fun_{\eK}(C,D)}\qMod(\qqCat)_{\Fun_{\eK}(C,A)}}\ar[d]
      \ar@{}[dl]|{\cong}\\
      {\Fun_{\eK}(C,A)\cong\Fun_{\qCat}(1, \Fun_{\eK}(C,A))} \ar[r]_-{\yoneda} &
      {{}_{1}\qMod(\qqCat)_{\Fun_{\eK}(C,A)}}
    }
  \end{equation}
  Here the upper square arises by applying Proposition~\ref{prop:gen-Yoneda-pres} to the cosmological functor $\Fun_{\eK}(C,-)\colon \eK\to\qCat$ and the lower square is constructed by applying Lemma~\ref{lem:gen-Yoneda-precomp} to the map $f\colon 1\to\Fun_{\eK}(C,D)$ that picks out the functor $f\colon C\to D$.

  By inspection we see that the left-hand vertical composite in the diagram above is simply the precomposition functor $\Fun_{\eK}(f,A)\colon\Fun_{\eK}(D,A)\to\Fun_{\eK}(C,A)$. Its right-hand vertical composite acts to carry a module $(q,p)\colon E\tfib D\times A$ to a groupoidal cartesian fibration $p_f\colon\Fun_{\eK}(C,E)_f\tfib \Fun_{\eK}(C,A)$ whose total space is the fibre given by the following pullback:
  \begin{equation*}
    \xymatrix@R=2em@C=4em{
      {\Fun_{\eK}(C,E)_f}\pbexcursion\ar[r]\ar[d]\ar@/^2.5ex/@{->>}[rr]^{p_f} &
      {\Fun_{\eK}(C,E)}\ar@{->>}[d]^{\Fun_{\eK}(C,q)} 
      \ar@{->>}[r]_{\Fun_{\eK}(C,p)} & {\Fun_{\eK}(C,A)} \\
      {1}\ar[r]_-{f} & {\Fun_{\eK}(C,D)}}
  \end{equation*}
  Correspondingly, this functor carries a module map
  \begin{equation*}
    \xymatrix@R=2em@C=1em{
      {E}\ar[rr]^g\ar@{->>}[dr]_{(q,p)} &&
      {F}\ar@{->>}[dl]^{(v,u)} \\
      & {D\times A} & \\
    }
  \end{equation*}
  to the induced map of fibres $\Fun_{\eK}(C,g)_f\colon\Fun_{\eK}(C,E)_f\to \Fun_{\eK}(C,F)_f$. Consequently, we shall use the notation $\Fun_{\eK}(C,-)_f$ to denote that right-hand vertical composite.

  Now given a module map as above we know, from
Lemma~\ref{lem:module.legs}, that the legs $q\colon E\tfib D$ and
  $v\colon F\tfib D$ are cocartesian fibrations in $\eK$, so they are carried to
  cocartesian fibrations $\Fun_{\eK}(C,q)\colon \Fun_{\eK}(C,E)\tfib
  \Fun_{\eK}(C,D)$ and $\Fun_{\eK}(C,v)\colon \Fun_{\eK}(C,F)\tfib
  \Fun_{\eK}(C,D)$ of quasi-categories by the cosmological functor
  $\Fun_{\eK}(C,-)\colon\eK\to\qCat$. It follows that we may apply
  Proposition~\ref{prop:equivalence-of-fibrations} to show that the functor
  $\Fun_{\eK}(C,g)$, which is a cartesian functor between $\Fun_{\eK}(C,q)$ and
  $\Fun_{\eK}(C,v)$, is an equivalence if and only if each of its fibres
  $\Fun_{\eK}(C,g)_f\colon \Fun_{\eK}(C,E)_f\to \Fun_{\eK}(C,F)_f$ is an
  equivalence. Furthermore equivalences are defined representably in $\eK$, that
  is $g\colon E\to F$ is an equivalence in $\eK$ if and only if
  $\Fun_{\eK}(C,g)\colon\Fun_{\eK}(C,E)\to\Fun_{\eK}(C,F)$ is an equivalence of
  quasi-categories for all objects $C\in\eK$. Combining these two facts we find
  that $g\colon E\to F$ is an equivalence of modules in ${}_D\qMod(\eK)_A$ (or
  equally an isomorphism in the quasi-category ${}_D\qMod(\qK)_A$) if and only
  if for all objects $C$ and arrows $f\colon C\to D$ the map
  $\Fun_{\eK}(C,g)_f\colon \Fun_{\eK}(C,E)_f\to \Fun_{\eK}(C,F)_f$ is an
  equivalence of quasi-categories. Thus, we have shown that the family of
  functors $$\{\Fun_{\eK}(C,-)_f\colon{}_D\qMod(\qK)_A\to
  {}_1\qMod(\qqCat)_{\Fun_{\eK}(C,A)} \mid C\in\eK, f\in\Fun_{\eK}(C,D)_0\}$$ is
  \emph{jointly conservative}. Moreover, by Remark \ref{rmk:cosmos-pres-qcat-limits}, each cosmological functor $\Fun_{\eK}(C,-)_f$
  preserves all small limits, so it follows that the family of them
  \emph{jointly reflects\/} all small limits.

  At this point we may summarise what we know about the essentially commutative square depicted in~\eqref{eq:gen-Yoneda-pres-limits} as follows:
  \begin{itemize}

  \item For each $f \colon C \to D$ in $\eK$, the the members of the family of lower horizontal Yoneda embeddings preserve any small limits that exist, by Lemma~\ref{lem:qcat-Yoneda-pres-limits}.
  \item For each $f \colon C \to D$ in $\eK$, the  left-hand vertical functor preserves any family of limits in $\Fun_{\eK}(D,A)$ that is stable under precomposition in $\eK$, directly from  Definition~\ref{defn:limit-stable-under-precomp}.
    \item As $f \colon C \to D$ is allowed to range over all functors in $\eK$ with codomain $D$, the family of right-hand vertical functors $\Fun_{\eK}(C,-)_f$ jointly reflects small limits in  ${}_D\qMod(\qK)_A$.
  \end{itemize}
  It follows that the composite of the  left-hand vertical and lower horizontal functors preserve any family of small limits in $\Fun_{\eK}(D,A)$ that are stable under precomposition in $\eK$. So the same is true for the isomorphic composites of the upper horizontal map $\yoneda\colon\Fun_{\eK}(D,A)\to{}_D\qMod(\qK)_A$ and the right-hand vertical maps. But as $f \colon C \to D$ varies over all functors with codomain $D$, that latter family of maps jointly reflects all small limits, so by cancellation we  infer that the generalised Yoneda embedding $\yoneda\colon\Fun_{\eK}(D,A)\to{}_D\qMod(\qK)_A$ preserves any such family of small limits that is stable under precomposition in $\eK$ as required.
\end{proof}

\subsection{Colimits of diagrams}\label{ssec:colimit-construction} 

It is commonplace in  category theory to study \emph{generating classes\/} for important closed classes of (co)limits. The canonical result in this regard is the construction of all limits from products and equalisers; Lurie establishes the quasi-categorical analogue in \cite[4.4.2.6]{Lurie:2009fk}. In this subsection, we establish an analogous construction for limits and colimits  in an $\infty$-category $A$ of an $\infty$-cosmos $\eK$.

\begin{rmk}\label{rmk:colim-of-diags}
  Suppose that we are given a diagram $J\colon\qJ\op\to\SSet$ which is a coproduct, pushout, or countable composite diagram in which certain connecting maps are expected to be inclusions of simplicial sets, in the sense dual to the cosmological limit types of axiom \ref{defn:cosmos}\ref{defn:cosmos:a}. For instance, the diagram \eqref{eq:skeletal-decomposition} is built out of iterating diagrams of this type. 
  
  We may take the colimit $J_\top$ of this diagram, with colimit cocone $\pi\colon J\To\Delta {J_\top}$, and observe that certain components of that cocone are inclusions as specified (dually) in \ref{defn:cosmos}\ref{defn:cosmos:b}. Then, given an $\infty$-category $A$, this then implies that the functors $\pi_c \colon A^{J_\top} \to A^{Jc}$ defined by restricting along the legs of the cone are isofibrations.
  \end{rmk}
  
  \begin{lem}\label{lem:cone-res-diagrams} Let $J_\top$ be a simplicial set defined as a coproduct, pushout of a monomorphism, or countable composite of monomorphisms of simplicial sets, presented as the colimit of a diagram $J \colon \qJ\op \to \SSet$ with colimit cone $\pi\colon J\To\Delta {J_\top}$. Suppose further that we are given an $\infty$-category $A\in\eK$ along with a fixed  diagram $d\colon D\to A^{J_\top}$ and consider the restricted diagrams  $\xymatrix@1{d_c\colon {D}\ar[r]^-{d} & A^{J_\top}\ar[r]^{A^{\pi_c}} & {A^{Jc}}}$ for each $c \in \qJ$.
  \begin{enumerate}[label=(\roman*)]
  \item\label{itm:cone-res-diag} The $\infty$-categories of cones $\Delta_{Jc}\comma d_c$ over the restricted diagrams assemble into a canonical diagram $\Delta_{J_*}\comma d_*\colon \qJ \to {}_D\qMod(\eK)_A$ of one of the cosmological limit types.
  \item\label{itm:cone-res-diag-cone} Moreover there is a canonical cosmological limit type cone over this diagram in ${}_D\qMod(\eK)_A$ whose summit is the $\infty$-category $\Delta_{J_\top}\comma d$ of cones over $d$.
  \end{enumerate}
  \end{lem}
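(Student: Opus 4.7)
The plan is to assemble both the diagram and the cone by repeated application of Proposition~\ref{prop:trans-comma}, tracking how monomorphisms in $\SSet$ are converted into isofibrations in $\eK$. For part~(i), each morphism $c \to c'$ in $\qJ$ corresponds to a morphism $c' \to c$ in $\qJ\op$, hence to a simplicial map $\phi \colon Jc' \to Jc$ satisfying $\pi_c \cdot \phi = \pi_{c'}$. Consequently $d_{c'} = A^{\phi} \cdot d_c$ and $\Delta_{Jc'} = A^{\phi} \cdot \Delta_{Jc}$, so applying Proposition~\ref{prop:trans-comma} to the transformation of cospans with middle component $A^{\phi}$ and identities on $D$ and $A$ yields a canonical functor $\Delta_{Jc} \comma d_c \to \Delta_{Jc'} \comma d_{c'}$ over $D \times A$. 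Functoriality in $c$ is then inherited from the strict universality of the comma pullback, and each $\Delta_{Jc} \comma d_c$ is itself a module from $D$ to $A$, since it arises as a bi-fibre of the hom-space module $(A^{Jc})^{\cattwo}$ along $d_c \times \Delta_{Jc}$ (Example~\ref{ex:hom-is-a-module}).

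To identify the cosmological limit type, I would treat each case of Remark~\ref{rmk:colim-of-diags} in turn. When $J_\top$ is a coproduct, $\qJ$ is discrete and the diagram is simply a family of modules, of product shape. When $J_\top$ is the pushout of a monomorphism, $\qJ \cong \pbshape$, and the leg of the cospan induced by this monomorphism is converted by $A^{(-)}$ into an isofibration (axiom~\ref{defn:cosmos}\ref{defn:cosmos:b}), which Proposition~\ref{prop:trans-comma} promotes to an isofibration between commas---yielding a pullback-shape diagram of the required form. When $J_\top$ is a countable composite of monomorphisms, every connecting map in the tower is converted analogously into an isofibration, so the resulting diagram is a tower of isofibrations in ${}_D\qMod(\eK)_A$.

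For part~(ii), the colimit projections $\pi_c \colon Jc \to J_\top$ give, via Proposition~\ref{prop:trans-comma} applied to the cospan transformation with middle map $A^{\pi_c}$ and identities on the flanks, canonical maps $\Delta_{J_\top} \comma d \to \Delta_{Jc} \comma d_c$, and their compatibility with the morphisms of $\qJ$ is again forced by strict universality applied to the triangular identities $\pi_c \cdot \phi = \pi_{c'}$. The main obstacle is essentially bookkeeping: one must verify that the monomorphism hypotheses on the original colimit diagram in $\SSet$ transport correctly under $A^{(-)}$ and the induced-comma construction to the isofibration conditions demanded by the cosmological limit types in $\eK$. Given the dictionary between monomorphisms and isofibrations in axiom~\ref{defn:cosmos}\ref{defn:cosmos:b} together with the preservation properties of Proposition~\ref{prop:trans-comma}, this reduces to a formal case analysis.
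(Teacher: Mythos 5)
Your proposal is correct and follows essentially the same route as the paper's proof: both parts are obtained by applying Proposition~\ref{prop:trans-comma} to cospan transformations whose middle components are $A^{J(f)}$ (your $A^{\phi}$) and $A^{\pi_c}$ respectively, with functoriality coming from the strict universal property of the comma pullback and the isofibration conditions coming from the fact that cotensoring converts monomorphisms of simplicial sets into isofibrations. Your explicit case analysis of the three cosmological limit types is slightly more detailed than the paper's treatment but is not a different argument.
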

\begin{proof}
  Given an arrow $f\colon c\to c'$ in $\qJ$ we may construct a transformation of diagrams
  \begin{equation*}
    \xymatrix@R=2em@C=3em{
      {D}\ar@{=}[d]\ar[r]^{d_c} & {A^{Jc}}\ar[d]^-{A^{J(f)}} &
      {A}\ar[l]_-{\Delta_{Jc}}\ar@{=}[d] \\
      {D}\ar[r]_{d_{c'}} & {A^{Jc'}} & 
      {A}\ar[l]^-{\Delta_{Jc'}}
    }
  \end{equation*}
  which induces a unique map $\comma(A,A^{J(f)},D)\colon \Delta_{Jc}\comma d_{c}\to \Delta_{Jc'}\comma d_{c'}$ as discussed in Proposition~\ref{prop:trans-comma}. This construction is clearly functorial, and so provides us with a diagram $\Delta_{J_*}\comma d_{*}\colon\qJ\to {}_{D}\qMod(\eK)_{A}$ as stipulated in \ref{itm:cone-res-diag}.  What is more if $J(f)\colon Jc'\inc Jc$ is an inclusion of simplicial sets then $A^{J(f)}\colon A^{Jc}\tfib A^{Jc'}$ is an isofibration and it follows, by Proposition~\ref{prop:trans-comma}, that $\comma(A,A^{J(f)},D)\colon \Delta_{Jc}\comma d_{c}\tfib \Delta_{Jc'}\comma d_{c'}$ is also an isofibration. So this diagram satisfies the isofibration condition required of the cosmological limit type diagrams of \ref{defn:cosmos}\ref{defn:cosmos:a}.
  
  Equally each transformation
  \begin{equation*}
    \xymatrix@R=2em@C=3em{
      {D}\ar@{=}[d]\ar[r]^{d} & {A^{J_\top}}\ar[d]^-{A^{\pi_c}} &
      {A}\ar[l]_-{\Delta_{J_\top}}\ar@{=}[d] \\
      {D}\ar[r]_{d_{c}} & {A^{Jc}} & 
      {A}\ar[l]^-{\Delta_{Jc}}
    }
  \end{equation*}
  induces a projection $\comma(A,A^{\pi_c},D)\colon \Delta_{J_\top}\comma d\to \Delta_{Jc}\comma d_c$, and this family provides us with a cone $\Delta_{J_\top}\comma d \To \Delta_{J_*}\comma d_{*}$.  For the same reason, the monomorphic legs of the colimit cone $\pi$ convert to isofibration legs in the cone in ${}_D\qMod(\eK)_{A}$, and hence this cone is also a cosmological limit type cone.
\end{proof}

As strongly suggested by the set up of Lemma \ref{lem:cone-res-diagrams}, we can in fact show that the cones just constructed are limit cones. 

\begin{lem}\label{lem:colim-of-diags}
  The cone of Lemma \ref{lem:cone-res-diagrams} displays $\Delta_{J_\top}\comma d$ as a limit of the diagram $\Delta_{J_*}\comma d_{*}$ in ${}_{D}\qMod(\eK)_{A}$.
\end{lem}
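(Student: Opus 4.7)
The plan is to exhibit the cone of Lemma~\ref{lem:cone-res-diagrams} as a strict simplicially enriched limit cone in the slice $\infty$-cosmos $\eK_{/D\times A}$ by a direct calculation with the defining pullback of the comma construction, and then to invoke the argument of Proposition~\ref{prop:strict-pseudo-cones} to upgrade this strict limit to a pseudo homotopy limit in the Kan-complex-enriched subcategory ${}_D\qMod(\eK)_A$.

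First I would unpack the defining simplicial pullback
\[
\xymatrix@=2em{ \Delta_{J_\top}\comma d\pbexcursion \ar[r] \ar@{->>}[d] & (A^{J_\top})^\cattwo \ar@{->>}[d]\\ D\times A \ar[r]_-{d\times \Delta_{J_\top}} & A^{J_\top}\times A^{J_\top} }
\]
and recall that the simplicial functor $A^{(-)}\colon\SSet\op\to\eK$ carries the colimit presentation $J_\top = \colim_{c\in\qJ\op} Jc$ in $\SSet$ to a limit presentation $A^{J_\top} \cong \lim_{c\in\qJ} A^{Jc}$ in $\eK$. Since the simplicial cotensor $(-)^\cattwo$ preserves all simplicial limits, we also have $(A^{J_\top})^\cattwo\cong\lim_{c\in\qJ}(A^{Jc})^\cattwo$. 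Moreover, under these identifications the map $d\times\Delta_{J_\top}$ is the cone-induced map into $\lim_c (A^{Jc}\times A^{Jc})$ whose $c$-component is $d_c\times\Delta_{Jc}$.

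Next I would exploit the fact that pullbacks commute with arbitrary limits to rewrite
\[
\Delta_{J_\top}\comma d \;\cong\; (D\times A)\times_{\lim_c (A^{Jc}\times A^{Jc})}\lim_c (A^{Jc})^\cattwo \;\cong\; \lim_{c\in\qJ}\bigl[(D\times A)\times_{A^{Jc}\times A^{Jc}}(A^{Jc})^\cattwo\bigr] \;=\; \lim_{c\in\qJ}\bigl(\Delta_{Jc}\comma d_c\bigr).
\]
A direct inspection shows that the canonical projections of this presentation coincide with the legs $\comma(A,A^{\pi_c},D)$ constructed in Lemma~\ref{lem:cone-res-diagrams}\ref{itm:cone-res-diag-cone}, so we have identified the cone of that lemma as a strict simplicially enriched limit cone in $\eK_{/D\times A}$. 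The isofibration conditions required for this to be one of the cosmological limit types are inherited from the monomorphism conditions on the legs of $J_\top = \colim Jc$ via Proposition~\ref{prop:trans-comma}, and Example~\ref{ex:comp-mods} ensures that the limit remains in the Kan-complex-enriched subcategory ${}_D\qMod(\eK)_A$.

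Finally, to promote this strict simplicial limit to a genuine (pseudo homotopy) limit in the Kan-complex-enriched sense, I would invoke the argument of Proposition~\ref{prop:strict-pseudo-cones}: the indexing shapes appearing in Remark~\ref{rmk:colim-of-diags}---discrete categories, the pullback shape $\pbshape$, and $\omega\op$---together with the resulting isofibration hypotheses are exactly those covered there. Its proof reduces, upon representing via the Kan-complex valued functors $\Fun_{{}_D\qMod(\eK)_A}(X,-)$, to the point-set calculations of Lemmas~\ref{lem:strict-pseudo-products},~\ref{lem:strict-pseudo-pullbacks}, and~\ref{lem:strict-pseudo-towers}, and this reduction goes through verbatim in ${}_D\qMod(\eK)_A$. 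The main obstacle, though essentially bookkeeping, is checking that the isofibration conditions are correctly transported through the comma construction so that the appropriate cases of Proposition~\ref{prop:strict-pseudo-cones} apply; once this is confirmed, the conclusion is immediate.
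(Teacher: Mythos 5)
Your first half is correct and is essentially the paper's own argument, just phrased as a direct computation: the paper routes the same content through the cosmological functors $(-)^\cattwo$ and $A^{(-)}$, the construction of Proposition \refVII{prop:K^2-cosmos-limits} in $\eK^{\cattwo}$, and a final pullback along $d\times\Delta_{J_\top}$, whereas you commute the defining pullback past the limit presentation $A^{J_\top}\cong\lim_c A^{Jc}$ by hand. One small precision to add: in your chain of isomorphisms the expression $\lim_{c}\bigl[(D\times A)\times_{A^{Jc}\times A^{Jc}}(A^{Jc})^\cattwo\bigr]$ must be read as the limit in the slice $\eK_{/D\times A}$ (equivalently, a wide pullback over $D\times A$), not the limit in $\eK$; taken in $\eK$ the ``pullbacks commute with limits'' step fails already for a two-object discrete $\qJ$, since the ambient limit would duplicate the $D\times A$ factor. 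With that reading your identification of the projections with the legs $\comma(A,A^{\pi_c},D)$ is exactly what the lemma asserts, and since the diagram and its limit all lie in the full subcategory ${}_D\qMod(\eK)_A$, the limit restricts there. Your second half, promoting the strict cone to a pseudo homotopy limit via Proposition \ref{prop:strict-pseudo-cones}, is not part of this lemma: the statement only concerns the strict simplicially enriched limit in ${}_D\qMod(\eK)_A$, and the upgrade to a limit in the quasi-category ${}_D\qMod(\qK)_A$ is precisely the content of the subsequent Corollary \ref{cor:colim-of-diags} (via Theorem \ref{thm:nerve-completeness} and Example \ref{ex:comp-mods}). So you have proved the lemma and, in passing, most of the corollary as well; no gap, just a misplaced boundary between the two statements.
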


\begin{proof}
  The functor $(-)^\cattwo\colon\eK\to\eK^\cattwo$, which carries each object $A$ to the associated isofibration $A^\cattwo\tfib A\times A$, is a cosmological functor. In particular, it preserves the cosmological limit types, a fact which follows easily from the observations that these limit types are jointly created by $\dom,\cod\colon\eK^\cattwo\to\eK$ and that they commute with the limits $A\times A$ and $A^\cattwo$ in $\eK$. Furthermore the functor $A^{(-)}\colon\SSet\op\to\eK$ carries coproducts, pushouts of inclusions, and countable composites of inclusions to the corresponding limit types in $\eK$. It follows, therefore, that these functors carry the diagram $J\colon\qJ\op\to\SSet$ and its colimiting cocone $\pi\colon J\To \Delta_{J_\top}$ to a diagram $(A^{J_*})^\cattwo\colon \qJ\to \eK^\cattwo$ and a limit cone with apex $(A^{J_\top})^\cattwo\tfib A^{J_\top}\times A^{J_\top}$, and that these satisfy the conditions specified in \ref{defn:cosmos}\ref{defn:cosmos:b} in the $\infty$-cosmos $\eK^\cattwo$.

  We may apply the construction of Proposition~\refVII{prop:K^2-cosmos-limits} to the limit derived in the last paragraph---pulling back the codomains of the arrows in the image of the diagram in $\eK^\cattwo$ to the codomain of the limit object---to give a diagram $A^{\pi_*}\comma A^{\pi_*}\colon \qJ\to \eK_{/A^{J_\top}\times A^{J_\top}}$ which maps each object $c\in\qJ$ to the isofibration $A^{\pi_c}\comma A^{\pi_c}\tfib A^{J_\top}\times A^{J_\top}$ and a cone which displays $(A^{J_\top})^\cattwo\tfib A^{J_\top}\times A^{J_\top}$ as its limit in the $\infty$-cosmos $\eK_{/A^{J_\top}\times A^{J_\top}}$ of one of the cosmological limit types. Finally we may pull his latter diagram and limit cone back along the arrow $d\times\Delta_{J_\top}\colon D\times A\to A^{J_\top}\times A^{J_\top}$ to give the diagram and cone in $\eK_{/D\times A}$. The required result follows on recalling that pullback along that arrow determines a cosmological functor, which therefore carries our limit to a limit in the $\infty$-cosmos $\eK_{/D\times A}$ and hence in its full subcategory ${}_D\qMod(\eK)_A$ as required.
\end{proof}

We shall show that the limits we have constructed in Lemma \ref{lem:colim-of-diags} give rise to corresponding limits in the quasi-category of modules ${}_D\qMod(\qK)_A$.

\begin{cor}\label{cor:colim-of-diags}
The limit derived in Lemma~\ref{lem:colim-of-diags} presents the module $\Delta_{J_\top}\comma d$ as a limit of the diagram $\Delta_{J*}\comma d_{*}$ of shape $\qJ$ in the quasi-category of modules ${}_D\qMod(\qK)_A$.
\end{cor}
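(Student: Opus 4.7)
The plan is to transport the strict simplicially enriched limit established in Lemma~\ref{lem:colim-of-diags} to a limit in the quasi-category ${}_D\qMod(\qK)_A$ by combining two previously established results: Proposition~\ref{prop:strict-pseudo-cones}, which recognises such strict limits as pseudo homotopy limits in the $(\infty,1)$-categorical core, and Theorem~\ref{thm:nerve-completeness}, which transposes pseudo homotopy limits in a Kan-complex-enriched category to limits in its homotopy coherent nerve.

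First, I would observe that the indexing category $\qJ$ is of precisely one of the three kinds treated by Proposition~\ref{prop:strict-pseudo-cones}. Indeed, by Remark~\ref{rmk:colim-of-diags}, the simplicial set $J_\top$ is presented as a coproduct, pushout of a monomorphism, or countable composite of monomorphisms, so $\qJ$ is either discrete, the pullback shape $\pbshape$, or $\omega\op$. Furthermore Lemma~\ref{lem:cone-res-diagrams} verifies that the relevant connecting arrows in the diagram $\Delta_{J_*}\comma d_*$ and the relevant legs of the cone with summit $\Delta_{J_\top}\comma d$ are isofibrations, which matches the isofibration hypotheses of Proposition~\ref{prop:strict-pseudo-cones}.

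Next, I would apply Proposition~\ref{prop:strict-pseudo-cones} in the $\infty$-cosmos $\eK_{/D\times A}$. Lemma~\ref{lem:colim-of-diags} identifies the cone of Lemma~\ref{lem:cone-res-diagrams} as a strict simplicially enriched limit in ${}_D\qMod(\eK)_A$, which is the simplicial subcategory of $\eK_{/D\times A}$ spanned by the modules, and this limit is inherited pointwise from $\eK_{/D\times A}$. Proposition~\ref{prop:strict-pseudo-cones} then promotes this strict limit cone to a pseudo homotopy limit cone in the $(\infty,1)$-categorical core $g_*(\eK_{/D\times A})$. By Example~\ref{ex:comp-mods}, ${}_D\qMod(\eK)_A$ is closed in $g_*(\eK_{/D\times A})$ under flexible weighted homotopy limits, so this is equivalently a pseudo homotopy limit cone inside the Kan-complex-enriched category ${}_D\qMod(\eK)_A$ of modules.

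Finally, I would apply Theorem~\ref{thm:nerve-completeness} to transpose this pseudo homotopy limit cone in ${}_D\qMod(\eK)_A$ across the homotopy coherent realisation--nerve adjunction to obtain a limit cone in the quasi-category ${}_D\qMod(\qK)_A$. Here I use the fact that, for the free diagram shapes we are considering, Lemma~\ref{itm:free-is-hty-coh} ensures that strict diagrams in the Kan-complex-enriched category correspond to quasi-categorical diagrams in its homotopy coherent nerve, and the strictly commuting pseudo cone of Definition~\ref{defn:strict-pseudo-cones} transposes to the expected quasi-categorical limit cone. The main technical point to keep track of is simply the bookkeeping that identifies the various incarnations of the cone --- strict cone in $\eK_{/D\times A}$, pseudo cone in $g_*(\eK_{/D\times A})$, and limit cone in $\qK_{/D\times A}$ --- and that confirms that these all restrict along the full sub(quasi)category of modules. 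Once this translation is in place, the conclusion is immediate.
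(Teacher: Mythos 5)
Your proposal is correct and follows essentially the same route as the paper's own proof: it combines Theorem~\ref{thm:nerve-completeness} (via Example~\ref{ex:comp-mods}) with Proposition~\ref{prop:strict-pseudo-cones} to recognise the strict cone of Lemma~\ref{lem:cone-res-diagrams} as a pseudo homotopy limit cone, hence a limit cone in the quasi-category of modules. The paper states this more tersely, but the ingredients and their roles are identical; your additional bookkeeping about the isofibration hypotheses and the passage through $g_*(\eK_{/D\times A})$ is a faithful unpacking of the same argument.
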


\begin{proof}
By Theorem \ref{thm:nerve-completeness} applied in the context of Example \ref{ex:comp-mods}, limits in the quasi-category of modules ${}_D\qMod(\qK)_A$ are constructed as pseudo homotopy limits in the Kan-complex-enriched category ${}_D\qMod(\eK)_A$. In the case of the diagrams under consideration at present---products, pullbacks, and limits of towers of isofibrations---Proposition \ref{prop:strict-pseudo-cones} reveals that such limits are given by the strict pseudo cones of Definition \ref{defn:strict-pseudo-cones} formed from the corresponding 1-categorical limits cones. These coincide with the cones constructed in Lemma \ref{lem:cone-res-diagrams}.
\end{proof}

\begin{obs}\label{obs:internalized-limit-diag}
Continuing in the context of Lemma \ref{lem:cone-res-diagrams}, now assume for each $c \in \qJ$ that the restricted diagram  $d_c \colon \xymatrix@1{{D}\ar[r]^-{d} & A^{J_\top}\ar[r]^{A^{\pi_c}} & {A^{Jc}}}$  admits a limit $\ell_c\colon D\to A$ as displayed by the following absolute right lifting diagram:
  \begin{equation}\label{eq:colim-of-diags}
    \vcenter{\xymatrix@=2.5em{
        {}\ar@{}[dr]|(0.7){\Downarrow\lambda_c} &
        {A}\ar[d]^{\Delta_{Jc}} \\
        {D}\ar[r]_{d_c}\ar[ur]^{\ell_c} & {A^{Jc}}
      }}
    \mkern40mu\leftrightsquigarrow\mkern40mu
    \vcenter{\xymatrix@R=2em@C=1em{
        {A\comma \ell_c}\ar[rr]^-{\simeq}
        \ar@{->>}[dr] && {\Delta_{Jc}\comma d_c}\ar@{->>}[dl] \\
        & {D\times A} &
      }}
  \end{equation}
  Equivalently, by Proposition~\ref{prop:limit-as-module-equivalence}, we know that $\ell_c$ provides a fibred equivalence of modules $\Delta_{Jc}\comma d_c$ as depicted on the right of the display above. These are otherwise isomorphisms in the quasi-category ${}_D\qMod(\qK)_A$ of modules between the vertices of the diagram $\Delta_{J*}\comma d_{*}\colon\qJ\to {}_D\qMod(\qK)_A$ and the covariantly represented modules $A\comma\ell_c$. Transferring the arrows in the $\qJ$-shaped diagram $\Delta_{J*}\comma d_{*}$ constructed in Lemma \ref{lem:cone-res-diagrams} along these pointwise isomorphisms, we may extend them to a natural isomorphism whose domain is a functor $A\comma\ell_{*}\colon\qJ\to {}_D\qMod(\qK)_A$ extending our family of represented modules.

  A represented module $A\comma\ell_c$ is simply the image of the functor $\ell_c\colon D\to A$ under the generalised Yoneda embedding $\yoneda\colon \Fun_{\eK}(D,A)\to {}_D\qMod(\qK)_A$. Furthermore generalised Yoneda is fully-faithful, by Proposition~\ref{prop:yoneda.fully.faithful}, so it follows that the functor $A\comma\ell_{*}$ factors through it to endow the family of covariant representatives $\ell_c$ with the structure of a functor $\ell_{*}\colon \qJ\to \Fun_{\eK}(D,A)$. We may summarise these various functors and their relationships in the following diagram:
  \begin{equation*}
    \xymatrix@R=2.5em@C=3em{
      {\qJ}\ar@/^2ex/[rr]!L(0.75)^-{\Delta_{J*}\comma d_{*}}
      \ar@/_2ex/[rr]!L(0.75)_-{A\comma\ell_{*}}\ar[dr]_{\ell_{*}} &
      \rotatebox[origin=c]{90}{$\scriptstyle\cong$}
      \ar@{}[d]|(0.6){\rotatebox[origin=c]{90}{$\scriptstyle\cong$}} &
      {{}_D\qMod(\qK)_A} \\
      & {\Fun_{\eK}(D,A)}\ar[ur]_{\yoneda} &
    }
  \end{equation*}
  When expressed in terms lifting properties this functor $\ell_{*}\colon\qJ\to\Fun_{\eK}(D,A)$ carries an arrow $f\colon c\to c'$ in $\qJ$ to a representative of the unique 2-cell $\ell_f$ induced by the lifting property of the right hand triangle in the following diagram: 
  \begin{equation}\label{eq:comma-diagrams}
    \vcenter{\xymatrix@=2.5em{
        {}\ar@{}[dr]|(0.7){\Downarrow\lambda_c} &
        {A}\ar[d]^{\Delta_{Jc}}\ar@{=}[r] & {A}\ar[d]^{\Delta_{Jc'}} \\
        {D}\ar[r]_{d_c}\ar[ur]^{\ell_c} &
        {A^{Jc}}\ar[r]_{A^{J(f)}} & {A^{Jc'}}
      }}
    \mkern40mu=\mkern40mu
    \vcenter{\xymatrix@=2.5em{
        {} & {A}\ar@{=}[r]\ar@{}[dr]|(0.7){\Downarrow\lambda_{c'}}
        \ar@{}[d]|{\overset{\ell_f}\To}&
        {A}\ar[d]^{\Delta_{Jc'}} \\
        {D}\ar@{=}[r]\ar[ur]^{\ell_c} & {D}\ar[r]_{d_{c'}}
        \ar[ur]^(0.4)*!<6pt,0pt>{\scriptstyle\ell_{c'}} & {A^{Jc'}}
      }}
  \end{equation}

\end{obs}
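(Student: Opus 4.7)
The observation makes three interlocking claims that I would need to justify: (i) the pointwise equivalences $A\comma\ell_c \simeq \Delta_{Jc}\comma d_c$ can be organized into a natural isomorphism of $\qJ$-indexed diagrams in the quasi-category ${}_D\qMod(\qK)_A$; (ii) the resulting diagram $A\comma\ell_*$ factors essentially uniquely through the generalized Yoneda embedding to produce a functor $\ell_*\colon \qJ \to \Fun_{\eK}(D,A)$; and (iii) this functor sends each arrow $f\colon c \to c'$ to the unique 2-cell furnished by the absolute right lifting property depicted in \eqref{eq:comma-diagrams}. My plan is to address these in sequence.

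For (i), I would first apply Proposition \ref{prop:limit-as-module-equivalence} at each vertex $c$ to obtain a fibered equivalence $A\comma\ell_c \simeq \Delta_{Jc}\comma d_c$ in the Kan-complex-enriched category ${}_D\qMod(\eK)_A$, which descends to an isomorphism in its homotopy coherent nerve ${}_D\qMod(\qK)_A$. To promote these vertex-wise isomorphisms to a natural isomorphism of diagrams, I would observe that the full sub-quasi-category of ${}_D\qMod(\qK)_A$ spanned by represented modules is replete up to isomorphism, then invoke a quasi-categorical ``transport of structure'': given the diagram $\Delta_{J*}\comma d_*$ of Lemma \ref{lem:cone-res-diagrams} together with a compatible family of isomorphisms out of each vertex, one extends them to a cylinder $\qJ \times \Delta^1 \to {}_D\qMod(\qK)_A$ via iterated special outer horn fillers, obtaining the transported diagram $A\comma\ell_*$ together with the asserted natural isomorphism. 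For (ii), Proposition \ref{prop:yoneda.fully.faithful} tells us that the generalized Yoneda embedding is fully faithful, hence an equivalence onto the sub-quasi-category spanned by its essential image; since each vertex $A\comma\ell_c$ lies literally in the image of $\yoneda$, the transported diagram factors uniquely up to isomorphism through this equivalence, yielding the desired functor $\ell_*$.

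Finally, for (iii), I would trace through the construction of $\Delta_{J*}\comma d_*$ in Lemma \ref{lem:cone-res-diagrams}, where the connecting map $\Delta_{Jc}\comma d_c \to \Delta_{Jc'}\comma d_{c'}$ is induced via Proposition \ref{prop:trans-comma} by the 2-cell $\Delta_{Jc} \To \Delta_{Jc'}$ arising from $A^{J(f)}$. Transporting this along the fibered equivalences of (i) and unwinding the weak 2-universal property of commas described in Observation \ref{obs:trans.induce.comma} identifies the corresponding arrow $A\comma\ell_c \to A\comma\ell_{c'}$ as the map induced by a unique 2-cell $\ell_f \colon \ell_c \To \ell_{c'}$; a direct comparison shows $\ell_f$ is precisely the 2-cell obtained by whiskering $\lambda_c$ with $A^{J(f)}$ and factoring through $(\ell_{c'}, \lambda_{c'})$ via the absolute right lifting property, matching the right-hand triangle of \eqref{eq:comma-diagrams}. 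The principal obstacle will be making the coherent transport in (i) fully rigorous: a naive choice of pointwise inverses need not assemble into a natural transformation, so one must appeal carefully to the \emph{repleteness} (and not merely essential surjectivity) of the representable sub-quasi-category in order to lift the family of isomorphisms to a genuine cylinder of diagrams.
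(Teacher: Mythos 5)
Your proposal is correct and follows essentially the same route as the paper's own (deliberately terse) argument for this Observation: pointwise fibred equivalences from Proposition \ref{prop:limit-as-module-equivalence}, transport of the diagram $\Delta_{J*}\comma d_*$ along these pointwise isomorphisms in the quasi-category ${}_D\qMod(\qK)_A$, factorisation through the fully faithful generalised Yoneda embedding, and identification of the induced arrows with the 2-cells $\ell_f$ supplied by the absolute right lifting property. One minor quibble: the transport step in (i) is a general fact about diagrams in a quasi-category --- any family of isomorphisms chosen independently at the vertices can be extended to a natural isomorphism by the cylinder/special-outer-horn argument you describe --- so it requires neither compatibility among the chosen isomorphisms nor repleteness of the representable sub-quasi-category; repleteness only matters if one wants the transported diagram to land in a prescribed subcategory, which here is handled instead by the factorisation through $\yoneda$ in step (ii).
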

Note that the functor of quasi-categories $\ell_{*}\colon\qJ\to\Fun_{\eK}(D,A)$ may dually be regarded as being a functor $\ell_{*}\colon D\to A^{\qJ}$ inside the $\infty$-cosmos $\eK$. This all leads us to the following result:

\begin{prop}\label{prop:colim-of-diags}
Let $J_\top$ be a simplicial set defined as a coproduct, pushout of a monomorphism, or countable composite of monomorphisms of simplicial sets, presented a colimit of a diagram $J \colon \qJ\op \to \SSet$. Consider a fixed diagram $d\colon D\to A^{J_\top}$ in an $\infty$-category $A$ with the property that the restricted diagrams  $\xymatrix@1{d_c\colon {D}\ar[r]^-{d} & A^{J_\top}\ar[r]^{A^{\pi_c}} & {A^{Jc}}}$ for each $c \in \qJ$ have limits $\ell_c \colon D \to A$.

Then a functor $\ell\colon D\to A$ is a limit of the diagram $d\colon D\to A^{J_\top}$ if and only if it is a limit of the diagram $\ell_{*}\colon D\to A^{\qJ}$ formed from the limit functors in Observation \ref{obs:internalized-limit-diag}.
\end{prop}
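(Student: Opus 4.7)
The plan is to reduce the biconditional to an unconditional equivalence of modules and then establish that equivalence by exhibiting both modules as limits of the same $\qJ$-shaped diagram in the quasi-category ${}_D\qMod(\qK)_A$. Specifically, by Proposition~\ref{prop:limit-as-module-equivalence}, the claim that $\ell\colon D\to A$ is a limit of $d\colon D\to A^{J_\top}$ is equivalent to a fibred equivalence $A\comma\ell\simeq \Delta_{J_\top}\comma d$ of modules from $D$ to $A$, and similarly $\ell$ is a limit of $\ell_*$ if and only if $A\comma\ell\simeq \Delta_\qJ\comma\ell_*$. Hence it suffices to prove the unconditional equivalence $\Delta_{J_\top}\comma d \simeq \Delta_\qJ\comma\ell_*$ in ${}_D\qMod(\qK)_A$.

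For $\Delta_{J_\top}\comma d$, Lemma~\ref{lem:colim-of-diags} already presents it as a strict limit, of one of the cosmological types listed in Remark~\ref{rmk:colim-of-diags}, of the diagram $\Delta_{J_*}\comma d_*\colon \qJ\to {}_D\qMod(\eK)_A$. By Proposition~\ref{prop:strict-pseudo-cones} this strict limit is also a pseudo homotopy limit in the Kan-complex-enriched category ${}_D\qMod(\eK)_A$, and Theorem~\ref{thm:nerve-completeness} applied in the context of Example~\ref{ex:comp-mods} then promotes it to a limit in the quasi-category ${}_D\qMod(\qK)_A$. Observation~\ref{obs:internalized-limit-diag} supplies a natural isomorphism $\Delta_{J_*}\comma d_*\cong A\comma\ell_*=\yoneda\circ\ell_*$ of diagrams in ${}_D\qMod(\qK)_A$, so $\Delta_{J_\top}\comma d$ is a limit of the generalised Yoneda image $A\comma\ell_*$ in that quasi-category.

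For $\Delta_\qJ\comma\ell_*$, I will verify directly that it is the strict limit of the diagram $A\comma\ell_*\colon\qJ\to \eK_{/D\times A}$, whence the same invocation of Proposition~\ref{prop:strict-pseudo-cones} and Theorem~\ref{thm:nerve-completeness} upgrades it to a limit in ${}_D\qMod(\qK)_A$. The key identification rests on the canonical isomorphism $(A^\qJ)^\cattwo\cong (A^\cattwo)^\qJ$ and the 2-functoriality of the simplicial cotensor $(-)^\qJ$: a generalised element of $\Delta_\qJ\comma\ell_*$ over $(x_D,x_A)\colon X\to D\times A$ is a 2-cell $\Delta_\qJ x_A\To \ell_* x_D$ in $A^\qJ$, which transposes to a $\qJ$-indexed compatible family of 2-cells $x_A\To \ell_c x_D$, equivalently a compatible family of generalised elements of the comma $\infty$-categories $A\comma\ell_c$, i.e.\ an element of the strict limit $\lim_c A\comma \ell_c$ in $\eK_{/D\times A}$. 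The projection legs of this limit are isofibrations --- arising as pullbacks of the isofibrations $(A^\cattwo)^\qJ\tfib A^\cattwo$ obtained by evaluating at objects of $\qJ$, which are isofibrations by the Leibniz cotensor axiom~\ref{defn:cosmos}\ref{defn:cosmos:b} --- so the resulting limit is of the cosmological type dictated by the shape of $\qJ$ (a product for discrete $\qJ$, a pullback for $\pbshape$, an inverse sequence of isofibrations for $\omega\op$).

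With both modules identified as limits of the same diagram $A\comma\ell_*\colon \qJ \to {}_D\qMod(\qK)_A$, the essential uniqueness of limits in a quasi-category yields the desired equivalence $\Delta_{J_\top}\comma d\simeq \Delta_\qJ\comma\ell_*$, completing the proof via the reduction of the first paragraph. The main technical obstacle will be making the universal-property argument of the third paragraph rigorous at the enriched simplicial level rather than merely 2-categorically, and tracking the isofibration structure through the pullback identification in each of the three cosmological shapes so that Proposition~\ref{prop:strict-pseudo-cones} genuinely applies; however, the analysis is entirely parallel to the one carried out in Lemmas~\ref{lem:cone-res-diagrams} and~\ref{lem:colim-of-diags} for the other side of the claimed equivalence.
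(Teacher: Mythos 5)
Your reduction in the first paragraph is sound, and your treatment of $\Delta_{J_\top}\comma d$ is exactly the paper's Lemma \ref{lem:colim-of-diags} and Corollary \ref{cor:colim-of-diags}. The gap is in your third paragraph: the claim that $\Delta_{\qJ}\comma\ell_*$ is the \emph{strict} limit of a diagram $A\comma\ell_*\colon\qJ\to\eK_{/D\times A}$ is not well posed, because no such strict diagram exists. The transition maps $A\comma\ell_c\to A\comma\ell_{c'}$ are induced from the 2-cells $\ell_f$ by the weak 2-universal property of commas (Observation \ref{obs:trans.induce.comma}); they are defined only up to isomorphism, need not commute strictly, and need not be isofibrations (Remark \ref{rmk:trans-induct-rel} warns about precisely this). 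Indeed, $A\comma\ell_*$ is constructed in Observation \ref{obs:internalized-limit-diag} only as a functor of quasi-categories $\qJ\to{}_D\qMod(\qK)_A$, by transporting the genuinely strict diagram $\Delta_{J*}\comma d_*$ along pointwise equivalences. Correspondingly, your generalised-element computation undercounts: an element of $\Delta_{\qJ}\comma\ell_*$ over $(x_D,x_A)$ is a map into $(A^{\cattwo})^{\qJ}$, which over each arrow $f\colon c\to c'$ of $\qJ$ carries a \emph{square} in $A$ (a coherence cell relating $\gamma_c$, $\gamma_{c'}$ and $\ell_f x_D$), not a strict equality identifying the image of $\gamma_c$ with $\gamma_{c'}$. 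That extra homotopical data is exactly the difference between the comma object and a strict limit, so Proposition \ref{prop:strict-pseudo-cones} cannot be invoked as you propose, and the analysis is not parallel to Lemmas \ref{lem:cone-res-diagrams} and \ref{lem:colim-of-diags}, which lean on the strict functoriality and isofibration conclusions of Proposition \ref{prop:trans-comma} available for $\Delta_{J*}\comma d_*$ but not for $A\comma\ell_*$. One could plausibly repair this by exhibiting $\Delta_{\qG}\comma\ell_*$ as a ``fat'' strict limit indexed by the vertices and edges of the generating graph and comparing that with the $W_{\qG}$-weighted homotopy limit, but that comparison is substantive new work, roughly on the order of Lemmas \ref{lem:strict-pseudo-pullbacks} and \ref{lem:strict-pseudo-towers}, and you have not supplied it.

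For contrast, the paper never forms $\Delta_{\qJ}\comma\ell_*$ at all. It translates ``$\ell$ is a limit of $\ell_*$'' into a limit in the functor quasi-category $\Fun_{\eK}(D,A)$ that is stable under precomposition (Lemma \ref{lem:limits-are-limits}), and then shuttles between $\Fun_{\eK}(D,A)$ and ${}_D\qMod(\qK)_A$ using the fully faithful, limit-preserving generalised Yoneda embedding (Propositions \ref{prop:yoneda.fully.faithful} and \ref{prop:gen.yoneda.pres.lim}): in one direction Yoneda preserves the limit of $\ell_*$ and identifies it with $\Delta_{J_\top}\comma d$; in the other, fully faithfulness reflects the limit cone back into $\Fun_{\eK}(D,A)$, with a separate check of stability under precomposition via Lemma \ref{lem:gen-Yoneda-precomp} and Proposition \ref{prop:fully-faithful-reflects}. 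Your route, if the missing identification were established, would be attractive precisely because it sidesteps Proposition \ref{prop:gen.yoneda.pres.lim} and the precomposition bookkeeping; but as written the central step is asserted rather than proved.
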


\begin{proof}
Suppose first that $\ell \colon D \to A$ is a limit of the diagram $\ell_*\colon D \to A^{\qJ}$. By Lemma \ref{lem:limits-are-limits}, this is the case if and only if $\ell \colon 1 \to \Fun_{\eK}(D,A)$ is the limit of the dual diagram $\ell_* \colon 1 \to \Fun_{\eK}(D,A)^{\qJ}$ of quasi-categories and this limit is stable under precomposition. Proposition~\ref{prop:gen.yoneda.pres.lim} tells us that this limit is preserved by the generalised Yoneda embedding $\yoneda\colon\Fun_{\eK}(D,A)\to{}_D\qMod(\qK)_{A}$. By construction, however, the diagram $\yoneda\ell_{*}\colon \qJ\to{}_D\qMod(\qK)_A$  is isomorphic to the diagram $A\comma\ell_*\colon \qJ\to{}_D\qMod(\qK)_A$ which is in turn isomorphic to the diagram $\Delta_{J*}\comma d_{*}\colon \qJ\to{}_D\qMod(\qK)_A$, as depicted in~\eqref{eq:comma-diagrams}; it follows that the limits of the diagrams $\yoneda\ell_{*}$ and $\Delta_{J*}\comma d_{*}$ are isomorphic when they exist. In this case the limit of the former diagram is $\yoneda\ell\cong A\comma \ell$, since generalised Yoneda preserves the limit $\ell$, and the limit of the latter is $\Delta_{J_\top}\comma d$, as demonstrated in Corollary~\ref{cor:colim-of-diags}, so the resulting isomorphism $A\comma \ell \cong \Delta_{J_\top}\comma d$ in the quasi-category ${}_D\qMod(\qK)_A$ provides an equivalence $A\comma \ell \simeq \Delta_{J_\top}\comma d$ over $D\times A$ which presents $\ell$ as a limit of $d\colon D\to A^{J_\top}$.

Conversely, suppose $\ell \colon D \to A$ is a limit of $d \colon D \to A^{J_\top}$ and  recall again that the diagrams $\yoneda\ell_*$ and $\Delta_{J*}\comma d_{*}$ are isomorphic and so the limit $\Delta_{J_\top}\comma d$ of the second of these diagrams in ${}_D\qMod(\qK)_A$, as supplied by Corollary~\ref{cor:colim-of-diags}, is also a limit of the first diagram. However our assumption that $\ell$ is a limit of the family of diagrams $d\colon D\to A^{J_\bot}$ may otherwise be read as saying that this limit is covariantly represented by the functor $\ell$; in other words, we have shown that the diagram $\yoneda\ell_{*}\colon \qJ\to{}_D\qMod(\qK)_A$ has the module $\yoneda\ell$ as its limit in the quasi-category of modules ${}_D\qMod(\qK)_A$. Since the generalised Yoneda embedding $\yoneda\colon\Fun_{\eK}(D,A)\to{}_D\qMod(\qK)_A$ is fully faithful by Proposition \ref{prop:yoneda.fully.faithful},  the cone presenting this limit factors through the generalised Yoneda embedding  to give a cone in $\Fun_{\eK}(D,A)$ that displays $\ell$ as a limit of $\ell_{*}\colon \qJ\to\Fun_{\eK}(D,A)$. Now Lemma~\ref{lem:limits-are-limits} tells us that we are done if we can show that this limit is stable under precomposition, or in other words that it is preserved by the precomposition functor $\Fun_{\eK}(f,A)\colon \Fun_{\eK}(D,A)\to\Fun_{\eK}(C,A)$ associated with any functor $f\colon C\to D$ in $\eK$. To see this, we use the essentially commutative diagram
  \begin{equation*}
    \xymatrix@R=2em@C=3em{
      {\Fun_{\eK}(D,A)}\ar[r]^-{\yoneda}\ar[d]_{\Fun_{\eK}(f,A)}
      \ar@{}[dr]|{\cong} & {{}_D\qMod(\qK)_A}\ar[d]^{(f\times A)^*} \\
      {\Fun_{\eK}(C,A)}\ar[r]_-{\yoneda} & {{}_C\qMod(\qK)_A}
    }
  \end{equation*}
  of Lemma~\ref{lem:gen-Yoneda-precomp}. The limit under consideration is preserved by the top Yoneda embedding (since it was reflected from ${}_D\qMod(\qK)_A$), and preserved by the right-hand vertical functor by Remark \ref{rmk:cosmos-pres-qcat-limits}. It is then reflected by the bottom Yoneda embedding by Proposition \ref{prop:fully-faithful-reflects}, since this functor is fully faithful, and thus it must be preserved by the left-hand vertical as required.
\end{proof}

\begin{thm}[limit constructions]\label{thm:limit-construction}
  Suppose that $\kappa$ is a regular cardinal and that $A$ is an $\infty$-category in an $\infty$-cosmos $\eK$ that admits  products of cardinality $<\kappa$ and pullbacks. If $X$ is a $\kappa$-presentable simplicial set then $A$ admits all limits of diagrams of shape $X$.
\end{thm}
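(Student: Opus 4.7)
The plan, outlined in the introduction, is to exploit the skeletal decomposition~\eqref{eq:skeletal-decomposition} of $X$ and apply Proposition~\ref{prop:colim-of-diags} iteratively to reduce the construction of $X$-shaped limits in $A$ to products of cardinality $<\kappa$, pullbacks, and (in the unbounded-dimension case) sequential inverse limits.

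I would proceed by induction on $n$, establishing that $A$ admits limits of shape every $\kappa$-presentable simplicial set of dimension at most $n$. The base case $n=0$ reduces such an $X$ to a set of cardinality $<\kappa$, whose limits in $A$ are products of that size (with the empty product supplying the terminal object). For the inductive step, write an $X$ of dimension $\leq n+1$ as the pushout of monomorphisms $\sk_n X \cup_{\coprod_{L_{n+1}X}\partial\Delta^{n+1}} \coprod_{L_{n+1}X}\Delta^{n+1}$. A first application of Proposition~\ref{prop:colim-of-diags} with $\qJ = \pbshape$ reduces $X$-shaped limits in $A$ to limits of shape $\sk_n X$, $\coprod_{L_{n+1}X}\partial\Delta^{n+1}$, and $\coprod_{L_{n+1}X}\Delta^{n+1}$, plus a pullback in $A$. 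The first two are obtained from the inductive hypothesis via a further application of Proposition~\ref{prop:colim-of-diags} to the coproduct, together with products of size $<\kappa$ in $A$, since $\sk_n X$ and each $\partial\Delta^{n+1}$ have dimension $\leq n$ and the indexing sets have cardinality $<\kappa$. The third reduces analogously to limits of shape $\Delta^{n+1}$, which always exist: the morphisms $\{0\}\colon\Delta^0\to\Delta^{n+1}$ and $!\colon\Delta^{n+1}\to\Delta^0$ form an adjunction $\{0\}\dashv !$ in the homotopy $2$-category of $\SSet$, with unit the identity and counit represented by the map $\Delta^1\times\Delta^{n+1}\to\Delta^{n+1}$ sending $(0,i)\mapsto 0$ and $(1,i)\mapsto i$. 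The contravariant $2$-functor $A^{(-)}\colon\SSet\op\to\eK$ therefore supplies the adjunction $A^!\dashv A^{\{0\}}$ that presents the limit as evaluation at the initial vertex.

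For $X$ of unbounded dimension, I write $X = \colim_n \sk_n X$ as a countable composite of monomorphisms, whose pieces are $\kappa$-presentable of bounded dimension and therefore admit limits in $A$ by the preceding. A final application of Proposition~\ref{prop:colim-of-diags} with $\qJ = \omega\op$ reduces to the existence of sequential inverse limits in $A$. If $\kappa > \aleph_0$, these are constructed as equalisers of the pair $\prod_n X_n \rightrightarrows \prod_n X_n$ given by identity and shift, and equalisers in turn as pullbacks against the diagonal; the countable products needed exist because $\aleph_0 < \kappa$, and pullbacks are hypothesised. If $\kappa = \aleph_0$, every $\kappa$-presentable $X$ has finitely many non-degenerate simplices, hence $X = \sk_N X$ for some finite $N$, and no sequential step is required.

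The principal subtlety will be ensuring that each application of Proposition~\ref{prop:colim-of-diags} is fed a genuine limit diagram $\ell_*\colon D\to A^\qJ$ in the appropriate functor $\infty$-category, rather than a mere fibrewise family of limits. Observation~\ref{obs:internalized-limit-diag} packages precisely this internalisation via the fully faithful generalised Yoneda embedding of Proposition~\ref{prop:yoneda.fully.faithful}, so the inductive steps compose without the need to track additional coherence data by hand.
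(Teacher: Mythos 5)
Your argument reproduces the paper's proof almost exactly through the bounded-dimension induction: products of cardinality $<\kappa$ for the $0$-skeletal base case, the skeletal pushout fed to Proposition~\ref{prop:colim-of-diags} for the inductive step, the cells $\coprod_{L_{n+1}X}\Delta^{n+1}$ handled by composing the adjunction $A^{!}\dashv A^{\{0\}}$ (your explicit unit and counit are correct) with a $<\kappa$-fold product, and the internalisation of the fibrewise limits into a genuine diagram $\ell_*\colon D\to A^{\qJ}$ delegated to Observation~\ref{obs:internalized-limit-diag} --- which is precisely the mechanism the paper uses. The one place you genuinely diverge is the transfinite step. After reducing to sequential inverse limits via Proposition~\ref{prop:colim-of-diags} applied to $X=\colim_n\sk_nX$, you invoke the classical reductions of a tower limit to the equaliser of identity and shift on a countable product, and of an equaliser to a pullback against a diagonal. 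Both statements are true for $\infty$-categories, but neither is proved in this paper and neither is automatic in the synthetic setting: each is itself an instance of ``limits over a one-dimensional simplicial set reduce to products and pullbacks,'' so an honest proof would re-run the very skeletal decomposition you are executing. The paper sidesteps this entirely by observing that when $\kappa>\omega$ the diagram shape of a countable tower is itself a low-dimensional, countable, hence $\kappa$-presentable simplicial set (Lemma~\ref{itm:free-is-hty-coh} lets one replace the nerve of $\omega\op$ by its one-skeletal spine), so the already-established bounded-dimension case supplies limits of towers for free. Your route buys an explicit formula for the sequential limit; the paper's buys self-containment. If you keep your version, the cleanest repair is to apply Proposition~\ref{prop:colim-of-diags} to the single pushout presenting the one-dimensional tower shape, which exhibits the sequential limit directly as a pullback of countable products and makes your equaliser detour unnecessary.
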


\begin{rmk}
  Here the phrase {\em admits all limits of diagrams of shape $X$\/} should be taken to mean that the diagonal functor $\Delta_X\colon A\to A^X$ admits a right adjoint as in Definition \ref{defn:all-limits}, which is equivalent to  postulating that every family of diagrams $d\colon D\to A^X$ admits a limit. In $\qCat$, Corollary \refI{cor:pointwise} tells us that this is equivalent to postulating that every diagram $d \colon 1 \to A^X$ has a limit, because the quasi-category 1 acts as a ``generator'' for the $\infty$-cosmos $\qCat$ in a suitable sense, but this reduction to the case $D=1$ is not possible in all $\infty$-cosmoi.
\end{rmk}

\begin{proof}
  Our proof proceed by induction on the skeleta of the simplicial set $X$. We note first that a simplicial set is $\kappa$-presentable if and only if it has a set of non-degenerate simplices of cardinality $<\kappa$.

  When $X$ is $0$-skeletal it comprises a disjoint set of vertices of cardinality $<\kappa$ and so it follows that the limit of any family of diagrams $d\colon D\to A^X$ exists by our assumption that $A$ admits all products of cardinality $<\kappa$.

  So fix a natural number $n$ and adopt the inductive hypothesis that the result of the statement holds for all $(n-1)$-skeletal $\kappa$-presentable simplicial sets. Suppose that $X$ is a $n$-skeletal $\kappa$-presentable simplicial set, then we may express it as a pushout
  \begin{equation}\label{eq:po-of-skeleta}
    \xymatrix@R=2.5em@C=2em{
      {\coprod_{L_nX} \boundary \Delta^n}\ar@{^(->}[r]\ar[d] &
      {\coprod_{L_nX}  \Delta^n}\ar[d] \\
      {\sk_{n-1}X}\ar@{^(->}[r] & {X}\poexcursion 
    }
  \end{equation}
  in which $L_{n}X$ denotes the set of non-degenerate $n$-simplices of $X$, which has cardinality $<\kappa$.

  The diagonal $A\to A^{\Del^n}$ always admits a right adjoint given by precomposition with the map $\Del^{\fbv{0}}\inc\Del^{n}$. In other words $A$ admits all limits of diagrams of shape $\Del^n$ and these are given by evaluating at $0$; see Propositions \refV{prop:initial-limits} and Lemma \refV{lem:adj-initial}. It follows immediately that the functor $A^{L_{n}X}\to A^{\coprod_{L_nX}  \Delta^n}$ determined by precomposition with the projection $\pi\colon \coprod_{L_nX}  \Delta^n\cong L_{n}X\times\Del^{n}\to L_{n}X$ also admits a right adjoint. What is more the diagonal $A\to A^{L_{n}X}$  admits a right adjoint, by our hypothesis that $A$ possesses all products of cardinality $<\kappa$, and composing these adjunctions 
  \[
\xymatrix@C=3em{ A \ar@/^/[r]^-\Delta \ar@{}[r]|-\bot& A^{L_{n}X} \ar@/^/[r]^-\Delta \ar@{-->}@/^/[l] \ar@{}[r]|-\bot& A^{\coprod_{L_nX}  \Delta^n} \ar@{-->}@/^/[l]}
\]
  we see that $A$ admits all limits of shape $\coprod_{L_nX}  \Delta^n$. Furthermore the simplicial sets $\sk_{n-1}X$ and $\coprod_{L_nX}  \boundary\Delta^n$ are both $(n-1)$-skeletal and $\kappa$-presentable so the inductive hypothesis suffices to show that $A$ admits all limits of diagrams of those shapes. In this way we have established the hypothesis required to apply Proposition~\ref{prop:colim-of-diags} to the diagram whose pushout is depicted in~\eqref{eq:po-of-skeleta} to infer that $A$ admits all limits of diagrams of shape $X$ as required.

  It remains only to prove that this result also holds when $X$ is a $\kappa$-presentable simplicial set which is not $n$-skeletal for any $n$. In that case $\kappa > \omega$, because a finitely presentable simplicial set is always $n$-skeletal for some $n$, and it follows that $A$ admits all limits of countable sequences since their diagram shape is a $2$-skeletal and $\kappa$-presentable simplicial set. Now the simplicial set $X$ may be expressed as the countable composite of its skeleta inclusions
  \begin{equation}\label{eq:seq-of-skeleta}
    \xymatrix@R=0em@C=2.5em{
      {\sk_0X}\ar@{^(->}[r] & {\sk_1X} \ar@{^(->}[r] &
      {\sk_1X} \ar@{^(->}[r] & \cdots
      \ar@{^(->}[r] &
      {\sk_{n}X}\ar@{^(->}[r] & {\sk_{n+1}X}\ar@{^(->}[r] &\cdots
    }
  \end{equation}
  and each of these skeleta is $\kappa$-presentable; furthermore, the inductive argument above also applies to show that $A$ admits all limits of diagrams of shape $\sk_{n}X$. In this way we have established the hypothesis required to apply Proposition~\ref{prop:colim-of-diags} to the diagram depicted in~\eqref{eq:seq-of-skeleta} to infer that $A$ admits all limits of diagrams of shape $X$ as required.
\end{proof}

For the reader's convenience, we explicitly derive the dual: 
\begin{thm}[colimit constructions]\label{thm:colimit-construction}
  Suppose that $\kappa$ is a regular cardinal and that $A$ is an $\infty$-category in an $\infty$-cosmos $\eK$ that admits coproducts of cardinality $<\kappa$ and pushouts. If $X$ is a $\kappa$-presentable simplicial set then $A$ admits all colimits of diagrams of shape $X$.
 \end{thm}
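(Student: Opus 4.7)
The plan is to derive Theorem~\ref{thm:colimit-construction} as a direct consequence of Theorem~\ref{thm:limit-construction} applied to the dual $\infty$-cosmos $\eK\co$ introduced in Definition~\ref{defn:dual-cosmoi}. The key observation is that the $\co$-duality reverses 2-cells while preserving the underlying simplicial structure, so it interchanges left and right adjunctions and, correspondingly, interchanges absolute left lifting diagrams with absolute right lifting diagrams. Hence colimits in $A \in \eK$ are precisely limits in $A \in \eK\co$, and the existence of coproducts and pushouts in $A \in \eK$ translates to the existence of products and pullbacks in $A \in \eK\co$.

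First I would verify the compatibility of the cotensor operation with this duality: for any $\infty$-category $A \in \eK$ and any simplicial set $U$, the cotensor $A^U$ computed in $\eK\co$ is canonically isomorphic to the cotensor $A^{U\op}$ computed in $\eK$. This follows from the identity $(Y\op)^U \cong (Y^{U\op})\op$ for quasi-categories $Y$, a consequence of the tensor--hom adjunction together with the fact that $(-)\op$ is an anti-involution on the cartesian closed category of simplicial sets. Under this identification, a functor $d \colon D \to A^X$ in $\eK$ corresponds to a functor $d \colon D \to A^{X\op}$ in $\eK\co$, and the diagonals are compatible, so $A$ admits all colimits of diagrams of shape $X$ in $\eK$ --- which by Definition~\ref{defn:all-limits} means that $\Delta \colon A \to A^X$ has a left adjoint --- if and only if $A$ admits all limits of diagrams of shape $X\op$ in $\eK\co$.

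With these translations in hand, the proof reduces to invoking Theorem~\ref{thm:limit-construction} in the $\infty$-cosmos $\eK\co$ with indexing simplicial set $X\op$. Since $X$ is $\kappa$-presentable if and only if $X\op$ is $\kappa$-presentable --- the anti-involution $(-)\op$ bijects non-degenerate simplices --- the hypotheses of Theorem~\ref{thm:limit-construction} are satisfied in $\eK\co$, and we conclude that $A$ admits all limits of $X\op$-shaped diagrams in $\eK\co$, which translates back to the desired statement that $A$ admits all colimits of $X$-shaped diagrams in $\eK$. The main point requiring care is establishing that $\eK\co$ genuinely satisfies the axioms of an $\infty$-cosmos with the cotensor reindexing described above; once this is in place, the remaining intermediate structures used in the proof of Theorem~\ref{thm:limit-construction} --- cosmological limit types, the calculus of modules, and the generalised Yoneda embedding --- are each manufactured from the simplicially enriched structure of the ambient $\infty$-cosmos and so transport through the duality without further ado.
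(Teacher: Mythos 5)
Your proposal is correct and is essentially the paper's own proof: the paper derives Theorem~\ref{thm:colimit-construction} in one line by observing that colimits of $X$-indexed diagrams in $A\in\eK$ coincide with limits of $X\op$-indexed diagrams in $A$ viewed in the dual $\infty$-cosmos $\eK\co$, and then invoking Theorem~\ref{thm:limit-construction}. Your additional remarks on the cotensor reindexing $A^U_{\eK\co}\cong A^{U\op}_{\eK}$ and the preservation of $\kappa$-presentability under $(-)\op$ simply make explicit the bookkeeping that the paper leaves tacit.
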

 \begin{proof}
 Colimits of $X$-indexed diagrams valued in an $\infty$-category $A$  in an $\infty$-cosmos $\eK$ coincide with limits of $X\op$-indexed diagrams in $A$ in the $\infty$-cosmos $\eK\co$ of Definition \ref{defn:dual-cosmoi}. Thus Theorem \ref{thm:limit-construction} applies.
 \end{proof}


  \bibliographystyle{special}
  \bibliography{../../common/index}

\newcommand{\etalchar}[1]{$^{#1}$}
\begin{thebibliography}{BDG{\etalchar{+}}16}

\bibitem[AF20]{AyalaFrancis:2017fo}
D.~Ayala and J.~Francis.
\newblock Fibrations of $\infty$-categories.
\newblock {\em Higher Structures}, 4(1):168--265, 2020.
\newblock arXiv:1702.02681.

\bibitem[BDG{\etalchar{+}}16]{BDGNS:2016ph}
C.~Barwick, E.~Dotto, S.~Glasman, D.~Nardin, and J.~Shah.
\newblock Parametrized higher category theory and higher algebra: Expos{\'e}
  {I} -- elements of parametrized higher category theory,.
\newblock arXiv:1608.03657, 2016.

\bibitem[BHH17]{BarneaHarpazHorel:2017pc}
I.~Barnea, Y.~Harpaz, and G.~Horel.
\newblock Pro-categories in homotopy theory.
\newblock {\em Algebr. Geom. Topol.}, 17(1):567--643, 2017.

\bibitem[C19]{Cisinski:2019hc}
D.-C. Cisinski.
\newblock {\em Higher Categories and Homotopical Algebra}, volume 180 of {\em
  Cambridge Studies in Advanced Mathematics}.
\newblock Cambridge University Press, 2019.

\bibitem[CP86]{Cordier:1986:HtyCoh}
J.-M. Cordier and T.~Porter.
\newblock Vogt's theorem on categories of homotopy coherent diagrams.
\newblock {\em Mathematical Proceedings of the Cambridge Philosophical
  Society}, 100:65--90, 1986.

\bibitem[J08]{Joyal:2008tq}
A.~Joyal.
\newblock {\em The theory of quasi-categories and its applications}.
\newblock Quadern 45 vol II. Centre de Recerca Matem\`{a}tica Barcelona,
  \url{http://mat.uab.cat/~kock/crm/hocat/advanced-course/Quadern45-2.pdf},
  2008.

\bibitem[L09]{Lurie:2009fk}
J.~Lurie.
\newblock {\em {Higher Topos Theory}}, volume 170 of {\em Annals of
  Mathematical Studies}.
\newblock Princeton University Press, Princeton, New Jersey, 2009.

\bibitem[L17]{Lurie:2012uq}
J.~Lurie.
\newblock {Higher Algebra}.
\newblock \url{http://www.math.harvard.edu/~lurie/papers/HA.pdf}, September
  2017.

\bibitem[R16]{Riehl:2016cc}
E.~Riehl.
\newblock {\em Category Theory in Context}.
\newblock Aurora Modern Math Originals. Dover Publications, 2016.

\bibitem[RV15a]{RiehlVerity:2012tt}
E.~Riehl and D.~Verity.
\newblock The 2-category theory of quasi-categories.
\newblock {\em Adv. Math.}, 280:549--642, 2015.

\bibitem[RV15b]{RiehlVerity:2013cp}
E.~Riehl and D.~Verity.
\newblock Completeness results for quasi-categories of algebras, homotopy
  limits, and related general constructions.
\newblock {\em Homol.~Homotopy Appl.}, 17(1):1--33, 2015.

\bibitem[RV16]{RiehlVerity:2012hc}
E.~Riehl and D.~Verity.
\newblock Homotopy coherent adjunctions and the formal theory of monads.
\newblock {\em Adv. Math.}, 286:802--888, 2016.

\bibitem[RV17a]{RiehlVerity:2015fy}
E.~Riehl and D.~Verity.
\newblock Fibrations and {Y}oneda's lemma in an $\infty$-cosmos.
\newblock {\em J.~Pure Appl.~Algebra}, 221(3):499--564, 2017.

\bibitem[RV17b]{RiehlVerity:2015ke}
E.~Riehl and D.~Verity.
\newblock Kan extensions and the calculus of modules for $\infty$-categories.
\newblock {\em Algebr.~Geom.~Topol.}, 17-1:189--271, 2017.

\bibitem[RV18a]{RiehlVerity:2017cc}
E.~Riehl and D.~Verity.
\newblock The comprehension construction.
\newblock {\em Higher Structures}, 2(1):116--190, 2018.
\newblock arXiv:1706.10023.

\bibitem[RV18b]{RiehlVerity:2018cl-v2}
E.~Riehl and D.~Verity.
\newblock On the construction of limits and colimits in $\infty$-categories.
\newblock arXiv:1808.09835v2, 2018.

\bibitem[RV19]{RiehlVerity:2017ts}
E.~Riehl and D.~Verity.
\newblock The calculus of two-sided fibrations and modules.
\newblock {I}n preparation, 2019.

\bibitem[RV20]{RiehlVerity:2018rq}
E.~Riehl and D.~Verity.
\newblock Recognizing quasi-categorical limits and colimits in homotopy
  coherent nerves.
\newblock {\em Appl.~Categ.~Struct.}, 28(4):669--716, 2020.
\newblock arXiv:1808.09834.

\bibitem[S11]{Steimle:2011hc}
W.~Steimle.
\newblock Homotopy coherent diagrams and approximate fibrations.
\newblock arXiv:1107.5213, 2011.

\bibitem[S14]{Szumilo:2014tm}
K.~Szumi\l{}o.
\newblock Two models for the homotopy theory of cocomplete homotopy theories.
\newblock arXiv:1411.0303, 2014.

\bibitem[S16]{Shah:2016ph}
J.~Shah.
\newblock Parametrized higher category theory and higher algebra: Expos{\'e}
  {II} - indexed homotopy limits and colimits.
\newblock arXiv:1809.05892, 2016.

\bibitem[SW78]{street.walters:yoneda}
R.~H. Street and R.~Walters.
\newblock Yoneda structures on 2-categories.
\newblock {\em Journal of Algebra}, 50:350--379, 1978.

\bibitem[V08]{Verity:2007:wcs1}
D.~Verity.
\newblock Weak complicial sets {I}, basic homotopy theory.
\newblock {\em Adv. Math.}, 219:1081--1149, September 2008.

\bibitem[W82]{wood:proI}
R.~J. Wood.
\newblock Abstract proarrows {I}.
\newblock {\em Cahiers de Topologie et G\'eom. Diff.}, \hbox{XXIII-3}:279--290,
  1982.

\bibitem[W85]{wood:proII}
R.~J. Wood.
\newblock Proarrows {II}.
\newblock {\em Cahiers de Topologie et G\'eom. Diff.}, \hbox{XXVI-2}:135--186,
  1985.

\end{thebibliography}


\end{document}